\newcommand{\Dim}{{\scriptsize \textsf{D}}}
\newcommand{\Span}{\mathrm{span}}
\newcommand{\unitV}{\mathds{1}}
\newcommand{\bmi}{\mathbf{i}}
\newcommand{\bmj}{\mathbf{j}}
\newcommand{\bmn}{\mathbf{n}}
\newtheorem{thm}{Theorem}[section]
\newtheorem{lem}[thm]{Lemma}
\newtheorem{coro}[thm]{Corollary}
\newtheorem{defn}[thm]{Definition}
\newtheorem{exm}[thm]{Example}
\newtheorem{ntn}{Notation}
\newcommand{\revise}[1]{#1}
\title{
  An AI-aided algorithm for multivariate polynomial reconstruction
  on Cartesian grids
  and the PLG finite difference method
}
\author[1,2]{Qinghai Zhang\thanks{Corresponding author: qinghai@zju.edu.cn}}
\author[1]{Yuke Zhu}
\author[1]{Zhixuan Li}
\affil[1]{School of Mathematical Sciences,
  Zhejiang University,
  866 YuHangTang Road,
  Hangzhou, Zhejiang Province, 310058, China}
\affil[2]{Shanghai Institute for Advanced Study of 
  Zhejiang University, Shanghai AI Laboratory, Shanghai, 200000, China}
\date{}
\begin{document}

\maketitle

\begin{abstract}
  Polynomial reconstruction on Cartesian grids 
 \revise{is fundamental
 %in finite difference methods as well as
 in many scientific and engineering applications}, 
 yet it is still an open problem
 how to construct for a finite subset $K$ of $\mathbb{Z}^{\Dim}$
 a lattice ${\cal T}\subset K$
 so that multivariate polynomial interpolation on this lattice
 is unisolvent.
In this work, we solve this open problem
 of poised lattice generation (PLG)
 via an interdisciplinary research
 of approximation theory, abstract algebra,
 and artificial intelligence. 
Specifically, 
 we focus on the triangular lattices in approximation theory, 
 study group actions of permutations upon triangular lattices,  
 prove an isomorphism between the group of permutations
 and that of triangular lattices,
 and dynamically organize the state space of permutations
 so that a depth-first search of poised lattices
 has optimal efficiency. 
Based on this algorithm,
 we further develop the PLG finite difference method
 that retains the simplicity of Cartesian grids
 yet overcomes the disadvantage of legacy finite difference methods
 in handling irregular geometries.
Results of various numerical tests
 demonstrate the effectiveness of our algorithm
 and the simplicity, flexibility, efficiency,
 and fourth-order accuracy of
 the PLG finite difference method.
 % is fourth-order and six-order accurate
 % for a generic elliptic equation
 % that includes cross derivative terms.
%It also compares favorably with previous methods.

%%% Local Variables:
%%% mode: latex
%%% TeX-master: "../PLG"
%%% End:

  \textbf{Keywords:}
  multivariate polynomial interpolation, \and
  finite difference methods, \and
  data reconstruction, \and
  artificial intelligence, \and
  depth-first search with backtracking, \and
  poised lattice generation
%  \subclass{65D05 \and 65N06}
\end{abstract}

\section{Introduction}
\label{sec:introduction}

In numerically solving partial differential equations (PDEs), 
 % our developing finite difference (FD) and
 % finite volume (FV) methods to numerically solve PDEs.
 the simplest and historically oldest method is probably
 the finite difference (FD) method, 
 which mainly consists of replacing spatial derivatives in the PDE
 with FD formulas
 and solving the resulting system of 
 % the method of lines is usually employed
 % to exploit the wealth of numerical
 ordinary differential equations (ODEs)
 by a time integrator.
Supported by an extensive body of theory,
 FD methods are robust, efficient, and accurate
 for a variety of PDEs \cite{strikwerda89:_finit_differ_schem_partial_differ_equat,leveque07:_finit_differ_method_ordin_partial_differ_equat,li18:_numer_solut_differ_equat_introd}.

%However,
Legacy FD methods are often criticized
 as ill-suited for complex geometries. 
Indeed, 
 FD methods rely heavily
 on the geometric regularity of the underlying Cartesian grids, 
 and multidimensional FD formulas
 are usually based on tensor products
 of one-dimensional formulas.
%As such, 
These regularities of FD stencils
% and the large width of multidimensional stencils
 severely limit the application of FD methods 
 to irregular domains.

\subsection{Motivations from %accurately and efficiently
  solving PDEs on irregular domains with Cartesian grids}
\label{sec:motivations}

There are two approaches %successfully
 that could alleviate the unfitness of legacy FD methods
 for irregular domains while retaining
 the simplicity of Cartesian grids. 
 
In the first approach,
 classical FD stencils are retained for \emph{point} values
 at grid points in the interior of the domain 
 while special treatments are adopted
 at grid points near the irregular boundary. 
Successful examples of this approach include
 the immersed boundary method
 \cite{peskin77:_numer,mittal05:_immer_bound_method}, 
 the immersed interface method
 \cite{leveque94:_immer,li06:_immer_inter_method}, 
 the ghost cell approach
 \cite{mayo84:_fast_solut_of_poiss_and,tseng03:_ghost_cell_immer_bound_method,liu03:_ghost,zhang10:_handl_solid_fluid_inter_for_viscous_flows,xu23:_ghost},
 and many others such as 
 the MIB method \cite{zhou06:_high_order_match_inter_and}
 and the KFBI method \cite{xie20}. 
%These methods have achieved tremendous successes.

The second approach concerns a finite volume (FV) formulation, 
 where the unknowns are \emph{averaged} values
 over the rectangular cells of the Cartesian grid.
Similar to the first approach,
 spatial discretization away from the irregular boundary
 is based on symmetric stencils 
 while cells near the boundary are treated differently.
In both approaches,
 this strategy of separately treating
 `regular' grids and `irregular' grids 
 can be made very efficient % as FD methods on rectangular domains, 
 by exploiting the fact that
 the irregular grids requiring more involved treatments
 form a set of codimension one.

A popular example of the second approach
 is the cut-cell method, also known as the embedded boundary (EB) method,
 in which cells close to the boundary
 are cut by the irregular interface
 and averaged values are defined
 on the open region, i.e., the intersection
 of the cell and the problem domain;
 see, e.g., 
 \cite{johansen98:_cartes_grid_embed_bound_method,tucker00:_cartes,ingram03:_devel_cartes,devendran17:_cartes_poiss,overton-katz23:_stokes}
 and references therein.
Previous EB methods are second-order accurate 
 and it is only recently that fourth-order EB methods
 emerged \cite{devendran17:_cartes_poiss,overton-katz23:_stokes}. 
In these methods, FV formulas are derived
 for regular cells based on Taylor expansions
 while multivariate polynomials are fitted
 to interpolate cell-averaged values
 near the irregular boundary in a least-square sense.
Then spatial operators are discretized
 via integrating the derivatives of the fitted polynomials
 over irregular cells.
% that are different
%  from those away from the boundary.
 
This work is motivated
 by the following observations and questions.
 \begin{enumerate}[(Q.1)]\itemsep0em
 \item For irregular domains,
   most FD-based methods are only first- or second-order accurate.
   % and those rare ones with fourth-order accuracy
   % are often subject to assumptions
   % such as simple topology and/or geometry. 
   % restricted to trivial on the domain topology
   % or on specific operators such as the Laplacian.
   Can we develop, for arbitrarily complex
   topology and geometry,
   a new FD method with fourth- or higher-order accuracy?
%   with the same formulation
%   without any modifications?
 \item Treatments of the irregular boundary in most FD-based methods
   revolve around modifying one-dimensional FD formulas.
   As such, it is not clear how to generalize
   them to PDEs with cross derivatives
   such as $\frac{\partial^2 u}{\partial x\partial y}$.
   Can we design a new FD method
   whose formulation is completely decoupled from
   the specific form of the PDE? 
 % \item When coupled with multigrid methods,
 %   legacy FD methods on rectangular domains
 %   have optimal complexity in that the computational cost
 %   for solving steady problems is proportional
 %   to the number of unknowns.
 %   Can the new FD method retain this advantage
 %   on irregular domains?
 \item \revise{In current EB methods,
     a multivariate polynomial is fitted for each cell
     near the irregular boundary
     from averaged values over a stencil of nearby cells,
     the cardinality of which is more than necessary. 
     However, it has never been rigorously proven that
     this polynomial reconstruction
     indeed admits a unique solution.}
   In fact, even a precise description of which cells
   get selected into the stencil is rare. 
   Of course, one can keep adding nearby cells
   until the linear system becomes uniquely solvable
   in the sense of least squares.
   But blindly expanding the stencil
   may lead to a large number of redundant cells,
   deteriorating computational efficiency.
   As will be shown in Figure \ref{fig:TLG_conditioning}, 
   it may also increase the condition number
   of the linear system by a large factor.
   \revise{Therefore, for the multivariate polynomial fitting
   at a cell near the irregular boundary, 
   can we give an explicit and systematic construction
   of the stencil so that
   its number of cells is absolutely the \emph{minimum}? %
   Can we prove the unique solvability? 
   Can we exert fine control over the condition number
   of least squares?}
   % or, more precisely, can we guarantee the least condition number
   % and no redundant cells?
 \item The algorithmic complexity of EB methods
   and many FD-based methods 
   increases rapidly as the dimensionality %of the irregular domain
   goes from two to a higher one.
   Can we suppress the algorithmic complexity of the new FD method 
   so that the special treatment remains the same
   in two and higher dimensions?
 \end{enumerate}

In this paper, we give positive answers to all above questions, 
 via an intensive study on data reconstruction 
 by multivariate polynomial interpolation.

\begin{defn}[Lagrange interpolation problem (LIP) \cite{carnicer06:_inter}]
  \label{def:LagrangeProblem}
  Let $\Pi^{\Dim}$ denote the linear space of all $\Dim$-variate
  polynomials with real coefficients.
  Given a subspace $V$ of $\Pi^{\Dim}$,
  a finite number of points
  \mbox{$\mathbf{x}_1, \mathbf{x}_2, \ldots, \mathbf{x}_N \in
   \mathbb{R}^{\Dim}$}
  and the same number of data
  $f_1, f_2, \ldots, f_N \in \mathbb{R}$,
  % $N$ linearly independent continuous functions
  % $\mathbb{R}^{\Dim}\mapsto \mathbb{R}$.
  the \emph{Lagrange interpolation problem}
  seeks a polynomial $f\in V$ such that
  \begin{equation}
    \label{eq:LagrangeProblem}
    \forall j=1,2,\ldots,N,\ 
    f (\mathbf{x}_j) = f_j,
  \end{equation}
  where $V$ is called the \emph{interpolation space}
  and $\mathbf{x}_j$'s the \emph{interpolation sites}. %  \emph{node}s or
%  or simply the \emph{site}s.
\end{defn}

% Let the subspace $V$ in Definition \ref{def:LagrangeProblem}
%  be $N$-dimensional, 
A LIP with $\dim V = N$
 is said to be \emph{unisolvent}
 if for \emph{any} given data $(f_j)_{j=1}^N$
 there exists $f\in V$
 satisfying (\ref{eq:LagrangeProblem});
% Let the subspace $V$ in Definition \ref{def:LagrangeProblem}
%  be $N$-dimensional, 
%  the Lagrange interpolation problem
%  with the sites $(\mathbf{x}_j)_{j=1}^N$
%  is said to be \emph{poised} or \emph{correct}
%  if for any given data $(f_j)_{j=1}^N$
%  there exists a function $f\in V$
%  that satisfies (\ref{eq:LagrangeProblem}).
%Equivalently, 
in this case we also say that the sites $(\mathbf{x}_j)_{j=1}^N$
 are \emph{poised} %or \emph{unisolvent}
 in $V$ or
 they are {poised} with respect to a basis of $V$.
 % if the LIP
 % % $\{V, (\mathbf{x}_j)_{j=1}^N, (\phi_j)_{j=1}^N\}$
 % admits a solution for any data $(f_j)_{j=1}^N$.

All our answers to (Q.1--4) are based on
 a novel and efficient solution to the open problem as follows.

\subsection{The open problem of poised lattice generation (PLG) in $\mathbb{Z}^{\Dim}$}
\label{sec:PLG}
 
\begin{defn}[PLG in $\mathbb{Z}^{\Dim}$]
  \label{def:PLG}
  Given a finite set of feasible nodes $K\subset\mathbb{Z}^{\Dim}$, 
  a starting point $\mathbf{q}\in K$, and a degree $n\in \mathbb{Z}^+$, 
  the \emph{problem of poised lattice generation}
%  the \emph{PLG problem of poised lattice construction} (PLG)
  % in $\mathbb{Z}^{\Dim}$
  is to choose a lattice ${\mathcal T}\subset K$
  such that $\mathbf{q}\in {\mathcal T}$ and ${\mathcal T}$ is poised in $\Pi^{\Dim}_n$
  with $\#{\mathcal T} = \dim \Pi^{\Dim}_n = {\Dim+n \choose n}$,
  where $\Pi^{\Dim}_n$ is 
  the space of $\Dim$-variate polynomials
  of total degree at most $n$.
\end{defn}

 % the sites of a 
 % if for any given data
 % $\phi_1, \phi_2, \ldots, \phi_N \in \mathbb{R}$
 % the Lagrange problem admits a solution.
 % there exists a function $\phi\in V$
 % that satisfies (\ref{eq:LagrangeProblem}).

In this work, we limit values of $n$
 in Definition \ref{def:PLG} to $n\le 6$;
 see Section \ref{sec:nature-pois-latt} for reasons of this limitation.
Two examples of PLG are shown in Figure \ref{fig:selectingPoisedLattices}.
The starting point $\mathbf{q}$ corresponds to the grid point
 at which the spatial operators are discretized, 
 and the feasible set $K$ can be customized
 according to the physics of the PDE.
As illustrated in Figure \ref{fig:select_box}, 
 for the Laplacian operator one might want to choose $K$
 so that $\mathbf{q}$ is centered in $K$
 to reflect the isotropic nature of diffusion.
In contrast,
 for the advection operator it is more appropriate to choose $K$
 to favor the upwind direction so that
 the domain of dependence of the advection is covered. 
Note that the problem in Definition \ref{def:PLG}
 might not have a solution; 
 in this case one can either shift/expand the feasible set $K$
 or decrease the degree $n$ until the problem is solvable. 
 
\begin{figure}
  \centering
  \subfigure[A poised lattice in $\Pi^2_5$ at the hollow dot $\mathbf{q}$.
%  for FD discretization 
  ]{
    \includegraphics[width=0.37\textwidth]{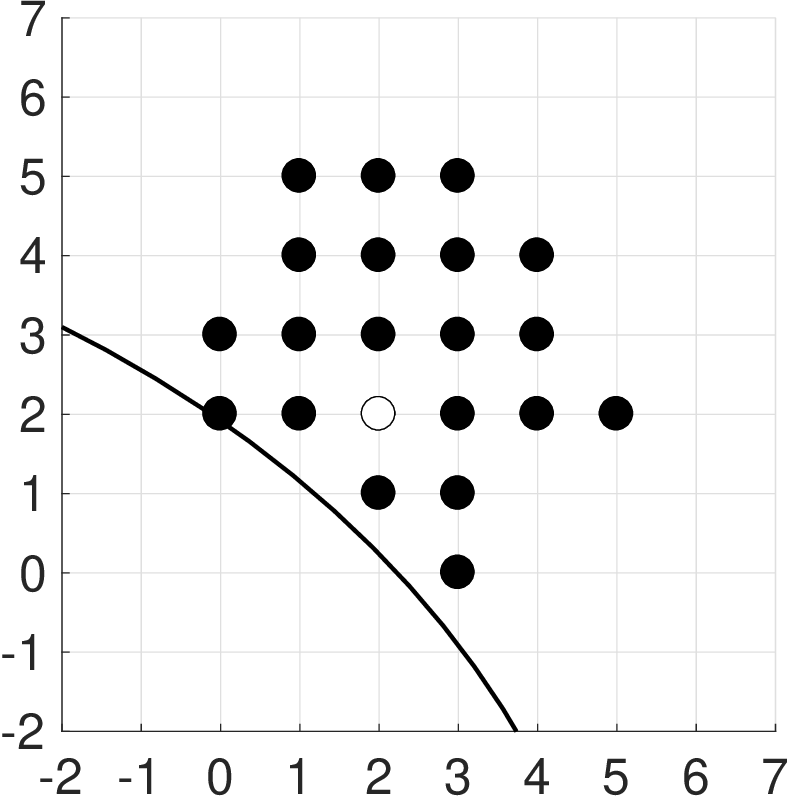}
  }
  \hfill
  \subfigure[A poised lattice in $\Pi^3_3$ at the yellow ball $\mathbf{q}$.
%  for FD discretization 
  % The yellow and blue balls
  ]{
    \includegraphics[width=0.58\textwidth]{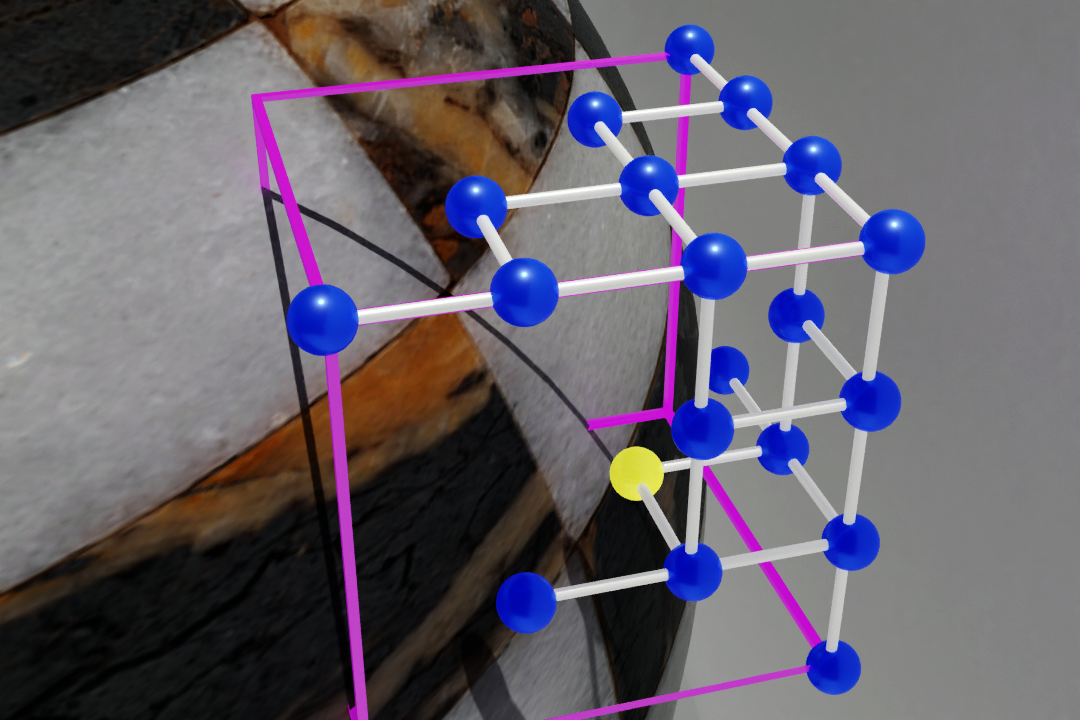}}
  
  \caption{Two solutions of the PLG problem, 
    where the feasible set $K$ is obvious.
%    consists of the grid points in the concave part 
  }
  \label{fig:selectingPoisedLattices}
\end{figure}

Suppose that a LIP
 is unisolvent with its interpolation space $V$ spanned
 by basis functions $\phi_1, \phi_2, \ldots, \phi_N$.
Then the \emph{sample matrix}
 % of an Lagrange interpolation problem
 % is a matrix of the form
 \begin{equation}
   \label{eq:sampleMatrix}
   M =
   \begin{bmatrix}
     \phi_1(\mathbf{x}_1) & \phi_2(\mathbf{x}_1) & \cdots &
     \phi_N(\mathbf{x}_1)
     \\
     \phi_1(\mathbf{x}_2) & \phi_2(\mathbf{x}_2) & \cdots &
     \phi_N(\mathbf{x}_2)
     \\
     \vdots & \vdots & \ddots & \vdots 
     \\
     \phi_1(\mathbf{x}_N) & \phi_2(\mathbf{x}_N) & \cdots &
     \phi_N(\mathbf{x}_N)
   \end{bmatrix}
 \end{equation}
 must be non-singular.
Indeed, $\det M=0$ would imply
% $\dim \mathrm{Range} M < N$
that the dimension of the range of $M$ be less than $N$
and that there exist some data  $(f_j)_{j=1}^N$
for which the LIP has no solution.
Therefore,
% the uniqueness of the solution
the nonsingularity of the sample matrix,
the existence of a unique solution of the LIP,
and the poisedness of the interpolation sites
are equivalent conditions.
% For this reason, sites that are poised 
%  are also called \emph{unisolvent} sites
%is equivalent to the existence of its unique solution.
 
In the familiar case of $\Dim=1$,
 the LIP is unisolvent if and only if
 its sites are pairwise distinct.
 % i.e., any $N$ pairwise-distinct points %in $\mathbb{R}$
 % are poised in $V=\Pi^1_{N-1}$.
In contrast, for $\Dim>1$, 
 it is much more difficult to decide
 whether a set of sites is poised in $\Pi^{\Dim}_n$.
For example,
suppose six data are given
at the sites
% \begin{displaymath}
%  (1,0),\ (-1,0),\ (0,1),\ (0,-1),\ 
%  \left(\frac{\sqrt{2}}{2}, \frac{\sqrt{2}}{2}\right),\ 
%  \left(-\frac{\sqrt{2}}{2}, \frac{\sqrt{2}}{2}\right).
% \end{displaymath}
\begin{displaymath}
 (5,0),\ (-5,0),\ (0,5),\ (0,-5),\ (4,3),\ (-3,4).
\end{displaymath}
Then for the space $V=\Pi^2_2=\Span(1,x,y,x^2,y^2,xy)$, 
 the sample matrix
 \begin{displaymath}
   M=
   \begin{bmatrix}
     1 & 5 & 0 & 25 & 0 & 0
     \\
     1 & -5 & 0 & 25 & 0 & 0
     \\
     1 & 0 & 5 & 0 & 25 & 0
     \\
     1 & 0 & -5 & 0 & 25 & 0
     \\
     1 & 4 & 3 & 16 & 9 & 12
     \\
     1 & -3 & 4 & 9 & 16 & -12
   \end{bmatrix}
 \end{displaymath}
 is singular since its first, fourth, and fifth columns are
 linearly dependent;
 indeed, all six sites are on the circle $x^2+y^2-25=0$.
 % and the corresponding LIP
 % cannot admit a unique solution.
As a crucial difference between univariate and multivariate polynomials,
 the latter usually vanishes on an \emph{infinite} number of points.
% satisfying $p(\mathbf{x})=0$.
In general, a set of sites is poised
% it is highly difficult to check whether
 if and only if the interpolation space does not contain a polynomial
 that vanishes on all the sites.
 % after all, adding that polynomial to any solution
 % leads to another solution of the interpolation problem.
This underlines a core difficulty of multivariate interpolation:
 \emph{the poisedness of interpolation sites depends on their geometric configuration}.

%This difficulty has been the topic of active research for decades.
In approximation theory, 
 much effort has been directed to constructing poised lattices. 
% the relation between the
% interpolation space and the geometry of sites.
%overcome the aforementioned 
% by constructing poised interpolation sites for the LIP.
In the classical paper by Chung and Yao \cite{chung77:_lagran},
a set $X$ of $N:={\Dim+n \choose n}$ sites in $\mathbb{R}^{\Dim}$ is said
to satisfy the \emph{condition of geometric characterization} in
$\Pi^{\Dim}_n$ %$\mathbb{R}^{\Dim}$
if for each site $\mathbf{x}_i$
there exist $n$ distinct hyperplanes
$G_{i,1}$, $G_{i,2}$, $\ldots$, $G_{i,n}$
such that
\begin{equation}
  \label{eq:conditionGC}
  \forall i,j=1,2,\ldots,N, \qquad
  i\ne j \ \Leftrightarrow\
  \mathbf{x}_j\in \cup_{\ell=1}^{n} G_{i,\ell},
\end{equation}
i.e., each $\mathbf{x}_i$ does not lie on any of these hyperplanes
and all the other nodes in $X$ lie on at least one of these hyperplanes.
By constructing Lagrange fundamental polynomials
that are 1 at one site and 0 at all other sites,
 they showed that the condition of geometric characterization
 guarantees a unique interpolating polynomial of degree at most $n$.
Chung and Yao \cite{chung77:_lagran} also proposed
 two special sets of poised sites,
 called natural lattices and principal lattices
 (in this work a lattice refers to
 a set of interpolation sites whose cardinality equals
 the dimension of the space of interpolating polynomials),
 both of which satisfy the condition of geometric characterization.
Following
 this idea of choosing intersections of appropriate hyperplanes,
 researchers have proposed other types of poised lattices
 such as $(\Dim+1)$-pencil lattices \cite{lee91:_const_lagran,jaklic10:_lattic},
 (fully) generalized principal lattices \cite{carnicer06:_inter,carnicer06:_geomet,boor09:_multiv},
 Aitken-Neville sets
 \cite{sauer02:_aitken_nevil,carnicer09:_aitken_nevil,boor09:_multiv},
 and lower sets \cite{werner80:_remar_newton,sauer03:_lagran,dyn14:_multiv}.
See \cite{gasca00:_polyn} for a review.

Unfortunately,
the freedom of choosing \emph{any} points in $\mathbb{R}^{\Dim}$
is always assumed in the aforementioned methods; % of constructing poised lattices.
% which is crucial for the success of current methods.
% In dealing with current methods of constructing poised lattices,
%  the freedom of choosing \emph{any} points in $\mathbb{R}^{\Dim}$ is assumed.
but this assumption in $\mathbb{R}^{\Dim}$ does not hold for
the PLG problem in Definition \ref{def:PLG}. 
%this freedom is unavailable in the problem below.
% such as numerically solving partial differential equations.

To our best knowledge,
 there are no systematic solutions
 to Definition \ref{def:PLG},
 and thus PLG is still an open problem.

\subsection{The nature of the PLG problem}
\label{sec:nature-pois-latt}

%At this point
% The knowledgeable reader might already be skeptical
%  on the formulation of PLG
%  in $\mathbb{Z}^{\Dim}$.  %Definition \ref{def:PLG}. 
It is now a good time to discuss several key issues
 on the formulation of PLG in $\mathbb{Z}^{\Dim}$
 to reveal its nature
 and to dispel potential doubts on Definition \ref{def:PLG}.
%on this problem in Definition \ref{def:PLG}.
% hoping to dispel these potential objections.

First, we explain
 why values of $n$ are limited 
 to $n\le 6$ in Definition \ref{def:PLG}. 
 %by considering polynomial interpolation
 
By the Runge phenomenon, 
 polynomial interpolation on uniform grids may diverge 
% does not have uniform convergence %to the interpolated function 
 as the polynomial degree $n\rightarrow \infty$. 
Indeed, 
% the $n$th derivative of the interpolated function might grow quickly 
 the condition number of the Vandermonde matrix
 (\ref{eq:sampleMatrix}) on uniform grids
 grows exponentially \cite[p. 24]{gautschi12:_numer_analy}: 
 \begin{equation}
   \label{eq:cond8Vandermonde}
   \text{cond}_{\infty} M \sim \frac{1}{\pi}
   \exp\left[\frac{n}{4}(\pi+2\ln2)-\frac{\pi}{4}\right]. 
 \end{equation}

\revise{Consider univariate polynomial interpolation
 as a linear projection $L_p$ that sends a continuous function
 $f\in {\cal C}([-1,+1])$ to
 a polynomial $p\in \Pi_n^1$ 
 so that the interpolation condition 
 $p(x_i)=f(x_i)$ holds
 on a sequence of interpolation sites $(x_i)_{i=0}^n$.
 Its \emph{Lebesgue constant} is defined 
 as $\Lambda:=\sup_{f\in {\cal C}([-1,1])}
 \frac{\|p\|_{\infty}}{\|f\|_{\infty}}$. 
Then it is not difficult to show
 \cite[Thm 15.1]{trefethen17:_approx_theor_and_approx_pract}
 % a polynomial interpolant
 % is good if and only if its Lebesgue constant is small, i.e., 
 \begin{equation}
   \label{eq:nearBestApprox}
   \|p-f\|_{\infty} \le (1+\Lambda)\|p^*-f\|_{\infty}, 
 \end{equation}
 where $p^*$ is the best approximation of $f$ in $\Pi_n^1$
 based on the max-norm.

%Since $\Lambda(n)\ge \frac{2}{\pi}\log(n+1)+0.5$
% grows at least logarithmically with respect to $n$
It has long been known  \cite{turetskii40,schonhage61:_fehler_inter}
 \cite[Thm 15.2]{trefethen17:_approx_theor_and_approx_pract}
that the Lebesgue constant on uniform grids
 can be estimated by 
 % we have $\Lambda(n)> \frac{2^{n-2}}{n^2}$
 \begin{equation}
   \label{eq:LebesgueConstant}
   \Lambda(n)\sim \frac{2^{n+1}}{e n \log n}.
 \end{equation}
Hence it is not a good idea to seek the solution 
 of PLG in $\mathbb{Z}^{\Dim}$ for a large $n$.
}

But how large is large?
For the Vandermonde matrix,
 we have from (\ref{eq:cond8Vandermonde}) that
$\text{cond}_{\infty} M \approx 160, 8700, 1.0\times 10^9$
for $n=6, 10, 20$, respectively. 
As for the Lebesgue constant,
 we have \revise{from (\ref{eq:LebesgueConstant})
 $\Lambda \approx 16, 132$, and $9.8\times 10^{12}$}
 for $n$=7, 11, and 50, respectively. 
% \cite[Chap.~15]{trefethen17:_approx_theor_and_approx_pract}. 
Of course these results are for one dimension,  
 but they serve as a strong heuristic
 that multivariate polynomial interpolation on uniforms grids is fine
 if we impose a small upper bound on the total degree, 
 say, $n\le 6$.
This is confirmed by results of numerical experiments
 in Section \ref{sec:tests}. 

Second, 
 we emphasize that the PLG problem, 
 even with limited small values of $n$,
 is not trivial at all.
In particular, brute-force algorithms do not work.
Consider solving PLG for $n=4$ and $\Dim=3$
 by exhaustive enumeration in a cube. 
The number of possibilities
 of choosing $N={n+\Dim \choose n}$ points,
 c.f. Lemma \ref{lem:cardTriangularLattice},
 from the cube of $(n+1)^{\Dim}$ points 
 is already $1.2\times 10^{31}$!
This number grows to $4.2\times 10^{81}$ for $n=6$ and $\Dim=3$!
% \begin{displaymath}
%   {(n+1)^{\Dim} \choose {n+\Dim \choose n}} = 1.2\times 10^{31}!
% \end{displaymath}
It is very challenging to \emph{efficiently} find a poised lattice
 out of such an enormous number of candidates.
After all, the PLG problem has to be solved
 for each irregular cell
 and the total cost of PLG should be a small fraction
 of that of solving the discretized equations.
 
Finally,
 a PLG algorithm 
 must be able to handle all possible
 geometric configurations of an irregular boundary;
 the feasible set $K$ in the signature of PLG
 serves this purpose. 
Since phyiscally meaningful regions with arbitrarily complex
 topology and geometry can be accurately and efficiently represented
 by Yin sets \cite{zhang20:_boolean}, 
 we insist that the algorithm
 operates in a dimension-agnostic manner
 so that there is no need to switch gears
 for different dimensions.
 
Given the above discussions on the nature of PLG,
 we believe that tools in traditional approximation theory
 are insufficient to satisfactorily solve the PLG problem.
A coupling to search algorithms in artificial intelligence (AI)
 turns out to be helpful.

\subsection{A fusion of approximation theory,
  group theory, and AI search algorithms}
\label{sec:search-algor-AI}

A \emph{search algorithm} is an AI algorithm for finding the best solution
 of a \emph{search problem}
 that is specified by the \emph{initial state},
 the \emph{goal state}, and a \emph{state space} or \emph{solution space}. 
% that is a (typically finite) set of possible or intermediate states.
A search algorithm solves the search problem
 by first organizing the state space
 into a special graph (typically a spanning tree)
 and then traversing the graph from the initial state
 by repeatedly determining the next best move
 according to some heuristic or deductive strategy.
%For an informed search problem,
A \emph{solution}
  is a path from the initial state to the goal state that
  minimizes a certain cost function.

\revise{
Typical applications of search algorithms
 include pathfinding, optimization, and game playing.
Figure \ref{fig:shortestPath} illustrates
 how a shortest-path problem is solved
 by a search algorithm, where
 the spanning tree is usually \emph{exhaustive}
 in that every acyclic path from the start to the end
 is contained in the spanning tree.
% is always the sum of the distances of the constituting edges.
Visiting one city at a time
 appears to be the \emph{only} appropriate way
 to construct the state space,
 leading to a \emph{static} structure
 that works very well for minimizing
 the total length of the traversed path. 
}

\revise{
  Game playing is another grand triumph of search algorithms.
  The research on playing games by computer programs traces back
  to a paper in 1950, in which Claude Shannon \cite{shannon50:_progr}
  proposed a framework that combines the structure of a spanning tree 
  with an optimization problem called ``minimaxing.''
  But it was not until 1997
  that a human world champion of chess was 
  defeated by an AI computer ``Deep Blue''
  from IBM\texttrademark. 
  In 2016 and 2017,
  two human world champions of Go
  were defeated by ``AlphaGo,'' an AI computer
  developed by Google\texttrademark\ DeepMind. 
  %In the second half of the 20th century,
  During these seven decades,
  search algorithms had always been a fundamental tool
  in developing AI game players. 
  % These are, among other things,
  % the most remarkable achievements of AI.

  The state space of a game contains roughly $b^d$ states
  where $d$ is the game depth
  and $b$ the game breath (i.e., number of legal moves per position).
  Their values are $(b,d)\approx (35,80)$ for chess
  and $(b,d)\approx (250,150)$ for Go.
  Since only one move is allowed at a time, 
  the state space of game playing
  has the same static structure as that of a shortest-path problem. 
  However, there is a key difference:
  \emph{exhaustive search is infeasible for game playing}.
  Consequently,
  much effort has been focused on the design of
  adversarial search algorithms
  \cite{knuth75:_alpha_beta_pruning}
  \cite[chap. 5]{russell21:_artif_intel} 
  that efficiently reduce the complexity
  of the actual state space
  by pruning away subtrees of unfavorable outcome. 
  For this purpose,
  AlphaGo is a breakthrough \cite{silver16:_AlphaGo}
  in that it combines an adversarial search algorithm
%  (the Monte Carlo tree search)
  and deep neural networks 
  to successfully handle the previously unmanageable
  complexity of $(b,d)\approx (250,150)$. 
}

%% 象棋和围棋中的两个典型的搜索算法（可以分别介绍，也可以统一介绍，一起引用）三篇文献均是领域内开创性文献
%% 计算机象棋引擎中常使用alpha-beta剪枝算法来减少评估节点的数量,极大提高决策树搜索的效率.
%%. \cite{newell58:_chess_playing},
%% AlphaGo 的基本原理是通过深度神经网络和蒙特卡洛树搜索相结合，
%%  利用强化学习和监督学习来优化围棋策略，从而在棋局中做出最优决策. 
 
\begin{figure}
\centering
\subfigure[A map of a local region in east China.
An integer represents the railway distance between two cities.]{
\includegraphics[width=.42\linewidth]{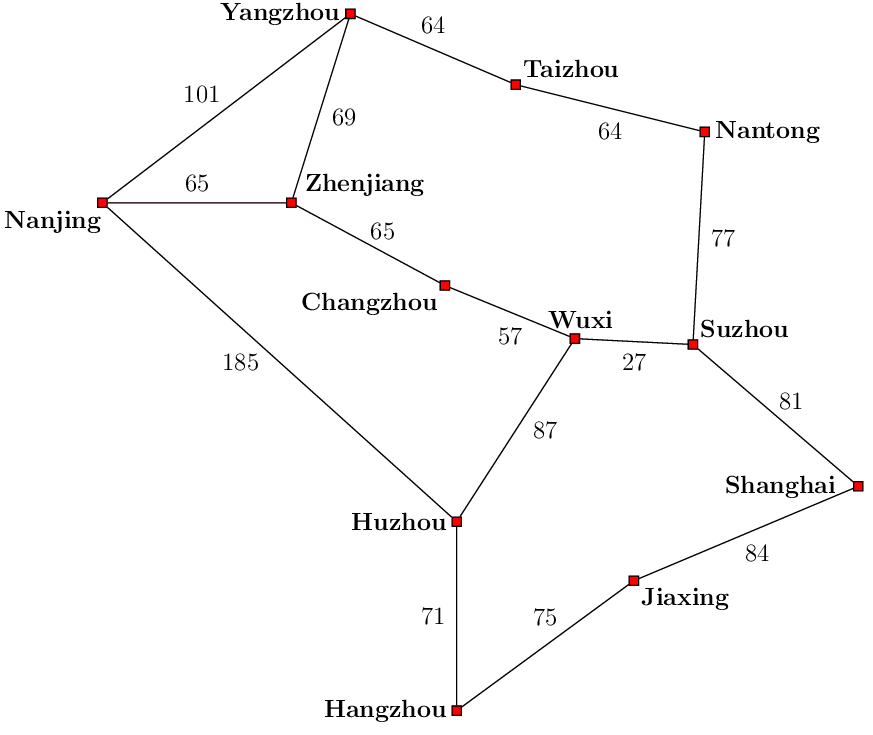}
}
\hfill
\subfigure[The state space is a spanning tree of the graph in
subplot (a). %Since all distances are positive,
We ignore any leaf node that forms a Hamiltonion circle with its parents.]{
\includegraphics[width=.50\linewidth]{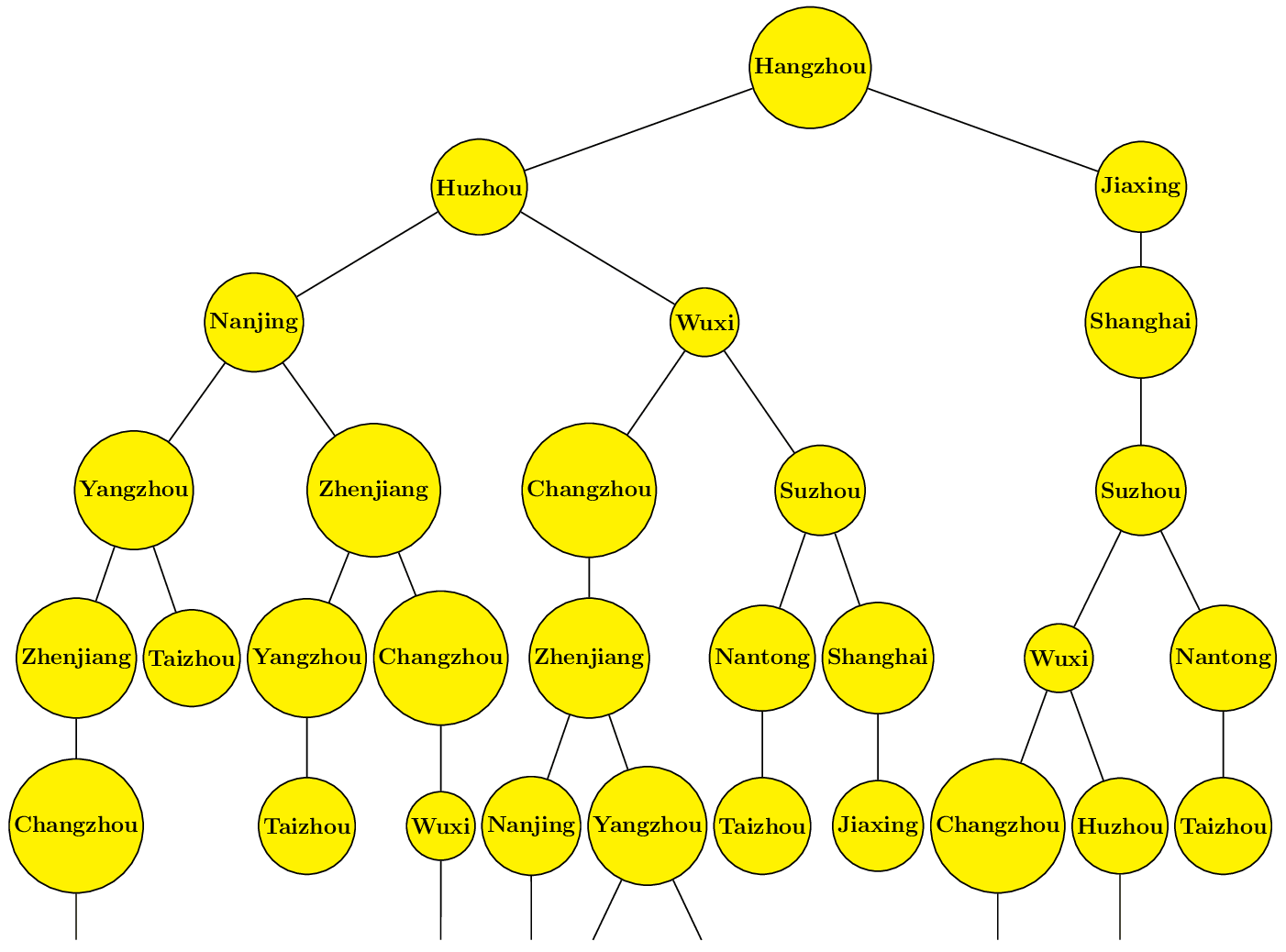}
}
\caption{An example search problem of finding
  the shortest path from Hangzhou to Taizhou.
  The initial state and the goal state are Hangzhou and Taizhou,
  respectively.
  This search problem can be solved
  by search algorithms
  such as the depth-first search, 
  the best-first search, and the $A^*$ search.
  See \cite[Chap.~3]{russell21:_artif_intel} for more details. 
}
\label{fig:shortestPath}
\end{figure}

\revise{
The PLG problem can also be formulated as a search problem
 with its initial state as the singleton lattice
 $\{\mathbf{q}\}$ that only contains the starting point.
It is similar to AI game playing in that any exhaustive search,
 by the discussion in Section \ref{sec:nature-pois-latt},
 is infeasible
 and thus the key difficulty also lies in 
 how to efficiently reduce
 the width and depth of the subtrees that actually get spanned.
 
There are, however, two crucial differences
 between PLG and AI game playing. 
% As an obvious difference,
%  there is only one goal state in the shortest path problem
%  while there are many goal states in the PLG problem.
% the state space has different structures
% in these two cases.
First,
 it is difficult to convert the poisedness of lattices
 to the minimization of any cost functions:
 a lattice is either poised or not.
More importantly, 
 there is no intuitive way to construct the state space
 for the PLG problem. 
Adding one point at a time into the lattice
 could be one way to construct the state space,
 but it is by no means the only way.
In fact, in multiple dimensions,
it is more natural to add at a time
$m_k:={k+\Dim+1 \choose k+1}-{k+\Dim \choose k}$
 points so that the polynomial degree of a non-initial state 
 is greater by one than that of its parent state.
Even for this strategy,
 one still has many possible choices of the $m_k$ points
 to maintain the invariant of poisedness; 
 furthermore,
 these choices depend on the particularity of the input
 feasible set $K$ which is unknown at the time of algorithm design.
In summary, the structure of the state space of the PLG problem
 is highly \emph{dynamic}. 

Our algorithm for PLG
 is a fusion of elementary concepts 
 from approximation theory, group theory, and AI search algorithms.
First, we observe
 that only one poised lattice is needed per cell
 and there is no need to find all poised lattices in the feasible
 set $K$.
Hence we focus on a particular class
 of poised lattices, % in approximation theory,
 namely the triangular lattices in Definition \ref{def:triangularLatticeDimD}, 
 to specialize PLG
 to the problem of triangular lattice generation (TLG);
 see Definition \ref{def:poisedLatticeGenProblem}. 
Second,
 for fixed $n$ and $\Dim$ 
 we study the structure of the group ${\cal X}$ of
 triangular lattices 
 and show that ${\cal X}$ is isomorphic
 to the group of $\Dim$-permutations that act on ${\cal X}$, 
 c.f. Definition \ref{def:DPermutation}. 
 % $\Dim$-tuples of permutations
 % that act on the first $n$ natural numbers. 
Based on this isomorphism,
 we identify the state space of poised lattices
 with that of $\Dim$-permutations, 
 organize the state space
 via orbits of points under the $\Dim$-permutations, 
 and prove in Theorem \ref{thm:DpermutAction}
 that a simple depth-first search with backtracking
 is optimal in generating a poised lattice; 
 see Sections \ref{sec:analysis} and \ref{sec:algorithms}
 for more details on TLG.
}

\subsection{Contributions of this work}
\label{sec:contr-this-work}

The TLG algorithm solves the PLG problem
 and yields straightforward answers of questions posed in Section \ref{sec:motivations}.
\begin{enumerate}[({A}.1)]\itemsep0em
\item Utilizing the TLG algorithm,
  we propose a new FD method, called the PLG-FD method,
  that retains Cartesian grids
  for comparable simplicity of legacy FD methods
  and solves PDEs on arbitrarily complex irregular domains
  with fourth-order accuracy.
  See Section \ref{sec:PLG-FD} for more details.
\item After generating the triangular lattice,
  we fit a multivariate polynomial
  and obtain a discrete equation for each grid point
  according to the PDE and its boundary conditions. 
  Thanks to its complete independence
  on the specific form of the PDE,
  this process also applies
  to PDEs with cross derivatives
  and complex boundary conditions. 
\item The TLG algorithm gives users of EB methods precise controls
  over least squares.
  Starting with the generated poised lattice,
  one can keep adding nearby points until the condition number
  reaches a minimum.
  \revise{Results of numerical experiments show
    that adding 2 to 5 extra points
    substantially reduces the condition number;
    see Figure \ref{fig:TLG_conditioning}. 
  }
  % this results in an excellent balance between conditioning and
  % efficiency.
\item The algorithmic steps of the PLG-FD method
  are conceptually the same for different dimensions
  and different orders of accuracy,
  thus a single implementation
  covers a wide range of applications.
\end{enumerate}

Apart from the above contributions for FD/FV methods,
 the TLG algorithm goes well with interpolation methods
 \cite{sauer95:_lagran,neidinger19:_multiv_newton,errachid20:_rmvpia}
 that compute the interpolating polynomials from a given poised lattice.

 % It might also be helpful to the design of
 % search algorithms for AI applications.
 \revise{Vector spaces and linear maps
   go hand in hand in mathematics;
   so do data structures and algorithms in computer science.
This duality principle is also manifested in TLG: 
 we \emph{design} the dynamic structure of the state space
 so that the search algorithm over the space may exploit
 this structure to achieve an optimal pruning of subtrees.
This distinguishes TLG
 from current AI search algorithms,
 which act over spaces whose structures are determined \emph{a priori}.
The presentation of dynamical structures of state spaces
 via abstract algebra,
 coupled with AI algorithms that exploit these structures, 
% with (even elementary) mathematical concepts 
 may lead to the solution of other open problems
 in scientific computing as well as AI. 
In particular, this strategy
 could be very useful for an informed search problem 
 where the information is not conducive
 to current AI search algorithms. 
}

The rest of this paper is organized as follows.
% such an algorithm of poised lattice generation.
In Section \ref{sec:preliminaries},
we introduce notation and preliminary concepts. 
In Section \ref{sec:analysis}, 
we examine the concept of triangular lattices,
 formalize the TLG problem, 
 and analyze it from the viewpoint of group actions of permutations upon triangular lattices.
We formalize in Section \ref{sec:algorithms}
 the TLG algorithm as our solution to the PLG problem
 and propose in Section \ref{sec:PLG-FD} the PLG-FD method.
In Section \ref{sec:tests},
 we test the TLG algorithm as well as the PLG-FD method
 by numerical experiments,
 demonstrating fourth-order convergence
 of the PLG-FD method for a variety of PDEs on irregular domains.
We conclude this work with some research prospects
 in Section \ref{sec:conclusion}.

%%% Local Variables:
%%% mode: latex
%%% TeX-master: "../PLG"
%%% End:

% LocalWords:  unisolvent TLG discretization PDE PDEs ODE ODEs FV FD
% LocalWords:  advection logarithmically Runge Vandermonde PLG EB
% LocalWords:  Taizhou codimension dimensionality interpolant AlphaGo
% LocalWords:  pathfinding subtrees

\section{Preliminaries}
\label{sec:preliminaries}

In this section,
 we introduce notation
 and collect relevant definitions and results
 from several distinct disciplines 
 to make this paper self-contained.
% to form a foundation for subsequent sections.

\begin{ntn}
  \label{ntn:firstNnaturalNumbers}
  The first $n+1$ nonnegative integers  
  and the first $n$ positive integers are denoted respectively by
  \begin{equation}
    \label{eq:firstNnaturalNumbers}
    \mathbb{Z}_n := \{0,1,\ldots,n\},
    \quad \mathbb{Z}_n^+ := \{1,\ldots,n\}.
  \end{equation}
  In particular, $\mathbb{Z}_m=\emptyset$
  for any negative integer $m$
  and $\mathbb{Z}_m^+=\emptyset$
  for any nonpositive integer $m$. 
\end{ntn}
  % and the first $n$ positive integers by
  % \begin{equation}
  %   \label{eq:firstNpositiveIntegers}
  %   \mathbb{Z}_n^+ := \{1,\ldots,n\}.
  % \end{equation}

\subsection{Triangular lattices in the plane}
\label{sec:triangularGrids2D}

% Most contents in this subsection
%  are from %a book chapter by
%  Phillips \cite[Chap. 5.2]{phillips03:_inter_approx_polyn}.

\begin{defn}
  \label{def:triangularLattice2D}
  A \emph{triangular lattice of degree $n$ in two dimensions}
   is a set of isolated points in $\mathbb{R}^2$, %interpolation sites
    \begin{equation}
    \label{eq:triangularLattice2D}
    {\mathcal T}^{2}_n := \{(x_i,y_j) : i,j\ge 0, i+j\le n\},
  \end{equation}
  where $x_i$'s and $y_j$'s are
  $n+1$ distinct $x$-
  and $y$-coordinates, respectively.
\end{defn}

\begin{exm}
  \label{exm:latticeD2n2}
  The constraints in (\ref{eq:triangularLattice2D})
   are not on the coordinates,
   but on their \emph{indices}.
  Hence a triangular lattice might have a shape
   that does not look like a triangle at all.
   \begin{center}
     \includegraphics[width=0.2\textwidth]{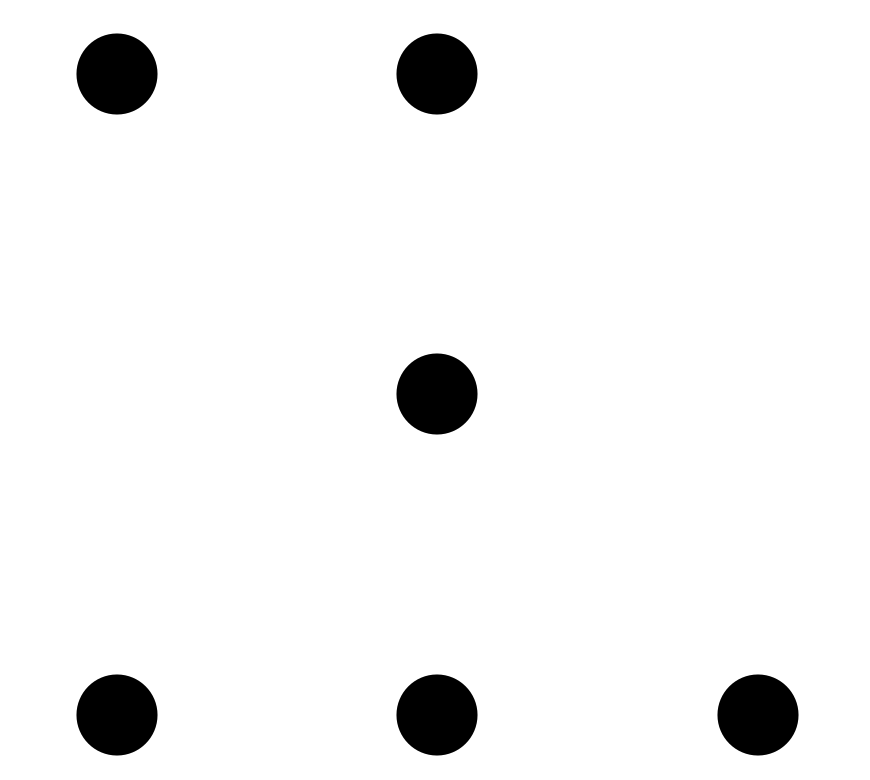}
   \end{center}
  For example, the above triangular lattice
   \begin{displaymath}
     {\mathcal T}_{2}^2 = \{(0,0), (0,2), (1,0), (1,1), (1,2), (2,0)\}
   \end{displaymath}
   has distinct coordinates
   $x_0=1$, $x_1=0$, $x_2=2$
   and $y_0=0$, $y_1=2$, $y_2=1$.
\end{exm}

\begin{thm}
  \label{thm:triangularSampleMatrix}
  A triangular lattice ${\mathcal T}^2_n$
  is poised with respect to
  bivariate \revise{monomials} of degree no more than $n$
   \begin{equation}
     \label{eq:triangularBasisFuncs}
     \Phi^2_n := \{1, x, y, x^2, xy, y^2, \ldots,
     x^n, x^{n-1}y, \ldots, xy^{n-1}, y^n\}; 
   \end{equation}
   % and a \emph{triangular lattice} ${\mathcal T}^2_n$ associated with
   % $n$ distinct points $x_i\in \mathbb{R}$ and
   % $n$ distinct points $y_i\in\mathbb{R}$,
   the corresponding sample matrix $M_{2}$
   resulting from (\ref{eq:sampleMatrix}) satisfies
  \begin{equation}
    \label{eq:triangularSampleMatrix}
    \det M_{2} = C \psi_n(x) \psi_n(y), 
  \end{equation}
  where $C$ is a nonzero constant and
  $\psi_n(x)$ is a polynomial
  in terms of the $n+1$ distinct coordinates $x_i$'s, 
  \begin{equation}
    \label{eq:triangularSampleMatrixDetFactor}
    \psi_n(x) %= \psi(x_0,x_1,\ldots,x_n)
    := \prod_{i=1}^n \prod_{{\ell}=0}^{i-1} (x_i-x_{\ell})^{n+1-i}.
  \end{equation}
\end{thm}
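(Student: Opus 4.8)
The statement bundles two claims: that ${\mathcal T}^2_n$ is poised (equivalently, by the discussion around \eqref{eq:sampleMatrix}, that $\det M_2\neq 0$ whenever the coordinates are distinct) and the explicit product formula \eqref{eq:triangularSampleMatrix}. The plan is to treat $\det M_2$ as a polynomial in the $2(n+1)$ indeterminates $x_0,\ldots,x_n,y_0,\ldots,y_n$ and to pin it down by divisibility together with a degree count, deferring the nonvanishing of the constant $C$ to a short independent induction. First I would extract $\psi_n(x)$ by a Vandermonde-style argument. Fix a pair $\ell<i$. For every $j$ with $0\le j\le n-i$ both $(x_i,y_j)$ and $(x_\ell,y_j)$ are lattice points, and subtracting the row of $(x_\ell,y_j)$ from that of $(x_i,y_j)$ replaces its entries $x_i^a y_j^b$ by $(x_i^a-x_\ell^a)y_j^b$, each divisible by $(x_i-x_\ell)$. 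Row operations leave $\det M_2$ unchanged while exhibiting $n+1-i$ rows divisible by $(x_i-x_\ell)$, so $(x_i-x_\ell)^{\,n+1-i}\mid\det M_2$. The linear forms $x_i-x_\ell$ for distinct pairs are pairwise non-associate primes in the (UFD) polynomial ring, so their product $\psi_n(x)$ divides $\det M_2$; the identical computation on $y$-indices (subtracting row $(x_i,y_m)$ from row $(x_i,y_j)$ for $m<j$, over the $n+1-j$ admissible values of $i$) gives $\psi_n(y)\mid\det M_2$, and since the two factors involve disjoint variables, $\psi_n(x)\psi_n(y)\mid\det M_2$.

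Next comes the degree count. Every term of $\det M_2$ is a product of one entry per column, and the entry taken from the column of the monomial $x^a y^b$ contributes $x$-degree exactly $a$ regardless of its row; hence $\det M_2$ is homogeneous in the $x$-variables of degree $\sum_{a+b\le n}a=\binom{n+2}{3}$, and likewise of degree $\binom{n+2}{3}$ in the $y$-variables. Since $\psi_n(x)$ and $\psi_n(y)$ are homogeneous of exactly these same degrees in their respective variable sets, the quotient $\det M_2/\bigl(\psi_n(x)\psi_n(y)\bigr)$ is a polynomial of $x$-degree and $y$-degree zero, i.e.\ a constant $C$. This establishes \eqref{eq:triangularSampleMatrix} up to verifying $C\neq 0$.

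Finally I would prove poisedness directly, which forces $C\neq 0$ without any coefficient computation. Suppose $p\in\Pi^2_n$ vanishes on ${\mathcal T}^2_n$. Restricting to the line $x=x_0$ yields a univariate polynomial $p(x_0,y)$ of degree at most $n$ that vanishes at the $n+1$ distinct nodes $y_0,\ldots,y_n$, so $p(x_0,y)\equiv 0$ and $(x-x_0)\mid p$. Writing $p=(x-x_0)\tilde p$ with $\tilde p\in\Pi^2_{n-1}$, the remaining conditions say that $\tilde p$ vanishes on $\{(x_i,y_j):1\le i,\ i+j\le n\}$, which is a degree-$(n-1)$ triangular lattice in the distinct coordinates $x_1,\ldots,x_n$ and $y_0,\ldots,y_{n-1}$; by induction $\tilde p\equiv 0$, whence $p\equiv 0$ and ${\mathcal T}^2_n$ is poised. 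Consequently $\det M_2\neq 0$ for distinct coordinates, and evaluating $\det M_2=C\,\psi_n(x)\psi_n(y)$ at any such configuration (where the right-hand factors are nonzero) forces $C\neq 0$; the base case $n=0$ gives the trivial $1\times 1$ matrix with $\det M_2=1$.

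I expect the difficulty to be bookkeeping rather than depth: one must match the extracted exponent $n+1-i$ with the homogeneity degree so that the quotient is provably a \emph{constant} (any slack would leave an undetermined polynomial factor), and one must notice that the naive Chung--Yao geometric characterization is unavailable for a \emph{general} triangular lattice, because the diagonal index sets $\{i+j=\mathrm{const}\}$ are not collinear when the $x_i$ and $y_j$ are arbitrary distinct reals. That obstruction is exactly why the determinant identity, rather than an explicit hyperplane construction, must carry the argument.
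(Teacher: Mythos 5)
Your proof is correct, and its core — extracting $(x_i-x_\ell)^{n+1-i}$ by Vandermonde-style elementary operations and then showing by a degree count that the cofactor of $\psi_n(x)\psi_n(y)$ is a constant — is the same strategy as the paper's proof of Theorem \ref{thm:triangularSampleMatrix}. There are two differences worth noting, both in your favor. First, in the bookkeeping: the paper extracts the factors via column operations followed by a recursive Laplace expansion, and compares \emph{total} degrees (using the Leibniz formula to get the exact total degree of $\det M_2$), whereas you subtract the $n+1-i$ rows indexed by $(x_i,y_j)$, $0\le j\le n-i$, all at once and then use homogeneity of $\det M_2$ in the $x$-variables and in the $y$-variables \emph{separately}; your variant is tighter and makes the ``quotient is a constant'' conclusion immediate, and your explicit appeal to the UFD structure (the forms $x_i-x_\ell$ being pairwise non-associate primes) justifies multiplying the individual divisibility relations, a step the paper leaves implicit. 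Second, and more substantively: the paper closes with ``the proof is completed by $\psi_n(x)\psi_n(y)\mid\det M_2$ and $\det M_2\ne 0$,'' but it never actually proves that $\det M_2$ is not the zero polynomial (equivalently that $C\ne 0$) — poisedness is precisely what is being established, so this assertion is an unproven step. Your induction — any $p\in\Pi^2_n$ vanishing on ${\mathcal T}^2_n$ vanishes on the line $x=x_0$ at $n+1$ distinct $y_j$'s, hence $(x-x_0)\mid p$, and the quotient vanishes on a re-indexed triangular lattice of degree $n-1$ — supplies exactly the missing argument, making your proof self-contained where the paper's is not.
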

\begin{proof}
  % For any fixed $i,{\ell}$ with $i>{\ell}$,
  %  replacing $(x_i,y_j)$ with $(x_{\ell},y_j)$
  %  in ${\mathcal T}^2_n$ makes the corresponding sample matrix singular
  %  for each $j=0,1,\ldots, n-i$;
  %  furthermore, $j$ cannot exceed $n-i$
  %  because (\ref{eq:triangularLattice2D}) dictates $i+j\le n$. 
  % Therefore the number of this type of replacements is $n-i+1$,
  %  and hence the exponent of $(x_i-x_{\ell})$
  %  in (\ref{eq:triangularSampleMatrixDetFactor}) is $n-i+1$.
  \revise{For any fixed $i,{\ell}$ with $i>{\ell}$,
  we have, from (\ref{eq:sampleMatrix}),
  \begin{align}
    \nonumber
    \det M_{2} &= \det
    \begin{bmatrix}
      \cdots & \phi_1(x_{\ell},y_j) & \cdots
      & \phi_1(x_{i},y_j) & \cdots
      \\
      \vdots & \vdots & \vdots & \vdots & \vdots
      \\
      \cdots & \phi_N(x_{\ell},y_j) & \cdots
      & \phi_N(x_{i},y_j) & \cdots
    \end{bmatrix}
    \\ \nonumber
    &= \det
      \begin{bmatrix}
        \cdots & \phi_1(x_{\ell},y_j)-\phi_1(x_{i},y_j) & \cdots
        & \phi_1(x_{i},y_j) & \cdots
        \\
        \vdots & \vdots & \vdots & \vdots & \vdots
        \\
        \cdots & \phi_N(x_{\ell},y_j)-\phi_N(x_{i},y_j) & \cdots
        & \phi_N(x_{i},y_j) & \cdots
      \end{bmatrix}
    \\ \label{eq:determinantM2}
    &= \sum_{k=1}^N[\phi_k(x_{\ell},y_j)-\phi_k(x_{i},y_j)]
      (-1)^{k+c_{\ell}}\det C_{k,c_{\ell}},
  \end{align}
  where $c_{\ell}$ is the column index
  of $(x_{\ell},y_j)$ in $M_2$, 
  the last step follows from the Laplace formula of determinants,
  and $C_{k,c_{\ell}}$ is the matrix that results from
  deleting the $k$th row and the $c_{\ell}$th column of $M_2$.
  For any $\phi_k$ in $\Phi^2_n$,
  we have $(x_i-x_{\ell}) | [\phi_k(x_{\ell},y_j)-\phi_k(x_{i},y_j)]$,
  i.e. $(x_i-x_{\ell})$ divides
  $[\phi_k(x_{\ell},y_j)-\phi_k(x_{i},y_j)]$.
  Apply (\ref{eq:determinantM2}) to $C_{k,c_{\ell}}$
  and we know
  $(x_i-x_{\ell}) | \det C_{k,c_{\ell}}$ for a different $y_j$.
  Since (\ref{eq:triangularLattice2D}) dictates $j\le n-i$, 
  a recursive expansion of $\det C_{k,c_{\ell}}$ implies
  $(x_i-x_{\ell})^{n+1-i} | \det M_2$.
}

  Now we vary ${\ell}$ while keeping $i$ fixed.
  Since there are $i$ indices less than $i$,
   the term $\prod_{{\ell}=0}^{i-1} (x_i-x_{\ell})^{n+1-i}$
   contributes to a total degree of 
   $i(n+1-i)$ in terms of the $n+1$ coordinates $x_0, \ldots, x_n$.
  Hence the total degree of $\psi_n(x)$ is \revise{at least}
   \begin{equation}
     \label{eq:degreeInduction1}
     \sum_{i=1}^n i(n+1-i)
     = (n+1)\sum_{i=1}^ni -\sum_{i=1}^ni^2 = \frac{n(n+1)(n+2)}{6},
   \end{equation}
   where the second equality is proven by an easy induction.
  Similarly,
   $\det M_{2}$ must contain a factor of $\psi_n(y)$,
   of which the total degree is \revise{at least}
   (\ref{eq:degreeInduction1}).

  The Leibniz formula of the determinant of a square matrix $A\in \mathbb{R}^{n\times n}$
   is $\det A = \sum_{\sigma\in S_n} \text{sgn}(\sigma) \prod_{k=1}^n
   a_{\sigma(k),k}$
   where the sum is over the symmetric group $S_n$ of all
   permutations, c.f. Definition \ref{def:symmetricGroup}, 
   and $a_{\sigma(k),k}$ is the element of $A$
   at the $\sigma(k)$th row and the $k$th column.
  Thus the determinant of the sample matrix $M_{2}$
   in (\ref{eq:sampleMatrix})
   is a polynomial in terms of the variables $x_0,x_1,\ldots,x_n$ and
   $y_0,y_1,\ldots,y_n$,
   with each monomial being a product of all basis functions in
   (\ref{eq:triangularBasisFuncs})
   evaluated at some point $(x_i,y_j)$.
  Hence the total degree of $\det M_{2}$
   in the variables $x_0,x_1,\ldots,x_n$ and
   $y_0,y_1,\ldots,y_n$ is \revise{equal to}
  \begin{equation}
     \label{eq:degreeInduction2}
     \sum_{i=1}^n i(i+1) = \frac{n(n+1)(n+2)}{3},
   \end{equation}
   where $i$ refers to the degree of a monomial
   and $i+1$ the number of monomials of degree $i$,
   c.f. (\ref{eq:triangularBasisFuncs}). 
   \revise{
  By the above arguments,
   the total degrees of 
   $\psi_n(x)\psi_n(y)$ and $\det M_{2}$ are
   at least and equal to (\ref{eq:degreeInduction2}), respectively.
  The proof is then completed by
   $\psi_n(x)\psi(y) | \det M_{2}$ and $\det M_{2}\ne 0$.}
\end{proof}

The $k$th \emph{divided difference} of a function
 $f: \mathbb{R}\rightarrow \mathbb{R}$
 over $x_0, \ldots, x_k\in \mathbb{R}$
 is
\begin{equation}
  \label{eq:divDiffRecursion}
  [x_0, x_1,\ldots, x_k]f = 
  \frac{[x_1, x_2, \ldots, x_k]f - [x_0, x_1,\ldots, x_{k-1}]f}{x_k-x_0}
\end{equation}
with the recursion termination condition
as $[x_0]f = f(x_0)$.
We use the notation $[x_0, x_1,\ldots, x_k]f$
to emphasize the functional nature
of divided differences.
\revise{For $f: \mathbb{R}^2\rightarrow \mathbb{R}$,
 its divided difference $[x_0, x_1,\ldots, x_k][y_0, y_1,\ldots, y_j]f$
 is defined recursively from (\ref{eq:divDiffRecursion})
 as $[x_0, x_1,\ldots, x_k]g$
 where $g:\mathbb{R}\rightarrow\mathbb{R}$
 depends only on $x$ and is given by $g(x)=[y_0, y_1,\ldots, y_j]f(x,y)$}.
 
The following is an error estimate
 for interpolation on triangular lattices.

\begin{thm}
  \label{thm:triangularInterpolantAndRemainder}
  For any scalar function $f$
   whose domain includes ${\mathcal T}^2_n$,
   we have, 
   \begin{equation}
     \label{eq:triangularInterpolantAndRemainder}
     \forall m\in \mathbb{Z}_n,\  %=0,1,\ldots,n,\ \
     f(x,y) = p_m(x,y) + r_m(x,y),
   \end{equation}
   where $p_m(x,y)$ is a polynomial interpolator of $f(x,y)$
   and $r_m(x,y)$ is the remainder on ${\mathcal T}^2_m$, 
   \begin{subequations}
     \label{eq:triangularInterpolantTerms}
   \begin{align}
     p_m(x,y) =& 
     \begin{cases}
       [x_0][y_0]f = f(x_0,y_0), & m=0;
       \\
       p_{m-1}(x,y)+q_m(x,y), & m>0,
     \end{cases}
     \\
     q_m(x,y) =& \sum_{k=0}^m\pi_k(x)\pi_{m-k}(y)
     [x_0,\ldots,x_k][y_0,\ldots,y_{m-k}]f,
     \\ 
     r_m(x,y) =& \sum_{k=0}^m\pi_{k+1}(x)\pi_{m-k}(y) 
     [x,x_0,\ldots,x_k][y_0,\ldots,y_{m-k}]f
     \\ \nonumber
     &+ \pi_{m+1}(y)[x][y,y_0,\ldots,y_{m}]f,
   \end{align}
 \end{subequations}
 where the symmetric polynomial $\pi_n(x)$ is
  \begin{equation}
    \label{eq:NewtonPoly}
    \pi_n(x)=
    \begin{cases}
      1, & n=0;
      \\
      \prod_{i=0}^{n-1}(x-x_i), & n>0.
    \end{cases}
  \end{equation}
\end{thm}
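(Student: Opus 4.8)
The plan is to establish the decomposition (\ref{eq:triangularInterpolantAndRemainder}) by induction on $m$, reading the formula as a \emph{layered} Newton expansion: the term $q_m$ collects exactly those summands whose $x$- and $y$-divided-difference orders add up to $m$, the interpolant $p_m=\sum_{k=0}^m q_k$ accumulates all layers up to order $m$, and $r_m$ is the remainder. Since $p_m=p_{m-1}+q_m$ by definition, the whole content of the induction reduces to the single telescoping identity $r_{m-1}=q_m+r_m$; once this is in hand, $f=p_{m-1}+r_{m-1}=p_m+r_m$ propagates the decomposition from $m-1$ to $m$.

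For the base case $m=0$ I would verify $f=p_0+r_0$ directly. Here $p_0=f(x_0,y_0)$, and reading $[x]g=g(x)$ from the termination rule of (\ref{eq:divDiffRecursion}), the two summands of $r_0$ telescope to $(x-x_0)[x,x_0][y_0]f=f(x,y_0)-f(x_0,y_0)$ and $(y-y_0)[x][y,y_0]f=f(x,y)-f(x,y_0)$; adding these to $f(x_0,y_0)$ collapses to $f(x,y)$.

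For the inductive step I would apply the univariate divided-difference recursion (\ref{eq:divDiffRecursion}), in the peeling form
\begin{equation}
  \label{eq:peeling}
  [x,x_0,\ldots,x_k]g = [x_0,\ldots,x_{k+1}]g + (x-x_{k+1})[x,x_0,\ldots,x_{k+1}]g,
\end{equation}
to each term of $r_{m-1}$. Applying (\ref{eq:peeling}) in $x$ to the $k$-th summand of the $x$-sum and using $\pi_{k+1}(x)(x-x_{k+1})=\pi_{k+2}(x)$ splits that summand into an interpolation piece and a remainder piece carrying the extra node $x$; after reindexing $k'=k+1$ these reassemble, for $k'=1,\ldots,m$, precisely into the corresponding terms of $q_m$ and of the $x$-sum of $r_m$. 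The trailing term $\pi_m(y)[x][y,y_0,\ldots,y_{m-1}]f$ of $r_{m-1}$ is handled by one peeling in $y$ followed by one peeling in $x$: with $\pi_m(y)(y-y_m)=\pi_{m+1}(y)$ this yields the three remaining boundary pieces, namely the $k=0$ term $\pi_m(y)[x_0][y_0,\ldots,y_m]f$ of $q_m$, the $k=0$ term $\pi_1(x)\pi_m(y)[x,x_0][y_0,\ldots,y_m]f$ of the $x$-sum of $r_m$, and the trailing term $\pi_{m+1}(y)[x][y,y_0,\ldots,y_m]f$ of $r_m$. Summing all pieces gives $r_{m-1}=q_m+r_m$.

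It then remains to record the two structural facts that upgrade the identity into the asserted statement. First, each $q_k$ is a bivariate polynomial of total degree $k$ since $\pi_a(x)\pi_{k-a}(y)$ has total degree $k$ and the divided-difference coefficients are constants once the nodes are fixed, so $p_m\in\Pi^2_m$. Second, $r_m$ vanishes on ${\mathcal T}^2_m$: at a node $(x_i,y_j)$ with $i+j\le m$ the factor $\pi_{m+1}(y_j)$ annihilates the trailing term, while the index constraints force $\pi_{k+1}(x_i)\pi_{m-k}(y_j)=0$ in every summand, so $p_m$ indeed interpolates $f$ on ${\mathcal T}^2_m$. I expect the main obstacle to be not any single estimate but the combinatorial bookkeeping in the inductive step: correctly matching the peeled interpolation and remainder pieces, together with the three boundary terms, against the target layer $q_m$ and remainder $r_m$, all while respecting the triangular index constraint $i+j\le n$ that dictates which divided differences may appear.
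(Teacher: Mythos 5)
The paper does not actually prove this theorem: its entire ``proof'' is a citation to Phillips, \emph{Interpolation and Approximation by Polynomials}, p.~180. Your argument is therefore necessarily a different route --- a self-contained one --- and, after checking, it is correct. The base case $m=0$ telescopes exactly as you claim; your peeling identity is just the divided-difference recursion (\ref{eq:divDiffRecursion}) rearranged, and applying it in $x$ to each summand of $r_{m-1}$ (using $\pi_{k+1}(x)(x-x_{k+1})=\pi_{k+2}(x)$), then in $y$ and in $x$ to the trailing term of $r_{m-1}$, does reassemble precisely into $q_m + r_m$, so the telescoping identity $r_{m-1}=q_m+r_m$ holds and the induction closes. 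Your two structural observations are also the right ones and both check out: $q_k$ has total degree $k$, so $p_m\in\Pi^2_m$; and at a node $(x_i,y_j)$ with $i+j\le m$, for every $k$ either $i\le k$ kills $\pi_{k+1}(x_i)$ or $j\le m-k-1$ kills $\pi_{m-k}(y_j)$, while $\pi_{m+1}(y_j)=0$ kills the trailing term, so $r_m$ vanishes on ${\mathcal T}^2_m$ and $p_m$ genuinely interpolates. What the paper's citation buys is brevity for a classical result; what your proof buys is self-containedness, and it makes explicit the one point the bare statement leaves implicit --- that the remainder formula only requires the recursion and the triangular index constraint, nothing about the specific coordinates. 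One cosmetic caveat: the divided differences in $r_m$ involve evaluations of $f$ at points $(x,y_j)$ and $(x,y)$ off the lattice, so strictly speaking the hypothesis ``domain includes ${\mathcal T}^2_n$'' must be read as ``domain includes the relevant products of coordinates''; this is a blemish of the theorem statement itself, not of your argument.
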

\begin{proof}
  See \cite[p. 180]{phillips03:_inter_approx_polyn}.
\end{proof}
\revise{
\subsection{Groups, homomorphisms,
   permutations, orbits, and isotropy subgroups}
\label{sec:symmetric-group}

\begin{defn}
  \label{def:group}
A \emph{group} is a set $G$
 with a binary operation
 $*: G\times G\rightarrow G$
 such that the following axioms hold, 
\begin{enumerate}[(GRP-1)]\itemsep0em
  \itemsep0em
\item associativity: 
  $\forall a,b,c\in G,\ \ (a* b)* c = a* (b* c)$;
\item identity element: 
  $\exists e\in G,\text{ s.t. }$
  $\forall x\in G,\ 
  e* x=x* e =x$;
\item inverse element: 
  $\forall a\in G, \exists a^{-1}\in G \text{ s.t. }
  a^{-1}* a = a* a^{-1}= e$. 
\end{enumerate}
\end{defn}

\begin{defn}
  \label{def:isomorphicGroups}
  Two groups $(G,*)$ and $(H,\cdot)$
  are \emph{homomorphic} if
  there exists a function $\varphi: G\rightarrow H$
  such that
  \begin{equation}
    \label{eq:homomorphism}
    \forall g_1, g_2\in G, \quad
    \varphi(g_1*g_2) = \varphi(g_1)\cdot\varphi(g_2); 
  \end{equation}
  in which case $\varphi$ is called a \emph{homomorphism} from $G$ to $H$.
  Furthermore,
  a homomorphism is called
  a \emph{monomorphism}, \emph{epimorphism},
  and \emph{isomorphism}
  if it is respectively injective, surjective, and bijective; 
  then the two groups are said to be
  \emph{monomorphic}, \emph{epimorphic},
  and \emph{isomorphic}, respectively.
\end{defn}

A homomorphism from $G$ to $H$
 preserves algebraic structures of $G$.
Isomorphic groups have exactly the \emph{same} algebraic structure
 and thus we can characterize this structure
 in any of these groups.

A \emph{permutation of a set} $A$
  is a bijective function $\sigma: A \rightarrow A$.
Let $A$ be a non-empty set,
and let $S_A$ be the collection of all permutations of $A$.
Then $(S_A, \circ, ^{-1}, e)$
is a group where the identity $e$ is the identity function.

\begin{defn}
  \label{def:symmetricGroup}
  The \emph{symmetric group on $n$ letters},
  written $S_{\mathbb{Z}_n^+}$ or simply $S_n$,
  is the group of all permutations of 
  $\mathbb{Z}_n^+$ in Notation \ref{ntn:firstNnaturalNumbers}. 
\end{defn}

By Cayley's theorem, % states that
 every group $G$ is isomorphic to a subgroup of $S_G$.

\begin{defn}
  \label{def:orbitOfPermutation}
  An \emph{orbit of a permutation $\sigma\in S_n$}
  is an equivalence class of $\mathbb{Z}_n^+$ determined by the equivalence
  relation
  \begin{equation}
    \label{eq:orbitOfPermutationEquiv}
    \forall a,b\in \mathbb{Z}_n^+,\quad
    a\sim b \ \Leftrightarrow\
    \exists m\in \mathbb{Z} \text{ s.t. } a = \sigma^m b. 
  \end{equation}
\end{defn}

A \emph{cycle} is a permutation $\sigma\in S_n$
that has at most one orbit that contains multiple elements.
In particular, the identity permutation is a cycle.
Two cycles in the same group $S_n$
 are \emph{disjoint} if any integer in $\mathbb{Z}_n^+$
 is moved by at most one of these two cycles.
Disjoint cycles are commutative.

\begin{thm}
  \label{thm:permutationIsDisjointCycles}
  Any permutation of a finite set is a product of disjoint cycles.
\end{thm}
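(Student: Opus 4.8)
The plan is to build the disjoint cycles directly from the orbit decomposition supplied by Definition \ref{def:orbitOfPermutation}, avoiding any induction. Fix a permutation $\sigma$ of a finite set $A$ (for concreteness one may take $A=\mathbb{Z}_n^+$ so that $\sigma\in S_n$). The relation (\ref{eq:orbitOfPermutationEquiv}) is an equivalence relation: reflexivity, symmetry, and transitivity follow immediately from the group axioms applied to the integer powers $\sigma^m$, using $\sigma^0=e$, $(\sigma^m)^{-1}=\sigma^{-m}$, and $\sigma^{m}\sigma^{m'}=\sigma^{m+m'}$. Hence it partitions $A$ into finitely many disjoint orbits $O_1, O_2, \ldots, O_r$. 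The key structural observation, which I would record first, is that each orbit is $\sigma$-invariant: if $a\in O_i$ then $\sigma(a)=\sigma^1 a\sim a$, so $\sigma(a)\in O_i$ as well; consequently $\sigma$ restricts to a bijection of each $O_i$ onto itself.

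Next I would associate to each orbit $O_i$ a candidate cycle $\sigma_i$, defined by $\sigma_i(a)=\sigma(a)$ for $a\in O_i$ and $\sigma_i(a)=a$ for $a\notin O_i$. The invariance just noted guarantees that $\sigma_i$ maps $A$ bijectively onto itself, so $\sigma_i$ is a genuine element of $S_A$. I would then check that $\sigma_i$ is a cycle in the sense defined above: every point outside $O_i$ is fixed and therefore constitutes a singleton orbit of $\sigma_i$, while $O_i$, being a single equivalence class of $\sigma$, remains one orbit under $\sigma_i$; thus $\sigma_i$ has at most one orbit with more than one element, namely $O_i$.

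The final step is to confirm that $\sigma=\sigma_1\sigma_2\cdots\sigma_r$ and that the factors are pairwise disjoint. Disjointness is immediate because the orbits partition $A$: any $a\in A$ lies in exactly one $O_i$, so $a$ is moved by at most one $\sigma_i$, which is precisely the definition of disjointness; since disjoint cycles commute, the product is well defined independently of the ordering of factors. To evaluate it at an arbitrary $a\in A$, I would locate the unique orbit $O_i$ containing $a$: every factor $\sigma_j$ with $j\ne i$ fixes $a$, whereas $\sigma_i(a)=\sigma(a)$, so the composite sends $a$ to $\sigma(a)$. As $a$ is arbitrary, $\sigma_1\sigma_2\cdots\sigma_r=\sigma$, completing the argument.

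I do not anticipate a genuine obstacle; the proof is essentially bookkeeping around the orbit partition. The single point demanding care is the $\sigma$-invariance of each orbit, since well-definedness of the $\sigma_i$, their being cycles, their disjointness, and the reconstruction of $\sigma$ all rest on it. Finiteness of $A$ enters only to guarantee that the number of orbits $r$ is finite, so that $\sigma_1\sigma_2\cdots\sigma_r$ is a finite product.
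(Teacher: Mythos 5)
Your proof is correct and takes essentially the same approach as the paper's: for each orbit, define the cycle that agrees with $\sigma$ on that orbit and fixes everything else, then observe that these cycles are disjoint (because orbits partition the set) and that their product is $\sigma$. The only difference is one of detail---you explicitly verify the equivalence relation, the $\sigma$-invariance of orbits, and the product evaluation, steps the paper compresses into ``Clearly.''
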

\begin{proof}
  For each $B_i$ of the $m$ orbits of a permutation $\sigma$,
  define a cycle $\mu_i$ by
  $\mu_i(x):= \sigma(x)$ if $x\in B_i$
  and $\mu_i(x):= x$ otherwise.
  Clearly $\sigma=\mu_1\mu_2\cdots \mu_m$. 
  By Definition \ref{def:orbitOfPermutation},
  the orbits $B_i$'s of $\sigma$ are disjoint,
  hence so are the cycles.
\end{proof}

A set $H$ is a \emph{subgroup} of $G$ if $H\subset G$
and $(H, *, ^{-1}, e)$ is a group.
The \emph{order of a group} $G$, written $|G|$, 
is the number of elements in $G$.
By the Lagrange theorem, 
 the order $|H|$ of a subgroup $H$ of a finite group $G$
 is a divisor of $|G|$. 
 
\begin{defn}
  \label{def:coset}
  A \emph{(left) coset} of a subgroup $H$ in a group $G$
  is a set
  % \begin{equation}
  %   \label{eq:coset}
  $aH := \{a h : h\in H\}$ 
%  \end{equation}
  where $a$ is an element in $G$.
\end{defn}

In particular, a subgroup $H$ of a group $G$
 is a coset in $G$ since $eH=H$. 
Conversely, a coset of $H$
 needs not to be a subgroup in $G$.
% Note that the coset in (\ref{eq:coset})
%  is uniquely determined by $H$ and $G$ but not by $a$.

\begin{lem}
  \label{lem:leftCosetsPartitionsGroup}
  For any subgroup $H$ of a group $G$, 
  its left cosets partition $G$. 
\end{lem}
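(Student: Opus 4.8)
The plan is to show that the left cosets of $H$ in $G$ form a partition of $G$, which by the standard correspondence between partitions and equivalence relations amounts to exhibiting an equivalence relation whose classes are exactly the left cosets. First I would define, for $a, b \in G$, the relation $a \sim b$ if and only if $a^{-1} * b \in H$, and then verify that this is an equivalence relation using the three group axioms of Definition \ref{def:group}. Reflexivity follows because $a^{-1} * a = e \in H$ since $H$ is a subgroup; symmetry follows because $H$ is closed under inverses, so $a^{-1} * b \in H$ implies $(a^{-1} * b)^{-1} = b^{-1} * a \in H$; transitivity follows because $H$ is closed under the operation $*$, so $a^{-1} * b \in H$ and $b^{-1} * c \in H$ give $(a^{-1} * b) * (b^{-1} * c) = a^{-1} * c \in H$ by associativity.

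Next I would identify the equivalence class of each element $a$ with its left coset $aH$ from Definition \ref{def:coset}. The key computation is that $b \sim a$ if and only if $a^{-1} * b \in H$, i.e. $a^{-1} * b = h$ for some $h \in H$, which rearranges to $b = a * h \in aH$; conversely every element of $aH$ has this form and hence lies in the class of $a$. Thus the equivalence class of $a$ equals $aH$.

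Finally, since the equivalence classes of any equivalence relation on a set always partition that set — they are nonempty (each $a$ lies in its own class because $\sim$ is reflexive), they cover $G$, and any two classes are either identical or disjoint — the left cosets $aH$ partition $G$, completing the proof. I do not anticipate a serious obstacle here, as the argument is entirely routine once the relation is chosen correctly; the only point requiring care is making sure the subgroup closure properties (under $*$ and under inverses, plus membership of $e$) are invoked in exactly the right places, and that the direction of the coset convention (left versus right) is matched consistently with $a^{-1} * b$ rather than $b * a^{-1}$.
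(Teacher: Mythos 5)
Your proof is correct and follows essentially the same route as the paper: the paper's proof also exhibits the left cosets as equivalence classes of the relation $a\sim b \Leftrightarrow \exists h\in H \text{ s.t. } b=a*h$, which is exactly your relation $a^{-1}*b\in H$ in disguise. You merely spell out the verification that the paper leaves as "readily verified," and you do so correctly.
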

\begin{proof}
  It is readily verified from Definitions
  \ref{def:group} and \ref{def:coset}
  that cosets of $H$ in $G$
  are equivalence classes of the congruence relation
  \begin{displaymath}
    \forall a,b\in G,\quad
    (a\sim b\ \Leftrightarrow\ 
    \exists h\in H \text{ s.t. } b=ah).
  \end{displaymath}
\end{proof}

In light of Lemma \ref{lem:leftCosetsPartitionsGroup}, 
\emph{the index of a subgroup $H$ of a group $G$},
 written $(G:H)$, 
 is the number of left cosets of $H$ in $G$.
Lemma \ref{lem:leftCosetsPartitionsGroup} also yields

\begin{coro}
  \label{coro:equalCosets}
  The left cosets $aH$ and $bH$ are equal
  if and only if $b\in aH$
  if and only if $a^{-1}b\in H$.
\end{coro}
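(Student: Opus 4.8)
The plan is to establish the chain of equivalences by proving the two links $aH = bH \Leftrightarrow b\in aH$ and $b\in aH \Leftrightarrow a^{-1}b\in H$ separately, relying on Lemma \ref{lem:leftCosetsPartitionsGroup} for the first link and on the group axioms of Definition \ref{def:group} for the second.

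I would begin with the easier equivalence $b\in aH \Leftrightarrow a^{-1}b\in H$. By Definition \ref{def:coset}, the statement $b\in aH$ means there exists $h\in H$ with $b=ah$. Left-multiplying by the inverse $a^{-1}$ (which exists by axiom (GRP-3)) and applying associativity (GRP-1) yields $a^{-1}b = h\in H$. Conversely, if $a^{-1}b = h$ for some $h\in H$, then $b = ah\in aH$. This half is purely routine manipulation of the axioms.

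For the link $aH = bH \Leftrightarrow b\in aH$, the forward direction is immediate: since the identity $e$ lies in $H$, we have $b = be \in bH = aH$. For the converse, I would invoke Lemma \ref{lem:leftCosetsPartitionsGroup}, which asserts that the left cosets of $H$ form a partition of $G$; consequently any two cosets are either identical or disjoint. Since $b\in bH$ always holds and $b\in aH$ by hypothesis, the cosets $aH$ and $bH$ share the element $b$ and therefore cannot be disjoint, so the partition property forces $aH = bH$.

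As a backup that avoids leaning on the partition, the same converse can be shown directly by double inclusion: writing $b = ah_0$ with $h_0\in H$, every element $bh\in bH$ equals $a(h_0 h)\in aH$ because $h_0 h\in H$, giving $bH\subseteq aH$, and the reverse inclusion follows symmetrically using $a = bh_0^{-1}$. There is no genuine obstacle in this corollary; the only point requiring care is that the converse of the coset-equality link is \emph{not} a formal tautology but genuinely uses the partition structure (equivalently, the closure of $H$ under the group operation), and I would make sure that this is the step where Lemma \ref{lem:leftCosetsPartitionsGroup} is actually consumed.
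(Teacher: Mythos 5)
Your proof is correct and follows essentially the same route as the paper, which states the corollary as a direct consequence of Lemma \ref{lem:leftCosetsPartitionsGroup} (cosets being the equivalence classes of the relation $b = ah$, $h\in H$), exactly the partition property you consume for the nontrivial direction. The extra double-inclusion backup and the explicit verification of $b\in aH \Leftrightarrow a^{-1}b\in H$ are just the routine details the paper leaves implicit.
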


 \begin{defn}
  \label{def:groupAction}
  An \emph{action of a group $G$ on a set $X$}
  is a map $G\times X \rightarrow X$,
  written $(g,x)\mapsto g(x)$ for $g\in G$ and $x\in X$, 
  that satisfies
  (i) $\forall x\in X$, $e(x) = x$
  and (ii) $\forall x\in X$, $\forall g_1,g_2\in G$,
  $(g_1*g_2)(x)=g_1(g_2(x))$.
  $X$ is called a \emph{$G$-set} if $G$ has an action on $X$.
\end{defn}

\begin{defn}
  \label{def:orbitForGsets}
  The \emph{orbit of an element $x$ in a $G$-set $X$
    under the group $G$} 
  is the set $Gx := \{g(x) : g\in G\}$. 
  % For a $G$-set $X$ and an element $x\in X$,
  % the set $Gx := \{g(x) : g\in G\}$
  % is called 
  % the \emph{orbit of $x\in X$ under $G$}. 
\end{defn}

\begin{lem}
  \label{lem:isotropySubgroup}
  For any $G$-set $X$ and $x\in X$,
  the set $G_x:= \{g\in G: gx = x\}$
  is a subgroup of $G$
  called the \emph{isotropy subgroup of $G$ at $x$}.
\end{lem}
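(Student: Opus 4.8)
The plan is to verify the three subgroup axioms of Definition \ref{def:group} directly from the two defining properties of a group action in Definition \ref{def:groupAction}. Since $G_x \subset G$ holds by construction, it suffices to show that $G_x$ contains the identity, is closed under the binary operation $*$, and is closed under taking inverses; associativity is then inherited automatically from $G$.

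First I would establish that the identity $e$ lies in $G_x$. This is immediate from property (i) of a group action, which gives $e(x) = x$; hence $e \in G_x$ and in particular $G_x$ is non-empty.

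Next I would verify closure under $*$. Given any $g_1, g_2 \in G_x$, so that $g_1(x) = x$ and $g_2(x) = x$, property (ii) yields $(g_1 * g_2)(x) = g_1(g_2(x)) = g_1(x) = x$, whence $g_1 * g_2 \in G_x$. For inverses, given $g \in G_x$ with $g(x) = x$, the key move is to substitute $x = g(x)$ before applying the action axioms: using property (ii) and then property (i), I would compute $g^{-1}(x) = g^{-1}(g(x)) = (g^{-1} * g)(x) = e(x) = x$, so that $g^{-1} \in G_x$.

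There is no genuine obstacle here: the argument is a routine unwinding of the two action axioms, and the associativity and identity requirements on $G_x$ descend from $G$. The only point requiring a moment's care is the inverse step, where one must resist evaluating $g^{-1}(x)$ in isolation and instead rewrite $x$ as $g(x)$ so that property (ii) can collapse $g^{-1} * g$ to $e$.
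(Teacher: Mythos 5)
Your proof is correct and takes essentially the same approach as the paper: the paper's proof is a one-line deferral to Definition \ref{def:group} and the definition of subgroups, and your explicit verification of the identity, closure, and inverse conditions via the two action axioms is precisely the routine check the paper leaves implicit.
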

\begin{proof}
  This follows from Definition \ref{def:group}
  and the definition of subgroups.
\end{proof}
}

% \begin{thm}
%   \label{thm:orderOfOrbitEqualIndex}
%   For any $G$-set $X$ and any $x\in X$,
%   we have $|Gx|=(G:G_x)$.
%   Furthermore, $|Gx|$ is a divisor of $|G|$
%   if $|G|$ is finite. 
% \end{thm}
% \begin{proof}
%   See \cite[p. 158]{fraleigh02:_first_cours_abstr_algeb}.
% \end{proof}

% Let $A$ be a set and let $\sigma\in S_n$.

 % \begin{equation}
 %   \label{eq:orbit}
   
 % \end{equation}
 
% The following theorem is well known
%  and can be found on any standard textbook on abstract algebra.

% \begin{thm}
%   \label{thm:groupAction}
%   Let $X$ be a $G$-set.
%   For each $g\in G$, the function $\sigma_g: X\rightarrow X$
%   defined by $\sigma_g(x)=gx$ is a permutation of $X$.
%   Also, the map $\phi: G\rightarrow S_G$
%   defined by $\phi(g) = \sigma_g$ is a homomorphism.
% \end{thm}

%\subsection{AI search algorithms}

\subsection{Backtracking}
\label{sec:backtracking}
% \subsection{Depth-first search (DFS) and backtracking}
% \label{sec:depth-first-search}

Consider a task that consists of a sequence of $k$ independent steps.
The \emph{fundamental principle of counting} states that
the total number of distinct ways to complete the task
is $\prod_{i=1}^{k} n_i$, 
where $n_i$ is the number of different choices for the $i$th step.
From another perspective,
we can view the process of completing the task
as traversing a directed tree.
The root node of the tree represents the starting point
with no steps done yet.
The root node has $n_1$ children, each of which
represents one way to finish the first step.
Inductively,
the $n_i$ ways to finish the $i$th step 
are represented by $n_i$ nodes,
all of which are children of the single node representing
the $(i-1)$th step.
The $\prod_{i=1}^{k} n_i$ leaf nodes form the solution space
of all different ways to complete the task.
This tree is called the \emph{spanning tree}
of the solution space.

\begin{defn}%[Backtracking]
\label{def:backtracking}
\revise{\emph{Backtracking} is a class of algorithms
 for solving a search problem $P$
 that admit an incremental assemblage of the solution
 and hence a spanning-tree organization of the state space.}
% shown in Figure \ref{fig:backtracking}
A particular implementation of backtracking is
 \texttt{BackTrack}($P$, \texttt{root}$(P),\{\})$ 
 where the generic algorithm is
% \texttt{BackTrack} is
 
\IncMargin{1em}
\RestyleAlgo{algoruled}\LinesNumbered
\begin{procedure}[H]
  \DontPrintSemicolon
  \SetKwInOut{SideEffect}{Side effects}
  \SetKwInOut{Precond}{Preconditions}
  \SetKw{Return}{return}

\caption{BackTrack($P$, $\mathbf{r}$, $T$)}
  \KwIn{$\mathbf{r}$ is a starting node in the solution space
   and $T$ is a set of solutions}
%  \Precond{Initially $T$ is an empty set}
  \SideEffect{$T$ contains all valid solutions}
\BlankLine

\uIf{\texttt{accept}$(P, \mathbf{r})$}{
  $T = T \cup \{\mathbf{r}\}$\;
  \lIf{\texttt{stopAfterAccept}$(P, \mathbf{r})$}{\Return}
}
\ElseIf{\texttt{reject}$(P, \mathbf{r})$}{
  \Return
}
$\mathbf{s} \leftarrow$ \texttt{first}$(P,\mathbf{r})$\; 
\While{$\mathbf{s}$ is not null}{
  $\mathrm{BackTrack}(P, \mathbf{s}, T)$\;
  $\mathbf{s} \leftarrow$ \texttt{next}$(P, \mathbf{r}, \mathbf{s})$
}
\end{procedure}
\DecMargin{1em}

and the user-defined subroutines are
\begin{itemize}\itemsep0em
\item \texttt{root}($P$):
  return the root node of the spanning tree of the solution space,
\item \texttt{accept}$(P,\mathbf{r})$:
  return true if and only if $\mathbf{r}$ is already a solution;
  return false otherwise,
\item \texttt{stopAfterAccept}$(P,\mathbf{r})$:
  return true if and only if
  the valid solution $\mathbf{r}$ can never be extended to another valid
  solution; 
  return false otherwise,
\item \texttt{reject}$(P,\mathbf{r})$:
  return true if and only if the partial candidate (or node) $\mathbf{r}$
  can never be completed to a valid solution;
  return false otherwise,
\item \texttt{first}$(P,\mathbf{r})$:
  return the first extension of $\mathbf{r}$ in the spanning tree
  if $\mathbf{r}$ is extendable;
  return null otherwise,
\item \texttt{next}$(P,\mathbf{r},\mathbf{s})$:
  return the next extension of $\mathbf{r}$ after $\mathbf{s}$
  if $\mathbf{r}$ is still extendable;
  return null otherwise,
\end{itemize}

\end{defn}

% A comparison of the procedures
%  in Definitions \ref{def:DFS} and \ref{def:backtracking}
%  show that
%  the order of backtracking in traversing the spanning tree
%  is the same as that of DFS.

Backtracking incrementally builds solutions
 by abandoning any candidate
 that cannot be completed to a valid solution.
Hence it can be much faster
 than brute-force enumeration
 since the procedure \texttt{reject}
 may eliminate a large number of candidates
 with a single test.
In particular, if \texttt{reject} always returns false,
 the procedure in Definition \ref{def:backtracking}
 reduces to a brute-force enumeration.

In Definition \ref{def:backtracking},
 although results of the two subroutines \texttt{accept}($P,\mathbf{r}$)
 and \texttt{reject}($P,\mathbf{r}$) cannot be both \texttt{true},
 they may be both \texttt{false}:
 an intermediate node $\mathbf{r}$ for which both subroutines return \texttt{false}
 may later be extended to a valid solution.
On the other hand,
 if \texttt{accept}($P,\mathbf{r}$) is true for some $\mathbf{r}$,
 only two cases are possible:
 either $\mathbf{r}$ may be extended to another valid solution,
 or, $\mathbf{r}$ can never be so.
In the latter case
 \texttt{stopAfterAccept} should return true.

All non-null nodes generated by \texttt{first}($P,\mathbf{r}$)
 and \texttt{next}($P,\mathbf{r},\mathbf{s}$)
 have the same rank %(or \emph{level})
 with respect to \texttt{root}($P$),
 i.e. the number of extensions from \texttt{root}($P$).

%%% Local Variables:
%%% mode: latex
%%% TeX-master: "../PLG"
%%% End:

% LocalWords:  homomorphic homomorphisms cosets

\section{Analysis and algebra}
\label{sec:analysis}

\subsection{Triangular lattices in $\Dim$ dimensions}
\label{sec:triang-latt-high}

Hereafter, we reserve
the symbol $i$ for indexing dimensions:
$i=1,2,\ldots,\Dim$.

\begin{defn}%[Triangular lattice]
  \label{def:triangularLatticeDimD}
  A subset ${\mathcal T}_n^{\Dim}$ of $\mathbb{R}^{\Dim}$
   is called
   a \emph{triangular lattice of degree $n$ in $\Dim$ dimensions}
   if for each of the $\Dim$ dimensions
   there exist $n+1$ distinct coordinates %in $\mathbb{R}$
   and a numbering of these coordinates,
   \begin{equation}
     \label{eq:trigLatCoords}
     \begin{bmatrix}
       p_{1,0} & p_{1,1} & \ldots & p_{1,n}
       \\
       p_{2,0} & p_{2,1} & \ldots & p_{2,n}
       \\
       \vdots & \vdots & \ddots & \vdots
       \\
       p_{{\Dim},0} & p_{{\Dim},1} & \ldots & p_{{\Dim},n}
     \end{bmatrix}
     \in \mathbb{R}^{\Dim\times (n+1)},
   \end{equation}
   such that ${\mathcal T}_n^{\Dim}$ can be expressed as
   % is a set of isolated points in $\mathbb{R}^{\Dim}$,
   \begin{equation}
     \label{eq:triangularLatticeDDim}
     {\mathcal T}_n^{\Dim} := \left\{\left
         (p_{1,k_1}, p_{2,k_2}, \ldots, p_{D,k_D}\right)
       \in \mathbb{R}^{\Dim}:\ 
       k_i\in \mathbb{Z}_n;\ \sum_{i=1}^\Dim k_i\le n\right\},
   \end{equation}
   where $p_{i,j}$ denotes the $j$th coordinate
   of the $i$th variable $p_i$.
   \revise{When there is no danger of confusion,
     we sometimes use the shorthand notation ${\cal T}$
     for ${\mathcal T}_n^{\Dim}$}. 
\end{defn}

\begin{exm}
  For $\Dim=2$, Definition \ref{def:triangularLatticeDimD}
  reduces to Definition \ref{def:triangularLattice2D}
  since (\ref{eq:triangularLatticeDDim}) simplifies to
    \begin{displaymath}
      {\mathcal T}^2_n = \{(p_{1,k_1}, p_{2,k_2}):\
      k_1,k_2\ge 0;\ k_1+k_2\le n\},
  \end{displaymath}
  which is the same as (\ref{eq:triangularLattice2D}).
\end{exm}

\begin{lem}
  \label{lem:cardTriangularLattice}
  The cardinality of a triangular lattice is
%  of degree $n$ in $\Dim$ dimensions is
  \begin{equation}
    \label{eq:numTriangularLattice}
    \#{\mathcal T}^{\Dim}_n=\binom{n+\Dim}{\Dim}
    =\sum_{i=0}^n \#{\mathcal T}^{\Dim}_{=i}
    = \sum_{i=0}^{n}\binom{i+\Dim-1}{\Dim-1}, 
  \end{equation}
  where
%  with total degree exactly as $m$ is denoted by
  \begin{equation}
    \label{eq:newTriangularLattice}
    {\mathcal T}_{=m}^{\Dim} :=
    \left\{(p_{1,k_1}, p_{2,k_2}, \ldots, p_{\Dim,k_\Dim}):
      k_i\ge 0; \sum_{i=1}^{\Dim} k_i=m\right\}
  \end{equation}
  and its cardinality is
  \begin{equation}
    \label{eq:numNewTriangularLattice}
    \#{\mathcal T}_{=m}^{\Dim}
    = \#{\mathcal T}^{\Dim}_m-\#{\mathcal T}^{\Dim}_{m-1}
    =\binom{m+\Dim-1}{\Dim-1}.
  \end{equation}
\end{lem}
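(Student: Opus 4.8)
The plan is to reduce this geometric counting problem to the elementary counting of nonnegative integer tuples via stars and bars. The first step is the key observation that, because within each dimension $i$ the $n+1$ coordinates $p_{i,0},\ldots,p_{i,n}$ are distinct by Definition~\ref{def:triangularLatticeDimD}, the map sending an index tuple $(k_1,\ldots,k_\Dim)$ to the point $(p_{1,k_1},\ldots,p_{\Dim,k_\Dim})$ is injective. Hence $\#{\mathcal T}^{\Dim}_n$ equals the number of tuples $(k_1,\ldots,k_\Dim)$ with $k_i\ge 0$ and $\sum_{i=1}^\Dim k_i\le n$, and likewise $\#{\mathcal T}^{\Dim}_{=m}$ equals the number of such tuples with $\sum_{i=1}^\Dim k_i=m$. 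This is the pivot that turns the statement about lattice points into a statement about admissible index tuples.

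Next I would establish $\#{\mathcal T}^{\Dim}_{=m}=\binom{m+\Dim-1}{\Dim-1}$ directly: counting nonnegative integer solutions of $k_1+\cdots+k_\Dim=m$ is the classical stars-and-bars problem of distributing $m$ indistinguishable units into $\Dim$ ordered bins, whose count is $\binom{m+\Dim-1}{\Dim-1}$. For the total count $\#{\mathcal T}^{\Dim}_n=\binom{n+\Dim}{\Dim}$, I would convert the inequality $\sum_{i=1}^\Dim k_i\le n$ into an equality by introducing a slack variable $k_{\Dim+1}:=n-\sum_{i=1}^\Dim k_i\ge 0$; the admissible tuples are then in bijection with nonnegative solutions of $k_1+\cdots+k_{\Dim+1}=n$, of which there are $\binom{n+\Dim}{\Dim}$ by the same principle applied to $\Dim+1$ bins.

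The remaining equalities follow by partitioning. Since the condition $\sum_{i=1}^\Dim k_i\le n$ is the disjoint union, over $i\in\mathbb{Z}_n$, of the conditions $\sum_{j=1}^\Dim k_j=i$, the sets ${\mathcal T}^{\Dim}_{=i}$ are pairwise disjoint with union ${\mathcal T}^{\Dim}_n$, giving $\#{\mathcal T}^{\Dim}_n=\sum_{i=0}^n\#{\mathcal T}^{\Dim}_{=i}$; substituting the layer count yields the final sum $\sum_{i=0}^n\binom{i+\Dim-1}{\Dim-1}$. The same layered decomposition gives ${\mathcal T}^{\Dim}_m\setminus{\mathcal T}^{\Dim}_{m-1}={\mathcal T}^{\Dim}_{=m}$, hence $\#{\mathcal T}^{\Dim}_{=m}=\#{\mathcal T}^{\Dim}_m-\#{\mathcal T}^{\Dim}_{m-1}$; combined with the closed forms, this is exactly Pascal's rule $\binom{m+\Dim}{\Dim}-\binom{m+\Dim-1}{\Dim}=\binom{m+\Dim-1}{\Dim-1}$, which in turn re-derives the hockey-stick identity $\binom{n+\Dim}{\Dim}=\sum_{i=0}^n\binom{i+\Dim-1}{\Dim-1}$ by telescoping (the boundary term $\binom{\Dim-1}{\Dim}$ vanishes).

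I do not anticipate a genuine obstacle, as every step is elementary combinatorics; the only point demanding care is the injectivity of the index-to-point map, which rests entirely on the distinctness of coordinates within each dimension and is what licenses the identification of counting lattice points with counting admissible index tuples. Everything else is stars and bars together with Pascal's rule.
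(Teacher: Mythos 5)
Your proposal is correct and follows essentially the same route as the paper: the total count via stars and bars with a slack variable (the paper's ``deficit urn''), and the remaining equalities via the disjoint layer decomposition ${\mathcal T}^{\Dim}_n=\bigcup_{m=0}^n{\mathcal T}^{\Dim}_{=m}$. Your version is in fact slightly more complete, since you make explicit both the injectivity of the index-to-point map and the ``easy calculation'' behind (\ref{eq:numNewTriangularLattice}), which the paper leaves to the reader.
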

\begin{proof}
  As the only nontrivial constraint on ${\mathcal T}_n^{\Dim}$
  in (\ref{eq:triangularLatticeDDim}),
   the sum of the $\Dim$ non-negative integers
   $i_1, \ldots, i_{\Dim}$ cannot exceed $n$.
  Hence $\#{\mathcal T}^{\Dim}_n$ equals the number of possibilities
   of placing $n$ indistinguishable balls into $\Dim+1$ distinguishable urns,
   with the first $\Dim$ urns corresponding to the $\Dim$ dimensions,
   respectively,
   and the last urn accounting for the deficit of $\sum_k i_k$ from $n$.
  This ball-urn problem is further equivalent to
   choosing $\Dim$ balls from $n+\Dim$ balls in a row 
   because the chosen $\Dim$ balls divide the rest $n$ balls
   into $\Dim+1$ consecutive arrays of balls,
   each of which corresponds to an urn.
  This proves the first equality in (\ref{eq:numTriangularLattice}).

  As for the second equality in (\ref{eq:numTriangularLattice}),
   (\ref{eq:triangularLatticeDDim}) and (\ref{eq:newTriangularLattice})
   imply
%  \begin{displaymath}
    ${\mathcal T}_n^{\Dim} = \cup_{m=0}^n {\mathcal T}_{=m}^{\Dim}$
%  \end{displaymath}
  and that two subsets ${\mathcal T}_{=m}^{\Dim}$
  and ${\mathcal T}_{=\ell}^{\Dim}$ are disjoint
  if and only if $m\ne \ell$.

  An easy calculation yields (\ref{eq:numNewTriangularLattice}),
   which implies the third equality in (\ref{eq:numTriangularLattice}).
\end{proof}

\begin{defn}
  \label{def:DvariatePolysOfDegreeNoMoreThanN}
  The set of \emph{$\Dim$-variate monomials of degree no more
  than $n$} is 
  \begin{equation}
    \label{eq:triangularBasisFuncsDDim}
    \Phi^{\Dim}_n := \left\{p_1^{e_1}p_2^{e_2}\ldots p_\Dim^{e_\Dim}:
      e_i\geq 0; \sum_{i=1}^\Dim e_i \le n \right\} 
  \end{equation}
%  In particular, %the $m$th slice of $\Phi^{\Dim}_n$
  and the set of \emph{$\Dim$-variate monomials of degree $m$} is
  \begin{equation}
    \label{eq:triangularBasisFuncsDDimSlice}
    \Phi^{\Dim}_{=m} := \left\{p_1^{e_1}p_2^{e_2}\ldots p_\Dim^{e_\Dim}:
      e_i\geq 0; \sum_{i=1}^\Dim e_i = m \right\}. 
  \end{equation}
\end{defn}

\begin{lem}
  \label{lem:cardMultivariatePolys}
  The results on triangular lattices 
  in Lemma \ref{lem:cardTriangularLattice}
  also hold for sets of multivariate polynomials.
  More precisely,
  (\ref{eq:numTriangularLattice})
  and (\ref{eq:numNewTriangularLattice}) still hold
  when the symbol ${\mathcal T}$ is replaced with the symbol $\Phi$.
\end{lem}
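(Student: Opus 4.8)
The plan is to transfer the counting results of Lemma \ref{lem:cardTriangularLattice} to the monomial setting by exhibiting an explicit bijection between the index set of a triangular lattice and the exponent set of the corresponding family of monomials, so that no recomputation is needed.

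First I would observe that, by Definition \ref{def:triangularLatticeDimD}, each point of ${\mathcal T}_n^{\Dim}$ is uniquely determined by its index tuple $(k_1, \ldots, k_\Dim)$ subject to $k_i\in\mathbb{Z}_n$ and $\sum_{i=1}^\Dim k_i\le n$; since the $n+1$ coordinates in each dimension are distinct, distinct index tuples yield distinct points, so $\#{\mathcal T}_n^{\Dim}$ equals the number of admissible index tuples. Likewise, by Definition \ref{def:DvariatePolysOfDegreeNoMoreThanN}, each monomial of $\Phi^{\Dim}_n$ is uniquely determined by its exponent tuple $(e_1,\ldots,e_\Dim)$ subject to $e_i\ge 0$ and $\sum_{i=1}^\Dim e_i\le n$, so $\#\Phi^{\Dim}_n$ equals the number of admissible exponent tuples.

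Next I would note that the two admissibility conditions coincide: since all $k_i$ are non-negative, the constraint $\sum k_i\le n$ already forces each $k_i\le n$, so the membership $k_i\in\mathbb{Z}_n$ is redundant, and the lattice indices range over exactly the same set as the monomial exponents. Hence the identity map on tuples, $(k_1,\ldots,k_\Dim)\mapsto(e_1,\ldots,e_\Dim)$, is a bijection between the two index sets, giving $\#\Phi^{\Dim}_n=\#{\mathcal T}_n^{\Dim}$. The same identification restricts to the sliced sets, so that admissible tuples with $\sum k_i=m$ correspond bijectively to those with $\sum e_i=m$, yielding $\#\Phi^{\Dim}_{=m}=\#{\mathcal T}_{=m}^{\Dim}$.

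Finally, substituting these two equalities into (\ref{eq:numTriangularLattice}) and (\ref{eq:numNewTriangularLattice}) of Lemma \ref{lem:cardTriangularLattice} produces the claimed formulas with ${\mathcal T}$ replaced by $\Phi$. There is essentially no obstacle here beyond matching the two constraint sets; the only point deserving a moment's care is confirming that the bound $k_i\in\mathbb{Z}_n$ in the lattice definition is subsumed by the sum constraint, which is precisely what makes the identity map a genuine bijection rather than merely an injection.
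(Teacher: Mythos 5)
Your proof is correct and takes essentially the same route as the paper, which simply invokes ``a natural bijection between $\Phi^{\Dim}_n$ and ${\mathcal T}_n^{\Dim}$'' whose restriction is also a bijection between $\Phi^{\Dim}_{=m}$ and ${\mathcal T}^{\Dim}_{=m}$. You have merely made that natural bijection explicit via the identification of exponent tuples with index tuples, including the (correct) observation that the bound $k_i\le n$ is subsumed by the sum constraint.
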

\begin{proof}
  This follows from a natural bijection between
  $\Phi^{\Dim}_n$ and ${\mathcal T}_n^{\Dim}$,
  the restriction of which is also a bijection between
  $\Phi^{\Dim}_{=m}$ and ${\mathcal T}^{\Dim}_{=m}$.
\end{proof}

\begin{lem}
  \label{lem:detDegreeEquality}
  For any positive integers $\Dim$ and $n$, we have
  \begin{equation}
    \label{eq:detDegreeEquality}
    (\Dim+1) \sum_{j=1}^n j \binom{n-j+\Dim}{\Dim}
    = \sum_{j=1}^n j \binom{j+\Dim}{\Dim}.
  \end{equation}
\end{lem}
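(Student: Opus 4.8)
The plan is to show that both sides of (\ref{eq:detDegreeEquality}) equal the single closed form $(\Dim+1)\binom{n+\Dim+1}{\Dim+2}$, after which the identity is immediate. Two elementary tools drive the computation: the absorption identity $j\binom{j+\Dim}{\Dim} = (\Dim+1)\binom{j+\Dim}{\Dim+1}$, which follows at once from expanding factorials, and the hockey-stick identity $\sum_{i=r}^{m}\binom{i}{r} = \binom{m+1}{r+1}$.

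For the right-hand side the computation is a direct vertical sum. First I would apply the absorption identity termwise to get $\sum_{j=1}^n j\binom{j+\Dim}{\Dim} = (\Dim+1)\sum_{j=1}^n \binom{j+\Dim}{\Dim+1}$, then extend the range down to $j=0$ (the added term vanishes) and invoke the hockey-stick identity to obtain $(\Dim+1)\binom{n+\Dim+1}{\Dim+2}$.

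The left-hand side is the genuinely interesting part, and I expect it to be the main obstacle, because the summand couples the factor $j$ with the \emph{shifted} binomial $\binom{n-j+\Dim}{\Dim}$; this is a convolution rather than a vertical sum, so the hockey-stick identity does not apply directly. I see two routes. The first is generating functions: since $\sum_{m\ge 0}\binom{m+\Dim}{\Dim}x^m = (1-x)^{-(\Dim+1)}$ and $\sum_{j\ge 0} j\,x^j = x(1-x)^{-2}$, the Cauchy product shows that $\sum_{j=0}^n j\binom{n-j+\Dim}{\Dim}$ is the coefficient of $x^n$ in $x(1-x)^{-(\Dim+3)}$, namely $\binom{n+\Dim+1}{\Dim+2}$; multiplying by the leading $(\Dim+1)$ then matches the target. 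The second route avoids generating functions: substitute $k=n-j$ to rewrite the inner sum as $n\sum_{k=0}^{n-1}\binom{k+\Dim}{\Dim} - \sum_{k=0}^{n-1}k\binom{k+\Dim}{\Dim}$, evaluate the first piece by hockey stick and the second by the absorption identity followed by hockey stick, and then reconcile the outcome with the target through a single application of Pascal's rule (the residual reduces to $(n-1)\binom{n+\Dim}{\Dim+1}=(\Dim+2)\binom{n+\Dim}{\Dim+2}$, which is again absorption). I would present whichever route reads more cleanly in context; the generating-function argument is the most transparent, since it dispatches the convolution in one stroke.

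Should a reader prefer a fully self-contained combinatorial argument, induction on $n$ is a viable fallback: for fixed $\Dim$ both sides are polynomials in $n$, and the increment from $n$ to $n+1$ on each side can be matched using the same absorption and Pascal identities.
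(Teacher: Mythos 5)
Your proposal is correct, and it takes a genuinely different route from the paper's. The paper proves (\ref{eq:detDegreeEquality}) by induction on $n$: after the base case $n=1$, the inductive step re-indexes $\sum_{j=1}^{n+1} j\binom{n+1-j+\Dim}{\Dim}$ via $i=j-1$, splits off $(\Dim+1)\sum_{j=0}^{n}\binom{j+\Dim}{\Dim}$, evaluates that piece with the hockey-stick identity, and converts $(\Dim+1)\binom{n+\Dim+1}{\Dim+1}$ into the new term $(n+1)\binom{n+\Dim+1}{\Dim}$ of the right-hand sum by absorption --- so the convolution on the left is never evaluated, only its increment in $n$. You instead compute both sides in closed form as $(\Dim+1)\binom{n+\Dim+1}{\Dim+2}$, dispatching the right side by absorption plus hockey-stick, and the left-side convolution either by generating functions or by the reversal $k=n-j$; the steps you outline all check out, including the residual identity $(n-1)\binom{n+\Dim}{\Dim+1}=(\Dim+2)\binom{n+\Dim}{\Dim+2}$. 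Your approach buys more: it exhibits the explicit common value of the two sums, which is stronger than the bare equality, and the generating-function route handles the convolution in one stroke. The paper's induction buys economy and self-containment --- only Pascal-type manipulations and a single hockey-stick application, with no appeal to the negative binomial series. Your fallback suggestion of induction on $n$ is essentially the paper's proof.
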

\begin{proof}
  For $n=1$, both sides reduce to $\Dim+1$.
  Suppose (\ref{eq:detDegreeEquality}) holds.
  Then the inductive step also holds because
  \begin{align*}
    (\Dim+1) \sum_{j=1}^{n+1} j \binom{n+1-j+\Dim}{\Dim}
    =& (\Dim+1) \sum_{i=0}^{n} (i+1) \binom{n-i+\Dim}{\Dim} \\
    =& (\Dim+1) \sum_{i=0}^{n} i \binom{n-i+\Dim}{\Dim}
       + (\Dim+1)\sum_{j=0}^{n} \binom{j+\Dim}{\Dim}
    \\
    =& \sum_{j=1}^n j \binom{j+\Dim}{\Dim}
       + (\Dim+1) \binom{n+\Dim+1}{\Dim+1} \\
    =& \sum_{j=1}^{n+1} j \binom{j+\Dim}{\Dim},
  \end{align*}
  where the first two steps follow from variable substitutions,
  the third step from the induction hypothesis
  and the well-known identity
  $\sum_{j=0}^{n} \binom{\Dim+j}{\Dim} = \binom{\Dim+n+1}{\Dim+1}$,
  and the last step from
  \begin{displaymath}
    (\Dim+1) \binom{n+\Dim+1}{\Dim+1}
    = (\Dim+1) \frac{(n+\Dim+1)!}{(\Dim+1)!n!}
    %= (n+1) \frac{(n+\Dim+1)!}{\Dim!(n+1)!}
    = (n+1) \binom{n+\Dim+1}{\Dim}. % \qedhere
  \end{displaymath}
\end{proof}

\begin{thm}
  \label{thm:triangularSampleMatrixDDim}
 A triangular lattice ${\mathcal T}_n^{\Dim}$ is poised
  with respect to $\Phi^{\Dim}_n$ in
  (\ref{eq:triangularBasisFuncsDDim})
   % \begin{equation}
   %   \label{eq:triangularBasisFuncsDDim}
   %   \Phi_{\Dim}=\{1,p_1,p_2,\ldots,p_\Dim,\ldots,p_1^n,p_1^{n-1}p_2,\ldots,p_{\Dim-1}p_{\Dim}^{n-1}, p_{\Dim}^n\},
   % \end{equation}
  and the corresponding sample matrix $M_{\Dim}$ satisfies
  \begin{equation}
    \label{eq:triangularSampleMatrixDDim}
    \det M_{\Dim} = C\prod_{k=1}^{\Dim} \psi_n(p_k)
  \end{equation}
  where $C$ is a nonzero constant and
  $\psi_n(p_k)$ is a polynomial
  in terms of the $n+1$ distinct coordinates of the variable $p_k$: 
  \begin{equation}
    \label{eq:triangularSampleMatrixDetFactorDDim}
    \psi_n(p_k) %=& \psi(p_{k,0}, p_{k,1}, \ldots, p_{k,n}) 
    := \prod_{i_k=1}^n \prod_{\ell=0}^{i_k-1}
    (p_{k,i_k}-p_{k,\ell})^{\alpha({i_k})}
    \text{ and }
    % \\
    % \label{eq:fixedFactorExponentDimD}
    \alpha({i_k}) := \binom{n-i_k+\Dim-1}{\Dim-1}.
  \end{equation}
\end{thm}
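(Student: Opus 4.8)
The plan is to mirror the two-dimensional argument of Theorem \ref{thm:triangularSampleMatrix}, but to replace its recursive cofactor expansion with a single round of simultaneous column operations that exposes the correct multiplicities all at once. Fix a dimension $k$ and a pair of indices $i>\ell$. Among the columns of $M_{\Dim}$ I would isolate those indexed by lattice points whose $k$th coordinate equals $p_{k,i}$; by the counting in Lemma \ref{lem:cardTriangularLattice} there are exactly $\alpha(i)=\binom{n-i+\Dim-1}{\Dim-1}$ of them, since fixing one index to $i$ leaves $\Dim-1$ nonnegative indices summing to at most $n-i$. Each such point $P$ has a well-defined partner $P'$ obtained by lowering its $k$th index from $i$ to $\ell$; because $\ell<i$, the partner still satisfies the simplex constraint and hence $P'\in{\mathcal T}_n^{\Dim}$. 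Subtracting the column of $P'$ from that of $P$ leaves the determinant unchanged, and every entry of the resulting column, being a difference $\phi(P)-\phi(P')$ of a monomial evaluated at two points that differ only in the $k$th slot, is divisible by $p_{k,i}-p_{k,\ell}$. The target columns and the subtracted columns form disjoint sets (indices $i\ne\ell$), so these operations may be carried out simultaneously; extracting the common linear factor from each of the $\alpha(i)$ altered columns shows $(p_{k,i}-p_{k,\ell})^{\alpha(i)}\mid\det M_{\Dim}$.

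Since the linear forms $p_{k,i}-p_{k,\ell}$ for distinct triples $(k,i,\ell)$ are pairwise non-associate irreducibles in the polynomial ring over all coordinate variables, unique factorization lets me multiply these divisibilities together to obtain $\prod_{k=1}^{\Dim}\psi_n(p_k)\mid\det M_{\Dim}$. The next step is a degree count. Reading off (\ref{eq:triangularSampleMatrixDetFactorDDim}), each $\psi_n(p_k)$ is homogeneous of degree $\sum_{i=1}^{n} i\,\alpha(i)=\sum_{i=1}^{n} i\binom{n-i+\Dim-1}{\Dim-1}$, so the product over the $\Dim$ dimensions has degree $\Dim\sum_{i=1}^{n} i\binom{n-i+\Dim-1}{\Dim-1}$. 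For $\det M_{\Dim}$ I would observe via the Leibniz formula that the entry $\phi_c(\mathbf{x}_r)$ has degree equal to the total degree of the monomial $\phi_c$ alone, \emph{independently of the row}; hence every Leibniz term carries the common degree $\sum_{\phi\in\Phi^{\Dim}_n}\deg\phi=\sum_{m=1}^{n} m\binom{m+\Dim-1}{\Dim-1}$ by Lemma \ref{lem:cardMultivariatePolys}, so $\det M_{\Dim}$ is homogeneous of that degree. The two degrees coincide: this is precisely Lemma \ref{lem:detDegreeEquality} with $\Dim$ replaced by $\Dim-1$, while the case $\Dim=1$ is the trivial Vandermonde identity. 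A divisor of equal degree forces $\det M_{\Dim}=C\prod_{k}\psi_n(p_k)$ for a scalar $C$.

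It remains to certify $C\ne 0$, equivalently that $\det M_{\Dim}$ is not the zero polynomial, and this is the step that needs an idea beyond the bookkeeping. I would settle it by specializing the coordinates to the canonical grid $p_{k,a}=a$ and switching from the monomial basis $\Phi^{\Dim}_n$ to the tensor-product Newton basis built from the $\pi_j$ of (\ref{eq:NewtonPoly}), ordering both points and basis functions by total degree. In that basis the collocation matrix is block lower-triangular with nonsingular diagonal blocks, each assembled from one-dimensional divided-difference data exactly as in the explicit interpolant of Theorem \ref{thm:triangularInterpolantAndRemainder}, so its determinant, hence $\det M_{\Dim}$ at this point, is nonzero. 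Finally, since Definition \ref{def:triangularLatticeDimD} requires the $n+1$ coordinates in each dimension to be distinct, every factor $p_{k,i}-p_{k,\ell}$ is nonzero, whence $\det M_{\Dim}=C\prod_k\psi_n(p_k)\ne 0$ and the lattice is poised. I expect the multiplicity argument of the first paragraph to be the conceptual heart, since landing the exponent on $\alpha(i)$ rather than on some larger or smaller power is where the two-dimensional proof is least transparent; the secondary obstacle is the $C\ne 0$ step, where one must exhibit a genuinely nonsingular specialization instead of arguing by divisibility alone.
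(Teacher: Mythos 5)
Your proof is correct, and it shares the paper's overall skeleton --- establish $(p_{k,i_k}-p_{k,\ell})^{\alpha(i_k)}\mid\det M_{\Dim}$ for each dimension $k$ and each pair $i_k>\ell$, assemble these into $\prod_{k}\psi_n(p_k)\mid\det M_{\Dim}$, and match total degrees via Lemma \ref{lem:detDegreeEquality} --- but it deviates in exactly the two places where the paper's argument is thinnest, and both deviations are improvements. For the multiplicity, the paper argues ``by analogy to (\ref{eq:determinantM2})'', i.e., by the recursive subtract-a-column-then-Laplace-expand scheme of Theorem \ref{thm:triangularSampleMatrix}; your alternative of pairing each of the $\alpha(i_k)$ columns whose $k$th index equals $i_k$ with the partner column obtained by lowering that index to $\ell$, doing all subtractions simultaneously, and extracting the common factor from every altered column lands the exponent $\alpha(i_k)$ in one stroke and is easier to verify. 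You also make explicit two points the paper glosses over: the unique-factorization step needed to multiply the separate divisibilities together, and the fact that Lemma \ref{lem:detDegreeEquality} must be applied with $\Dim$ replaced by $\Dim-1$ (with $\Dim=1$ checked separately). More substantively, the paper concludes by invoking ``$\det M_{\Dim}\ne 0$'' to fix the constant $C$, yet never proves this non-vanishing --- which is essentially the poisedness claim itself, since the divisibility-plus-degree argument alone is consistent with $C=0$. Your final step closes that gap: specializing to the principal lattice and passing to the tensor-product Newton basis, ordered by total degree, yields a block lower-triangular collocation matrix whose diagonal blocks are diagonal with nonzero entries, so the determinant polynomial is not identically zero; hence $C\ne 0$, and distinctness of the coordinates then gives $\det M_{\Dim}\ne 0$ for every triangular lattice. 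This specialization argument is what makes your proof self-contained where the paper's, as written, is not.
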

\begin{proof}
  We follow the strategy in the proof
  of Theorem \ref{thm:triangularSampleMatrix},
  with more complicated book-keeping on the combinatorics. 

  Consider the $k$th variable $p_k$.
  \revise{For any fixed $i_k$ and $\ell$ with $i_k >\ell$,
  we would like to show
  $(p_{k,i_k}-p_{k,\ell})^{\alpha({i_k})} | \det M_{\Dim}$. }
  % Consider the $k$th variable $p_k$.
  % For any fixed $i_k$ and $\ell$ with $i_k >\ell$,
  % replacing the coordinate $p_{k,i_{k}}$ with $p_{k,\ell}$ in a point 
  % \begin{equation}
  %   \label{eq:triangularSampleMatrixDDimProof1}
  %   \mathbf{p}_k :=(p_{1,i_1}, \ldots, p_{k,i_{k}}, \ldots, 
  %   p_{\Dim, i_{\Dim}}) \in {\mathcal T}_n^{\Dim} 
  % \end{equation}
  %  makes the corresponding sample matrix $M_{\Dim}$ singular.
  % Furthermore, 
  When the coordinate index of the $k$th variable $p_k$
   is fixed at $i_k$ in ${\mathcal T}_n^{\Dim}$,
   the cardinality of ${\mathcal T}_n^{\Dim}$ reduces
   to $\# {\mathcal T}^{\Dim-1}_{n-i_k}$,
   which,
   by Lemma \ref{lem:cardTriangularLattice}, 
   must equal $\alpha(i_k)$.
  In other words, %the combinatorial problem of finding
%  the cardinality of the subset of ${\mathcal T}_n^{\Dim} $
  \begin{displaymath}
    \# \left\{(p_{1,i_1}, \ldots, p_{k,i_{k}}, \ldots, 
    p_{\Dim, i_{\Dim}}) \in {\mathcal T}_n^{\Dim}:
    \ i_{k} \text{ is fixed}
    \right\}
  \end{displaymath}
  equals the cardinality of a triangular lattice
  of degree $n-i_k$ in $\Dim-1$ dimensions
  because an index of $i_k$ has been consumed from the total index $n$
  and one of the $\Dim$ dimensions has already been fixed.
  \revise{By an analogy to (\ref{eq:determinantM2}),
  the number of the factor $(p_{k,i_k}-p_{k,\ell})$
  in $\det M_{\Dim}$ 
  % possible $\mathbf{p}_k$'s that
  %  admit the replacement of $p_{k,i_{k}}$ with $p_{k,\ell}$
   is at least $\alpha(i_k)$.}
   
  Now we vary $\ell$ while keeping $i_k$ fixed.
  Since there are $i_k$ indices less than $i_k$,
   the term $\prod_{\ell=0}^{i_k-1} (p_{k,i_k}-p_{k,\ell})^{\alpha(i_k)}$
   contributes to a total degree
   $i_k\alpha(i_k)$ in terms of the $n+1$ coordinates
   $p_{k,0}, \ldots, p_{k,n}$.
  It follows that $\psi_n(p_k)$ must be a factor of $\det M_{\Dim}$
   and the total degree of $\psi_n(p_k)$ is
   \begin{displaymath}
     \sum_{i_k=1}^n i_k \alpha(i_k)
     = \sum_{j=1}^n j \binom{n-j+\Dim-1}{\Dim-1}.
   \end{displaymath}
   
  Similarly,
   $\det M_{\Dim}$ must contain a factor of $\psi_n(p_j)$
   for each variable $p_j$, $j=1,\ldots, \Dim$.
  Hence the total degree of $\det M_{\Dim}$ is at least
   \begin{displaymath}
     \xi := \Dim \sum_{j=1}^n j \binom{n-j+\Dim-1}{\Dim-1}.
   \end{displaymath}

%  From the other viewpoint of Definition \ref{def:determinantOfLeibniz},
  The determinant of $M_{\Dim}$
%   in Definition \ref{def:sampleMatrix} 
   is also a polynomial in terms of the coordinates
   $p_{1,0},p_{1,1},\ldots,p_{1,n}$, $\ldots$,
   $p_{\Dim,0},p_{\Dim,1},\ldots,p_{\Dim,n}$, 
   with each monomial being a product of all basis functions in
   (\ref{eq:triangularBasisFuncsDDim})
   evaluated at some point
   $(p_{1,i_1},p_{2,i_2},\ldots,p_{\Dim,i_{\Dim}})$.
   By Lemma \ref{lem:cardMultivariatePolys}
    and (\ref{eq:numNewTriangularLattice}), 
    the total degree of $\det M_{\Dim}$ equals
%   \begin{displaymath}
    $\eta := \sum_{i=1}^n i \binom{i+\Dim-1}{\Dim-1}$, 
%   \end{displaymath}
   where $i$ refers to the degree of monomials
   in the subset $\Phi^{\Dim}_{=i}$
   and $\binom{i+\Dim-1}{\Dim-1}$ 
   the cardinality of $\Phi^{\Dim}_{=i}$.
  Lemma \ref{lem:detDegreeEquality} implies $\xi=\eta$.
  % Hence the terms in $\prod_{k=1}^{\Dim} \psi_n(p_k)$
  % constitute all the non-constant factors of $\det M_{\Dim}$,
  % which yields (\ref{eq:triangularSampleMatrixDDim}).
  \revise{Then (\ref{eq:triangularSampleMatrixDDim}) follows
  from $\det M_{\Dim}\ne 0$
  and $\psi_n(p_k) | \det M_{\Dim}$
  for each $k\in \mathbb{Z}_{\Dim}^+$.}
\end{proof}

\subsection{The problem of triangular lattice generation (TLG)}
\label{sec:TLG}

By Definition \ref{def:triangularLatticeDimD},
 projecting the isolated points in a triangular lattice
 to any axis yields $n+1$ distinct coordinates in $\mathbb{R}$.
Restricting $\mathbb{R}$ to $\mathbb{Z}$
 incurs no loss of generality 
 since each triangular lattice in $\mathbb{R}^{\Dim}$
 is isomorphic to some triangular lattice in $\mathbb{Z}^{\Dim}$.
 
\begin{defn}[The TLG problem]
  \label{def:poisedLatticeGenProblem}
  Denote the \emph{$\Dim$-dimensional cube of size $n+1$} as 
  \begin{equation}
    \label{eq:DcubeN}
    \mathbb{Z}_n^{\Dim} := (\mathbb{Z}_n)^{\Dim} = \{0,1,\ldots,n\}^{\Dim}
  \end{equation}
  and define the \emph{set of all triangular lattices
   of degree $n$} in $\mathbb{Z}_n^{\Dim}$ as 
   \begin{equation}
     \label{eq:setOfTrigLats}
     {\mathcal X} := \left\{{\mathcal T}^{\Dim}_n:
      {\mathcal T}^{\Dim}_n\subset \mathbb{Z}_n^{\Dim}\right\}.
   \end{equation}
  For a set of feasible nodes $K\subseteq \mathbb{Z}_n^{\Dim}$
  and a starting point $\mathbf{q}\in K$,
  the \emph{triangular-lattice generation (TLG) problem}
  seeks ${\mathcal T}\subseteq K$
  such that $\mathbf{q}\in {\mathcal T}$ and
  ${\mathcal T}\in {\mathcal X}$. 
  % \begin{equation}
  %   \label{eq:latticeGenAvailability}
  %   \left\{
  %     \begin{array}{l}
  %       {\mathcal T} \in {\mathcal X}
  %       \\
  %       \mathbf{q}\in {\mathcal T},
  %       \\
  %       \forall \mathbf{p}\in {\mathcal T},\ \chi(\mathbf{p}) = 1.
  %     \end{array}
  %   \right.
  % \end{equation}
\end{defn}

%Since a triangular lattice must have exactly $n+1$ coordinates for each dimension,
The above formulation %in Definition \ref{def:poisedLatticeGenProblem}
 simplifies the PLG problem in Definition \ref{def:PLG}
 in that (i) the desirable poised lattice is restricted to the type of
 triangular lattices, 
 and (ii) only the \emph{numbering} of the coordinates
 in $\mathbb{Z}_n^{\Dim}$ needs to be determined.

\subsection{A group action of $\Dim$-permutations
  on triangular lattices}
\label{sec:group-action-trig-latt}

\begin{defn}
  \label{def:principalLatticeGeneral}
  The \emph{principal lattice} of degree $n$ in $\mathbb{Z}^{\Dim}$
   is %the point set
  \begin{equation}
    \label{eq:principalLatticeGeneral}
    {\mathcal P}_n^{\Dim} := \left\{
      (j_1,\ldots, j_{\Dim})\in \mathbb{Z}^{\Dim}:
      \forall k=1,2,\cdots, \Dim,\ j_k\ge 0;
      \sum_{k=1}^{\Dim}j_k\le n\right\}.
  \end{equation}
  \revise{When there is no danger of confusion,
    we sometimes write ${\cal P}$ for ${\mathcal P}_n^{\Dim}$}. 
\end{defn}

As an example,
 the principal lattice of degree 2 in two dimensions is
 \begin{equation}
   \label{eq:principalLatD2n2}
   {\mathcal P}_{2}^2 = \{(0,0),(0,1),(0,2),(1,0),(1,1),(2,0)\}.
 \end{equation}
 
\begin{defn}
  \label{def:slice}
  The \emph{$m$-slice of a subset} $T\subseteq \mathbb{Z}_n^{\Dim}$
  across the $i$th dimension is a subset of $T$ defined as
  \begin{equation}
    \label{eq:slice}
    L_{i,m}(T) = \{\mathbf{p}\in T:
    p_i = m\}.
  \end{equation}
\end{defn}

As in Definition \ref{def:triangularLatticeDimD},
the $n+1$ coordinates $0, 1, \ldots, n$ in $\mathbb{Z}_n$
are given an ordering for each dimension $i$
and $p_{i,m}$ represents the $m$th coordinates
along the $i$th dimension.
In particular, $p_{i,m}$ may or may not equal $m$.

\begin{lem}
  \label{lem:sliceOfTriangularLattice}
  Any $p_{i,m}$-slice of a triangular lattice of degree $n$
  in $\Dim$ dimensions
  is a triangular lattice of degree $n-m$ %($m\le n$)
  in $\Dim-1$ dimensions.
\end{lem}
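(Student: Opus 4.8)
The plan is to unpack both the slice and the triangular lattice directly from their definitions and show that the constraints cutting out the slice are \emph{exactly} those defining a triangular lattice of lower dimension and degree. The single substantive observation is that slicing by a coordinate \emph{value} amounts to fixing a coordinate \emph{index}, which hinges on the distinctness of the $n+1$ coordinates per dimension built into Definition~\ref{def:triangularLatticeDimD}.

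First I would write an arbitrary point of ${\mathcal T}_n^{\Dim}$ as $(p_{1,k_1},\ldots,p_{\Dim,k_\Dim})$ with $k_j\in\mathbb{Z}_n$ and $\sum_{j=1}^{\Dim}k_j\le n$, and note that its $i$th coordinate is $p_{i,k_i}$. Since $p_{i,0},\ldots,p_{i,n}$ are distinct, the defining condition $p_i=p_{i,m}$ of the $p_{i,m}$-slice $L_{i,p_{i,m}}({\mathcal T}_n^{\Dim})$ holds if and only if $k_i=m$. Thus the slice consists precisely of the lattice points whose $i$th index is pinned to $m$.

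Next I would substitute $k_i=m$ into $\sum_{j=1}^{\Dim}k_j\le n$, which becomes $\sum_{j\ne i}k_j\le n-m$. Together with $k_j\ge 0$ this forces $k_j\le n-m$ for each remaining dimension, so the ambient bound $k_j\in\mathbb{Z}_n$ tightens automatically to $k_j\in\mathbb{Z}_{n-m}$. Deleting the now-constant $i$th coordinate identifies the slice with the subset $\{(p_{j,k_j})_{j\ne i}:k_j\in\mathbb{Z}_{n-m};\ \sum_{j\ne i}k_j\le n-m\}$ of $\mathbb{R}^{\Dim-1}$. Matching this against (\ref{eq:triangularLatticeDDim}) with $n$ replaced by $n-m$ and $\Dim$ by $\Dim-1$, using the coordinate table obtained by deleting the $i$th row of (\ref{eq:trigLatCoords}) and retaining its first $n-m+1$ columns, exhibits the slice as a triangular lattice of degree $n-m$ in $\Dim-1$ dimensions. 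I would note in passing that the $n-m+1$ retained coordinates are distinct because the original $p_{j,\cdot}$ are, so the numbering requirement of Definition~\ref{def:triangularLatticeDimD} is satisfied.

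I do not anticipate a real obstacle; the argument is essentially bookkeeping once the value-versus-index identification is made, and that identification is the only step deserving care. As a sanity check I would verify the degenerate extremes fall out immediately from $\sum_{j\ne i}k_j\le n-m$: the case $m=n$ yields the single-point lattice ${\mathcal T}_0^{\Dim-1}$, while $m=0$ recovers a full triangular lattice of degree $n$ in $\Dim-1$ dimensions.
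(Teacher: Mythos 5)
Your proof is correct and follows essentially the same route as the paper's: unpack Definition~\ref{def:triangularLatticeDimD} and Definition~\ref{def:slice}, observe that fixing the coordinate \emph{value} $p_{i,m}$ is equivalent to fixing the \emph{index} $k_i=m$ (by distinctness of the $n+1$ coordinates), and check that the remaining index constraints are exactly those of a degree-$(n-m)$ triangular lattice in $\Dim-1$ dimensions. If anything, you make explicit the value-versus-index identification and the distinctness of the retained coordinates, which the paper's proof leaves implicit before invoking its renumbering step.
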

\begin{proof}
  By Definition \ref{def:triangularLatticeDimD},
   the triangular lattice is
    \begin{displaymath}
    {\mathcal T}_n^{\Dim} = \left\{\left(p_{1,j_1}, p_{2,j_2}, \ldots,
        p_{D,j_D}\right):\ 
     j_k\ge 0;\ \sum_{k=1}^\Dim j_k\le n\right\},
  \end{displaymath}
  where each $p_i$
  has exactly $n+1$ coordinates. % along the $j$th axis.
  % of ${\mathcal T}_n^{\Dim}$
  By Definition \ref{def:slice}, we have
   \begin{displaymath}
     L_{i,p_{i,m}}({\mathcal T}) =
     \left\{ 
     \begin{array}{l}
       \left(p_{1,j_1},\, \ldots,\, p_{i-1,j_{i-1}},\, p_{i,m},\,
       p_{i+1,j_{i+1}},\, \ldots,\, 
       p_{D,j_D}\right):\ %\right.
       \\
       \quad j_k\ge 0;\ \sum_{k\ne i, k=1}^{\Dim} j_k\le n-m
     \end{array}
     \right\},
  \end{displaymath}
  which, by Definition \ref{def:triangularLatticeDimD},
  % can be put into the form of
  is a triangular lattice of degree $n-m$ in $\Dim-1$ dimensions
  after renumbering the $n-m+1$ coordinates
  for each $k\ne i$.
\end{proof}

% \begin{defn}
%   \label{def:availabilityMap}
%   For a subset \mbox{$T\subseteq \mathbb{Z}_n^{\Dim}$}, 
%    its \emph{availability map}
%    $v_T: \mathbb{Z}_{\Dim}^+\times \mathbb{Z}_n \rightarrow \mathbb{Z}_n$
%    is defined as
%   \begin{equation}
%     \label{eq:availabilityMap}
%     v_T(i,m) = \# L_{i,m}(T),
%   \end{equation}
%   where $L_{i,m}(T)$ is in (\ref{eq:slice})
%   and $\#$ is the cardinality of a set.
% \end{defn}

\begin{defn}
  \label{def:DPermutation}
  A \emph{$\Dim$-permutation of degree $n$}, written 
  \begin{displaymath}
    A=(a_1,a_2,\ldots,a_{\Dim})^T, 
  \end{displaymath}
   is a map $A: \mathbb{Z}_n^{\Dim} \rightarrow \mathbb{Z}_n^{\Dim}$ defined as
  \begin{equation}
    \label{eq:DPermutation}
    A \mathbf{p} =
    \left(a_1(p_1), a_2(p_2), \cdots, a_{\Dim}(p_{\Dim})\right)^T, 
  \end{equation}
  where each
  \mbox{$a_i: \mathbb{Z}_n \rightarrow \mathbb{Z}_n$} is a permutation.
%  for $i=1,2,\ldots, \Dim$.
  % We also extend $\Dim$-permutations
  % to act on subsets of $\mathbb{Z}_n^{\Dim}$,
  % \begin{equation}
  %   \label{eq:groupActionOnCubeSubsets}
  %   \forall T\subseteq \mathbb{Z}_n^{\Dim},\ A T
  %   := \bigl\{ A \mathbf{p}: \mathbf{p}\in T
  %   \bigr\}.
  % \end{equation}
\end{defn}

\begin{ntn}
  \label{ntn:D-permutations}
  $G_n^{\Dim}$ denotes 
  the \emph{set of all $\Dim$-permutations of degree $n$}.
%  is denoted by $G_n$, or simply $G$
%  when there is no danger of ambiguity.  
\end{ntn}

\begin{defn}
  \label{def:multDpermut}
  The \emph{multiplication of two $\Dim$-permutations}
  is a binary operation $\cdot: G_n^{\Dim}\times G_n^{\Dim}\rightarrow G_n^{\Dim}$ given by
  \begin{equation}
    \label{eq:multDpermut}
    A\cdot B = (a_1\circ b_1, a_2\circ b_2, \ldots,
    a_{\Dim}\circ b_{\Dim})^T, 
  \end{equation}
  where ``$\circ$'' denotes function composition.
\end{defn}

\begin{defn}
  \label{def:DpermutInverse}
  The \emph{inverse of a $\Dim$-permutation} $A$
  is a unitary operation $^{-1}: G_n^{\Dim}\rightarrow G_n^{\Dim}$
  such that $A^{-1}$ satisfies
  \begin{equation}
    \label{eq:DpermutInverse}
    A^{-1}\cdot A = A\cdot A^{-1} = E = (e_1, e_2, \ldots, e_{\Dim})^T,
  \end{equation}
  where $E$ denotes the distinguished $\Dim$-permutation
  with each constituting permutation $e_i$
  as the identity map on $\mathbb{Z}_n$.
\end{defn}

Definition \ref{def:DpermutInverse} is well defined
because of (\ref{eq:multDpermut})
and the condition that each constituting permutation
is a bijection.

\begin{lem}
  \label{lem:permutationGroup}
  The following algebra, c.f. Notation \ref{ntn:D-permutations},
  is a group,
  \begin{equation}
    \label{eq:permutationGroup}
    (G_n^{\Dim}, \cdot, ^{-1}, E).
  \end{equation}
\end{lem}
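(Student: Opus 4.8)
The plan is to verify the three group axioms of Definition \ref{def:group} directly, exploiting the fact that each constituting coordinate of a $\Dim$-permutation lives in the symmetric group on $\mathbb{Z}_n$. The key structural observation is that $G_n^{\Dim}$ is nothing more than the $\Dim$-fold direct product of the symmetric group $S_{\mathbb{Z}_n}$ with itself, where both the multiplication of Definition \ref{def:multDpermut} and the inverse of Definition \ref{def:DpermutInverse} act coordinatewise. Since the excerpt already records that $(S_A, \circ, {}^{-1}, e)$ is a group for any non-empty set $A$, every axiom will reduce to the corresponding statement in each of the $\Dim$ copies of $S_{\mathbb{Z}_n}$.

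Concretely, I would first settle closure: given $A,B\in G_n^{\Dim}$, each coordinate $a_i\circ b_i$ is a composition of two bijections of $\mathbb{Z}_n$ and hence is again a permutation, so $A\cdot B$ is a genuine element of $G_n^{\Dim}$ and the binary operation of Definition \ref{def:multDpermut} is well defined. Associativity (GRP-1) then follows coordinatewise from the associativity of function composition, since the $i$th coordinate of both $(A\cdot B)\cdot C$ and $A\cdot(B\cdot C)$ equals $(a_i\circ b_i)\circ c_i = a_i\circ(b_i\circ c_i)$. For the identity element (GRP-2), the distinguished $\Dim$-permutation $E$ whose coordinates are the identity maps $e_i$ on $\mathbb{Z}_n$ satisfies $e_i\circ a_i = a_i\circ e_i = a_i$ in every coordinate, whence $E\cdot A = A\cdot E = A$.

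Finally, for inverses (GRP-3), I would note that each coordinate $a_i$ is a bijection of the finite set $\mathbb{Z}_n$ and therefore admits a two-sided inverse permutation $a_i^{-1}$; setting $A^{-1}=(a_1^{-1},\ldots,a_{\Dim}^{-1})^T$ yields an element of $G_n^{\Dim}$ whose coordinatewise composition with $A$ equals $E$, exactly as anticipated in the remark following Definition \ref{def:DpermutInverse}. There is no genuine obstacle in this argument; the only point requiring a moment of care is closure, i.e. confirming that the coordinatewise operations never leave $G_n^{\Dim}$, which is immediate once one recalls that the permutations of a set are closed under both composition and inversion.
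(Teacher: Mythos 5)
Your proposal is correct and matches the paper's approach: the paper's proof simply states that the group axioms of Definition \ref{def:group} are straightforward to verify from Definitions \ref{def:DPermutation}, \ref{def:multDpermut}, and \ref{def:DpermutInverse}, and your coordinatewise verification (viewing $G_n^{\Dim}$ as a $\Dim$-fold direct product of symmetric groups) is exactly the intended routine check, spelled out in full.
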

\begin{proof}
  It is straightforward to verify the conditions
  of a group in Definition \ref{def:group} from Definitions 
  \ref{def:DPermutation}, \ref{def:multDpermut},
  and \ref{def:DpermutInverse}.
\end{proof}

% \begin{lem}
%   \label{lem:DpermutPreservesTriangularity}
%   A $\Dim$-permutation maps a triangular lattice
%   to another triangular lattice.
% \end{lem}
% \begin{proof}
%   This follows directly from (\ref{eq:triangularLatticeDDim})
%   and renumbering the coordinates along each axis
%   according to the constituting permutations.
% \end{proof}

\begin{lem}
  \label{lem:permutationOnTrigLats}
  $\Dim$-permutations map triangular lattices
  to triangular lattices.
  More precisely, for any $A\in G_n^{\Dim}$,
  the map $\sigma_A$ given by
%  given by
\revise{
  \begin{equation}
    \label{eq:permutationOnTrigLats}
    \forall {\mathcal T}\in {\mathcal X}, \
    \sigma_A({\mathcal T}) = A {\mathcal T}
    := \bigl\{ A \mathbf{p}: \mathbf{p}\in {\mathcal T} \bigr\}
  \end{equation}
}
  is a permutation of ${\mathcal X}$.
\end{lem}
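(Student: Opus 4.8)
The plan is to prove the two assertions of the lemma in turn: first that $\sigma_A$ is well defined as a map $\mathcal{X}\to\mathcal{X}$ (which already yields the informal statement that $\Dim$-permutations send triangular lattices to triangular lattices), and then that this map is a bijection, i.e.\ a permutation of $\mathcal{X}$ in the sense of Definition~\ref{def:group} and the surrounding discussion.

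First I would fix an arbitrary $\mathcal{T}=\mathcal{T}_n^{\Dim}\in\mathcal{X}$ together with its numbering $(p_{i,j})$ as in Definition~\ref{def:triangularLatticeDimD}, so that
\[
  \mathcal{T}=\bigl\{(p_{1,k_1},\ldots,p_{\Dim,k_\Dim}):\ k_i\in\mathbb{Z}_n,\ \textstyle\sum_{i=1}^{\Dim}k_i\le n\bigr\}.
\]
Applying $A=(a_1,\ldots,a_\Dim)^T$ coordinatewise via \eqref{eq:DPermutation} gives
\[
  A\mathcal{T}=\bigl\{(a_1(p_{1,k_1}),\ldots,a_\Dim(p_{\Dim,k_\Dim})):\ k_i\in\mathbb{Z}_n,\ \textstyle\sum_{i=1}^{\Dim}k_i\le n\bigr\}.
\]
The \emph{key observation} is that the defining constraint of a triangular lattice is imposed on the indices $k_i$, not on the coordinate values themselves --- exactly the point stressed in Example~\ref{exm:latticeD2n2}. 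Setting $q_{i,j}:=a_i(p_{i,j})$ and noting that each $a_i$ is a bijection of $\mathbb{Z}_n$, the assignment $j\mapsto q_{i,j}$ is again a numbering of $n+1$ distinct coordinates lying in $\mathbb{Z}_n$. Hence $A\mathcal{T}$ is precisely the triangular lattice of degree $n$ with numbering $(q_{i,j})$, and since every $q_{i,j}\in\mathbb{Z}_n$ we have $A\mathcal{T}\subset\mathbb{Z}_n^{\Dim}$, so $A\mathcal{T}\in\mathcal{X}$. This settles well-definedness and simultaneously proves the first sentence of the lemma.

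Next I would establish bijectivity. Because each $a_i$ is a permutation of $\mathbb{Z}_n$, the map $A$ is a bijection of the ambient cube $\mathbb{Z}_n^{\Dim}$ whose inverse $A^{-1}=(a_1^{-1},\ldots,a_\Dim^{-1})^T$ is itself a $\Dim$-permutation, by Definition~\ref{def:DpermutInverse}. Injectivity of $\sigma_A$ is then immediate: if $A\mathcal{T}=A\mathcal{T}'$, applying $A^{-1}$ pointwise recovers $\mathcal{T}=\mathcal{T}'$. For surjectivity, given any $\mathcal{S}\in\mathcal{X}$, the first step applied to the $\Dim$-permutation $A^{-1}$ shows $A^{-1}\mathcal{S}\in\mathcal{X}$, while $A(A^{-1}\mathcal{S})=\mathcal{S}$ since $A\cdot A^{-1}=E$ acts as the identity on $\mathbb{Z}_n^{\Dim}$. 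Thus $\sigma_A$ is a bijection of $\mathcal{X}$ onto itself, i.e.\ a permutation of $\mathcal{X}$.

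I expect the only genuine content to reside in the first step, and the one pitfall to guard against there is conflating the coordinate values with their indices: once $q_{i,j}=a_i(p_{i,j})$ is recognized as a legitimate renumbering (which relies solely on the injectivity of $a_i$ on the finite set $\mathbb{Z}_n$), the triangular structure transfers verbatim. The bijection portion is then purely formal, reducing to the standard fact that a coordinatewise-bijective self-map of the cube induces a bijection on subsets, here restricted to $\mathcal{X}$ using the closure under $A^{-1}$ already proven.
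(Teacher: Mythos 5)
Your proof is correct and follows essentially the same route as the paper's: both rest on the observation that the triangular-lattice constraint is imposed on \emph{indices}, so applying the coordinatewise bijections $a_i$ merely relabels the $n+1$ distinct coordinates (a renumbering that $A^{-1}$ undoes). The only difference is completeness, not approach: the paper's proof stops at well-definedness and leaves the bijectivity of $\sigma_A$ on $\mathcal{X}$ implicit, whereas you spell out injectivity and surjectivity via $\sigma_{A^{-1}}$ explicitly.
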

\begin{proof}
  Since ${\mathcal T}$ is a triangular lattice,
   we know from Definition \ref{def:triangularLatticeDimD}
   that there exist $n+1$ distinct coordinates for each dimension
   such that
   \emph{indices} of the $\Dim$ constituting coordinates
   of each point $\textbf{p}\in {\mathcal T}$
   add up to no more than $n$.
  The action of $A$ upon ${\mathcal T}$ in (\ref{eq:DPermutation})
   can be undone
   by applying $A^{-1}$;
   this means that for $A{\mathcal T}$ there exists a renumbering
   (specified by $A^{-1}$)
   of the coordinates along each axis
   such that $A{\mathcal T}$ is a triangular lattice. 
\end{proof}

\begin{lem}
  \label{lem:groupActionOnTrigLats}
  The set of triangular lattices ${\mathcal X}$ is a $G_n^{\Dim}$-set with
  its group action \mbox{$G_n^{\Dim}\times {\mathcal X} \rightarrow {\mathcal X}$}
  given by $\sigma_A({\mathcal T})$ in (\ref{eq:permutationOnTrigLats}).
  % on ${\mathcal X}$ as the map $\bullet: G_n^{\Dim}\times {\mathcal X} \rightarrow {\mathcal X}$,
  % \begin{equation}
  %   \label{eq:groupActionOnTrigLats}
  %   \bullet(A,{\mathcal T}_n^{\Dim}) := A{\mathcal T}_n^{\Dim}
  %   := \bigl\{ A \mathbf{p}: \mathbf{p}\in {\mathcal T}_n^{\Dim}
  %   \bigr\}.
  % \end{equation}
\end{lem}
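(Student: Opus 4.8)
The plan is to verify directly the two axioms of a group action from Definition \ref{def:groupAction}, exploiting the fact that the bulk of the work is already done: Lemma \ref{lem:permutationOnTrigLats} established that each $\sigma_A$ is a permutation of ${\mathcal X}$, so in particular the assignment $(A,{\mathcal T})\mapsto \sigma_A({\mathcal T})=A{\mathcal T}$ always lands in ${\mathcal X}$ and hence $G_n^{\Dim}\times {\mathcal X}\rightarrow {\mathcal X}$ is a well-defined map. What remains is purely to check that this map respects the identity and the group multiplication.

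First I would dispatch the identity axiom. Since the distinguished element $E=(e_1,\ldots,e_{\Dim})^T$ of Definition \ref{def:DpermutInverse} has every constituting permutation $e_i$ equal to the identity on $\mathbb{Z}_n$, formula (\ref{eq:DPermutation}) gives $E\mathbf{p}=\mathbf{p}$ for every point $\mathbf{p}$, and therefore $\sigma_E({\mathcal T})=E{\mathcal T}={\mathcal T}$ for all ${\mathcal T}\in {\mathcal X}$, which is condition (i).

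Next I would verify compatibility, namely $\sigma_{A\cdot B}=\sigma_A\circ\sigma_B$. The heart of this step is the pointwise identity $(A\cdot B)\mathbf{p}=A(B\mathbf{p})$, obtained by comparing the componentwise composition in Definition \ref{def:multDpermut} against two successive applications of (\ref{eq:DPermutation}): the $i$th component of each side equals $(a_i\circ b_i)(p_i)=a_i\bigl(b_i(p_i)\bigr)$. Applying this identity to every point of a lattice and using that $\sigma_A$ distributes over the set-builder in (\ref{eq:permutationOnTrigLats}) yields $\sigma_{A\cdot B}({\mathcal T})=(A\cdot B){\mathcal T}=A(B{\mathcal T})=\sigma_A(\sigma_B({\mathcal T}))$, which is condition (ii). Both axioms holding, ${\mathcal X}$ is a $G_n^{\Dim}$-set.

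I expect no genuine obstacle here: the essential content has been absorbed upstream, with codomain well-definedness handled by Lemma \ref{lem:permutationOnTrigLats} and compatibility forced by the componentwise definition of multiplication in $G_n^{\Dim}$, which was engineered precisely so that function composition on each axis matches composition of the induced permutations $\sigma_A$. The only point meriting a moment's care is making explicit that the multiplication ``$\cdot$'' in $G_n^{\Dim}$ agrees with composition of the maps $\sigma_A$ on ${\mathcal X}$, and that agreement is exactly what the pointwise identity $(A\cdot B)\mathbf{p}=A(B\mathbf{p})$ records.
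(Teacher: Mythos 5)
Your proposal is correct and follows essentially the same route as the paper's proof: both use Lemma \ref{lem:permutationOnTrigLats} for well-definedness of the map into ${\mathcal X}$, verify $E{\mathcal T}={\mathcal T}$ from the definition of $E$, and establish $(A\cdot B){\mathcal T}=A(B{\mathcal T})$ by expanding the componentwise composition from Definitions \ref{def:DPermutation} and \ref{def:multDpermut} over the set-builder in (\ref{eq:permutationOnTrigLats}).
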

\begin{proof}
  By Lemma \ref{lem:permutationOnTrigLats}, 
  $\bullet(A,{\mathcal T})=\sigma_A({\mathcal T})$
  indeed has the signature $G_n^{\Dim}\times {\mathcal X} \rightarrow {\mathcal X}$.
  By (\ref{eq:DpermutInverse}), 
   we have
  \begin{displaymath}
    \forall {\mathcal T}\in {\mathcal X},\ \ 
    E{\mathcal T} = {\mathcal T}.
  \end{displaymath}
  In addition,
  for any $A,B\in G_n^{\Dim}$ and any ${\mathcal T}\in {\mathcal X}$,
  we have
  \begin{align*}
    (A\cdot B){\mathcal T}
    &= \bigl\{ (A\cdot B)\mathbf{p}: 
      \mathbf{p}\in {\mathcal T} \bigr\}
    = \bigl\{
      \left((a_1\circ b_1)(p_1), \ldots, (a_{\Dim}\circ b_{\Dim})(p_{\Dim})
      \right)^T: \mathbf{p}\in {\mathcal T} \bigr\}
    \\
    &= \bigl\{
      \left(a_1( b_1(p_1)), \ldots, a_{\Dim} (b_{\Dim}(p_{\Dim}))
      \right)^T: \mathbf{p}\in {\mathcal T} \bigr\}
    % \\
    % &= A\bigl\{
    %   \left(b_1(p_1), \ldots, b_{\Dim}(p_{\Dim})
    %   \right)^T: \mathbf{p}\in {\mathcal T} \bigr\}
    = A(B{\mathcal T}),
  \end{align*}
  where the first step follows from (\ref{eq:permutationOnTrigLats}), 
  the second from (\ref{eq:DPermutation}) and (\ref{eq:multDpermut}),
%  the third from Definition \ref{def:groupAction} (ii), 
  and the last from (\ref{eq:permutationOnTrigLats}).
  Then the proof is completed by Definition \ref{def:groupAction}. 
\end{proof}

\begin{defn}
  \label{def:restorationOp}
  The \emph{restoration of a triangular lattice}
  ${\mathcal T}_n^{\Dim}$ is a $\Dim$-permutation
  $R_{{\mathcal T}}=(r_1,r_2, \ldots, r_{\Dim})^T$
  such that %of which each constituting permutation $r_i$
%  for each $i=1, 2, \ldots, \Dim$ we have
  \begin{equation}
    \label{eq:restorationOp}
    % \forall i \in \mathbb{Z}_{\Dim}^+,\
    \forall i=1, 2, \ldots, \Dim, \     \forall m \in \mathbb{Z}_{n},\ 
    r_i(m) = \# \left\{
    j\in \mathbb{Z}_n: \#L_{i,j}({\mathcal T}) > \#L_{i,m}({\mathcal T})
    \right \}.
  \end{equation}
\end{defn}

By Lemma \ref{lem:sliceOfTriangularLattice}
 and Lemma \ref{lem:cardTriangularLattice}, 
 the slices of ${\mathcal T}$ along each axis
 have pairwise distinct cardinalities.
Hence (\ref{eq:restorationOp}) is well defined;
 see Example \ref{exm:restorationD2n2}. 
% (\ref{eq:restorationOp}) is well-defined
% because two distinct slices
% of a triangular lattice have different cardinalities.

\begin{lem}
  \label{lem:restorationOpIsBijective}
  The image of a triangular lattice ${\mathcal T}_n^{\Dim}$
  under its restoration 
  is %a bijection that maps ${\mathcal T}_n^{\Dim}$
  the principal lattice ${\mathcal P}_n^{\Dim}$, i.e. 
  \begin{equation}
    \label{eq:restorationOpIsBijective}
    R_{{\mathcal T}_n^{\Dim}} {\mathcal T}_n^{\Dim} = {\mathcal P}_n^{\Dim}. 
  \end{equation}
\end{lem}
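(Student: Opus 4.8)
The plan is to show that each constituting permutation $r_i$ of the restoration $R_{\mathcal T}$ is exactly the inverse of the coordinate numbering $j\mapsto p_{i,j}$ along the $i$th axis; once this is established, $R_{\mathcal T}$ simply sends every point of ${\mathcal T}$ to its tuple of indices, which is by definition a point of ${\mathcal P}_n^{\Dim}$. The key is therefore to pin down $r_i$ explicitly, and the crux of that is to relate the cardinality of each slice to its coordinate index.

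First I would fix a dimension $i$ and compute the cardinality of each slice as a function of its index. Since we work in $\mathbb{Z}_n^{\Dim}$, the $n+1$ distinct coordinates $p_{i,0},\ldots,p_{i,n}$ along the $i$th axis exhaust $\mathbb{Z}_n$, so every $m\in\mathbb{Z}_n$ equals $p_{i,j}$ for a unique index $j$. By Lemma \ref{lem:sliceOfTriangularLattice}, the slice $L_{i,p_{i,j}}({\mathcal T})$ is a triangular lattice of degree $n-j$ in $\Dim-1$ dimensions, and by Lemma \ref{lem:cardTriangularLattice} its cardinality is $\binom{n-j+\Dim-1}{\Dim-1}$. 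This quantity is strictly decreasing in $j$ over $\mathbb{Z}_n$, so the $n+1$ slices along the $i$th axis have pairwise distinct cardinalities, and the slice of index $j$ is outranked in cardinality by exactly the $j$ slices of indices $0,1,\ldots,j-1$.

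Next I would read off $r_i$ from Definition \ref{def:restorationOp}. Setting $m=p_{i,j}$, the defining count $r_i(p_{i,j})=\#\{j'\in\mathbb{Z}_n: \#L_{i,j'}({\mathcal T})>\#L_{i,p_{i,j}}({\mathcal T})\}$ is the number of slices of strictly greater cardinality, namely $j$. Hence $r_i(p_{i,j})=j$ for every $j$, i.e. $r_i$ inverts the numbering along the $i$th axis. Applying $R_{\mathcal T}$ to an arbitrary point $(p_{1,k_1},\ldots,p_{\Dim,k_\Dim})\in{\mathcal T}$, Definition \ref{def:DPermutation} then gives $R_{\mathcal T}(p_{1,k_1},\ldots,p_{\Dim,k_\Dim})=(k_1,\ldots,k_\Dim)$. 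As this point ranges over ${\mathcal T}$, the index tuple $(k_1,\ldots,k_\Dim)$ ranges over exactly those tuples with $k_i\ge 0$ and $\sum_{i}k_i\le n$, which is the defining condition of ${\mathcal P}_n^{\Dim}$ in Definition \ref{def:principalLatticeGeneral}; since $R_{\mathcal T}$ is a bijection, the image is precisely ${\mathcal P}_n^{\Dim}$.

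The main obstacle is the bookkeeping in the middle step: I must argue carefully that the rank-by-cardinality encoded in the definition of $r_i$ coincides with the coordinate index $j$, which rests entirely on the strict monotonicity of $\binom{n-j+\Dim-1}{\Dim-1}$ in $j$ and hence on the two cited cardinality lemmas. (For $\Dim=1$ the slice cardinalities degenerate to all being $1$, so the distinctness used above fails; but then ${\mathcal T}={\mathcal P}_n^{\Dim}=\mathbb{Z}_n$ and the statement is trivial, so I would regard $\Dim\ge 2$ as the substantive case.)
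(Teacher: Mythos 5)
Your proposal is correct, and it reaches the conclusion by a more direct route than the paper. The paper's own proof is a characterization argument: it first invokes Lemma \ref{lem:permutationOnTrigLats} to conclude that $R_{\mathcal T}{\mathcal T}$ is again a triangular lattice, then observes that, by the rank-by-cardinality definition of the $r_i$'s, the slice cardinalities of $R_{\mathcal T}{\mathcal T}$ decrease strictly monotonically in the coordinate value along every axis, and finally asserts that the principal lattice is the \emph{only} triangular lattice in $\mathbb{Z}_n^{\Dim}$ with this property. You instead pin down each $r_i$ explicitly---using the same two ingredients, Lemmas \ref{lem:sliceOfTriangularLattice} and \ref{lem:cardTriangularLattice}, to get $\#L_{i,p_{i,j}}({\mathcal T})=\binom{n-j+\Dim-1}{\Dim-1}$ strictly decreasing in $j$, hence $r_i(p_{i,j})=j$---and then read off the image pointwise as the set of admissible index tuples, which is ${\mathcal P}_n^{\Dim}$ by Definition \ref{def:principalLatticeGeneral}. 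What your version buys: it does not need Lemma \ref{lem:permutationOnTrigLats} at all, and it replaces the paper's unproved uniqueness assertion (``there is only one such triangular lattice'') with an explicit computation; indeed, proving that assertion carefully would amount to essentially the computation you perform, so your write-up is the more self-contained of the two. Your closing caveat about $\Dim=1$ is also apt: there all slice cardinalities equal $1$, so (\ref{eq:restorationOp}) yields $r_1\equiv 0$, which is not even a permutation; this degenerate case is glossed over by the paper's well-definedness remark as well, and both proofs tacitly assume $\Dim\ge 2$.
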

\begin{proof}
  By Definition \ref{def:restorationOp},
   the cardinalities of rearranged slices
   along the $i$th dimension are not changed by
   any constituting permutations except $r_i$.
  By Lemma \ref{lem:permutationOnTrigLats}, 
   $R_{{\mathcal T}} {\mathcal T}$ is also a triangular lattice.
  Furthermore, 
   cardinalities of the $m$-slices of ${R_{{\mathcal T}} \mathcal T}$
   decrease strictly monotonically as $m$ increases.
  There is only one such triangular lattice,
  namely ${\mathcal P}_n^{\Dim}$ in (\ref{eq:principalLatticeGeneral}).
  % Finally, $R_{{\mathcal T}}$ is a bijection
  % because each constituting permutation is a bijection.
\end{proof}

As a $\Dim$-permutation, a restoration is a bijection 
 and hence it has an inverse.

\begin{defn}
  \label{def:formationOp}
  The \emph{formation of a triangular lattice}
  ${\mathcal T}$
  is the inverse of its restoration, i.e., 
  \begin{equation}
    \label{eq:formationOp}
    A_{{\mathcal T}} = R_{{\mathcal T}}^{-1}.
  \end{equation}
\end{defn}

\begin{lem}
  \label{lem:formationOpIsBijective}
  The formation of a triangular lattice ${\mathcal T}_n^{\Dim}$
  maps the principal lattice ${\mathcal P}_n^{\Dim}$
  to ${\mathcal T}_n^{\Dim}$, 
  \begin{equation}
    \label{eq:formationOpIsBijective}
    {\mathcal T}_n^{\Dim} = A_{{\mathcal T}_n^{\Dim}} {\mathcal P}_n^{\Dim}. 
  \end{equation}
\end{lem}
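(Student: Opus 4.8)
The plan is to deduce this lemma directly from Lemma \ref{lem:restorationOpIsBijective} by exploiting the group-action structure of $G_n^{\Dim}$ on ${\mathcal X}$ established in Lemma \ref{lem:groupActionOnTrigLats}. The key observation is that the principal lattice ${\mathcal P}_n^{\Dim}$ is itself a triangular lattice lying in $\mathbb{Z}_n^{\Dim}$, hence an element of the $G_n^{\Dim}$-set ${\mathcal X}$, so that all the group-action axioms apply to it. Since $A_{{\mathcal T}_n^{\Dim}}$ is by Definition \ref{def:formationOp} the inverse $\Dim$-permutation $R_{{\mathcal T}_n^{\Dim}}^{-1}$, the statement is essentially a one-line manipulation.

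Concretely, I would first rewrite the right-hand side using Definition \ref{def:formationOp} and then substitute the identity $R_{{\mathcal T}_n^{\Dim}} {\mathcal T}_n^{\Dim} = {\mathcal P}_n^{\Dim}$ supplied by Lemma \ref{lem:restorationOpIsBijective}. This turns $A_{{\mathcal T}_n^{\Dim}} {\mathcal P}_n^{\Dim}$ into $R_{{\mathcal T}_n^{\Dim}}^{-1}\bigl(R_{{\mathcal T}_n^{\Dim}} {\mathcal T}_n^{\Dim}\bigr)$. The chain of equalities I would write is
\begin{equation*}
  A_{{\mathcal T}_n^{\Dim}} {\mathcal P}_n^{\Dim}
  = R_{{\mathcal T}_n^{\Dim}}^{-1} {\mathcal P}_n^{\Dim}
  = R_{{\mathcal T}_n^{\Dim}}^{-1}\bigl(R_{{\mathcal T}_n^{\Dim}} {\mathcal T}_n^{\Dim}\bigr)
  = \bigl(R_{{\mathcal T}_n^{\Dim}}^{-1}\cdot R_{{\mathcal T}_n^{\Dim}}\bigr) {\mathcal T}_n^{\Dim}
  = E\, {\mathcal T}_n^{\Dim}
  = {\mathcal T}_n^{\Dim},
\end{equation*}
where the third equality is the compatibility $(A\cdot B){\mathcal T}=A(B{\mathcal T})$ from the proof of Lemma \ref{lem:groupActionOnTrigLats}, the fourth is the defining property (\ref{eq:DpermutInverse}) of the inverse $\Dim$-permutation, and the last is the identity-action axiom $E{\mathcal T}={\mathcal T}$ verified in Lemma \ref{lem:groupActionOnTrigLats}.

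There is no real obstacle here: the content of the lemma has already been done in Lemma \ref{lem:restorationOpIsBijective}, and the present statement merely repackages it through the inverse. The only point requiring a moment of care is to confirm that we are genuinely entitled to apply the group-action associativity and the identity law to ${\mathcal P}_n^{\Dim}$ — that is, to recognize ${\mathcal P}_n^{\Dim}\in{\mathcal X}$ so that $R_{{\mathcal T}_n^{\Dim}}^{-1}$ acts on it as an element of a $G_n^{\Dim}$-set rather than as a bare map on points. Once this membership is noted, the computation above is forced and the proof is complete.
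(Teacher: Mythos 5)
Your proposal is correct and is essentially the paper's own argument: the paper likewise deduces the lemma directly from Lemma \ref{lem:restorationOpIsBijective} together with Definitions \ref{def:DpermutInverse} and \ref{def:formationOp}, and your chain of equalities merely spells out the details (with the group-action compatibility from Lemma \ref{lem:groupActionOnTrigLats} justifying the unwinding of $R_{{\mathcal T}}^{-1}(R_{{\mathcal T}}{\mathcal T})$). Your remark that one must note ${\mathcal P}_n^{\Dim}\in{\mathcal X}$ is a fair point of care that the paper leaves implicit.
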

\begin{proof}
  This follows directly from Lemma \ref{lem:restorationOpIsBijective}
  and Definitions \ref{def:DpermutInverse} and \ref{def:formationOp}.
\end{proof}

% \begin{rem}
%   By Lemma \ref{lem:restorationOpIsBijective},
%   we can consider a restoration operator
%   as a bijection between the triangular lattice
%   and the corresponding principal lattice.
% \end{rem}

% \begin{defn}
%   \label{def:conversionMap}
%   The \emph{conversion map specified by a matrix}
%   $A\in \mathbb{Z}_n^{\Dim\times(n+1)}$ is a map
%   $f_A: {\mathcal P}_n^{\Dim}\rightarrow \mathbb{Z}_n^{\Dim}$
%   that assigns a multiindex
%   to each multiindex $\mathbf{i}$
%   in the principal lattice ${\mathcal P}_n^{\Dim}$,
%   \begin{equation}
%     \label{eq:conversionMap}
%     f_A(j_1,j_2,\ldots,j_{\Dim})
%     = (r_{1,j_1+1}, r_{2,j_2+1}, \ldots, r_{\Dim,j_{\Dim}+1}),
%   \end{equation}
%   where
%   the ``+1'' in the column index $j_k+1$ comes from the fact
%   that the indexing of multiindices starts from 0
%   while column indices in a matrix start from 1.  
% \end{defn}

\begin{exm}
  \label{exm:restorationD2n2}
  For the triangular lattice ${\mathcal T}_2^2$
  in Example \ref{exm:latticeD2n2},
  its formation $A_{{\mathcal T}_2^2}$ equals its restoration $R_{{\mathcal T}_2^2}$,
  % with the two constituting permutations as
   \begin{equation}
     \label{eq:normalizationT22}
     \left\{
       \begin{array}{l}
         r_1: 0\mapsto 1; 1\mapsto 0; 2\mapsto 2,
         \\
         r_2: 0\mapsto 0; 1\mapsto 2; 2\mapsto 1.
       \end{array}
     \right.
   \end{equation}
  The processes of restoration and formation are shown below.
   \begin{center}
     \includegraphics[width=0.5\textwidth]{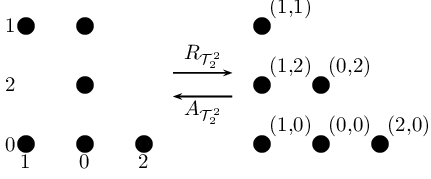}
   \end{center}
   On the left,
   the integers below the lattice are $r_1([0, 1, 2])$,
   i.e., the numbers of vertical slices whose cardinalities
   are larger than the current slice; 
   the integers to the left of the lattice
   are $r_2([0, 1, 2])$,
   i.e., the numbers of horizontal slices whose cardinalities
   are larger than the current slice.
   On the right,
   the multiindex close to a dot is the preimage
   of the dot under $R_{{\mathcal T}_2^2}$,
   whose behavior is summarized as follows.
%   a bijection.
  \begin{center}
    \begin{tabular}{c|c|c|c|c|c|c}
      \hline
      $R_{{\cal T}_2^2} \mathbf{p}\in {\mathcal P}_{2}^2$
      & (0,0) & (0,1) & (0,2) & (1,0) & (1,1) & (2,0)
      \\
      \hline
      $\mathbf{p}\in {\mathcal T}_{2}^2$
      & (1,0) & (1,2) & (1,1) & (0,0) & (0,2) & (2,0)
      \\
      \hline
    \end{tabular}
  \end{center}
\end{exm}

\begin{lem}
  \label{lem:bijectionfromG2X}
  Let ${\mathcal P}$ be the principal lattice of ${\mathcal X}$.
  The map $\varphi: G_n^{\Dim} \rightarrow {\mathcal X}$
  given by $\varphi(A_{{\mathcal T}}) := A_{{\mathcal T}} {\mathcal P}$
  %$= {\mathcal T}$
  is bijective. 
  % \begin{equation}
  %   \label{eq:bijectionfromG2X}
  %   \varphi(A_{{\mathcal T}}) = A_{{\mathcal T}} {\mathcal P} =
  %   {\mathcal T}. 
  % \end{equation}
\end{lem}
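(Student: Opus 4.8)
The plan is to prove bijectivity of $\varphi$ by treating surjectivity and injectivity separately, with injectivity reduced to showing that the isotropy subgroup of ${\mathcal P}$ under the group action of Lemma~\ref{lem:groupActionOnTrigLats} is trivial. Surjectivity comes essentially for free: given any ${\mathcal T}\in{\mathcal X}$, its formation $A_{{\mathcal T}}\in G_n^{\Dim}$ satisfies ${\mathcal T} = A_{{\mathcal T}}{\mathcal P}$ by Lemma~\ref{lem:formationOpIsBijective}, so $\varphi(A_{{\mathcal T}})={\mathcal T}$ and every triangular lattice is in the image.

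For injectivity, I would suppose $\varphi(A)=\varphi(B)$, i.e. $A{\mathcal P}=B{\mathcal P}$, and set $C := B^{-1}\cdot A\in G_n^{\Dim}$. Then $A{\mathcal P}=B{\mathcal P}$ is equivalent to $C{\mathcal P}={\mathcal P}$, so $C$ lies in the isotropy subgroup $(G_n^{\Dim})_{{\mathcal P}}$ of Lemma~\ref{lem:isotropySubgroup}. Hence $\varphi$ is injective as soon as I establish that this isotropy subgroup is trivial, namely that $C{\mathcal P}={\mathcal P}$ forces $C=E$.

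To prove the stabilizer is trivial, write $C=(c_1,\ldots,c_{\Dim})^T$. Since the $i$th coordinate of $C\mathbf{p}$ is $c_i(p_i)$, the map $C$ sends the slice $L_{i,m}({\mathcal P})$ into $L_{i,c_i(m)}({\mathcal P})$ (Definition~\ref{def:slice}); as each $c_i$ is a bijection of $\mathbb{Z}_n$ and $C{\mathcal P}={\mathcal P}$ makes $C$ a bijection of ${\mathcal P}$ onto itself, this slice map is in fact a bijection between the two slices, whence $\#L_{i,m}({\mathcal P}) = \#L_{i,c_i(m)}({\mathcal P})$. But by Lemmas~\ref{lem:sliceOfTriangularLattice} and~\ref{lem:cardTriangularLattice} the slice cardinality $\binom{n-m+\Dim-1}{\Dim-1}$ is strictly decreasing in $m$, hence pairwise distinct (this is exactly the observation recorded after Definition~\ref{def:restorationOp}). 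The equality of cardinalities therefore forces $c_i(m)=m$ for every $m\in\mathbb{Z}_n$ and every $i$, so $C=E$ and $A=B$.

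The main obstacle is this injectivity step, i.e. the triviality of the stabilizer of ${\mathcal P}$. The crux is recognizing that any $\Dim$-permutation fixing ${\mathcal P}$ must permute the one-dimensional family of slices in each axis among slices of \emph{equal} cardinality; once that is paired with the strict monotonicity of slice cardinalities, the conclusion $c_i=\mathrm{id}$ is immediate and the remaining arguments are routine bookkeeping.
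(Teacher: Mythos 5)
Your proof is correct, and on the injectivity half it takes a genuinely different (and more complete) route than the paper. The surjectivity argument is identical: both you and the paper apply Lemma~\ref{lem:formationOpIsBijective} to the formation $A_{\mathcal T}$ of an arbitrary ${\mathcal T}\in{\mathcal X}$. For injectivity, however, the paper merely asserts in one line that $A_1\ne A_2$ implies $A_1{\mathcal P}\ne A_2{\mathcal P}$, citing Definition~\ref{def:DPermutation} and the disjoint-cycle decomposition of Theorem~\ref{thm:permutationIsDisjointCycles} without supplying the argument. You instead reduce injectivity, via the group action of Lemma~\ref{lem:groupActionOnTrigLats}, to the triviality of the isotropy subgroup of ${\mathcal P}$ (Lemma~\ref{lem:isotropySubgroup}), and then kill the stabilizer by slice counting: any $\Dim$-permutation fixing ${\mathcal P}$ must carry each slice $L_{i,m}({\mathcal P})$ bijectively onto a slice of equal cardinality, and the cardinalities $\binom{n-m+\Dim-1}{\Dim-1}$ are pairwise distinct. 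This is precisely the mechanism that makes the paper's restoration operator (Definition~\ref{def:restorationOp}) well defined, so your proof is economical—it reuses a fact the paper already needs—and it is fully verifiable, whereas the paper's appeal to disjoint cycles leaves the reader to reconstruct the key step. One caveat you share with the paper: strict monotonicity of the slice cardinalities requires $\Dim\ge 2$ (for $\Dim=1$ all slices are singletons, and indeed the lemma itself fails there, since every permutation fixes ${\mathcal P}^1_n$); this implicit assumption pervades the paper's whole restoration/formation framework, so it is not a defect of your argument relative to the paper's own standard, though stating it would tighten the proof.
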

\begin{proof}
  It can be proved from Definition \ref{def:DPermutation}
  and Theorem \ref{thm:permutationIsDisjointCycles} that 
   $A_1\ne A_2$ implies
   $A_1{\mathcal P}\ne A_2{\mathcal P}$, 
   hence $\varphi$ is injective.
  By Lemma \ref{lem:formationOpIsBijective}, 
 % its restoration is uniquely defined in (\ref{eq:restorationOp})
   the formation $A_{\mathcal T}$ of any ${\mathcal T}\in {\mathcal X}$
   satisfies $\varphi(A_{{\mathcal T}})
   = A_{{\mathcal T}} {\mathcal P} = {\mathcal T}$.
  Hence $\varphi$ is surjective.
\end{proof}

\begin{thm}
  \label{thm:binaryOpOnX}
  The group $(G_n^{\Dim}, \cdot)$ is isomorphic to
  $( {\mathcal X}, *)$ with the binary operation $*$ on ${\mathcal X}$
  given by
  \begin{equation}
    \label{eq:binaryOpOnX}
%    \forall {\mathcal T},{\mathcal S}\in {\mathcal X},\
    {\mathcal T}*{\mathcal S} := \varphi(\varphi^{-1}({\mathcal
      T})\cdot \varphi^{-1}({\mathcal S}))
  \end{equation}
  where $\varphi$ is the bijection defined in Lemma
  \ref{lem:bijectionfromG2X}. 
\end{thm}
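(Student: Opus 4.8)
The plan is to recognize that the binary operation $*$ in (\ref{eq:binaryOpOnX}) is defined precisely so as to \emph{transport} the group structure of $(G_n^{\Dim},\cdot)$ onto ${\mathcal X}$ through the bijection $\varphi$ of Lemma \ref{lem:bijectionfromG2X}. Consequently the theorem is essentially a ``transport of structure'' statement: once the bijectivity of $\varphi$ is in hand, the isomorphism is almost forced, and the bulk of the work is merely confirming that $({\mathcal X},*)$ is a bona fide group.

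First I would note that, since $\varphi: G_n^{\Dim}\rightarrow {\mathcal X}$ is a bijection, its inverse $\varphi^{-1}$ is a well-defined function and the right-hand side of (\ref{eq:binaryOpOnX}) makes sense; moreover, because $\varphi$ takes values in ${\mathcal X}$, the operation $*$ indeed has signature ${\mathcal X}\times {\mathcal X}\rightarrow {\mathcal X}$. This settles well-definedness.

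Next I would verify that $({\mathcal X},*)$ satisfies the axioms of Definition \ref{def:group} by pulling every computation back to $G_n^{\Dim}$ via $\varphi^{-1}$. Writing $A:=\varphi^{-1}({\mathcal T})$, $B:=\varphi^{-1}({\mathcal S})$, $C:=\varphi^{-1}({\mathcal U})$ and repeatedly using $\varphi^{-1}\circ \varphi=\mathrm{id}$, associativity of $*$ reduces to associativity of $\cdot$ in $G_n^{\Dim}$; the identity element is ${\mathcal P}=\varphi(E)$ (recall $\varphi(E)=E{\mathcal P}={\mathcal P}$, since $E$ acts as the identity map); and the inverse of ${\mathcal T}$ is $\varphi\bigl((\varphi^{-1}({\mathcal T}))^{-1}\bigr)$. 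Each of these is a short one-line computation.

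Finally, I would check the homomorphism property directly from (\ref{eq:binaryOpOnX}): for any $A,B\in G_n^{\Dim}$,
\begin{equation*}
  \varphi(A)*\varphi(B)
  = \varphi\bigl(\varphi^{-1}(\varphi(A))\cdot \varphi^{-1}(\varphi(B))\bigr)
  = \varphi(A\cdot B),
\end{equation*}
so $\varphi$ satisfies (\ref{eq:homomorphism}). Being both a homomorphism and a bijection, $\varphi$ is an isomorphism in the sense of Definition \ref{def:isomorphicGroups}, which is exactly the claim. I do not expect a genuine obstacle here: the only points demanding a little care are confirming that the transported operation really lands in ${\mathcal X}$ and that ${\mathcal P}$ serves as its identity, both of which follow immediately from the bijectivity of $\varphi$ and from $\varphi(E)={\mathcal P}$. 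The conceptual content — that ${\mathcal X}$ is a torsor over $G_n^{\Dim}$ with base point ${\mathcal P}$, so that fixing that base point identifies the set of triangular lattices with the permutation group itself — was already achieved in the preceding lemmas; this theorem merely records its algebraic consequence.
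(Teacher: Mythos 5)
Your proposal is correct and matches the paper's intent: the paper's own proof is a one-line citation of Definition \ref{def:isomorphicGroups}, Definition \ref{def:multDpermut}, Lemma \ref{lem:bijectionfromG2X}, and (\ref{eq:binaryOpOnX}), which is precisely the transport-of-structure argument you spell out (well-definedness of $*$, group axioms pulled back through $\varphi^{-1}$, and the homomorphism identity $\varphi(A)*\varphi(B)=\varphi(A\cdot B)$). You have simply made explicit the routine verifications that the paper leaves to the reader, including the correct identification of ${\mathcal P}=\varphi(E)$ as the identity of $({\mathcal X},*)$.
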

\begin{proof}
  This follows from 
  Definitions \ref{def:isomorphicGroups}
  and \ref{def:multDpermut}, 
  Lemma \ref{lem:bijectionfromG2X}, and (\ref{eq:binaryOpOnX}). 
\end{proof}

\begin{coro}
  \label{coro:permutationMonomorphism}
  For the symmetric group $S_{\mathcal X}$ on ${\mathcal X}$, 
  the map $\phi: G_n^{\Dim}\rightarrow S_{\mathcal X}$ given by
  $\sigma_A$ in (\ref{eq:permutationOnTrigLats}) as 
  \begin{equation}
    \label{eq:permutationMonomorphism}
    \phi(A) = \sigma_A
  \end{equation}
  is a monomorphism.
\end{coro}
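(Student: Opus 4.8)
The plan is to verify the two defining properties of a monomorphism (Definition \ref{def:isomorphicGroups}) in turn: that $\phi$ is a homomorphism and that it is injective. First I would confirm that $\phi$ is well defined, i.e.\ that its values actually lie in $S_{\mathcal X}$; this is immediate from Lemma \ref{lem:permutationOnTrigLats}, which guarantees that each $\sigma_A$ is a permutation of ${\mathcal X}$.

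For the homomorphism property I would establish $\sigma_{A\cdot B}=\sigma_A\circ\sigma_B$ for all $A,B\in G_n^{\Dim}$. This is nothing other than the second axiom of the group action proved in Lemma \ref{lem:groupActionOnTrigLats}: for every ${\mathcal T}\in{\mathcal X}$ one has $(A\cdot B){\mathcal T}=A(B{\mathcal T})$, which reads $\sigma_{A\cdot B}({\mathcal T})=\sigma_A(\sigma_B({\mathcal T}))$. Since this holds for every ${\mathcal T}$, the two maps on ${\mathcal X}$ coincide, so $\phi(A\cdot B)=\phi(A)\circ\phi(B)$, exactly the condition (\ref{eq:homomorphism}).

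For injectivity I would show that $\phi$ has trivial kernel. Suppose $\sigma_A=\sigma_B$, i.e.\ $A{\mathcal T}=B{\mathcal T}$ for every ${\mathcal T}\in{\mathcal X}$. The principal lattice ${\mathcal P}={\mathcal P}_n^{\Dim}$ is itself a triangular lattice and hence an element of ${\mathcal X}$, so evaluating at ${\mathcal P}$ yields $A{\mathcal P}=B{\mathcal P}$. By the injectivity of the bijection $\varphi$ in Lemma \ref{lem:bijectionfromG2X}---which is precisely the assertion that $A_1\ne A_2$ forces $A_1{\mathcal P}\ne A_2{\mathcal P}$---this forces $A=B$. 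Hence $\phi$ is injective, and together with the homomorphism property it is a monomorphism.

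I do not anticipate a genuine obstacle, since every ingredient has already been assembled upstream: the real work was distilled into Lemma \ref{lem:bijectionfromG2X}, whose injectivity I simply reuse. The only point meriting a moment's care is the reduction of injectivity on all of ${\mathcal X}$ to the single test lattice ${\mathcal P}$; this is legitimate because evaluating $\sigma_A$ and $\sigma_B$ at ${\mathcal P}$ alone already distinguishes $A$ from $B$, so no information from the remaining lattices is required.
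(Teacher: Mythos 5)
Your proof is correct, but it follows a genuinely different route from the paper's. The paper proves the corollary structurally: it factors $\phi$ as the chain $G_n^{\Dim} \overset{\varphi}{\rightarrow} {\mathcal X} \overset{c}{\rightarrow} {\mathcal C}_{\mathcal X} \hookrightarrow S_{\mathcal X}$, where $({\mathcal X},*)$ is the group of Theorem \ref{thm:binaryOpOnX}, $c$ is the Cayley isomorphism onto the subgroup ${\mathcal C}_{\mathcal X}$ of $S_{\mathcal X}$, and the inclusion $\bar{i}$ is a monomorphism; monomorphy of $\phi$ then follows because $\varphi$ and $c$ are isomorphisms and $\bar{i}$ is injective. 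You instead verify the two defining properties directly: the identity $\sigma_{A\cdot B}=\sigma_A\circ\sigma_B$ is read off from axiom (ii) of the group action in Lemma \ref{lem:groupActionOnTrigLats}, and injectivity is reduced to evaluation at the single lattice ${\mathcal P}\in{\mathcal X}$ together with the injectivity of $\varphi$ from Lemma \ref{lem:bijectionfromG2X}. Your route is more elementary---it bypasses Cayley's theorem and Theorem \ref{thm:binaryOpOnX} entirely---and it makes explicit a computation that the paper's proof conceals in the phrase ``by definition'' when asserting $\phi=\bar{i}\circ c\circ\varphi$: that factorization amounts to the claim $\sigma_A({\mathcal S})=(A{\mathcal P})*{\mathcal S}$ for all ${\mathcal S}$, which requires exactly the group-action manipulation you carry out. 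What the paper's approach buys in exchange is the identification of the image of $\phi$ with the Cayley subgroup ${\mathcal C}_{\mathcal X}$ and a natural setting for its follow-up observation (via the cardinality comparison $\left|G_n^{\Dim}\right| < |S_{\mathcal X}|$) that $\phi$ is not an epimorphism; your argument establishes the corollary as stated with strictly less machinery.
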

\begin{proof}
  By definition, we have 
  $\phi= \bar{i}\circ c \circ \varphi$, i.e.,
  $\phi$ is the chain of maps
  \begin{equation}
    \label{eq:chainOfMaps}
    \phi: G_n^D \overset{\varphi}{\longrightarrow} {\mathcal X} \overset{c}{\longrightarrow} 
    {\mathcal C}_{\mathcal X} \overset{\bar{i}}{\longhookrightarrow}
    {\mathcal S}_{\mathcal X}, 
  \end{equation}
  where ${\mathcal C}_{\mathcal X}$ is the subgroup
  of ${\mathcal S}_{\mathcal X}$
  isomorphic to ${\mathcal X}$, c.f. Cayley's theorem,  
  $c$ is the corresponding isomorphism,
  and $\bar{i}$ is the inclusion.
  By Definition \ref{def:isomorphicGroups}, 
  $\phi= \bar{i}\circ c \circ \varphi$ is a monomorphism
  because $\varphi$ and $c$ are isomorphisms
  and $\bar{i}$ is a monomorphism.
  Since
%  \begin{displaymath}
  \revise{$ \left|G_n^D\right| = [(n+1)!]^{\Dim} <
    |{\mathcal S}_{\mathcal X}| =
    \left([(n+1)!]^{\Dim}\right)!$},
    %\qedhere
    % \end{displaymath}
  $\phi$ is not an epimorphism. 
  % By Theorem \ref{thm:groupAction} and Lemma
  % \ref{lem:groupActionOnTrigLats},
  % $\phi$ is a homomorphism.
  % We still need to show that $\phi$ is injective.
  % Suppose there exists $A\ne B$ in $G_n^{\Dim}$ such that $\phi(A)=\phi(B)$.
  % Then
  % \begin{displaymath}
  %   \forall {\mathcal T}\in {\mathcal X},\ 
  %   \sigma_A ({\mathcal T}) = A {\mathcal T} = B{\mathcal T} = \sigma_B ({\mathcal T}),
  % \end{displaymath}
  % which implies $A^{-1}B=E$, a contradiction to $A\ne B$.
\end{proof}

\begin{coro}
  \label{coro:onePairDetermines}
  Each preimage-image pair of triangular lattices
  uniquely determines a $\Dim$-permutation $A\in G_n^{\Dim}$ 
  and the corresponding $\sigma_A\in {\mathcal S}_{\mathcal X}$ as in
  (\ref{eq:permutationMonomorphism})
  is also uniquely determined.
\end{coro}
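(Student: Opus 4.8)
The plan is to prove Corollary \ref{coro:onePairDetermines}
 as a direct consequence of the monomorphism established in
 Corollary \ref{coro:permutationMonomorphism},
 reducing the ``uniqueness'' claims to the injectivity of the relevant maps.
First I would fix a preimage-image pair $({\mathcal S}, {\mathcal T})$
 of triangular lattices and seek a $\Dim$-permutation $A\in G_n^{\Dim}$
 with $A{\mathcal S} = {\mathcal T}$.
The natural candidate is $A := A_{{\mathcal T}}\cdot R_{{\mathcal S}}$,
 the formation of ${\mathcal T}$ composed with the restoration of ${\mathcal S}$.
By Lemma \ref{lem:restorationOpIsBijective} we have
 $R_{{\mathcal S}}{\mathcal S} = {\mathcal P}_n^{\Dim}$,
 and by Lemma \ref{lem:formationOpIsBijective} we have
 $A_{{\mathcal T}}{\mathcal P}_n^{\Dim} = {\mathcal T}$;
 composing these via the group action property of
 Lemma \ref{lem:groupActionOnTrigLats} yields
 $A{\mathcal S} = A_{{\mathcal T}}(R_{{\mathcal S}}{\mathcal S})
 = A_{{\mathcal T}}{\mathcal P}_n^{\Dim} = {\mathcal T}$,
 establishing existence.

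Next I would establish uniqueness of $A$.
 Suppose $A_1{\mathcal S} = A_2{\mathcal S} = {\mathcal T}$
 for $A_1, A_2\in G_n^{\Dim}$.
 Acting on the left by $A_2^{-1}$ and using the group-action axioms
 gives $(A_2^{-1}\cdot A_1){\mathcal S} = {\mathcal S}$,
 so $B := A_2^{-1}\cdot A_1$ lies in the isotropy subgroup
 $(G_n^{\Dim})_{{\mathcal S}}$ of Lemma \ref{lem:isotropySubgroup}.
 The key point is that this isotropy subgroup is trivial:
 the injectivity argument already used in the proof of
 Lemma \ref{lem:bijectionfromG2X}
 (namely $A_1\ne A_2 \Rightarrow A_1{\mathcal P}\ne A_2{\mathcal P}$)
 shows more generally that a $\Dim$-permutation fixing any triangular
 lattice setwise must be $E$, since the slice cardinalities along each
 axis are pairwise distinct by Lemma \ref{lem:sliceOfTriangularLattice}
 and Lemma \ref{lem:cardTriangularLattice}, forcing each constituting
 permutation $b_i$ to fix the ordering of coordinates it induces.
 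Hence $B = E$ and $A_1 = A_2$.

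Finally, the uniqueness of the corresponding
 $\sigma_A\in {\mathcal S}_{\mathcal X}$ is immediate:
 by Corollary \ref{coro:permutationMonomorphism} the map
 $\phi(A) = \sigma_A$ is a monomorphism, hence injective,
 so the unique $A$ determines a unique $\sigma_A$.
I expect the main obstacle to be the triviality of the isotropy
 subgroup, i.e.\ verifying that no nonidentity $\Dim$-permutation can
 fix a triangular lattice setwise; the cleanest route is to invoke the
 pairwise-distinctness of slice cardinalities, which pins down the
 action of each $a_i$ on exactly those coordinates that actually occur
 in ${\mathcal S}$, rather than to re-derive the injectivity from scratch.
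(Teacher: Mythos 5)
Your proposal is correct, and its existence half coincides with the paper's proof: both take $A := A_{{\mathcal T}}\cdot R_{{\mathcal S}}$ (formation of the image composed with restoration of the preimage) and verify $A{\mathcal S}={\mathcal T}$ via Lemmas \ref{lem:groupActionOnTrigLats}, \ref{lem:restorationOpIsBijective}, and \ref{lem:formationOpIsBijective}. Where you genuinely diverge is uniqueness of $A$. The paper argues that ${\mathcal T}_1$ and ${\mathcal T}_2$ uniquely determine $R_{{\mathcal T}_1}$ and $A_{{\mathcal T}_2}$ (citing Theorem \ref{thm:binaryOpOnX}), hence the product $A$, and then passes to $\sigma_A$ through the monomorphism. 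You instead prove that the action of $G_n^{\Dim}$ on ${\mathcal X}$ is free: any $B$ with $B{\mathcal S}={\mathcal S}$ maps each slice $L_{i,j}({\mathcal S})$ bijectively onto $L_{i,b_i(j)}({\mathcal S})$, and since slice cardinalities along each axis are pairwise distinct (Lemmas \ref{lem:sliceOfTriangularLattice} and \ref{lem:cardTriangularLattice}), this forces $b_i(j)=j$, i.e.\ $B=E$. This buys something real: read strictly, the paper's argument shows the assignment $({\mathcal T}_1,{\mathcal T}_2)\mapsto A_{{\mathcal T}_2}\cdot R_{{\mathcal T}_1}$ is well defined, but excluding a second $A'\ne A$ with $A'{\mathcal T}_1={\mathcal T}_2$ requires exactly the injectivity of $A\mapsto A{\mathcal T}_1$ that your isotropy-triviality claim supplies (equivalently one can extract it from the injectivity of $\varphi$ in Lemma \ref{lem:bijectionfromG2X}: $A'{\mathcal T}_1={\mathcal T}_2$ gives $\varphi(A'\cdot A_{{\mathcal T}_1})=\varphi(A_{{\mathcal T}_2})$, hence $A'=A_{{\mathcal T}_2}\cdot R_{{\mathcal T}_1}$), so your version is self-contained where the paper's is slightly elliptical. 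One minor attribution slip: the paper's injectivity proof of Lemma \ref{lem:bijectionfromG2X} cites disjoint-cycle decompositions rather than slice cardinalities, but your slice-cardinality argument stands on its own (for $\Dim\ge 2$, the standing assumption, since for $\Dim=1$ all slices are singletons) and in fact reproves that injectivity; the final step, uniqueness of $\sigma_A$ via Corollary \ref{coro:permutationMonomorphism}, is identical in both proofs.
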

% \begin{proof}
%   For $A({\cal T}_1)={\cal T}_2$,
%   define $A':=\varphi^{-1}({\cal T}_2)\cdot
%   \left[\varphi^{-1}({\cal T}_1)\right]^{-1}$
%   where $\varphi$ is given in Lemma \ref{lem:bijectionfromG2X},
%   $\varphi^{-1}$ is the inverse of $\varphi$,
%   and the last $^{-1}$ is the inverse of $\Dim$-permutations 
%   in Definition \ref{def:DpermutInverse}. 
%   Clearly we have $A'=A$.
%   % By Definition \ref{def:DPermutation},
%   %  any $A\in G_n^{\Dim}$ is uniquely determined
%   %  by a preimage-image pair of triangular lattices.
%   Finally, (\ref{eq:chainOfMaps}) implies 
%   $\phi(A)\in {\mathcal C}_{\mathcal X}$
%   and that both $\varphi$ and $c$ are isomorphisms.
% \end{proof}
\begin{proof}
\revise{
Consider a permutation $\sigma_A: {\mathcal X}\rightarrow {\mathcal X}$ that maps
${\mathcal T}_1$  to ${\mathcal T}_2$.
Denote by $R_{{\mathcal T}_1}$ and $A_{{\mathcal T}_2}$
the restoration of ${\mathcal T}_1$
and the formation of ${\mathcal T}_2$, respectively, 
Lemma \ref{lem:permutationGroup} implies that
$A=A_{{\mathcal T}_2}\cdot R_{{\mathcal T}_1}$
is in $G_n^{\Dim}$ and 
\begin{displaymath}
  \phi(A){\mathcal T}_1 = \sigma_A {\mathcal T}_1
  = (A_{{\mathcal T}_2}\cdot R_{{\mathcal T}_1}){\mathcal T}_1
  = A_{{\mathcal T}_2}(R_{{\mathcal T}_1}{\mathcal T}_1)
  = A_{{\mathcal T}_2}{\mathcal P} = {\mathcal T}_2,
\end{displaymath}
where the first step follows from (\ref{eq:permutationMonomorphism}),
the second from (\ref{eq:permutationOnTrigLats}),
the third from Lemma \ref{lem:groupActionOnTrigLats},
the fourth from Lemma \ref{lem:restorationOpIsBijective}, 
%(\ref{eq:restorationOpIsBijective}),
and the fifth from Lemma \ref{lem:formationOpIsBijective}.
%(\ref{eq:formationOpIsBijective}).
By Theorem \ref{thm:binaryOpOnX}, ${\cal T}_1$ and ${\cal T}_2$ uniquely determine
$R_{{\mathcal T}_1}$ and $A_{{\mathcal T}_2}$,
which uniquely determine $A$, %$=A_{{\mathcal T}_2}\cdot R_{{\mathcal T}_1}$,
which, by Corollary \ref{coro:permutationMonomorphism},
uniquely determines $\sigma_A$.
}
\end{proof}

% However, the homomorphism in (\ref{eq:permutationMonomorphism})
% is not surjective,
% simply because the set of $\Dim$-permutations is a proper subgroup
% of $S_{\mathcal X}$:
% the bijection that swaps the first and the second coordinates
% of all points in any triangular lattice
% is a permutation ${\mathcal X}\rightarrow {\mathcal X}$,
% but is not a $\Dim$-permutation.

\begin{coro}
  \label{coro:suffConditionsForTLG}
  The TLG problem $(K,\mathbf{q})$
   in Definition \ref{def:poisedLatticeGenProblem}
   is solved by a triangular lattice ${\mathcal T}$
   if and only if its formation $A_{{\mathcal T}}$
   satisfies
  \begin{subequations}
    \label{eq:conditionOfPrincipalLatForTLG}
    \begin{align}
      \exists \mathbf{p}\in {\mathcal P}_n^{\Dim},
      \text{ s.t. } A_{\mathcal T} \mathbf{p} = \mathbf{q}; 
      \\
      \forall \mathbf{p}\in {\mathcal P}_n^{\Dim},\ 
      A_{\mathcal T}\mathbf{p} \in K.
    \end{align}
   Furthermore, all solutions to the TLG problem 
   can be obtained by enumerating the formations.
  \end{subequations}
\end{coro}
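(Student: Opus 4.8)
The plan is to reduce the whole statement to the single identity $\mathcal{T}=A_{\mathcal{T}}\mathcal{P}_n^{\Dim}$ supplied by Lemma \ref{lem:formationOpIsBijective}, which realizes a triangular lattice as the image of the principal lattice under its formation, i.e. $\mathcal{T}=\{A_{\mathcal{T}}\mathbf{p}:\mathbf{p}\in\mathcal{P}_n^{\Dim}\}$. Once $\mathcal{T}$ is written this way, each of the three defining requirements of the TLG problem in Definition \ref{def:poisedLatticeGenProblem} becomes a transparent statement about $A_{\mathcal{T}}$ acting on the fixed, known set $\mathcal{P}_n^{\Dim}$, and the proof is essentially bookkeeping.

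First I would dispatch the membership requirement $\mathbf{q}\in\mathcal{T}$. Because $\mathcal{T}=\{A_{\mathcal{T}}\mathbf{p}:\mathbf{p}\in\mathcal{P}_n^{\Dim}\}$, we have $\mathbf{q}\in\mathcal{T}$ precisely when some $\mathbf{p}\in\mathcal{P}_n^{\Dim}$ satisfies $A_{\mathcal{T}}\mathbf{p}=\mathbf{q}$, which is the first condition of (\ref{eq:conditionOfPrincipalLatForTLG}). Next I would treat the containment $\mathcal{T}\subseteq K$: every element of $\mathcal{T}$ has the form $A_{\mathcal{T}}\mathbf{p}$, so $\mathcal{T}\subseteq K$ is equivalent to $A_{\mathcal{T}}\mathbf{p}\in K$ for every $\mathbf{p}\in\mathcal{P}_n^{\Dim}$, which is the second condition of (\ref{eq:conditionOfPrincipalLatForTLG}). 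The remaining requirement $\mathcal{T}\in\mathcal{X}$ is then handled by a short remark: since $K\subseteq\mathbb{Z}_n^{\Dim}$ by hypothesis, the second condition already forces $\mathcal{T}\subseteq\mathbb{Z}_n^{\Dim}$, and as $\mathcal{T}$ is a degree-$n$ triangular lattice this yields $\mathcal{T}\in\mathcal{X}$ by (\ref{eq:setOfTrigLats}). Conjoining the three equivalences gives the asserted ``if and only if''.

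For the final sentence---that every solution is obtained by enumerating formations---I would appeal to Lemma \ref{lem:bijectionfromG2X}. The map $\varphi(A)=A\mathcal{P}_n^{\Dim}$ is a bijection from $G_n^{\Dim}$ onto $\mathcal{X}$ whose inverse sends each triangular lattice to its formation; hence the formations are exactly the elements of $G_n^{\Dim}$, with $A$ being the formation of $A\mathcal{P}_n^{\Dim}$. Letting $A$ range over $G_n^{\Dim}$ and retaining those $A$ for which both conditions of (\ref{eq:conditionOfPrincipalLatForTLG}) hold therefore produces each TLG solution exactly once.

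I do not anticipate a genuine obstacle; the content rides entirely on Lemmas \ref{lem:formationOpIsBijective} and \ref{lem:bijectionfromG2X}. The one delicate point worth care is the $\mathcal{T}\in\mathcal{X}$ clause: it is tempting to regard it as an independent hypothesis, but in the ``if'' direction it must be recovered from the second condition of (\ref{eq:conditionOfPrincipalLatForTLG}) together with $K\subseteq\mathbb{Z}_n^{\Dim}$, since otherwise one has only certified $\mathbf{q}\in\mathcal{T}$ and $\mathcal{T}\subseteq K$ without verifying that $\mathcal{T}$ lies in the search space $\mathcal{X}$.
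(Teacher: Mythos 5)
Your proposal is correct and follows essentially the same route as the paper: the ``if and only if'' is unfolded from the identity ${\mathcal T}=A_{\mathcal T}{\mathcal P}_n^{\Dim}$ of Lemma \ref{lem:formationOpIsBijective}, and the enumeration claim rests on the bijectivity of $\varphi(A)=A{\mathcal P}_n^{\Dim}$. The only cosmetic difference is that you cite Lemma \ref{lem:bijectionfromG2X} directly for the second part where the paper cites Theorem \ref{thm:binaryOpOnX}; since that theorem is just the bijection of Lemma \ref{lem:bijectionfromG2X} endowed with a group structure that the enumeration argument never uses, this is the same argument (and your explicit handling of the ${\mathcal T}\in{\mathcal X}$ clause via $K\subseteq\mathbb{Z}_n^{\Dim}$ is a welcome detail the paper leaves implicit).
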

\begin{proof}
  The first statement follows
   from Lemma \ref{lem:formationOpIsBijective}
  while the second from Theorem \ref{thm:binaryOpOnX}.
  % By Lemma \ref{lem:formationOpIsBijective},
  %  any triangular lattice ${\mathcal T}$ determines a $\Dim$-permutation,
  %  namely its formation, that generates ${\mathcal T}$
  %  from the principal lattice.
  % Hence all triangular lattices are generated by enumerating formations.
\end{proof}

The significance of Corollary \ref{coro:suffConditionsForTLG}
 is that, instead of enumerating
 all triangular lattices in ${\mathcal X}$,
 we can obtain
 \emph{all} solutions to the TLG problem
 by enumerating the $\Dim$-permutations in $G_n^{\Dim}$,
 whose simple structure is very amenable
 to the enumeration.

\begin{ntn}
  \label{ntn:DPermutationMatrix}
 In matrix notation, a $\Dim$-permutation is
  \begin{equation}
    \label{eq:DPermutationMatrix}
    A=
    \begin{bmatrix}
      a_1(0) & a_1(1) & \ldots & a_1(n)
      \\
      a_2(0) & a_2(1) & \ldots & a_2(n)
      \\
      \vdots & \vdots & \ddots & \vdots
      \\
      a_{\Dim}(0) & a_{\Dim}(1) & \ldots & a_{\Dim}(n)
    \end{bmatrix},
  \end{equation}
  where the row index $i$ of the matrix
  starts from 1 while the column index $j$ starts from 0.
  In other words, the $(i,j)$th-element of $A$ is
  \begin{equation}
    \label{eq:DPermutationMatrixSimple}
    A(i,j) = a_i(j).
  \end{equation}
\end{ntn}

\subsection{Partitioning the principal lattice and $\Dim$-permutations}
\label{sec:part-princ-latt}

When enumerating $\Dim$-permutations
 and checking them against (\ref{eq:conditionOfPrincipalLatForTLG}),
 there are two extremes.
For a given $\Dim$-permutation, 
 we could check the ${n+\Dim \choose \Dim}=\frac{(n+\Dim)!}{n!\Dim!}$
 conditions one at a time:
 we deny the $\Dim$-permutation upon the first time it fails any check;
 otherwise we retain it as a solution.
On the other hand,
 we could check all conditions in (\ref{eq:conditionOfPrincipalLatForTLG})
 \emph{simultaneously}.
For example,
 we could represent a subset $P$ of $\mathbb{Z}_n^{\Dim}$
 by a binary number,
 each bit of which corresponds to a point
 $\mathbf{p}\in\mathbb{Z}_n^{\Dim}$,
 with $1$ meaning $\mathbf{p}\in P$ and $0$ meaning $\mathbf{p}\not\in P$;
 then checking all conditions in (\ref{eq:conditionOfPrincipalLatForTLG})
 reduces to a single calculation of the logical conjunction of two binary numbers
 that represent $K$ and the triangular lattice $A_{\mathcal T}{\mathcal P}$.
% In the \texttt{C++} programming language, a convenient tool is 
%  the class \texttt{bitset<n>} in the standard template library.
 
In the former method
 the number of steps $\frac{(n+\Dim)!}{n!\Dim!}$
 is unnecessarily large while 
 in the latter much computation is repeated
 for similar $\Dim$-permutations.
To strike a balance,
 we group the points of a principal lattice
%$\frac{(n+\Dim)!}{n!\Dim!}$ conditions
 into $\Dim(n+1)$ equivalence classes, 
% each of which consists of multiple points.
 and at each stage we check (\ref{eq:conditionOfPrincipalLatForTLG})
 for points in a single equivalence class.
This helps to preclude at early stages
 those $\Dim$-permutations that
 do not satisfy (\ref{eq:conditionOfPrincipalLatForTLG}b).

 \begin{defn}
   \label{def:testSet}
   A \emph{test set} $W_{(\ell,m)}$ is an equivalence class of points
   in the principal lattice, 
%   with $(\ell,m)\in \mathbb{Z}_{\Dim}^+ \times \mathbb{Z}_n$.
   \begin{equation}
     \label{eq:testSet}
      \bigcup_{(\ell,m)\in \mathbb{Z}_{\Dim}^+ \times \mathbb{Z}_n}
      W_{(\ell,m)} = {\mathcal P}_n^{\Dim}.
   \end{equation}
 \end{defn}

%\begin{defn}
%  Recall that 
%\end{defn}

\begin{defn}
  \label{def:partialDpermut}
  A \emph{partial function} from $Y$ to $Z$
  is a function $Y'\rightarrow Z$ on some $Y'\subseteq Y$.
  The \emph{$(\ell,m)$-partial $\Dim$-permutation}, 
%  of a $\Dim$-permutation $A$,
  denoted by $A^{(\ell,m)}$,
  is a partial function on the test set $W_{(\ell,m)}$
  % \begin{equation}
  %   \label{eq:partialPermDomain}
  %   P_{(\ell,m)}=\left\{
  %     (i,j)\in \mathbb{Z}_{\Dim}^+\times \mathbb{Z}_n: s(i,j)\le s(\ell,m)
  %   \right\}
  % \end{equation}
  that satisfies 
  \begin{equation}
    \label{eq:partialDpermut}
    \forall \mathbf{p}\in W_{(\ell,m)},
    \ A^{(\ell,m)}\mathbf{p} = A\mathbf{p}.
  \end{equation}  
\end{defn}

 Although $A^{(\ell,m)}\ne A$,
  their actions on $W_{(\ell,m)}$ are exactly the same.
 Instead of enumerating all $A\in G_n^{\Dim}$,
  we enumerate all $A^{(\ell,m)}$'s by adding one number at a time
  to the $\Dim$-by-$(n+1)$ grid in (\ref{eq:DPermutationMatrix}), 
  so that they are naturally organized into a spanning tree.
 The parent-child relation in this spanning tree
  and the incremental construction of $\Dim$-permutations
  can be made precise by a linear ordering of pairs
  such as the one below. %in Definition \ref{def:linearOrderingOfGrid}.

\begin{defn}
  \label{def:linearOrderingOfGrid}  
  The \emph{linear ordering of integer pairs}
   on a grid $\mathbb{Z}_{\Dim}^+\times \mathbb{Z}_n$
   is the column-wise ordering of the grid,
   i.e., the total ordering obtained by
   stacking all the columns of the grid
   into one column.
  More precisely, 
   this ordering is a bijection $s$
   that maps a pair $(i,j)\in \mathbb{Z}_{\Dim}^+\times \mathbb{Z}_n$
   to a scalar index $k\in \mathbb{Z}_{\Dim(n+1)}^+$,
  \begin{align}
    \label{eq:mid2id}
    s(i,j) &= i + j\Dim; 
    \\
    \label{eq:id2mid}
    s^{-1}(k) &= \left(1 + (k-1) \text{ mod } \Dim,\
           \left\lfloor \frac{k-1}{\Dim}\right\rfloor\right),
  \end{align}
  where $\lfloor\cdot\rfloor: \mathbb{Q}\rightarrow \mathbb{N}$
  is the floor function.
\end{defn}

Then the linear ordering of the $\Dim$-permutations $A^{(\ell,m)}$'s is 
% Since $s$ is a bijection,
%  it also makes sense to write the matrix $A$ in
%  (\ref{eq:DPermutationMatrix}) as 
 \begin{equation}
   \label{eq:partialPermutScalar}
   \forall t=s(\ell,m),\
   A^{(t)}:= A^{(\ell,m)},
 \end{equation}
 where $A^{(t)}$ is interpreted as a matrix in
 (\ref{eq:DPermutationMatrix}). 
 Clearly we have $A^{(\Dim(n+1))}=A$
 and $A^{(0)}(i,j)=-1$.

  % \begin{rem}
% The notation in (\ref{eq:DPermutationMatrixSimple})
%  and (\ref{eq:partialDpermutMatrix})
%  is about the $(i,j)$th \emph{element} of a matrix that \emph{represents}
%  a $\Dim$-permutation.
% In contrast, $A\mathbf{p}$ and $A{\mathcal T}$ means
%  a $\Dim$-permutation \emph{acting}
%  on a point $\mathbf{p}\in \mathbb{Z}_n^{\Dim}$
%  or a point set ${\mathcal T}\subseteq \mathbb{Z}_n^{\Dim}$,
%  respectively.
% To distinguish $A\mathbf{p}$ and $A{\mathcal T}$
%  from the notation $A(i,j)$,
%  we never write $A(\mathbf{p})$ nor $A({\mathcal T})$.
%\end{rem}

\begin{lem}
  \label{lem:triangularLatPartialPartition}
  A triangular lattice ${\mathcal T}_n^{\Dim}$
  % \subset \mathbb{Z}_n^{\Dim}$ be 
  has the partition
  \begin{equation}
    \label{eq:triangularLatPartialPartition}
    {\mathcal T}_n^{\Dim} =
    \bigcup_{(\ell,m)\in \mathbb{Z}_{\Dim}^+ \times \mathbb{Z}_n}
    A_{\mathcal T}^{(\ell,m)} W_{(\ell,m)},
  \end{equation}
   where %$A_{\mathcal T}$ is its formation operator and
   the terms $A_{\mathcal T}^{(\ell,m)} W_{(\ell,m)}$
   are pairwise disjoint.
\end{lem}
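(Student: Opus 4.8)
The plan is to realize the asserted partition as the image, under the bijection $A_{\mathcal T}$, of a partition of the principal lattice ${\mathcal P}_n^{\Dim}$ that is already available to us. The starting observation is that, by Definition \ref{def:testSet}, each test set $W_{(\ell,m)}$ is an equivalence class of points of ${\mathcal P}_n^{\Dim}$, and equivalence classes of a single relation are automatically pairwise disjoint. Combined with (\ref{eq:testSet}), this shows that the family $\{W_{(\ell,m)}\}$ is a genuine partition of the principal lattice:
\begin{equation}
  {\mathcal P}_n^{\Dim} =
  \bigcup_{(\ell,m)\in \mathbb{Z}_{\Dim}^+ \times \mathbb{Z}_n} W_{(\ell,m)},
\end{equation}
a \emph{disjoint} union.

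Next I would push this partition forward through the formation $A_{\mathcal T}$. By Definition \ref{def:formationOp}, $A_{\mathcal T}=R_{\mathcal T}^{-1}$ is a $\Dim$-permutation, hence a bijection on $\mathbb{Z}_n^{\Dim}$. Since any injection commutes with unions and carries disjoint sets to disjoint images, applying $A_{\mathcal T}$ to the displayed partition yields
\begin{equation}
  A_{\mathcal T}{\mathcal P}_n^{\Dim} =
  \bigcup_{(\ell,m)\in \mathbb{Z}_{\Dim}^+ \times \mathbb{Z}_n}
  A_{\mathcal T} W_{(\ell,m)},
\end{equation}
with the sets $A_{\mathcal T} W_{(\ell,m)}$ pairwise disjoint. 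Invoking Lemma \ref{lem:formationOpIsBijective}, the left-hand side is exactly ${\mathcal T}_n^{\Dim}$.

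It then remains to replace the full formation $A_{\mathcal T}$ by its partial versions. By Definition \ref{def:partialDpermut}, the $(\ell,m)$-partial $\Dim$-permutation $A_{\mathcal T}^{(\ell,m)}$ agrees with $A_{\mathcal T}$ on the test set $W_{(\ell,m)}$, so that $A_{\mathcal T} W_{(\ell,m)} = A_{\mathcal T}^{(\ell,m)} W_{(\ell,m)}$ for every $(\ell,m)$. Substituting this identity into the previous display gives the asserted partition (\ref{eq:triangularLatPartialPartition}) together with the pairwise disjointness of its pieces.

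I do not expect a genuine obstacle here: at bottom the lemma is the statement that a bijection transports a partition to a partition. The only point that warrants a moment's care is the disjointness of the test sets themselves, and this is immediate once one recognizes them as equivalence classes in Definition \ref{def:testSet}. The passage to partial permutations is then purely notational, since each $A_{\mathcal T}^{(\ell,m)}$ coincides with $A_{\mathcal T}$ precisely on the set $W_{(\ell,m)}$ to which it is applied, so no information is lost in the substitution.
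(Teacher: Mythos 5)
Your proposal is correct and follows essentially the same route as the paper's proof: both push the test-set partition of ${\mathcal P}_n^{\Dim}$ forward through the bijection $A_{\mathcal T}$ (using Lemma \ref{lem:formationOpIsBijective} to identify the image with ${\mathcal T}_n^{\Dim}$), then substitute the partial permutations via Definition \ref{def:partialDpermut}, with disjointness inherited from the bijectivity of the formation. Your write-up merely spells out more explicitly the disjointness of the equivalence classes and the preservation of disjointness under injections, which the paper leaves implicit.
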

\begin{proof}
  By Lemma \ref{lem:formationOpIsBijective}
  and Definitions \ref{def:testSet} and \ref{def:partialDpermut}, 
  we have
  \begin{displaymath}
    {\mathcal T}_n^{\Dim} %&= A_{\mathcal T}{\mathcal P}_n^{\Dim}
    = A_{\mathcal T}\bigcup_{(\ell,m)}W_{(\ell,m)}
    = \bigcup_{(\ell,m)}A_{\mathcal T}W_{(\ell,m)}
    = \bigcup_{(\ell,m)}A_{\mathcal T}^{(\ell,m)}W_{(\ell,m)},
  \end{displaymath}
  where the pairwise disjointness 
  follows from the formation being a bijection.
  % By Definition \ref{def:testSet},
  % any point $\mathbf{p}\in W_{\ell,m}$ has a scalar index
  % no greater than $s(\ell,m)$.
  % Therefore none of the entries of $A_{\mathcal T}$ with scalar index greater
  % than $s(\ell,m)$ is needed.
  % Hence Definition \ref{def:partialDpermut} implies that
  % the partial formation $A_{\mathcal T}^{(\ell,m)}$ has
  % the signature $W_{\ell,m}\rightarrow \mathbb{Z}_n^{\Dim}$.
  % The proof is completed by Theorem \ref{thm:triangularLatPartition}.
\end{proof}

\begin{thm}
  \label{thm:incrementalConditionsForTLG}
  The TLG problem $(K, \mathbf{q})$
   in Definition \ref{def:poisedLatticeGenProblem}
   is solved by a triangular lattice ${\mathcal T}=A_{\mathcal T}{\mathcal P}$
   if and only if its formation $A_{{\mathcal T}}$ satisfies
  \begin{subequations}
    \label{eq:incrementalConditionsForTLG}
    \begin{align}
      \forall (\ell,m)\in \mathbb{Z}_{\Dim}^+\times \mathbb{Z}_n,\ 
      A_{\mathcal T}^{(\ell,m)} W_{(\ell,m)} \subset K;
      \\
      \exists \mathbf{p}\in {\mathcal P}, %_n^{\Dim},
      \text{ s.t. } A_{\mathcal T}\mathbf{p} = \mathbf{q},
    \end{align}
  \end{subequations}
  where the test sets $W_{(\ell,m)}$'s
  form a partition of ${\mathcal P}$.
  Furthermore, an enumeration based on the 
  partial $\Dim$-permutations finds all solutions
  to the TLG problem.
\end{thm}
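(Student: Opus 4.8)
The plan is to obtain this characterization as a direct refinement of Corollary \ref{coro:suffConditionsForTLG}, using only the partition property of the test sets together with the action-equivalence of partial $\Dim$-permutations. I would first note that condition (\ref{eq:incrementalConditionsForTLG}b) is literally condition (\ref{eq:conditionOfPrincipalLatForTLG}a), so the sole task is to show that (\ref{eq:incrementalConditionsForTLG}a) is equivalent to the pointwise feasibility requirement (\ref{eq:conditionOfPrincipalLatForTLG}b), namely $\forall \mathbf{p}\in{\mathcal P}_n^{\Dim},\ A_{\mathcal T}\mathbf{p}\in K$. The clause that the test sets partition ${\mathcal P}$ is immediate from Definition \ref{def:testSet}.

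For that equivalence I would invoke Definition \ref{def:testSet}, by which the $W_{(\ell,m)}$ are equivalence classes whose union is ${\mathcal P}_n^{\Dim}$, so the universal quantifier over $\mathbf{p}\in{\mathcal P}$ splits into a conjunction of universal quantifiers, one over each $W_{(\ell,m)}$. On each test set, Definition \ref{def:partialDpermut} gives $A_{\mathcal T}^{(\ell,m)}\mathbf{p}=A_{\mathcal T}\mathbf{p}$, so the clause ``$A_{\mathcal T}\mathbf{p}\in K$ for all $\mathbf{p}\in W_{(\ell,m)}$'' is exactly the set containment $A_{\mathcal T}^{(\ell,m)}W_{(\ell,m)}\subset K$. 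Chaining these two observations yields (\ref{eq:conditionOfPrincipalLatForTLG}b) $\Leftrightarrow$ (\ref{eq:incrementalConditionsForTLG}a), and the stated iff follows from Corollary \ref{coro:suffConditionsForTLG}.

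For the enumeration claim I would argue completeness in two steps. Running over all index pairs $(\ell,m)\in\mathbb{Z}_{\Dim}^+\times\mathbb{Z}_n$ in the linear order of Definition \ref{def:linearOrderingOfGrid} assigns every entry of the matrix (\ref{eq:DPermutationMatrix}), so completing a partial $\Dim$-permutation produces an arbitrary full $\Dim$-permutation, i.e. an arbitrary formation $A_{\mathcal T}$; hence the incremental enumeration ranges over precisely the set of all formations, which by Corollary \ref{coro:suffConditionsForTLG} already recovers every solution. It then remains to confirm that checking (\ref{eq:incrementalConditionsForTLG}a) one test set at a time never discards a genuine solution: once $A_{\mathcal T}^{(\ell,m)}$ is fixed, the image $A_{\mathcal T}^{(\ell,m)}W_{(\ell,m)}$ is determined (Definition \ref{def:partialDpermut}) and, by the pairwise disjointness in Lemma \ref{lem:triangularLatPartialPartition}, is a block of ${\mathcal T}$ that later assignments neither touch nor repair, so a failure of (\ref{eq:incrementalConditionsForTLG}a) on any one block is irrecoverable and the pruning is sound.

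The bulk of this theorem is therefore bookkeeping, and the only point demanding genuine care is this last pruning-soundness observation, since it is what makes the per-test-set incremental verification a faithful substitute for the monolithic check of Corollary \ref{coro:suffConditionsForTLG} and thereby licenses the backtracking formulation of Definition \ref{def:backtracking}. I expect no real obstacle beyond stating the quantifier-splitting cleanly and being explicit that the fixed image on an already-assigned block cannot be altered downstream.
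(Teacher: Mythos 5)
Your proof is correct and follows essentially the same route as the paper, which simply cites Lemma \ref{lem:triangularLatPartialPartition} and Corollary \ref{coro:suffConditionsForTLG}; your quantifier-splitting over the partition of ${\mathcal P}$ and the agreement $A_{\mathcal T}^{(\ell,m)}\mathbf{p}=A_{\mathcal T}\mathbf{p}$ on each test set is exactly the content being invoked there. The extra pruning-soundness discussion is a faithful (if more verbose) unfolding of the same two ingredients.
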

\begin{proof}
  This follows directly from Lemma
  \ref{lem:triangularLatPartialPartition}
  and Corollary \ref{coro:suffConditionsForTLG}.
\end{proof}

Theorem \ref{thm:incrementalConditionsForTLG}
 is the main theoretical foundation
 for our TLG algorithm.

%%% Local Variables:
%%% mode: latex
%%% TeX-master: "../PLG"
%%% End:

%  LocalWords:  PLG discretization discretize Laplacian monomorphism
% LocalWords:  isomorphisms surjective nonnegative

\section{Algorithm}
\label{sec:algorithms}

In light of Corollary \ref{coro:suffConditionsForTLG}
 and Notation \ref{ntn:DPermutationMatrix}, 
 the key issue that affects the efficiency of a TLG algorithm
 is how to choose the equivalence classes
 in Definition \ref{def:testSet}. 
\revise{
  In Sections \ref{sec:test-sets-first} and \ref{sec:test-sets-second},
  we %describe two types of test sets %that satisfy (\ref{eq:testSet}); 
  define,
  for each $(\ell,m)\in \mathbb{Z}_{\Dim}^+\times \mathbb{Z}_{n}$,
  a subset $W_{(\ell,m)}$ of ${\cal P}_n^{\Dim}$ as the $(\ell,m)$th test set
  and then deduce directly from Definition \ref{def:partialDpermut}
  the corresponding partial $\Dim$-permutation $A^{(\ell,m)}$. 
Exploiting the total ordering $s$ in Definition \ref{def:linearOrderingOfGrid}
 on $\mathbb{Z}_{\Dim}^+\times \mathbb{Z}_{n}$,
 we give each pair $(\ell,m)$ the scalar index $s(\ell,m)$
 and determine all spanning trees
 in the solution space of partial $\Dim$-permutations. 
Based on the backtracking algorithm
 in Definition \ref{def:backtracking}, 
 we propose in Section \ref{sec:backtr-algor-plg} 
 the TLG algorithm that traverses the solution space
 to locate a plausible $\Dim$-permutation.
}
% Since we will show in
%  the reader might want to skip Section \ref{sec:test-sets-first}.
% However, by including both test sets in the exposition,
The importance of algebraic structures
 in designing the solution space 
 is emphasized in Section \ref{sec:tests}, 
 where the TLG algorithm based on test sets in Section \ref{sec:test-sets-second}
 is shown to be much more efficient
 than that based on Section \ref{sec:test-sets-first}. 
% Thus we advocate the powerful combination of abstract algebra and
% computer algorithms.
 
\subsection{Test sets of type I}
\label{sec:test-sets-first}

% In light of Definition \ref{def:linearOrderingOfGrid}, 
%  we require that the matrix of 
%  any partial $\Dim$-permutation in 
%  for test sets of type I satisfy
%   \begin{equation}
%     \label{eq:partialDpermutMatrix}
%     A^{(\ell,m)}(i,j) =
%     \begin{cases}
%       A(i,j) & \text{if } s(i,j) \le s(\ell,m);
%       \\
%       -1 & \text{otherwise},
%     \end{cases}
%   \end{equation}
%   where $s$ is defined in (\ref{eq:mid2id}), 
%   and ``$-1$'' indicates undefined behavior.

\revise{
\begin{defn}
  \label{def:testSet1}
  The $(\ell,m)$th \emph{test set of type I}
   is a subset of $\mathbb{Z}_n^{\Dim}$ given by
   $W_{(\ell,m)}:=\tilde{W}_{(\ell,m)}$
   if $(\ell,m)=(1,0)$
   and otherwise
   $W_{(\ell,m)}:=\tilde{W}_{(\ell,m)}\setminus \tilde{W}_{(i,j)}$
   where $s(i,j)=s(\ell,m)-1$, 
   $s$ is defined in (\ref{eq:mid2id}), and
   \begin{equation}
     \label{eq:testSetTypeIprimitive}
%     \forall (\ell,m)\ne(0,0),\quad 
     \tilde{W}_{(\ell,m)} := \left\{
       \mathbf{p} \in {\cal P}_n^{\Dim}:\quad
%       \left\{
         \begin{array}{l}
           i\le\ell \implies p_i\in \mathbb{Z}_m;\\
           i> \ell \implies p_i\in \mathbb{Z}_{m-1}
         \end{array}
%         \right.
     \right\}.
   \end{equation}
\end{defn}
 
By Notation \ref{ntn:firstNnaturalNumbers},
 $m=0$ yields $\mathbb{Z}_{m-1}=\emptyset$, 
 hence 
 $\tilde{W}_{(\ell,m)}=\emptyset$
 if and only if $\ell<\Dim$ and $m=0$. 
The conditions in (\ref{eq:testSetTypeIprimitive}) ensure
 that the sequence $(\tilde{W}_{(\ell,m)})_{s(\ell,m)=0}^{\Dim (n+1)}$
 be a filtration, i.e.,
\begin{displaymath}
  \emptyset = \tilde{W}_{(0,0)} \subseteq
  \tilde{W}_{(1,0)} \subseteq \cdots \subseteq
  \tilde{W}_{(\Dim,0)} \subseteq \tilde{W}_{(1,1)}
  \subseteq \cdots \subseteq
  \tilde{W}_{(\Dim,1)} \subseteq \cdots \subseteq
  \tilde{W}_{(\Dim,n)} = {\cal P},
\end{displaymath}
which implies that the test sets $\{W_{(\ell,m)}: (\ell,m)\in
\mathbb{Z}_{\Dim}^+\times \mathbb{Z}_n\}$
in Definition \ref{def:testSet1} 
indeed form a partition of the principal lattice,
c.f. Definition \ref{def:testSet}.
}

\begin{exm}
  \label{exm:testSetPartitionD2n2}
  For ${\mathcal P}_2^2$ in (\ref{eq:principalLatD2n2}),
  % the sets of filtrations
  % are
  % \begin{displaymath}
  %   \begin{array}{lll}
  %     \tilde{W}_{(1,0)}=\emptyset,
  %     & \tilde{W}_{(1,1)}=\{(1,0)\},
  %     & \tilde{W}_{(1,2)}=\{(2,0)\}, 
  %     \\
  %     \tilde{W}_{(2,0)}=\{(0,0)\},
  %     & \tilde{W}_{(2,1)}=\{(0,1),(1,1)\},
  %     & \tilde{W}_{(2,2)}=\{(0,2)\}.
  %   \end{array}
  % \end{displaymath}
  the test sets of type I
  are
  \begin{displaymath}
    \begin{array}{lll}
      W_{(1,0)}=\emptyset,
      & W_{(1,1)}=\{(1,0)\},
      & W_{(1,2)}=\{(2,0)\}, 
      \\
      W_{(2,0)}=\{(0,0)\},
      & W_{(2,1)}=\{(0,1),(1,1)\},
      & W_{(2,2)}=\{(0,2)\}.
    \end{array}
  \end{displaymath}
  The triangular lattice in Example \ref{exm:latticeD2n2} is
  \begin{equation}
    \label{eq:triangularLatD2n2Elements}
     {\mathcal T} = \{(0,0), (0,2), (1,0), (1,1), (1,2), (2,0)\}
  \end{equation}
  and its formation is
  \begin{equation}
    \label{eq:matrixOfLatticeD2n2}
    A_{{\mathcal T}} =
    \begin{bmatrix}
      1 & 0 & 2
      \\
      0 & 2 & 1
    \end{bmatrix}.
  \end{equation}
  In light of Lemma \ref{lem:triangularLatPartialPartition}, 
   ${\mathcal T}$ is partitioned into
  \begin{displaymath}
    \begin{array}{lll}
      A_{{\mathcal T}}^{(1,0)}W_{(1,0)}=\emptyset,\ 
      &A_{{\mathcal T}}^{(1,1)}W_{(1,1)}=\{(0,0)\},\
      &A_{{\mathcal T}}^{(1,2)}W_{(1,2)}=\{(2,0)\},
      \\
      A_{{\mathcal T}}^{(2,0)}W_{(2,0)}=\{(1,0)\},\ 
      &A_{{\mathcal T}}^{(2,1)}W_{(2,1)}=\{(1,2),(0,2)\},\ 
      &A_{{\mathcal T}}^{(2,2)}W_{(2,2)}=\{(1,1)\}.
    \end{array}
  \end{displaymath}
\end{exm}

% \begin{rem}
%   Since the test sets depend only on $\Dim$ and $n$,
%    they can be generated once and for all
%    (at compile time)
%    before checking on any $\Dim$-permutations (at run time).
% \end{rem}

\subsection{Test sets of type II}
\label{sec:test-sets-second}

\revise{
  The efficiency of test sets of type I
  can be improved
  by requiring that
  test sets to be checked early contain
  a larger number of points in the principal lattice; 
  then a failure of an early test set eliminates
  a larger subtree.
To this end, 
 we capture the structure
 of isotropy subgroups of $G_n^{\Dim}$ 
 in Theorem \ref{thm:DpermutAction}, 
 which guarantees the optimal subtree elimination 
 and gives rise to test sets of type II
 in Definition \ref{def:testSet2}. 

\begin{lem}
  \label{lem:groupActionOnTestSet}
  Let $H$ be a subgroup of a group $G$
  and let $Z$ be a subset of a $G$-set $Y$,
  c.f. Definition \ref{def:groupAction}. 
  Define a set 
  \begin{equation}
    \label{eq:main_def}
    W_{H,Z} := \left\{p \in Y : H p \subset Z \right\},
  \end{equation}
  where $Hp$ is the orbit of $p$ under $H$, 
  c.f. Definition \ref{def:orbitForGsets}. 
  Then we have
  \begin{equation}
    \label{eq:alg_foundation}
    \forall a \in G, \quad
    a W_{H,Z}  = \bigcap_{b \in aH} bZ,
  \end{equation}
  where $aH$ is the left coset of $H$, 
  c.f. Definition \ref{def:coset}. 
  % and $a, b$ act on subsets of $Y$ in the natural way. 
\label{prop:algebraic_foundation}
\end{lem}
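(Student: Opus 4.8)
The plan is to establish the set equality (\ref{eq:alg_foundation}) by translating membership on both sides into a single universally quantified condition over $H$ and then checking that the two conditions coincide. First I would record that, by Definition \ref{def:groupAction}, each $a\in G$ acts on $Y$ as a bijection whose inverse is the action of $a^{-1}$; consequently $y\in aW_{H,Z}$ holds if and only if $a^{-1}y\in W_{H,Z}$. By the definition (\ref{eq:main_def}) of $W_{H,Z}$ together with Definition \ref{def:orbitForGsets}, the latter is equivalent to $h(a^{-1}y)\in Z$ for every $h\in H$. Using the second group-action axiom to collapse the composition, this reads $(ha^{-1})y\in Z$ for all $h\in H$.

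For the right-hand side, $y\in\bigcap_{b\in aH}bZ$ means $y\in bZ$ for every $b$ in the coset $aH$, i.e.\ $b^{-1}y\in Z$ for all such $b$ (again inverting the action of $b$). Writing a generic coset element as $b=ah$ with $h\in H$, c.f.\ Definition \ref{def:coset}, this becomes $(ah)^{-1}y=(h^{-1}a^{-1})y\in Z$ for all $h\in H$. The decisive step is to reindex: since inversion is a bijection of the group $H$, replacing $h$ by $h^{-1}$ turns this into $(ha^{-1})y\in Z$ for all $h\in H$, which is exactly the condition obtained for the left-hand side. Hence $y\in aW_{H,Z}$ if and only if $y\in\bigcap_{b\in aH}bZ$, which proves (\ref{eq:alg_foundation}).

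I do not expect a genuine obstacle here; the content is a careful bookkeeping argument rather than a deep one. The two points that require attention are: (i) applying axiom (ii) of Definition \ref{def:groupAction} in the correct direction, so that a composition of actions such as $h(a^{-1}y)$ becomes the action of the group product $(ha^{-1})y$, and so that $b^{-1}$ genuinely undoes the action of $b$; and (ii) the reindexing $h\mapsto h^{-1}$ (equivalently, parametrizing the coset as $aH=\{ah^{-1}:h\in H\}$), which hinges only on $H$ being closed under inversion and on the bijection $h\mapsto ah$ between $H$ and $aH$. Everything else is a direct unfolding of Definitions \ref{def:group}, \ref{def:coset}, \ref{def:groupAction}, and \ref{def:orbitForGsets}.
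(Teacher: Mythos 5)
Your proof is correct and follows essentially the same route as the paper's: both arguments translate membership across the action by applying inverses (your step $y\in aW_{H,Z}\Leftrightarrow a^{-1}y\in W_{H,Z}$ is the paper's multiplication of the identity by $a^{-1}$), and both hinge on the same key observation that inversion $h\mapsto h^{-1}$ is a bijection of $H$ (the paper's ``because $b^{-1}$ ranges over $H$''). The only cosmetic difference is packaging: the paper first reduces to the case $a=e$ and then proves $W_{H,Z}=\bigcap_{b\in H}bZ$ by double inclusion, whereas you run a single chain of equivalences for general $a$.
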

\begin{proof}
  Since the signature of an action of $G$ on $Y$
  is $G\times Y \rightarrow Y$,
  any $a \in G$ can be considered as a permutation on $Y$.
  Multiply (\ref{eq:alg_foundation}) with $a^{-1}$,
  apply condition (ii) in Definition \ref{def:groupAction}, 
  and we deduce from $a^{-1}\cap_{b\in a H} bZ = \cap_{b\in H} bZ$
  that (\ref{eq:alg_foundation}) holds if and only if
  $W_{H,Z} = \bigcap_{b \in H} bZ$. 
  To show $W_{H,Z}$ is contained in $bZ$ for every $b \in H$, 
  suppose $p \in W_{H,Z}$. 
  Because the orbit of $p$ is contained in $Z$, 
  we have $b^{-1}p \in Z$ for any $b\in H$; 
  apply $b$ to both sides and we get $p \in bZ$. 
  Conversely, 
  suppose $p \in bZ$ for every $b \in H$; 
  apply $b^{-1}$ to both sides
  and we have $b^{-1}p \in Z$ for every $b \in H$. 
  Because $b^{-1}$ ranges over $H$, 
  the orbit of $p$ under $H$ is contained in $Z$. 
\end{proof}

Write a sequence $\Lambda$ of subsets of $\mathbb{Z}_n$ as
\begin{equation}
  \label{eq:D-SubsetsOfZn}
  \Lambda := (\Lambda_i)_{i=1}^{\Dim}
  = (\Lambda_1, \Lambda_2, \ldots, \Lambda_{\Dim})
\end{equation}
and denote by $H_{\Lambda}$ the subset of $G_n^{\Dim}$
that holds fixed the coordinates in $\Lambda$, i.e., 
% $H_{\Lambda} \Lambda = \Lambda$
% or more explicitly 
 \begin{equation}
   \label{eq:H-Lambda}
  H_{\Lambda} := \left\{A \in G_n^{\Dim}:
    \forall i \in \mathbb{Z}_{\Dim}^+,\ \forall j\in \Lambda_i,\ 
    A(i,j) = j \right\}.
    % \forall \mathbf{p} \in \Lambda,\ 
    % A \mathbf{p} = \mathbf{p} \right\}.
\end{equation}

By Lemma \ref{lem:isotropySubgroup},
 $H_{\Lambda}$ is the intersection of
 isotropy subgroups of $G_n^{\Dim}$
 at coordinates in $\Lambda$.
From Corollary \ref{coro:equalCosets}
 we readily deduce that
  two $\Dim$-permutations $A, B \in G_n^{\Dim}$
  are in the same coset of $H_{\Lambda}$
  if and only if %$B\in AH_{\Lambda}$, i.e., 
  their actions agree on each $\Lambda_i$, i.e., 
%  \begin{equation*}
  $\forall i\in \mathbb{Z}_{\Dim}^+,\  \forall j \in \Lambda_i,\ 
  A(i,j) = B(i,j)$.
%  \end{equation*}

What is %$H_{\Lambda} \mathbf{p}$,
 the orbit of a point $\mathbf{p} \in \mathbb{Z}_n^{\Dim}$
 under $H_{\Lambda}$?
For a fixed dimension $d\in \mathbb{Z}_{\Dim}^+$, 
 if $p_d \in \Lambda_d$, 
 all permutations in $H_{\Lambda}$ will hold $p_d$ fixed;
 otherwise $p_d \notin \Lambda_d$, 
 then $p_d$ will be mapped to one of $\mathbb{Z}_n \setminus \Lambda_d$. 
By Definition \ref{def:DPermutation},
 a $\Dim$-permutation consists of
 $\Dim$ independent 1-permutations,
 each of which, 
 by Theorem \ref{thm:permutationIsDisjointCycles},
 is simply a product of disjoint cycles.
Hence the orbit of $\mathbf{p}$ is
\begin{equation}
  \label{eq:orbitH}
  H_{\Lambda} \mathbf{p} = \prod_{i=1}^{\Dim} W^{\times}(p_i, \Lambda_i)
  \text{ where }
  W^{\times}(p_i, \Lambda_i) :=
  \begin{cases}
    \{p_i\} &\text{if } p_i \in \Lambda_i; \\
    \mathbb{Z}_n \setminus \Lambda_i &\text{otherwise}. 
  \end{cases}
\end{equation}

Lemma \ref{lem:groupActionOnTestSet} and 
 the above arguments prove 

\begin{thm}
%  \label{thm:main}
  \label{thm:DpermutAction}
%  Suppose $\Lambda_d \subset \mathbb{Z}_n$ for $1 \le d \le D$, 
  For a sequence $\Lambda$ of subsets of $\mathbb{Z}_n$, 
  define
  \begin{equation}
    \label{eq:W-Lambda}
    W_{\Lambda,{\mathcal P}} := \left\{\mathbf{p} \in \mathbb{Z}_n^{\Dim}: 
      %\prod_{i=1}^{\Dim} W^{\times}(p_d, \Lambda_d)
     H_{\Lambda}\mathbf{p}  \subset {\mathcal P} \right\},
  \end{equation}
  where $H_{\Lambda}\mathbf{p}$ is the orbit in (\ref{eq:orbitH})
  and ${\mathcal P}$ the corresponding principal lattice
  in (\ref{eq:principalLatticeGeneral}).
  Then
  \begin{equation}
    \forall A\in G_{n}^{\Dim},\quad
    AW_{\Lambda,{\mathcal P}} =
    \bigcap_{B \in A H_{\Lambda}} B{\mathcal P},
  \end{equation}
  where the coset $A H_{\Lambda}$ is
  the set of all $\Dim$-permutations
  whose actions agree with that of $A$
  on $\prod_{i=1}^{\Dim}\Lambda_i$.
\end{thm}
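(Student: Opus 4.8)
The plan is to recognize the statement as a direct specialization of Lemma~\ref{lem:groupActionOnTestSet}, instantiating the generic group $G$, subgroup $H$, $G$-set $Y$, and subset $Z$ respectively as $G_n^{\Dim}$, $H_{\Lambda}$, $\mathbb{Z}_n^{\Dim}$, and ${\mathcal P}$. Once the hypotheses of that lemma are checked for these concrete objects, the desired identity $AW_{\Lambda,{\mathcal P}} = \bigcap_{B\in AH_{\Lambda}} B{\mathcal P}$ follows by setting $a=A$ in (\ref{eq:alg_foundation}), and the coset description follows from the earlier consequence of Corollary~\ref{coro:equalCosets}.

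First I would verify the three hypotheses of Lemma~\ref{lem:groupActionOnTestSet}. (i) $\mathbb{Z}_n^{\Dim}$ is a $G_n^{\Dim}$-set: the action $(A,\mathbf{p})\mapsto A\mathbf{p}$ of Definition~\ref{def:DPermutation} satisfies $E\mathbf{p}=\mathbf{p}$ and $(A\cdot B)\mathbf{p}=A(B\mathbf{p})$, exactly as in the computation of Lemma~\ref{lem:groupActionOnTrigLats} but carried out on a single point rather than on a whole lattice. (ii) $H_{\Lambda}$ is a subgroup of $G_n^{\Dim}$: by Lemma~\ref{lem:isotropySubgroup} each isotropy subgroup at a fixed coordinate is a subgroup, and $H_{\Lambda}$ in (\ref{eq:H-Lambda}) is their intersection, which is again a subgroup. (iii) ${\mathcal P}\subset \mathbb{Z}_n^{\Dim}$, which is immediate from (\ref{eq:principalLatticeGeneral}) and (\ref{eq:DcubeN}).

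Next I would match the two definitions of the test set. Comparing $W_{H,Z}=\{p\in Y: Hp\subset Z\}$ from (\ref{eq:main_def}) with $W_{\Lambda,{\mathcal P}}=\{\mathbf{p}\in\mathbb{Z}_n^{\Dim}: H_{\Lambda}\mathbf{p}\subset{\mathcal P}\}$ from (\ref{eq:W-Lambda}), I see they coincide under the above instantiation once the orbit $H_{\Lambda}\mathbf{p}$ is taken to be the explicit product (\ref{eq:orbitH}); that orbit formula is precisely the content of the paragraph preceding the theorem, so I would simply cite it. Then Lemma~\ref{lem:groupActionOnTestSet} yields the claimed intersection identity verbatim, and the characterization of the coset $AH_{\Lambda}$ as the $\Dim$-permutations agreeing with $A$ on $\prod_{i=1}^{\Dim}\Lambda_i$ is exactly the statement deduced earlier from Corollary~\ref{coro:equalCosets}.

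Since the combinatorial heart — the equality $a W_{H,Z}=\bigcap_{b\in aH} bZ$ — is already proved in Lemma~\ref{lem:groupActionOnTestSet}, there is no serious obstacle here; the argument is essentially an assembly step. The only point requiring a moment's care is hypothesis (i): the paper has explicitly established only that the lattice space ${\mathcal X}$ is a $G_n^{\Dim}$-set (Lemma~\ref{lem:groupActionOnTrigLats}), so I would make sure to note that the same action restricts to an honest $G_n^{\Dim}$-action on the underlying point set $\mathbb{Z}_n^{\Dim}$, which is what Lemma~\ref{lem:groupActionOnTestSet} requires.
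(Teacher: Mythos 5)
Your proposal is correct and matches the paper's own proof, which likewise obtains Theorem \ref{thm:DpermutAction} by instantiating Lemma \ref{lem:groupActionOnTestSet} with $G=G_n^{\Dim}$, $H=H_{\Lambda}$, $Y=\mathbb{Z}_n^{\Dim}$, $Z={\mathcal P}$, and combining it with the subgroup property of $H_{\Lambda}$ from Lemma \ref{lem:isotropySubgroup}, the orbit formula (\ref{eq:orbitH}), and the coset characterization from Corollary \ref{coro:equalCosets}. Your explicit check that the $\Dim$-permutation action on the point set $\mathbb{Z}_n^{\Dim}$ is a genuine group action is a detail the paper leaves implicit, but it is the same argument.
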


Consider the two extreme cases of
$\Lambda_i \equiv \emptyset$
and $\Lambda_i \equiv \mathbb{Z}_n$ for each $i\in \mathbb{Z}_{\Dim}^+$.
In the former case,
 $\Lambda=\emptyset^{\Dim}$,
 $H_{\Lambda}=G_n^{\Dim}$,
 and no point in $\mathbb{Z}_n^{\Dim}$
 has an orbit under $H_{\Lambda}$
 that is contained in the principal lattice.
Hence (\ref{eq:W-Lambda}) yields
 $W_{\Lambda,{\mathcal P}} = \emptyset$.
In the latter case,
 $\Lambda\in \mathbb{Z}_n^{\Dim}$,
 $H_{\Lambda}$ is a singleton set containing the identity permutation,
 and each point $\mathbf{p}\in\mathbb{Z}_n^{\Dim}$
 has $\{\mathbf{p}\}$ as its orbit under $H_{\Lambda}$.
Hence (\ref{eq:W-Lambda}) yields
  $W_{\Lambda,{\mathcal P}} = {\mathcal P}$.
Between the two extremes, we have 
 \begin{displaymath}
   (\forall i\in \mathbb{Z}_{\Dim}^+,\ 
   \Lambda'_i \subset \Lambda_i )
   \implies 
   W_{\Lambda',{\mathcal P}} \subset W_{\Lambda,{\mathcal P}}
\end{displaymath}
because Lemma \ref{lem:isotropySubgroup}
implies that $H_{\Lambda}$ be a subgroup of $H_{\Lambda'}$.
As the number of fixed coordinates
 in $\Lambda$ increases from $0$ to $\Dim(n+1)$,
 $W_{\Lambda,{\mathcal P}}$ %in (\ref{eq:W-Lambda})
 grows from $\emptyset$ to ${\mathcal P}$.

The above discussion and
 the linear ordering in Definition \ref{def:linearOrderingOfGrid}
 suggest the construction of a filtration
 $(\Lambda^k)_{k=0}^{\Dim(n+1)}$ that satisfies 
\begin{displaymath}
  (\emptyset, \cdots, \emptyset)
  =\Lambda^0 \subset \Lambda^1 \subset \cdots \subset \Lambda^{\Dim(n+1)}
  = (\mathbb{Z}_n, \cdots, \mathbb{Z}_n), 
\end{displaymath}
 where each $\Lambda^k$ is a sequence in (\ref{eq:D-SubsetsOfZn})
 and $\Lambda^{k+1}$ has one and only one more fixed coordinate
 than $\Lambda^k$.
 The previous paragraph implies 
 $W_{\Lambda^{\Dim(n+1)},{\cal P}}={\cal P}$
 and the requirement of pairwise disjointness
 in Definition \ref{def:testSet} leads to

\begin{defn}
  \label{def:testSet2}
  The $(\ell,m)$th \emph{test set of type II} is a subset of $\mathbb{Z}_n^{\Dim}$,
  \begin{equation}
    \label{eq:testSet2}
    W_{(\ell,m)} := W(\Lambda^k)\setminus  W(\Lambda^{k-1}), 
  \end{equation}
  where $W(\Lambda^k):=W_{\Lambda^k, {\cal P}}$
  is defined in (\ref{eq:W-Lambda}),
  $k=s(\ell,m) = 1, 2, \ldots, \Dim(n+1)$ 
  as in Definition \ref{def:linearOrderingOfGrid},
  $\Lambda^0 := (\emptyset)_{1}^{\Dim}$,
  $\Lambda^k :=(\Lambda^k_i)_{i=1}^{\Dim}$, and
  \begin{equation}
    \label{eq:choiceOfLambdas}
    \Lambda_i^k :=
    \begin{cases}
      \mathbb{Z}_{m} & \text{if } i\le \ell; 
      \\
      \mathbb{Z}_{m-1} & \text{if } i> \ell. % \text{ and } m>0;
      % \\
      % \emptyset & \text{otherwise}.
    \end{cases}
    % \begin{cases}
    %   \emptyset & \text{if } i> \ell \text{ and } m=0;
    %   \\
    %   \mathbb{Z}_{m-1} & \text{else if } i> \ell; % \text{ and } m>0;
    %   \\
    %   \mathbb{Z}_{m} & \text{otherwise}.
    % \end{cases}
  \end{equation}
\end{defn}

In particular, $i>\ell$ and $m=0$
 implies $\Lambda_i^k=\mathbb{Z}_{m-1}=\emptyset$,  
 c.f. Notation \ref{ntn:firstNnaturalNumbers}.
The reader is invited to verify that (\ref{eq:choiceOfLambdas}) 
 indeed fulfills the requirement that
 one and only one coordinate is added
 from $\Lambda^k$ to $\Lambda^{k+1}$. 

\begin{exm}
  \label{exm:testSet2PartitionD2n2}
  For ${\mathcal P}_2^2$ in (\ref{eq:principalLatD2n2}), 
  the test sets of type II are
  \begin{displaymath}
    \begin{array}{lll}
      W_{(1,0)}=\{(0,0),(0,1),(0,2)\},
      &W_{(1,1)}=\emptyset, 
      &W_{(1,2)}=\emptyset, 
      \\
      W_{(2,0)}=\{(1,0),(2,0)\},
      &W_{(2,1)}=\{(1,1)\},
      &W_{(2,2)}=\emptyset.
    \end{array}
  \end{displaymath}
  We detail the process of calculating $W_{(\ell,m)}$
  only for $(\ell,m)=(1,0)$ since the others are similar.
  By Definition \ref{def:testSet2}, 
  we have $\Lambda^0 = (\emptyset, \emptyset)$
  and $\Lambda^1 = (\{0\},\emptyset)$.
  Then, for any $\mathbf{p}=(0,j)$ with $j=0,1,2$, 
  (\ref{eq:orbitH}) gives
  \begin{displaymath}
      W^{\times}(0, \Lambda_1^1)
      = \{0\}, \ 
      W^{\times}(j, \Lambda_2^1) = \mathbb{Z}_2;
      \ \ 
      H_{\Lambda}\mathbf{p} = \{(0,0),(0,1),(0,2)\}
                              \subset {\cal P}_2^2, 
  \end{displaymath}
  hence by (\ref{eq:W-Lambda}) we have
  $(0,j)\in W_{\Lambda^1,{\cal P}}$
  for each $j=0,1,2$.
  
  In comparison, for any $\mathbf{p}=(i,j)$ with $i=1,2$ and $j=0,1,2$, 
  (\ref{eq:orbitH}) gives
  \begin{displaymath}
    W^{\times}(i, \Lambda_1^1)
    = \{1,2\}, \ 
    W^{\times}(j, \Lambda_2^1) = \mathbb{Z}_2;\ \ 
    H_{\Lambda}\mathbf{p}
    = \{1,2\}\times\mathbb{Z}_2 \not \subset {\cal P}_2^2, 
  \end{displaymath}
  hence by (\ref{eq:W-Lambda}) we have
  $(i,j)\not\in W_{\Lambda^1,{\cal P}}$
  for each $(i,j)\in \{1,2\}\times \mathbb{Z}_2$.
  Finally, (\ref{eq:testSet2})
  yields $W_{(1,0)}= W(\Lambda^1)\setminus W(\Lambda^0)
  = \{0\}\times \mathbb{Z}_2$.
\end{exm}

It is emphasized that $n$ and $\Dim$
 completely determine test sets of both types I and II. 
Therefore, both test sets for a wide range of $(\Dim,n)$
 are computed once and for all TLG problems;
 in other words,
 test sets are not calculated {on the fly}
 \emph{at the runtime}
 but rather determined at the \emph{compile time}.

In light of Corollary \ref{coro:suffConditionsForTLG},
 we enumerate not the triangular lattices
 but the formations of triangular lattices;
 see Figure \ref{fig:backtrack_with_test_sets}. 
The root node of the solution space of $G_n^{\Dim}$
 corresponds to the state
 where none of the $\Dim(n+1)$ variables is determined.
At each step of spanning subtrees in the solution space,
 we append to each leaf node
 a set of acceptable $\Dim$-permutations, 
 each of which has one more element determined
 in its matrix representation in Notation
 \ref{ntn:DPermutationMatrix},
 with the index of this element
 given by the total ordering in Definition
 \ref{def:linearOrderingOfGrid}.
A particular case on how to determine the acceptability
 of the next element of the matrix of a $\Dim$-permutation
 is $W_{(\ell,m)}=\emptyset$,
 for which we have no points to check the feasibility
 and thus simply enumerate every possible
 value of $a(\ell,m)$
 in the set $\mathbb{Z}_n\setminus \{a(\ell,j): j\in \mathbb{Z}_{m-1}\}$;
 see, e.g., the first matrix in the third line
 in Figure \ref{fig:backtrack_with_test_sets} (b).
 % $\{c\in \mathbb{Z}_n : \forall j\in \mathbb{Z}_{m-1}, c\ne a(\ell,j)\}$.
The above pattern is repeated recursively
 to span subtrees until
 we find a $\Dim$-permutation 
 such that all elements in its matrix representation
 are determined as acceptable.
Such a $\Dim$-permutation is the formation 
 of some triangular lattice 
 that solves the TLG problem in Definition \ref{def:poisedLatticeGenProblem}.

For the particular case of Examples \ref{exm:testSetPartitionD2n2}
 and \ref{exm:testSet2PartitionD2n2}, 
we compare the spanned subtrees
 for the two types of test sets
 in Figure \ref{fig:backtrack_with_test_sets}
 and observe the key advantage of test sets of type II
 over those of type I:
 test sets to be checked early contain a larger subtree
 and thus a failure of an early test set eliminates
 a larger number of $\Dim$-permutations in the solution space.
Consequently,
 the spanned subtrees for test sets of type II
 are much less complicated that those of type I.

This advantage of test sets of type II essentially comes from 
 the \emph{optimal} organization of the solution space
 via exploiting the orbital structure
 captured in Theorem \ref{thm:DpermutAction}: 
 the coset $A H_{\Lambda}$ contains
 \emph{all} $\Dim$-permutations
 whose actions agree with that of $A$
 on $\prod_{i=1}^{\Dim}\Lambda_i$.
Therefore, 
 \emph{if the current partial permutation $A^{(\ell,m)}$
 does not satisfy the condition
 (\ref{eq:conditionOfPrincipalLatForTLG}),
 then we eliminate \textbf{all} permutations
 whose restrictions to $W_{(\ell,m)}$ equal $A^{(\ell,m)}$.}
% We refer to Figure~\ref{fig:backtrack_with_test_sets}
% for an example of traversing the solution space 
% with test sets of type II.

\small{
\begin{figure}
  \centering
  \subfigure[A TLG problem that is %$(K,\mathbf{q})$
  solved by the triangular lattice in Example \ref{exm:latticeD2n2}.
  % and one of the solution. 
  % Here the set of feasible nodes $K$ are 
  % the grid nodes within the dashed line box and below the solid curve. 
  % The starting point $\mathbf{q}$ is represented by the hollow dot at $(1,1)$.
  % The triangular lattice that constitutes a solution 
  % is the union of the hollow and solid dots. 
  ]{
    \includegraphics[width=0.4\linewidth]{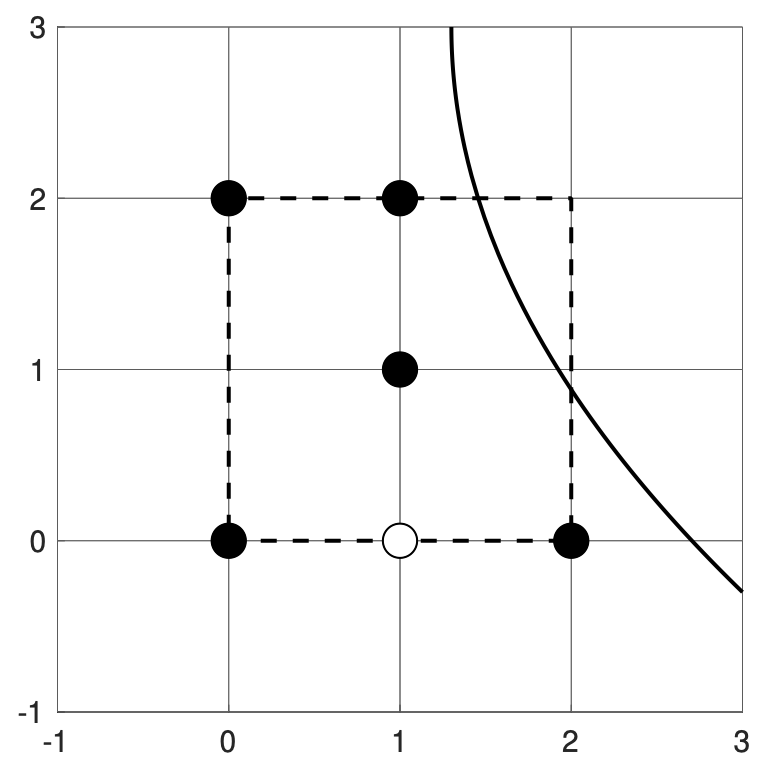}
  }
  \hfill
  \subfigure[Most subtrees in the solution space
  spanned by test sets of type II.
  ]{
    \includegraphics[width=0.5\textwidth]{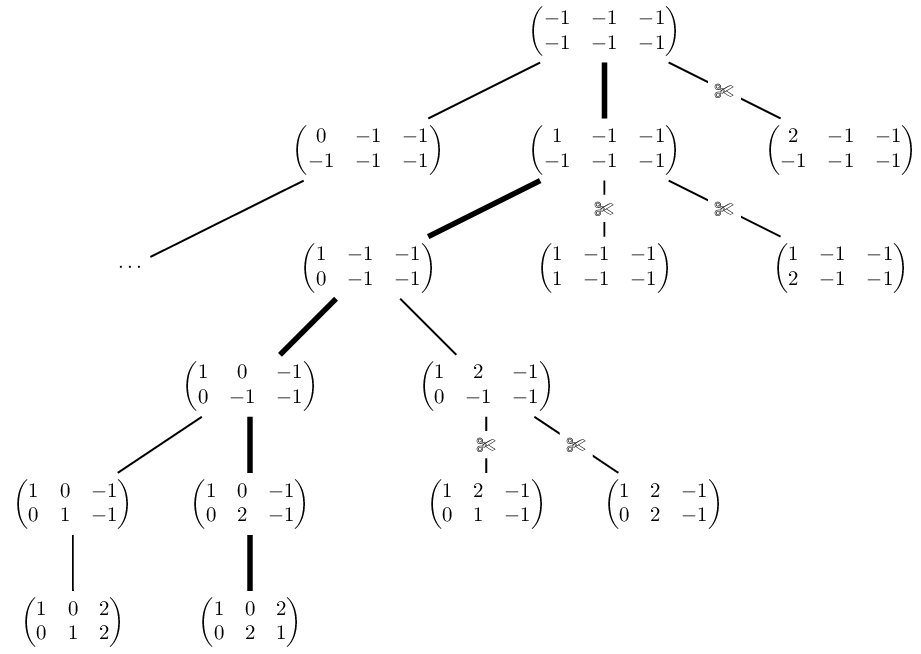}
  }
  \hfill
  \subfigure[the subtrees spanned by test sets of type I
  and corresponding to those in (b).
  ]{
    \includegraphics[width=0.8\textwidth]{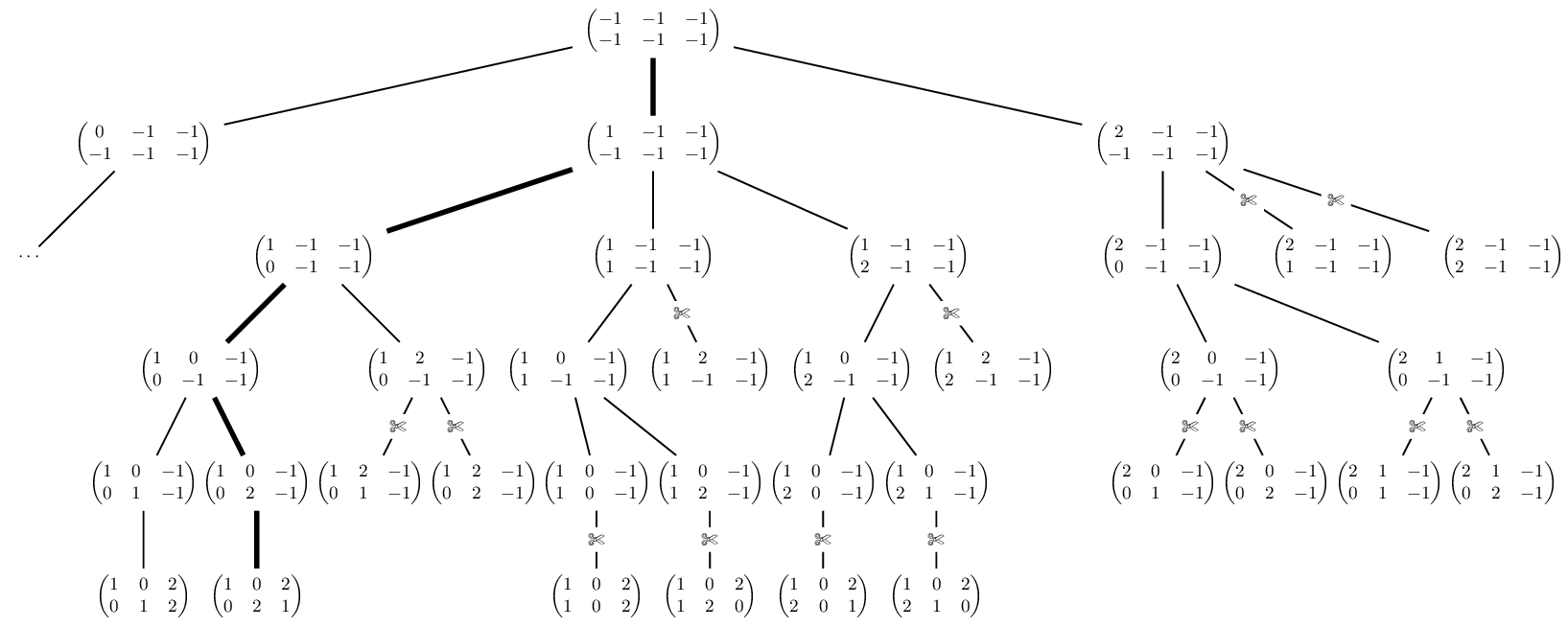}
  }
  \caption{An illustration of the efficiency advantage of test sets of
    type II over test sets of type I
    on spanning subtrees in the solution space.
    The TLG problem $(K,\mathbf{q})$ is shown in subplot (a), 
    with $\Dim=2$, $n=2$,
    $\mathbf{q}$ being the hollow dot at $(1,0)$,
    and the set of feasible nodes $K$ consisting of the grid nodes 
    within the dot-dashed box and to the left of the solid curve. 
    % The triangular lattice in Example \ref{exm:latticeD2n2} 
    % is the union of the hollow and solid dots.
    In both (b) and (c),
    a solid edge ended with a partial $\Dim$-permutation 
    represents a subtree spanning by an acceptable leaf node,
    a solid edge ended with ``$\cdots$'' 
    a subtree spanning with the subtree omitted,  %to avoid cluttering, 
    a thick solid edge part of the path from the root node
    to the solution in subplot (a), 
    and an edge labeled with a pair of scissors a subtree pruning.
    The \emph{only criterion} for either accepting or rejecting $A^{(\ell,m)}$ is
    whether $A^{(\ell,m)}W_{(\ell,m)}\subset K$ holds or not,
    respectively.
    % Thus any partial $\Dim$-permutation $A^{(\ell,m)}$ under a scissored edge
    % is rejected in Definition \ref{def:algorithm-plg} 
    % because $A^{(\ell,m)}W_{(\ell,m)}\not\subset K$. 
    For example, 
    the last child of the root node in subplot (b)
    is rejected because $A^{(1,0)}W_{(1,0)}$
    with $A^{(1,0)}(1,0)=2$
    is not a subset of $K$.
    As another example,
    $W_{(1,1)}=\emptyset$ dictates that
    all immediate children of the first partial $\Dim$-permutation
    in the third row be enumerated,
    despite the fact that
    the second partial $\Dim$-permutation in the fourth row
    maps some points in the principal lattice out of $K$.
    Any partial $\Dim$-permutation
     in the penultimate line %in both subplots (b) and (c)
     already fully determines a $\Dim$-permutation: 
     in each row of its matrix
     the first $n$ entries settle the last one
     since each 1-permutation is a bijection.
    Hence we combine the last $\Dim$ steps of subtree spanning
     into one.
    Compared to that in subplot (c), 
    the subtree spanned in subplot (b) according to test sets of type II
    is much smaller and thus
    the resulting backtracking algorithm in Definition
    \ref{def:testOrderingOneNorm}
    is more efficient.
    Suppose the depth-first search scans the solution space
    from the right to the left,
    then for test sets of type II
    it would take 5 prunings and 6 spannings
    to obtain the solution in subplot (a)
    while the numbers of prunings and spannings
    for test sets of type I
    are 14 and 18, respectively.
  }
  \label{fig:backtrack_with_test_sets}
\end{figure}
} 
}

\subsection{Backtracking for TLG}
\label{sec:backtr-algor-plg}

To solve the TLG problem,
 we specialize the backtracking algorithm
 in Definition \ref{def:backtracking}
 by giving specific definitions to the itemized subroutines
 in \texttt{typewriter} font.
% This illustrates a main advantage
%  of formulating \texttt{BackTrack} as an abstract subroutine
%  in that it maximizes its generality.
 % On the other hand, this abstraction might make it more difficult
 % to understand the generic algorithm,
 % hence for the purpose of easy understanding
 % it might be better to consult Example \ref{exm:algorithmx}
 % before a careful reading of the definitions of the subroutines
 % in Definition \ref{def:algorithm-x}.
%
We begin by defining an ordering that is useful
 for the subroutines \texttt{first} and \texttt{next}.
 
 \begin{defn}
 \label{def:testOrderingOneNorm}
  For a TLG problem $(K, \mathbf{q})$,
  the (compactness-first) \emph{test ordering} ``$<$'' of a subset
  $J_i\subseteq \mathbb{Z}_n$ along the $i$th dimension
  is a total ordering on $J_i$
  determined first by the distance to $\mathbf{q}$
  along the $i$th dimension
  and then by breaking any tie with the cardinality of the slices.
  More precisely, for any distinct $j,k\in J_i$,
  we have
  % we say that $k$ is \emph{greater than} $j$, written $k>j$,
  % or that $j$ is \emph{less than} $k$, written $j<k$,
  % if and only if
  \begin{equation} 
    \label{eq:testOrderingOneNorm}
    j<k \ \Leftrightarrow \ \bigl( \vert j-q_i \vert > \vert k - q_i \vert \bigr)
    \ \ \vee\ \
      \bigl(\vert j-q_{i} \vert = \vert k-q_i \vert \ \wedge\ 
    \#L_{i,j}(K) < \#L_{i,k}(K) \bigr),
  \end{equation}
  % \begin{align} \nonumber
  %   j<k \ \Leftrightarrow \ &\bigl( \vert j-q_i \vert > \vert k - q_i \vert \bigr)
  %   \\
  %   \label{eq:testOrderingOneNorm}
  %   &\vee\ 
  %     \bigl(\vert j-q_{i} \vert = \vert k-q_i \vert \ \ \wedge\ \ 
  %   \#L_{i,j}(K) < \#L_{i,k}(K) \bigr),
  % \end{align}
  where $q_i$ is the $i$th coordinate of $\mathbf{q}$.
  \revise{We pick one randomly from $j$ and $k$
  if the right-hand side (RHS) logical statement
  in (\ref{eq:testOrderingOneNorm})
  is false.}
  In particular, the element in $J_i$
  that is greater and less than all other elements in $J_i$
  is denoted by $\max J_i$ and $\min J_i$, respectively.
 \end{defn}

%  \begin{defn}
%   \label{def:testOrderingFeasibility}
%   For a TLG problem $(K, \mathbf{q})$,
%   the (compactness-first) \emph{test ordering} ``$<$'' of a subset
%   $J_i\subseteq \mathbb{Z}_n$ along the $i$th dimension
%   is a total order on $J_i$
%   determined first by the cardinality of the slices,
%   and then by breaking any tie with the distance to $\mathbf{q}$
%   along the $i$th dimension.
%   More precisely, for any distinct $j,k\in J_i$,
%   we say that $k$ is \emph{greater than} $j$, written $k>j$,
%   or that $j$ is \emph{less than} $k$, written $j<k$,
%   if and only if
%   \begin{align} \nonumber
%     % \forall j,k\in J_T,\ 
%     % &j < k\ \Leftrightarrow\
%     % \\
%     &\bigl(\#L_{i,j}(K) < \#L_{i,k}(K)\bigr)
%     \\
%     \label{eq:testOrderingFeasibility}
%     \vee
%      & \bigl(\#L_{i,j}(K) = \#L_{i,k}(K)\ \ \wedge\ \ 
%     |j-q_{i}|> |k-q_i| \bigr),
%   \end{align}
%   where $q_i$ is the $i$th coordinate of $\mathbf{q}$.
%   In particular, the element in $J_i$
%   that is greater and less than all other elements in $J_i$
%   is denoted by $\max J_i$ and $\min J_i$, respectively.
% \end{defn}

Note that the test ordering is just
 one particular choice of traversing the spanning tree.
There might as well exist many other ways
to traverse the solution space.
For example, the feasibility-first test ordering
 is a total ordering on $J_i$
 determined first by the cardinality of the slices
 along the $i$th dimension
 and then by breaking any tie with the distance to $\mathbf{q}$,
 i.e., 
 \begin{align} 
    \nonumber
   & j < k\ \Leftrightarrow\
   \\ \label{eq:testOrderingFeasibility}
    &\bigl(\#L_{i,j}(K) < \#L_{i,k}(K)\bigr)
   \ \vee\ 
   \bigl(\#L_{i,j}(K) = \#L_{i,k}(K)\ \wedge\ 
   |j-q_{i}|> |k-q_i| \bigr).
 \end{align}
 \revise{$j$ and $k$ is picked randomly 
 if the above RHS logical statement is false.}
 % \begin{align} \nonumber
 %    % \forall j,k\in J_T,\ 
 %    % &j < k\ \Leftrightarrow\
 %    % \\
 %    &\bigl(\#L_{i,j}(K) < \#L_{i,k}(K)\bigr)
 %    \\
 %    \label{eq:testOrderingFeasibility}
 %    \vee
 %     & \bigl(\#L_{i,j}(K) = \#L_{i,k}(K)\ \ \wedge\ \ 
 %    |j-q_{i}|> |k-q_i| \bigr),
 %  \end{align}
%The reader might as well implement her own test ordering.

The set of \emph{all} solutions of a given TLG problem
 does not depend on the order of traverse;
 after all,
 one never knows whether the current set of solutions
 is the set of all solutions until she has exhausted
 all possibilities.
 
It suffices to
 find just one solution of the TLG problem; 
 so we save some time by stopping the traverse
 with the first solution.
 % which accounts for the employment of
 % depth-first-search
 % instead of breath-first-search.
% We believe that
%  the ordering in Definition \ref{def:testOrderingFeasibility} 
%  might be the quickest way
%  to find the first solution of a TLG problem.
% This is why we did not put Definition \ref{def:testOrderingOneNorm}
%  in Section \ref{sec:part-acti-dim}
%  but in this subsection.  

\begin{defn}[A backtracking algorithm for TLG]
\label{def:algorithm-plg}
The following recursive algorithm
 finds all solutions to the TLG problem
 in Definition \ref{def:poisedLatticeGenProblem}.

\IncMargin{1em}
\begin{algorithm2e}[H]
  \DontPrintSemicolon
  \SetKwInOut{Precond}{Preconditions}
  \SetKwInOut{Postcond}{Postconditions}
  \SetKw{BT}{BackTrack}
  \KwIn{the degree $n\in \mathbb{N}^+$,
    the dimensionality $\Dim\in \mathbb{N}^+$,
    the search domain $K\subseteq \mathbb{Z}_{n}^{\Dim}$,
    and the starting point $\mathbf{q}\in K$}
  \KwOut{a set ${\mathcal U}$ of $\Dim$-permutations}
  %in $\mathbb{Z}_{n}^{\Dim\times(n+1)}$}
\Postcond{$\{A{\mathcal P}_n^{\Dim}: A\in {\mathcal U}\}$
  is the set of all solutions
  of the TLG problem} % $(K, \mathbf{q})$}
\BlankLine
${\mathcal U} \leftarrow$ an empty set of $\Dim$-permutations \;
$A^{(0)}\leftarrow$ \texttt{root}($n,\Dim$)\;
\BT $\left ((K,\mathbf{q}), A^{(0)}, {\mathcal U}\right)$\;
\end{algorithm2e}
\DecMargin{1em}

where $A^{(t)}$ is the matrix in (\ref{eq:partialPermutScalar})
and the procedures in Definition \ref{def:backtracking}
are described as follows.
\begin{itemize}\itemsep0em
\item \texttt{root}$(n,\Dim)$:
  set $A^{(0)}$ to a $\Dim$-by-$(n+1)$ matrix
  of constant $-1$ following Notation \ref{ntn:DPermutationMatrix};
  return $A^{(0)}$.
\item \texttt{accept}$((K,\mathbf{q}),A^{(t)})$:
  return false if $t<\Dim(n+1)$;
  otherwise
  return true if and only if
%  $A^{(t)}=A^{(\Dim,n)}$ satisfies
  \begin{displaymath}
    A^{(t)} W_{(\Dim,n)} \subset K \text{ and }
    \mathbf{q}\in A^{(t)} {\mathcal P}_n^{\Dim}.
  \end{displaymath}
\item \texttt{stopAfterAccept}$((K,\mathbf{q}),A^{(t)})$:
  return true.
\item \texttt{reject}$((K,\mathbf{q}),A^{(t)})$:
  return the negation of \texttt{accept}$((K,\mathbf{q}),A^{(t)})$ 
  if \mbox{$t=\Dim(n+1)$}; 
  otherwise return the value of the logical statement
  \begin{displaymath}
    \exists \mathbf{p}\in W_{(\ell,m)} \text{ s.t. }
    A^{(t)} \mathbf{p} \not\in K,
  \end{displaymath}
  where $(\ell,m)=s^{-1}(t)$.
\item \texttt{first}$((K,\mathbf{q}),A^{(t)})$:
  let $(\ell,m)=s^{-1}(t+1)$ and initialize
  \begin{displaymath}
    B^{(t+1)}\leftarrow A^{(t)}, 
  \end{displaymath}
  \begin{equation}
    \label{eq:freeSet}
    J_{\ell} := \mathbb{Z}_n\setminus \left\{B^{(t+1)}(\ell,j):
        j=0,1,\ldots,m-1\right\}.
    % j_{\min} &:= \arg\min_{j\in J_{\max}} \|j-q_{\ell}\|,
  \end{equation}
  Set $B^{(t+1)}(\ell,m) \leftarrow \max J_{\ell}$
  as in Definition \ref{def:testOrderingOneNorm} and 
  return $B^{(t+1)}$.
\item \texttt{next}$((K,\mathbf{q}),A^{(t)},B^{(t+1)})$:
  let $(\ell,m)=s^{-1}(t+1)$ and compute $J_{\ell}$ by (\ref{eq:freeSet}).
  Return null if $B^{(t+1)}(\ell,m)$ equals $\min J_{\ell}$
   as in Definition \ref{def:testOrderingOneNorm}.
  Otherwise initialize $C^{(t+1)}\leftarrow B^{(t+1)}$,
  set $C^{(t+1)}(\ell,m)$ to be the element in $J_{\ell}$
  that is \emph{immediately} smaller than $B^{(t+1)}(\ell,m)$,
  and return $C^{(t+1)}$.
\end{itemize}
\end{defn}

The above specifications of subroutines 
 %in Definition \ref{def:algorithm-plg}
 entail some explanation.
First, a partial $\Dim$-permutation
 is formally not a $\Dim$-permutation until the last stage $t=\Dim(n+1)$; 
 this is the reason for the conditions
 $t<\Dim(n+1)$ and $t=\Dim(n+1)$
 in the subroutines \texttt{accept}
 and \texttt{reject}, respectively.
However, in the case of test sets of type II, 
 the partial $\Dim$-permutation
 has already been fully determined at the stage $t=\Dim n$;
 see Example \ref{exm:testSet2PartitionD2n2}. 
\revise{
Second, the subroutine \texttt{stopAfterAccept}
 always returns true because
 a $\Dim$-permutation, once determined,
 can never be augmented to another one.
}
Third, the subroutine \texttt{first}
 returns the first child node of the parent node $A^{(t)}$, 
 hence in (\ref{eq:freeSet}) we should remove from $\mathbb{Z}_n$
 the numbers that are already in the same row of $A^{(t)}$.
Lastly, the subroutine \texttt{next}
 returns the child node of the parent node $A^{(t)}$
 that is immediately after the child node $B^{(t+1)}$.
Hence $B^{(t+1)}=\min {\mathcal J}_{\ell}$ means that
 all child nodes of $A^{(t)}$ have already been checked.
 
%%% Local Variables:
%%% mode: latex
%%% TeX-master: "../PLG"
%%% End:

% LocalWords:  subtree subtrees coset TLG cardinality

\section{The PLG-FD method}
\label{sec:PLG-FD}

Based on the TLG algorithm in Section \ref{sec:algorithms}, 
 we propose a new FD method in 

\begin{defn}[The PLG-FD method]
  \label{def:PLG-FD}
  For an elliptic equation 
  \begin{equation}
    \label{eq:PDE2solve}
    {\mathcal L}u(\mathbf{x}) = 0\quad  \text{ in } \Omega
    % {\mathcal L}(\phi, \nabla\phi, \nabla^2\phi, \cdots) = 0
  \end{equation}
  of a scalar function $u: \overline{\Omega} \rightarrow \mathbb{R}$
  with appropriate boundary conditions on $\partial \Omega$,
  the \emph{PLG-FD method} numerically solves
  %produces discrete solutions of
  (\ref{eq:PDE2solve})
  by four steps as follows.
  \begin{enumerate}[(a)]
  \item \emph{Embed $\Omega\subset \mathbb{R}^{\Dim}$ in a rectangular Cartesian grid}.
    For ease of exposition, the grid is assumed to have a uniform grid
    size $h$ for all dimensions.
    Then each cell can be indexed by a multiindex $\bmi \in \mathbb{Z}^{\Dim}$
    with its center at
    % \begin{equation*}
    %   \label{eq:FDcellCenter}
      $\mathbf{x}_{\bmi} = h \bmi + \frac{1}{2}h \unitV$,
    %\end{equation*}
    where $\unitV$ is the multiindex whose components are all 1's.
    % \begin{equation}
    %   \label{eq:FDnodes}
    %   x_{\bmi} = \left( (\bmi_1 + \frac{1}{2})h, \cdots,  (\bmi_D + \frac{1}{2})h \right), \quad 
    % \end{equation}
    % discretize the cube with a tensor-product grid, 
    % and remove grid points outside $\Omega$ so that
    % values of $\phi$ on remaining grid points are to be solved for.
  \item \emph{Classify the cell centers}.
    \begin{itemize}
    \item A cell center $\mathbf{x}_{\mathbf{i}}$ is called an \emph{exterior node}
      if $\mathbf{x}_{\bmi} \not\in \Omega$ and its distance to $\partial \Omega$ is greater than $\eta h$,
      where $\eta<\frac{1}{2}$ is a user-specified positive number.
    \item A cell center $\mathbf{x}_{\mathbf{j}}$ is called a \emph{boundary node}
      if $\mathbf{x}_{\mathbf{j}}$ is not an exterior node
      but a face neighbor of an exterior node $\mathbf{x}_{\mathbf{i}}$,
      i.e., $\|\mathbf{j}-\mathbf{i}\|_1=1$.
      Each boundary node is also associated with a \emph{boundary point},
      i.e., the point on $\partial \Omega$ that is closest to the boundary node.
      % within the cell.
    \item All other cell centers are called \emph{interior node}s.
    \end{itemize}
    Boundary nodes and interior nodes are collectively called
    \emph{FD node}s, where discrete solutions of the PDE are sought.
    See Figure \ref{fig:geometricFD} for an illustration.
  \item \emph{Discretize the PDE for each FD node}.
    \begin{itemize}
    \item If the FD node $\mathbf{x}_{\mathbf{i}}$
      is a \emph{regular FD node},
      i.e., an interior node away from the irregular boundary such that
      all points in its standard one-dimensional FD stencils
      are FD nodes, 
      we use these standard stencils and their tensor products
      to discretize ${\mathcal L}$ at $\mathbf{x}_{\mathbf{i}}$.
      For example, for fourth-order accuracy in two dimensions, 
      the Laplacian is discretized with the standard 9-point stencil
      and the cross derivative term
      $\frac{\partial^2 u}{\partial x \partial y}$
      with a 5-by-5 square box.
    \item Otherwise $\mathbf{x}_{\mathbf{i}}$ is called 
      \emph{an irregular FD node}.
      We generate a poised lattice $\mathbf{S}(\mathbf{i})$
      for $\mathbf{x}_{\mathbf{i}}$ %${\mathcal T}_{\mathbf{i}}$
      from nearby FD nodes
      (as a solution to the problem in Definition \ref{def:PLG}). 
      \revise{Some extra FD nodes may also be added 
      into $\mathbf{S}(\mathbf{i})$}.
      \begin{itemize}
      \item \revise{If $\mathbf{x}_{\bmi}$ is an interior node, 
          we approximate $u$ at each FD node
          $\mathbf{j}\in\mathbf{S}(\mathbf{i})$
          by a multivariate polynomial
          $P_\mathbf{i} = \sum_{k=1}^N
          c_{\mathbf{i},k}\phi_k(\mathbf{x})$
          with $\phi_i$'s as the monomial basis of $\Pi^{\Dim}_n$;
          this yields
          \begin{equation}
          \label{eq:fittingPoly}
          \begin{array}{l}
            u(\mathbf{x}_\mathbf{j}) \approx
            P_\mathbf{i}(\mathbf{x}_{\mathbf{j}})
            = \sum_{k=1}^N b_{\mathbf{j},k}c_{\mathbf{i},k},
          \end{array}
        \end{equation}
        where $b_{\mathbf{j},k}:= \phi_k(\mathbf{x}_{\mathbf{j}})$.
        % Since the cardinality of $\mathbf{S}(\mathbf{i})$
        % equals $\dim \Pi^{\Dim}_n$,
        Equations in (\ref{eq:fittingPoly}) 
        yield a linear system, 
        \begin{equation}
          \label{eq:fittingLinearSystem}
          \mathbf{B}_\mathbf{i}\mathbf{c}_\mathbf{i}
          = \mathbf{U}_{\mathbf{S}(\mathbf{i})},
        \end{equation}
        where the $\mathbf{j}$th row
        of the sample matrix $\mathbf{B}_\mathbf{i}$ 
        is the vector $\mathbf{b}_\mathbf{j}$
        that consists of $b_{\mathbf{j},k}$'s,
        the unknown vector $\mathbf{c}_\mathbf{i}$
        consists of $c_{\mathbf{i},k}$'s,
        and the vector $\mathbf{U}_{\mathbf{S}(\mathbf{i})}$
        contains the \emph{symbol}s of the values of $u$ at the nodes
        in $\mathbf{S}(\mathbf{i})$.
        Solve (\ref{eq:fittingLinearSystem})
        (in the least square sense
        if $\mathbf{S}(\mathbf{i})> \dim \Pi^{\Dim}_n$)
        to express $\mathbf{c}_\mathbf{i}$
        as a linear combination of the symbols
        in $\mathbf{U}_{\mathbf{S}(\mathbf{i})}$,
        substitute $\mathbf{c}_\mathbf{i}$ into
        (\ref{eq:fittingPoly}) to obtain a multivariate polynomial
        in $\mathbf{x}$, 
        differentiate the polynomial according to the operator ${\cal L}$
        in the PDE (\ref{eq:PDE2solve}), 
        and we have a discrete equation %at $\mathbf{x}_{\mathbf{i}}$
        whose unknowns are the symbols
        in $\mathbf{U}_{\mathbf{S}(\mathbf{i})}$.}
%        directly solve the LIP in Definition \ref{def:LagrangeProblem},
%        and plug the fitted multivariate polynomial into (\ref{eq:PDE2solve})
%        to obtain the discretized PDE at $\mathbf{x}_{\mathbf{i}}$.
      \item Otherwise $\mathbf{x}_{\bmi}$ is a boundary node.
        The procedures to obtain the discretized PDE are almost the same
        as those of the previous case except that
        we incorporate the boundary condition
        at the boundary point $b_{\bmi}$
        \revise{by adding an extra equation
        into the linear system (\ref{eq:fittingLinearSystem})
        and solve the resulting least squares problem.
        For example,
        for the Neumann boundary condition
        $\frac{\partial u}{\partial \mathbf{n}} = v_n$,
        we append
        to the symbolic vector $\mathbf{U}_{\mathbf{S}(\mathbf{i})}$
        the numerical value of $v_n$
        and augment the matrix $\mathbf{B}_\mathbf{i}$ 
        with the extra row
        $\left[
            \frac{\partial \phi_1}{\partial \bmn}(b_{\bmi}), \ \cdots, \ \frac{\partial \phi_N}{\partial \bmn}(b_{\bmi}) 
          \right]$ 
        where $\bmn$ is the unit outward normal of $\partial \Omega$}.
      \end{itemize}
    \end{itemize}
  \item \emph{Solve the system of discrete equations
      that result from (c)
      to produce the numerical solution of (\ref{eq:PDE2solve})}.
  \end{enumerate}
\end{defn}

\begin{figure}
  \centering
  \includegraphics[width=0.5\textwidth]{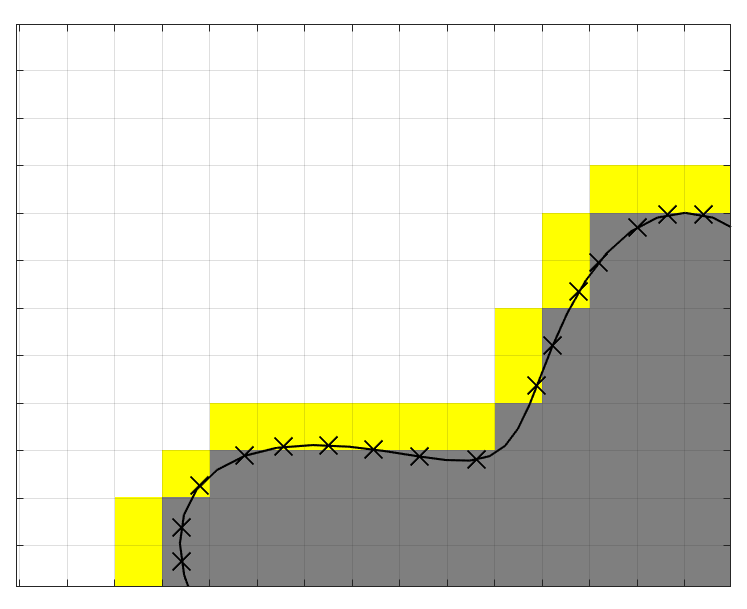}
  \caption{Classify cell centers in the PLG-FD method. 
    The curve represents part of the domain boundary;
    a gray square represents an exterior node,
    a white square an interior node,
    a yellow square a boundary node,
    and a cross mark a boundary point.
  }
  \label{fig:geometricFD}
\end{figure}

% Hereafter step (c) will be referred to as the \emph{PLG discretization}; 
%  see the first paragraph of Section
%  \ref{sec:testDiscretization} for an elaboration on it.
% The essential idea is that, near an irregular boundary, 
%  we discretize the elliptic operator ${\mathcal L}$
%  by fitting a complete multivariate polynomial on poised lattices.

\begin{figure}
  \centering
  \subfigure[Discretizing $\Delta u$. %the diffusion operator 
  The first pivot set
  only contains the starting point $\mathbf{q}$
  and the resulting set $K$ contains all yellow cells.
  ]{
    \includegraphics[width=0.40\linewidth]{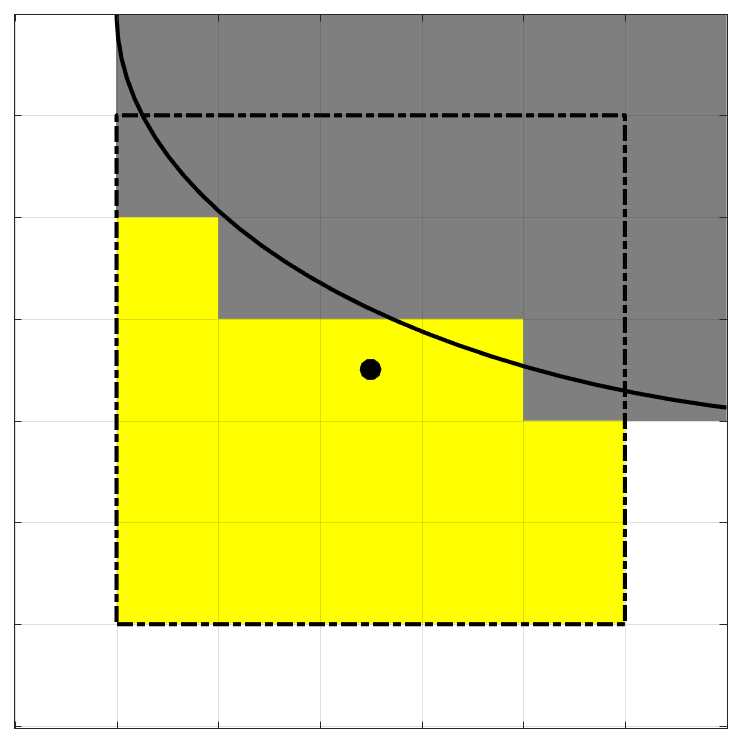}
  }
  \hfill
  \subfigure[Discretizing $\mathbf{v}\cdot \nabla {u}$. % the convection operator 
%  with known $\mathbf{v}$.
  % Shift the cube $\mathbb{Z}_n^{\Dim}$
  % so that points in the lattice is clustered along the upwind
  % direction $-\mathbf{v}$ behind $\mathbf{q}$.
  The first and the last pivot sets are $\{\mathbf{p}_1\}$
   and $\{\mathbf{p}_2\}$,
   where $\mathbf{p}_1$ and $\mathbf{p}_2$ are defined in (\ref{eq:advectionP1P2})
   and represented by the hollow square and the hollow dot, respectively.
  The set $K$ resulting from $\mathbf{p}_1$
   contains the yellow cells and white cells inside the dot-dashed box.
  ]{
    \includegraphics[width=0.41\textwidth]{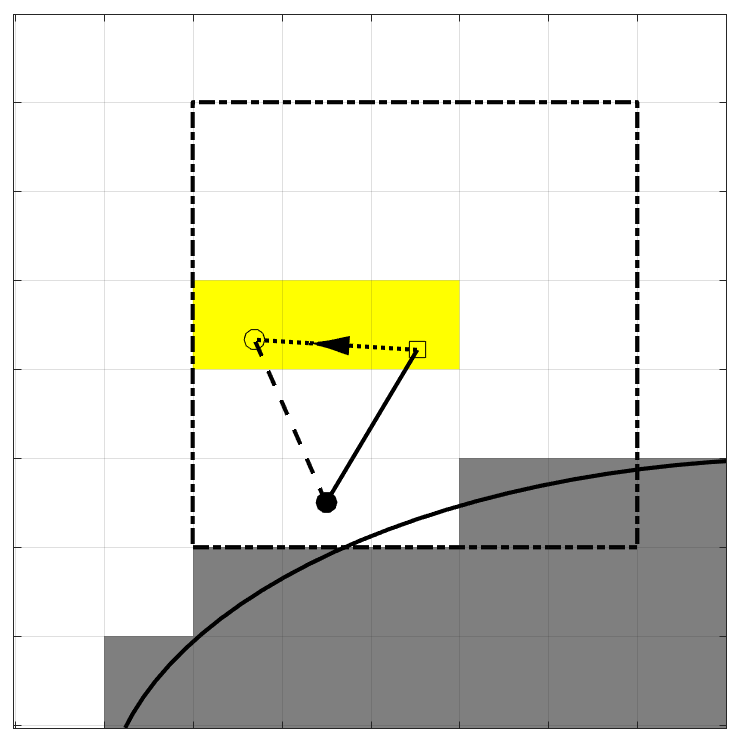}
  }
  \caption{Our strategy (KQN) to determine the set $K$ of feasible nodes
    from an even $n$ and an irregular FD node $\mathbf{q}$
    for PLG discretization %at an irregular FD node
    in the PLG-FD method. % ($n=4$)
%    $n=4$ 
    A solid dot represents the irregular FD node,
     to which the starting point $\mathbf{q}$ is always assigned.
%     that is naturally set to be $\mathbf{q}$.
    Gray squares represent exterior nodes
     and the curve the domain boundary $\partial \Omega$.
    A box of dot-dashed line represents a shifted cube %$\mathbb{Z}_n^{\Dim}$
     centered at the first pivot.
    % in a shifted cube $\mathbb{Z}_n^{\Dim}$ that contains
    % $\mathbf{q}$.
    %
    Yellow squares represent pivots in the sequence of pivot sets.
  }
  \label{fig:select_box}
\end{figure}

% \revise{
% For the time-dependent problems,
%  such as the heat equations and the advection-diffusion equations,
%  we use the method of lines (MOL).
% This technique for solving PDEs involves: 
%  (a) discretizing the spatial derivatives 
%   while leaving the time variable continuous;
%  (b) solving the resulting ODEs with a numerical method 
%   designed for IVPs.}

% For a given $n$,
%  the TLG problem may have no solutions,
%  which indicates that the local geometry is underresolved.
% Then one can reduce $n$ to $n-1$ and try again.
% In the asymptotic case of $h\rightarrow 0$, 
%  the local geometry will be well resolved,
%  and consequently the TLG problem set up in Figure \ref{fig:select_box}
%  always admits a solution.
% Therefore,
%  this reduction of $n$ is not a problematic algorithm step,
%  but rather a simple device for ensuring \emph{robustness}
%  of the geometric FD method on coarse grids.
 
%This is the only difference from traditional FD method, 
The above PLG-FD method is appealing in a number of aspects.

First, the steps in Definition \ref{def:PLG-FD}
 directly apply to a wide array of PDEs.
For elliptic equations,
 adding a cross derivative term
 such as $\frac{\partial^2 u}{\partial x \partial y}$
 incurs no changes on algorithmic steps.
 % which appears to be a notable advantage over FD methods
 % (such as that in \cite{zhou06:_mib})
 % that rely heavily on one-dimensional FD formulas.
Coupled to a time integrator,
 the PLG-FD method is also an efficient solver
 for time-dependent problems
 such as the advection-diffusion equation. 
\revise{
The resulting method has two simple steps: 
\begin{enumerate}[(PFT-1)]
\item discretize the time derivative according
  to the time integrator to obtain
  a sequence of elliptic equations in the form of
  (\ref{eq:PDE2solve});
\item solve the elliptic equations by the steps
  in Definition \ref{def:PLG-FD}. 
\end{enumerate}
}

\revise{Second,
  the algorithmic steps of PLG-FD remain the same
  for different boundary conditions
  and different forms of the PDE, 
 due to two reasons.
 First, the unknown $u$ is locally represented
 by a multivariate polynomial in $\mathbf{x}$
 whose coefficients are functions of
 the symbols in $\mathbf{U}_{\mathbf{S}(\mathbf{i})}$
 that represent values of $u$ at fixed locations;
 see step (c) in Definition \ref{def:PLG-FD}.
 Second, it is extremely easy
 to take derivatives of this multivariate polynomial
 according to the operators in the PDE (\ref{eq:PDE2solve})
 and its boundary conditions. 
 This generality/flexibility
 is an advantage of PLG-FD over other FD methods
 % such as that in \cite{ito05:_higher_order_cartes}, 
 where the discretization of spatial operators
 is coupled to the PDE and boundary/interface conditions.}
 % and jump conditions are repeatedly used in deriving
 % the discrete form of spatial operators near an irregular interface.

%  \revise{
% For the time-dependent problems,
%  such as the heat equations and the advection-diffusion equations,
%  we use the method of lines (MOL).
% This technique for solving PDEs involves: 
%  (a) discretizing the spatial derivatives 
%   while leaving the time variable continuous;
%  (b) solving the resulting ODEs with a numerical method 
%   designed for IVPs.}

Third, the PLG-FD method achieves high-order %fourth- and sixth-order
 accuracy while retaining the simplicity of traditional FD methods.
For second- and lower-order methods (such as that in
\cite{johansen98:_cartes_grid_embed_bound_method}),
the problem of choosing interpolation sites in fitting a polynomial is
trivial for $n=1,2$,
but not so for higher values of $n$.
Some authors resort to least squares
 by adding a large number of interpolation sites in polynomial fitting.
However, least squares do not guarantee
 the nonsingularity of the linear system
 and a large number of redundant interpolation sites
 might deteriorate the efficiency.
In contrast,
 least squares in the PLG-FD method
 start with a poised lattice
 and the balance between conditioning and efficiency is completely under control.
See the last paragraph in Section \ref{sec:testTLG} for more discussions.

Lastly, the TLG algorithm is dimensionality-agnostic
 and is flexible, through the choice of feasible sets, 
 in catering for the physics of the operators to be discretized.

\subsection{How to choose the feasible set in PLG-FD?}
\label{sec:how-choose-feasible}

In this subsection we suggest a choice of the feasible set
% choose the starting point $\mathbf{q}$ in $\mathbb{Z}_n^{\Dim}$
 according to the physics of the problem at hand.
 % continuous operator to be discretized.
For example, to discretize the Laplacian $\Delta u$,
 the generated lattice should be centered 
 at the starting point as much as possible.
In comparison, for the advection operator $\mathbf{v}\cdot \nabla {u}$,
 we typically have a lopsided lattice
 to capture the physics that most information
 comes from the upwind direction $-\mathbf{v}$. % rather than from all directions.

% In the context of the geometric FD method in Definition
%  \ref{def:PLG-FD},
%  the starting point $\mathbf{q}$ is given \emph{a priori}
%  as an irregular FD node,
%  and we only need to choose $K$ from $\mathbf{q}$ and $n$ % of feasible nodes
%  to make the TLG problem unambiguous.
% Our strategy is as follows.
\begin{enumerate}[({CFS-}1)]
\item From $n$ and $\mathbf{q}$, we determine 
  a \emph{sequence of pivot sets} $O_j\subset \mathbb{R}^{\Dim}$.
  Denote the union of cell corners and cell centers by
  \begin{equation}
    \label{eq:UnionofCornersAndCenters}
    V:= \left( h\mathbb{Z} \right)^{\Dim} \cup \left( h\mathbb{Z} + \frac{h}{2} \unitV \right)^{\Dim}.
  \end{equation}
  For the Laplacian $\Delta u$, we choose
  \begin{equation}
    \label{eq:sequencePivotSets4Laplacian}
    O_j^{\text{Lap}} = \left\{\mathbf{x}\in V:
      \frac{1}{h}\|\mathbf{x}-\mathbf{q}\|_{\infty} = j-1
      + \frac{1}{2}(n \textrm{ mod } 2)\right\}
  \end{equation}
   with $j=1,2,\ldots, \alpha_n$ and $\alpha_n :=\lfloor \frac{n}{2}\rfloor$.
  For the advection $(\mathbf{v}\cdot \nabla) u$,
   %the sequence of pivot sets
   we use (\ref{eq:sequencePivotSets4Laplacian})
   if $\|\mathbf{v}\|_2$ is sufficiently small;
   otherwise we define
   \begin{equation}
     \label{eq:advectionP1P2}
     \mathbf{p}_1:= \mathbf{q} - \alpha_nh 
     \frac{\mathbf{v}}{\|\mathbf{v}\|_2},\qquad
     \mathbf{p}_2:= \mathbf{q} - \alpha_nh \mathbf{n}_b,
   \end{equation}
   where $\mathbf{n}_b$ is the unit outward normal of
   $\mathbf{q}_b\in\partial \Omega$ and $\mathbf{q}_b$
   is the closest point to $\mathbf{q}$ on $\partial \Omega$.
  As a particle travels from $\mathbf{p}_1$ to $\mathbf{p}_2$,
   it intersects a sequence of cells %$(\mathbf{i}_j)$
   whose centers are FD nodes $\mathbf{x}^{\text{adv}}_j$. % $(\mathbf{x}_{\mathbf{i}_j}: j=1,2,\ldots)$.
  Then we choose  
   \begin{equation}
    \label{eq:sequencePivotSets4Advection}
    O_j^{\text{Adv}} = \left\{\mathbf{x}\in V:
      \frac{1}{h}\|\mathbf{x}-\mathbf{x}^{\text{adv}}_j\|_{\infty} = \frac{1}{2}(n \textrm{ mod } 2)\right\}.
  \end{equation}
\item For each pivot $\mathbf{x}\in O_j$, we shift $\mathbb{Z}_n^{\Dim}$
     to center it at $\mathbf{x}$,
     identify $K$ with the set of FD nodes in the shifted cube, %$\mathbb{Z}_n^{\Dim}$, %$\mathbf{x}_{\mathbf{i}_1}$;
     and solve the resulting TLG problem to obtain at most one poised lattice.
\item If the previous step yields multiple poised lattices,
  we select the lattice $\{\mathbf{x}_{\mathbf{j}}\}$ of which
  $\sum_{\mathbf{j}}\|\mathbf{x}_{\mathbf{j}}-\mathbf{q}\|_2^2$ is minimized.

\item Otherwise no solution exists for any pivot in $O_j$.
     We increase $j$ by 1 and repeat (CFS-1,2,3)
     until we have a solution %or until we have exhausted the sequence.
     or until, for the given $n$, 
     no solution exists for any pivot set in the sequence.
\end{enumerate}

In Figure \ref{fig:select_box} we illustrate the above strategy
in the case of $n$ being an even number.
Numerical experiments show that $j=1$ in (CFS-1) yields a poised lattice
in most cases.
%
% For a large $n$ with underresolved geometry,
%  such a TLG problem may have no solutions; 
% % for the above strategy (K.1-4); 
%  in this case we reduce $n$ to $n-1$ and try again.

%%% Local Variables:
%%% mode: latex
%%% TeX-master: "../PLG"
%%% End:

% LocalWords:  monomial

\section{Tests}
\label{sec:tests}

In Section \ref{sec:testTLG},
 we test the TLG algorithm by generating poised lattices
 near irregular boundaries.
The variation of condition number of the sample matrix
 with respect to the number of points in least square stencils
 is also studied for representative scenarios. 
In Section \ref{sec:testDiscretization},
 we test the TLG discretization of a spatial operator
 to show fourth- and sixth-order convergence of truncation errors.
In Section \ref{sec:ellipticEqWithCrossDerivative}, 
 we demonstrate the effectiveness of PLG-FD
 by solving an elliptic equation with a cross-derivative term.
In Section \ref{sec:PoissonEq},
 we show that the fourth-order PLG-FD method 
 is more accurate than a previous second-order EB method
 %by comparing their errors
 in solving Poisson's equation. 
% with mixed Neumann and Dirichlet conditions.
\revise{
In Section \ref{sec:PoissonEq3D},
 we extend the solution of Poisson's equation to three dimensions.
In Sections \ref{sec:heatEq} and \ref{sec:adv-diffu-eq},
 we solve the heat equation
 and the advection-diffusion equation by PLG-FD,
 demonstrating its fourth-order accuracy 
 for time-dependent problems.
}
 
\subsection{TLG near irregular boundaries}
\label{sec:testTLG}

By Definition \ref{def:poisedLatticeGenProblem}, 
 a TLG problem is uniquely specified by
 the feasible set $K$ and the starting node $\mathbf{q}$.
% Since the main purpose of TLG %solving a TLG problem
%  is to realize the PLG-FD method, 
Our design of test cases has been centered around
 representative scenarios in the context of FD methods.
 % to verify the TLG algorithm,

First,
 although the dimensionality $\Dim$ can be an arbitrary positive integer, 
 we only test the cases of $\Dim=2$ and $\Dim=3$.

Second,
 the starting point %$\mathbf{q}$ %$=\mathbf{x}_{\mathbf{i}}$,
 is an irregular FD node $\mathbf{x}_{\mathbf{i}}$
 that is either a boundary node or an interior node close to
 the boundary, c.f. Definition \ref{def:PLG-FD} (c).

Third, 
 in the asymptotic case of local geometry being well resolved
 by the rectangular grid, 
 the set $K$ %of feasible nodes 
 is well described either by one hyperplane or by multiple hyperplanes.
The former corresponds to a smooth boundary surface
 while the latter a boundary surface with discontinuous normal vectors.
In both cases, however,
 $K$ must lie entirely at one side of the boundary surface.
In light of these observations, 
 for $\Dim=2$ we select the irregular boundary to be either a rotated box
 or an ellipse
\begin{equation}
  \label{eq:ellipse}
  \frac{(x-x_0)^2}{a^2} + \frac{(y-y_0)^2}{b^2} = 1.
\end{equation}

For $\Dim=3$, 
the irregular boundary is either the surface of a rotated cube
 or an ellipsoid
\begin{equation}
  \label{eq:ellipsoid}
  \frac{(x-x_0)^2}{a^2} + \frac{(y-y_0)^2}{b^2} + \frac{(z-z_0)^2}{c^2} = 1.
\end{equation}

The above characterizations of test cases are orthogonal,
 which helps the enumeration
 of representative TLG problems.
We invoke the TLG algorithm in Definition \ref{def:algorithm-plg}
 on all these TLG problems,
 stop on finding the first poised lattice,
 and list results of some of them 
 in Figures \ref{fig:TLG_2D} and \ref{fig:TLG_3D}.
 
\begin{figure}
\centering
\subfigure[$n=3$, a concave boundary]{
\includegraphics[width=.31\linewidth]{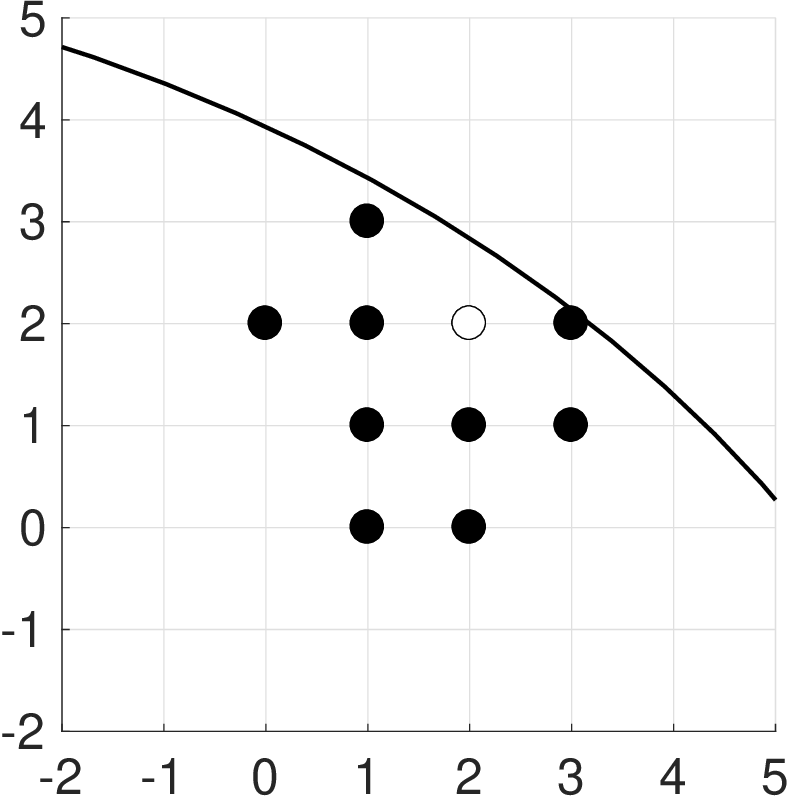}
}
\hfill
\subfigure[$n=3$, a convex boundary]{
\includegraphics[width=.31\linewidth]{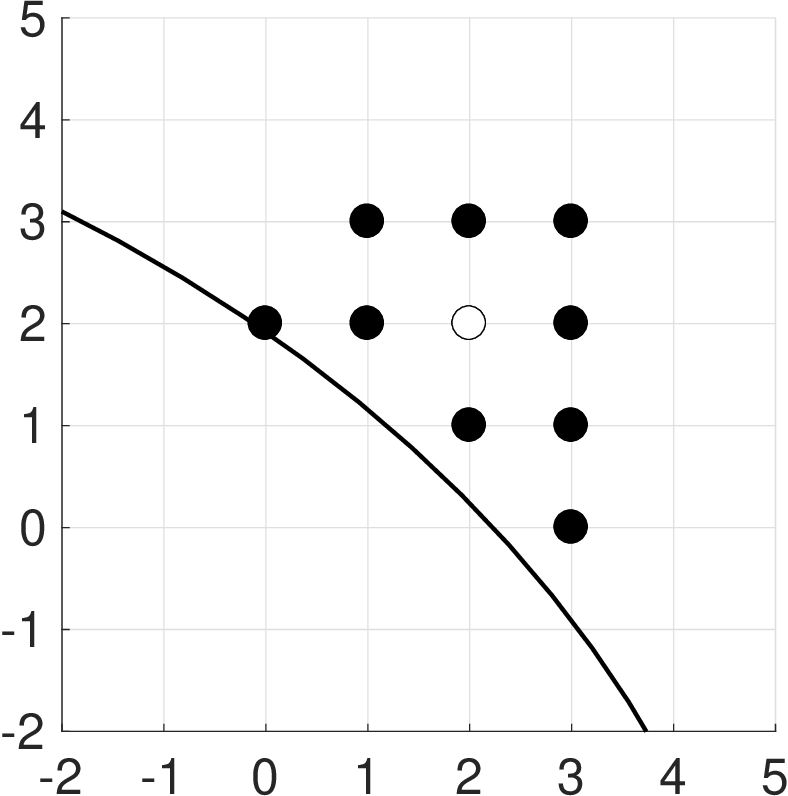}
}
\hfill
\subfigure[$n=3$, the corner of a box] {
\includegraphics[width=.31\linewidth]{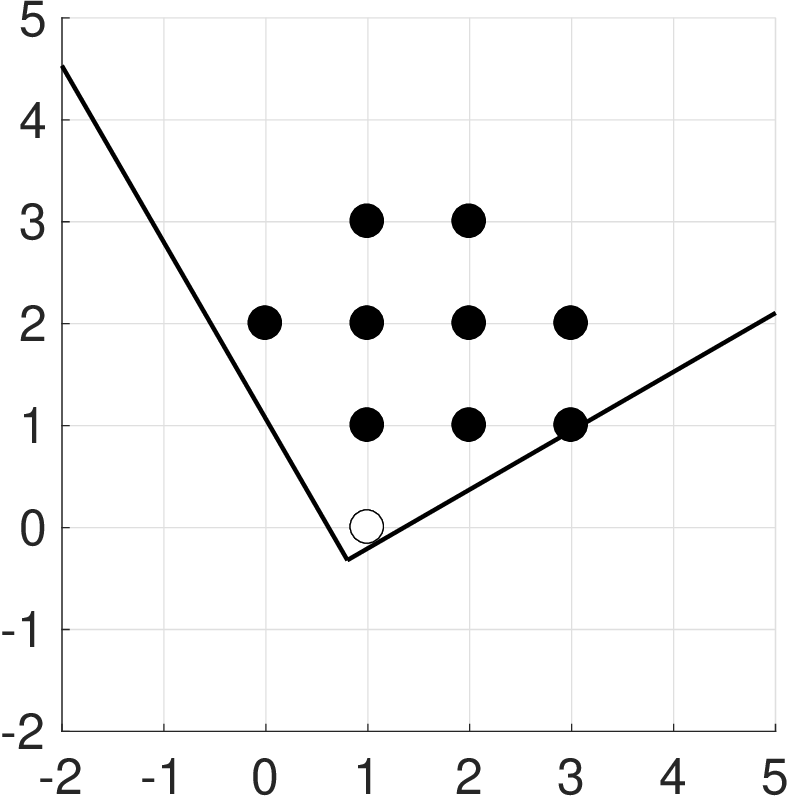}
}

\subfigure[$n=4$, a concave boundary]{
\includegraphics[width=.31\linewidth]{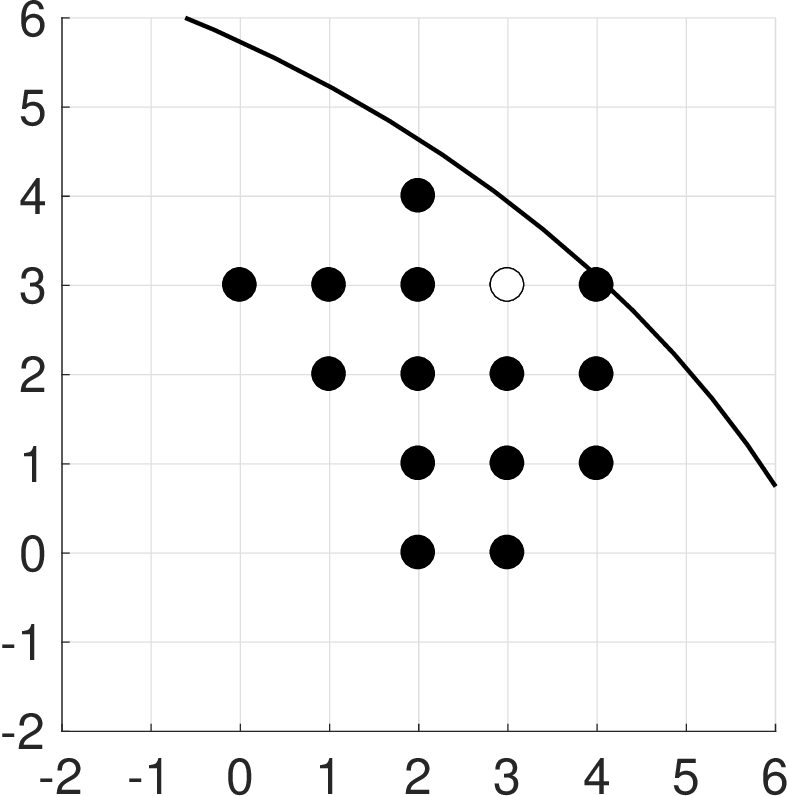}
}
\hfill
\subfigure[$n=4$, a convex boundary]{
\includegraphics[width=.31\linewidth]{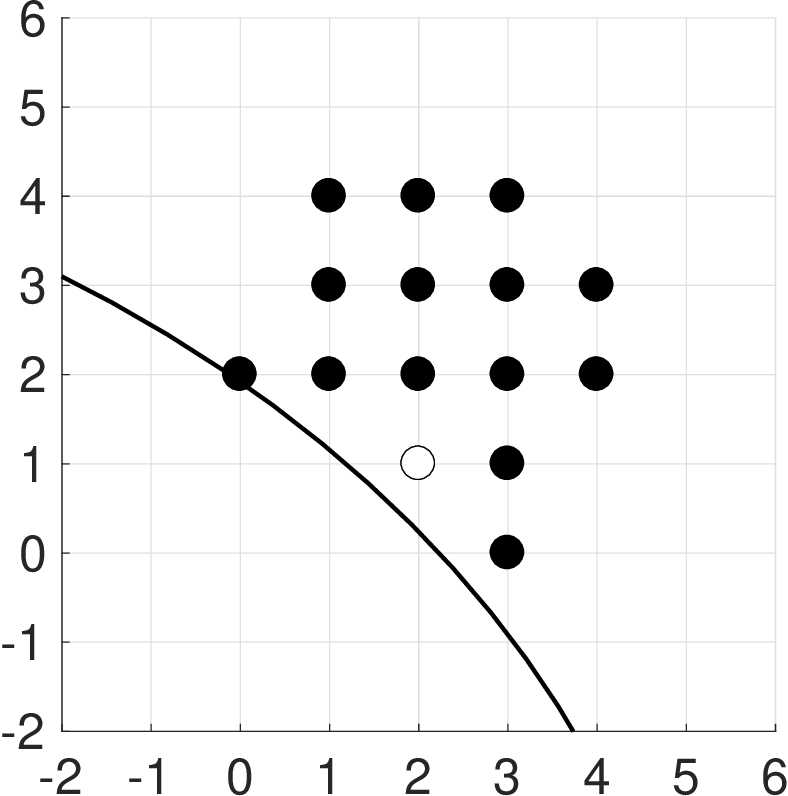}
}
\hfill
\subfigure[$n=4$, the corner of a box] {
\includegraphics[width=.31\linewidth]{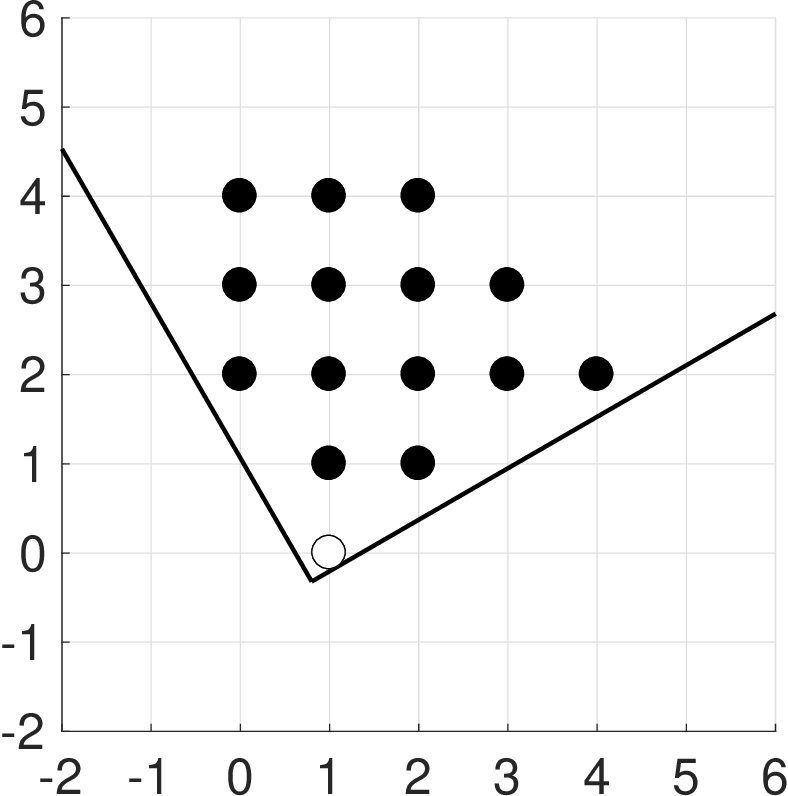}
}

\subfigure[$n=5$, a concave boundary]{
\includegraphics[width=.31\linewidth]{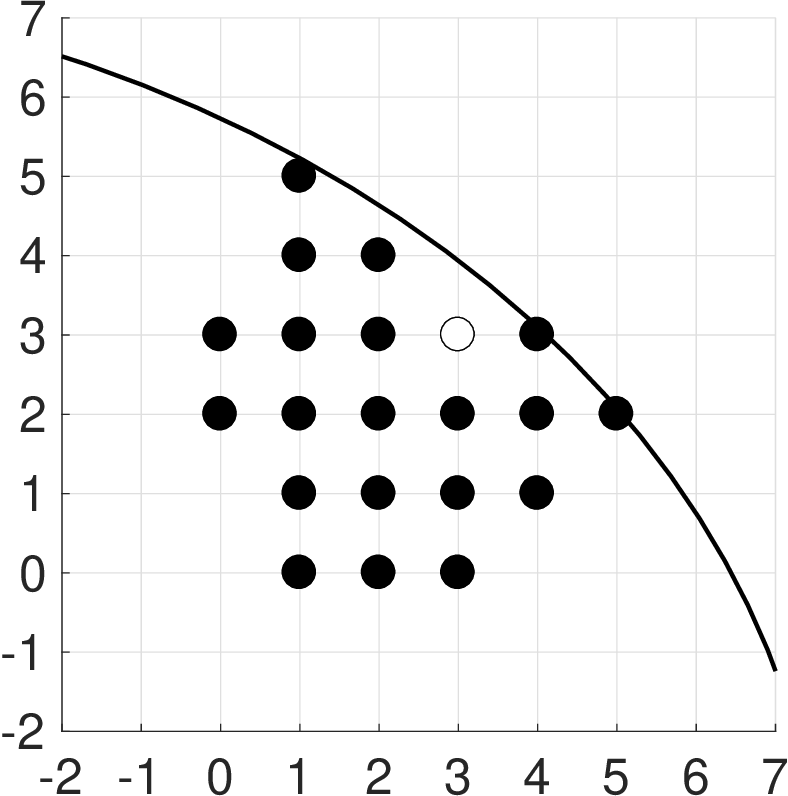}
}
\hfill
\subfigure[$n=5$, a convex  boundary]{
\includegraphics[width=.31\linewidth]{eps/S-2-5-cv}
}
\hfill
\subfigure[$n=5$, the corner of a box] {
\includegraphics[width=.31\linewidth]{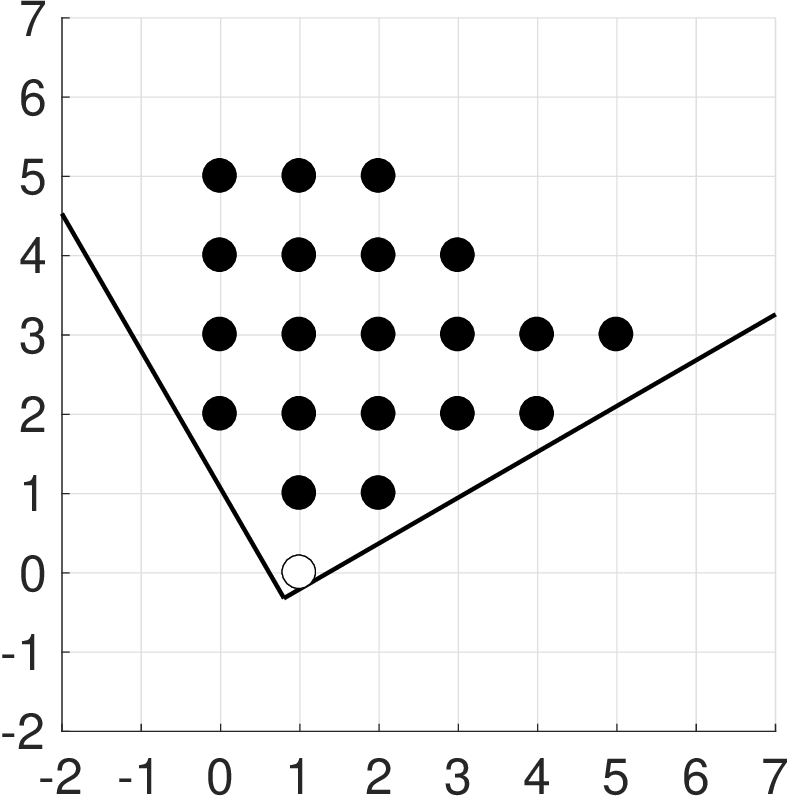}
}
\caption{Poised lattices generated
  by the TLG algorithm in Definition \ref{def:algorithm-plg}
  with the compactness-first test ordering
  (\ref{eq:testOrderingOneNorm}).
  In all cases,
   the starting point $\mathbf{q}$ is represented by
   the hollow dot
   and $K$ is the set of FD nodes in the shifted cube $\mathbb{Z}^2_n$
   that contains all hollow and solid dots.
  % $\mathbf{q}$ is an interior node in subplots (b,h)
  %  and is a boundary node in all other cases.  
  All smooth boundaries are arcs of ellipses in (\ref{eq:ellipse})
  with $(a,b)=(16, 12)$.
  We have \mbox{$(x_0, y_0)=(-8.40, -6.28)$} in subplot (a),
  \mbox{$(x_0, y_0)=(-8.40, -4.48)$} in subplots (d,g), 
  and \mbox{$(x_0, y_0)=(-10.88, -6.88)$} in subplots (b,e,h). 
  In subplots (c,f,i), 
  the lower corner of each box is at $(0.80, -0.32)$
  with the right side angled at $\frac{\pi}{6}$.
  \revise{
    Each dot, hollow or solid, is a cell center
    and thus by Definition \ref{def:PLG-FD}(a),
    the grids do not represent cell boundaries.
    Consequently, the hollow dots in subplots (b,h)
    represent irregular interior nodes,
    c.f. Definition \ref{def:PLG-FD},
    while those in all other subplots
    denote boundary nodes.
  }
  % It is straightforward to verify that
  % each lattice is indeed a triangular lattice
  % by Definition \ref{def:triangularLatticeDimD}.
 }
\label{fig:TLG_2D}
\end{figure}

\begin{figure}
\centering
\subfigure[$n=3$, %a concave surface
$x_0=y_0=z_0 = 0$, \mbox{$a=b=c=3.6$}]{
\includegraphics[width=.47\linewidth]{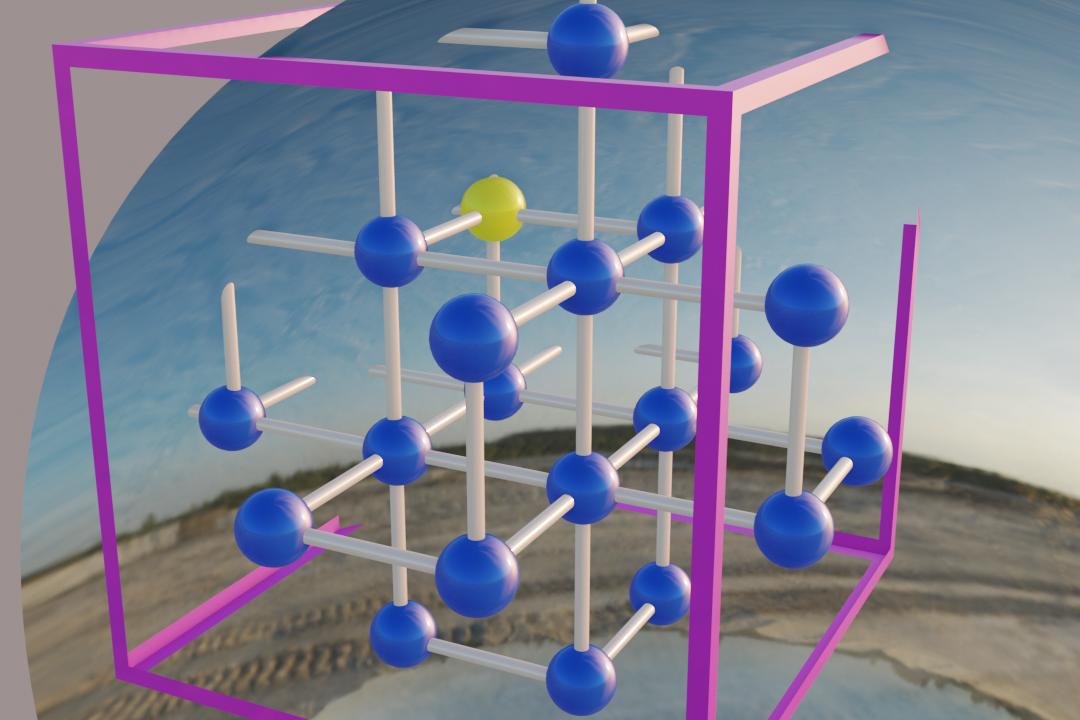}
}
\hfill
\subfigure[$n=4$, %a concave surface
$x_0=y_0=z_0 = 0$, \mbox{$a=b=c=4.8$}]{
\includegraphics[width=.47\linewidth]{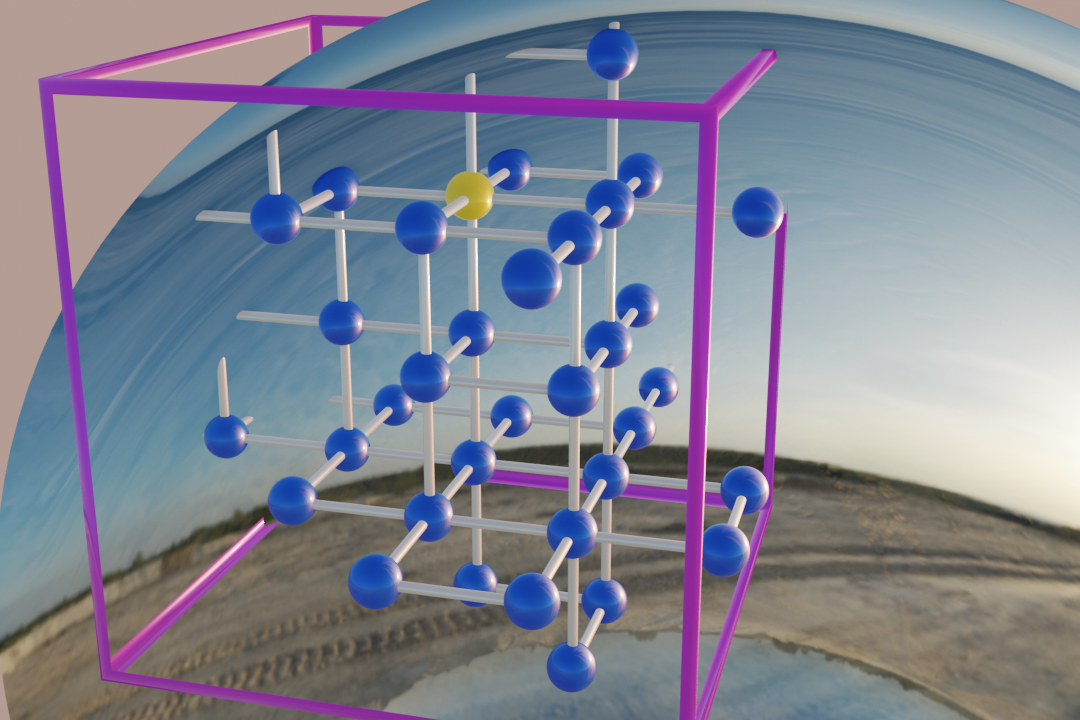}
}

\subfigure[$n=3$, %a convex surface,
$(x_0, y_0, z_0) = (12, 9, -3)$, {$(a,b,c) = (15, 12, 9)$}]{
\includegraphics[width=.47\linewidth]{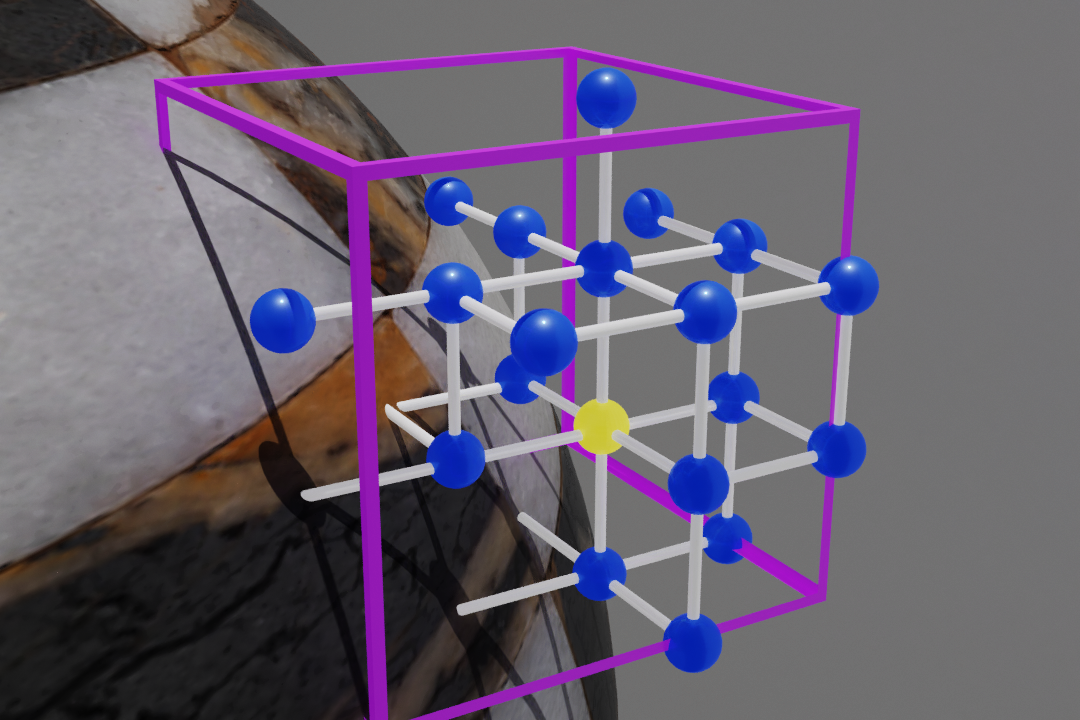}
}
\hfill
\subfigure[$n=4$, %a convex surface]{
$(x_0, y_0, z_0) = (16, 12, -4)$, $(a,b,c) = (20, 16, 12)$]{
\includegraphics[width=.47\linewidth]{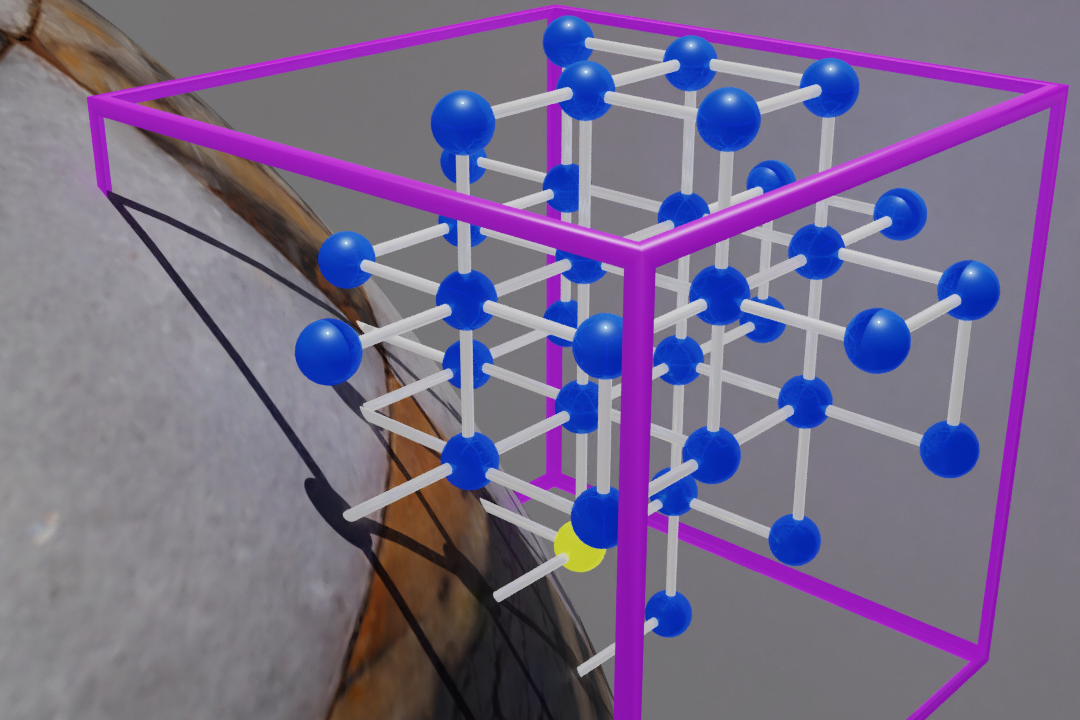}
}

\subfigure[$n=3$, the box corner is at $(1.0, 1.0, -0.5)$]{
\includegraphics[width=.47\linewidth]{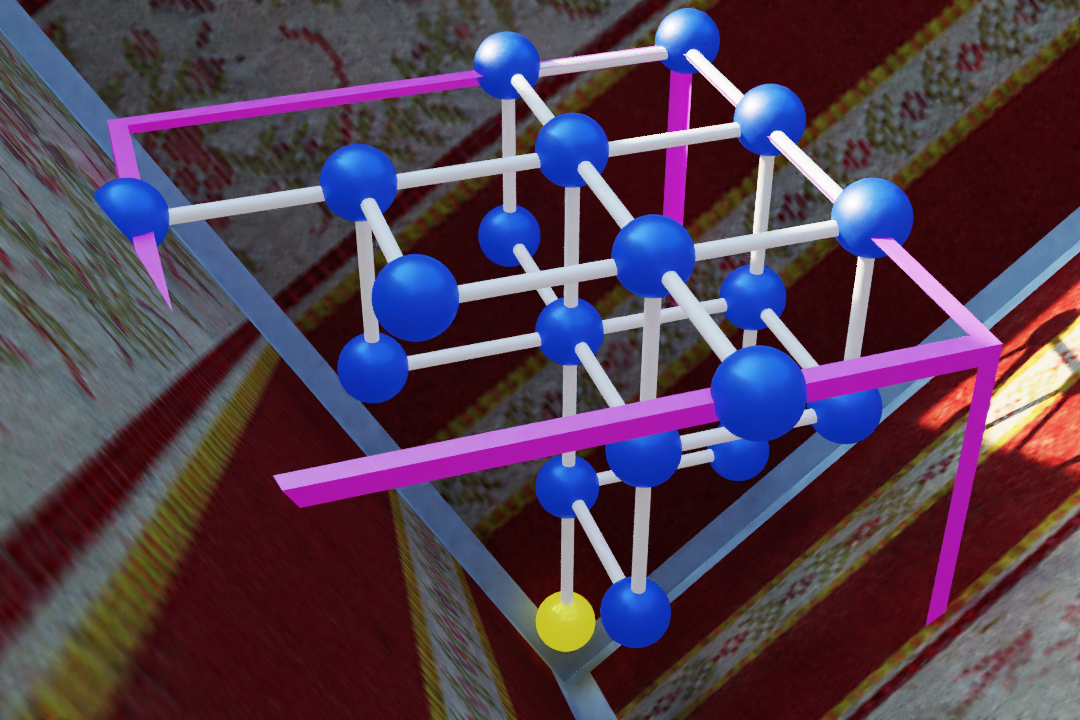}
}
\hfill
\subfigure[$n=4$, the box corner is at $(2.0, 2.0, -0.5)$]{
\includegraphics[width=.47\linewidth]{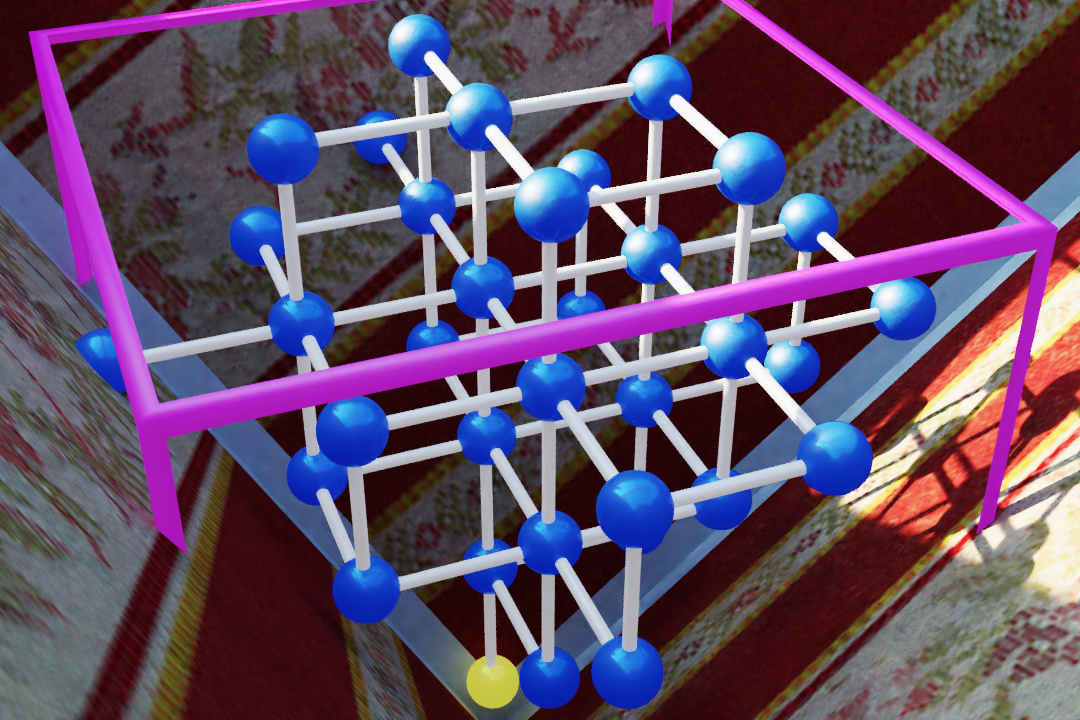}
}
\caption{Poised lattices generated
  by the TLG algorithm in Definition \ref{def:algorithm-plg}
  %for $\Dim=3$
  with the compactness-first test ordering (\ref{eq:testOrderingOneNorm}).
 In all cases,
  the starting point $\mathbf{q}$ is represented by the yellow ball
  and the cube $\mathbb{Z}^3_n$ is shifted to coincide with the blue frame.
 % $\mathbf{q}$ is an interior node in subplot (c)
 %   and is a boundary node in all other cases.
  All smooth boundaries are patches of the ellipsoid in (\ref{eq:ellipsoid}).
  In subplots (e,f), 
   the box is obtained by rotating the unit cube with Euler angles
   $\frac{\pi}{4}$.
%   $\alpha = \beta = \gamma = \pi/4$.
Note that any slice of the lattice along any dimension
 is a two-dimensional triangular lattice.
For $\Dim=3$ and $n=3$, the result of the TLG algorithm with
 the feasibility-first test ordering
 (\ref{eq:testOrderingFeasibility})
 is shown in Figure \ref{fig:geometricFD}(c).
 \revise{
 The yellow ball in subplot (c) represents 
 an irregular interior node, c.f. Definition \ref{def:PLG-FD}.}
 }
\label{fig:TLG_3D}
\end{figure}

By Definition \ref{def:triangularLatticeDimD}, 
 all lattices shown in Figures \ref{fig:TLG_2D} and \ref{fig:TLG_3D}
 are indeed triangular lattices
because each lattice contains ${n+\Dim \choose n}$ nodes
and projecting these nodes to each axis
yields exactly $n+1$ distinct coordinates.

We further test how the number of redundant points
 in a least-square stencil affects the condition number
 of the sample matrix $M$ in (\ref{eq:sampleMatrix})
 for the least-square problem.
Starting with a poised lattice in Figure \ref{fig:TLG_2D} or \ref{fig:TLG_3D},
 we add redundant points one at a time into the stencil, 
 calculate the corresponding condition number of $M$, 
 and present the results in Figure \ref{fig:TLG_conditioning},
 in which we observe several prominent features.
First,
 the condition number is reduced
 as we increase the number of redundant points; 
 there exists a (typically flat) minimum of condition numbers.
Second,
 further adding redundant points into the stencil
 might lead to an increase of the condition number, 
 thus a large number of redundant points in a least square stencil 
 might be detrimental to both efficiency and conditioning. 
Third,
 since a poised lattice has no redundant points,
 it admits a fine control of the least square stencil 
 to strike a good balance between efficiency and conditioning. 

 \begin{figure}
\centering
\subfigure[for Figure \ref{fig:TLG_2D}(c) with $\Dim=2$ and $n=3$]{
\includegraphics[width=.47\linewidth]{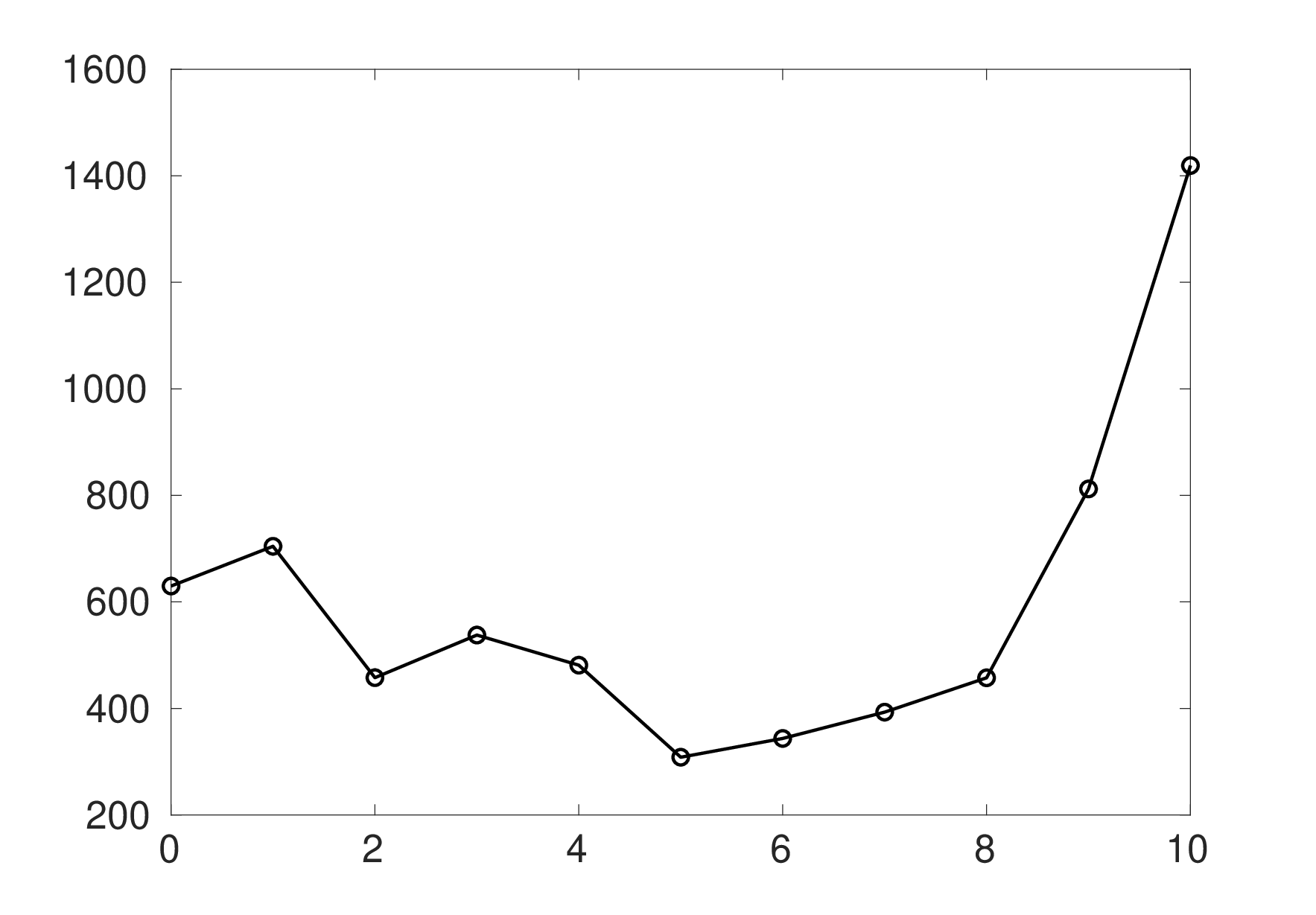}
}
\hfill
\subfigure[for Figure \ref{fig:TLG_2D}(e) with $\Dim=2$ and $n=4$]{
\includegraphics[width=.47\linewidth]{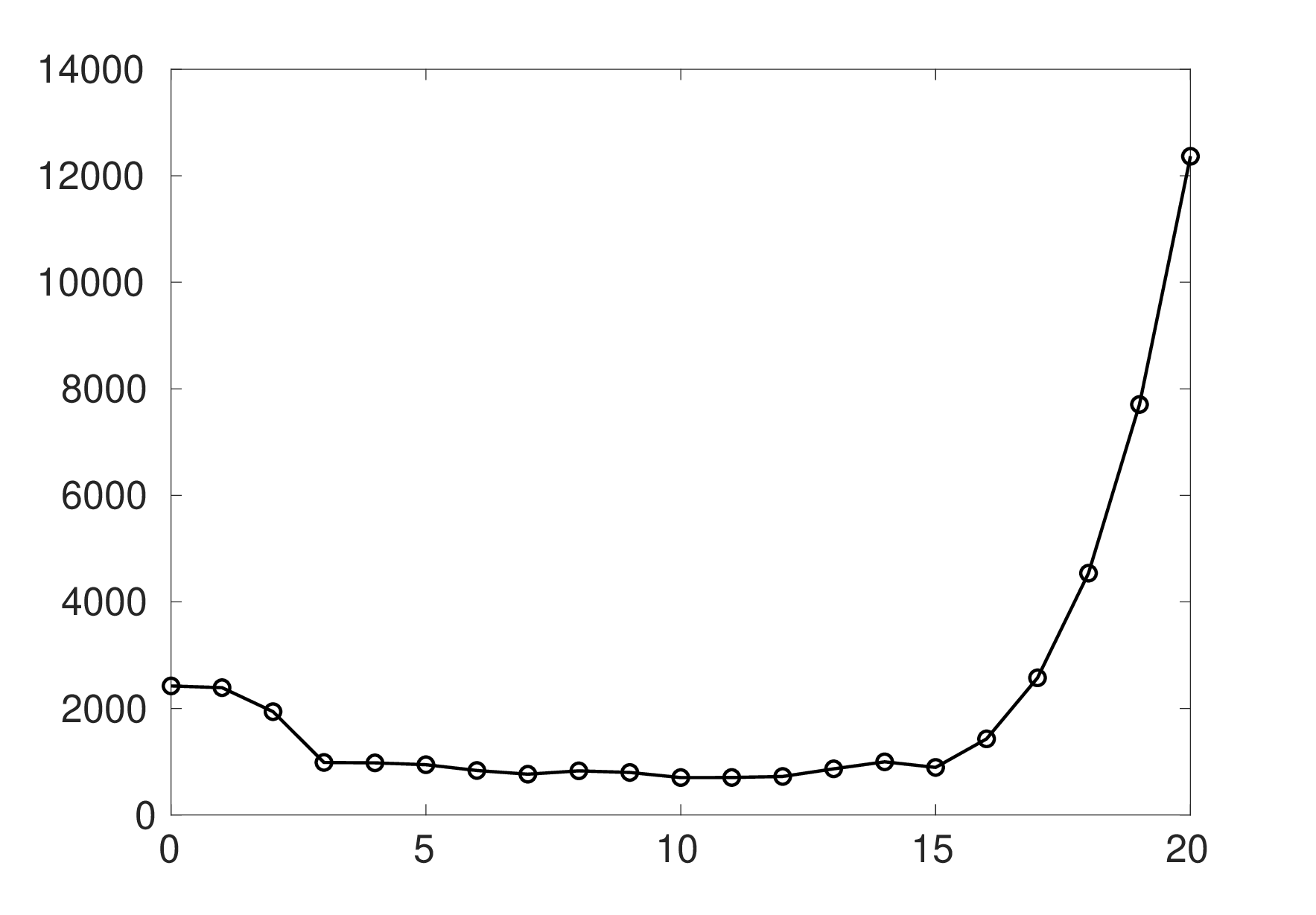}
}

\subfigure[for Figure \ref{fig:TLG_3D}(b) with $\Dim=3$ and $n=4$]{
\includegraphics[width=.47\linewidth]{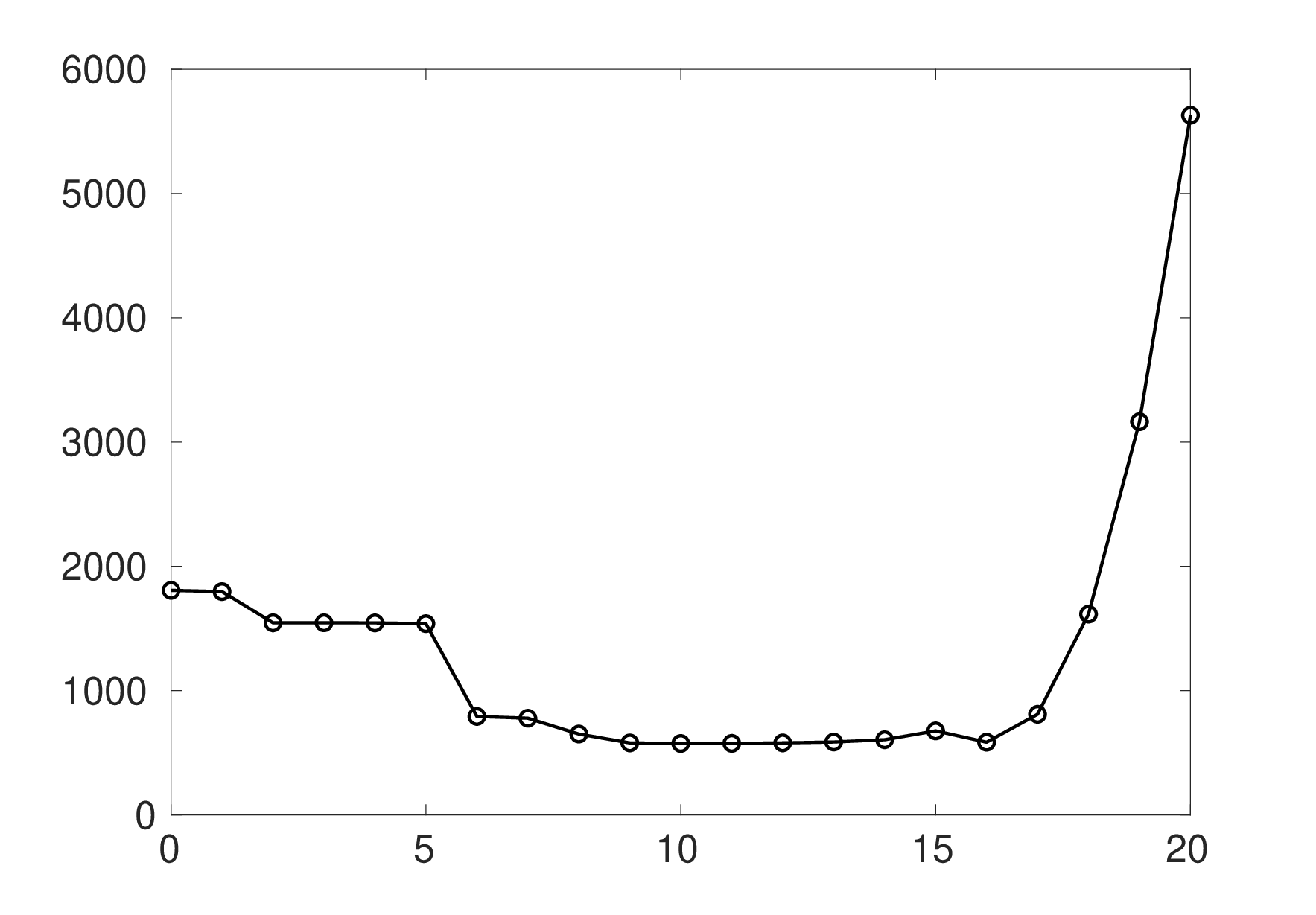}
}
\hfill
\subfigure[for Figure \ref{fig:TLG_3D}(e) with $\Dim=3$ and $n=3$]{
\includegraphics[width=.47\linewidth]{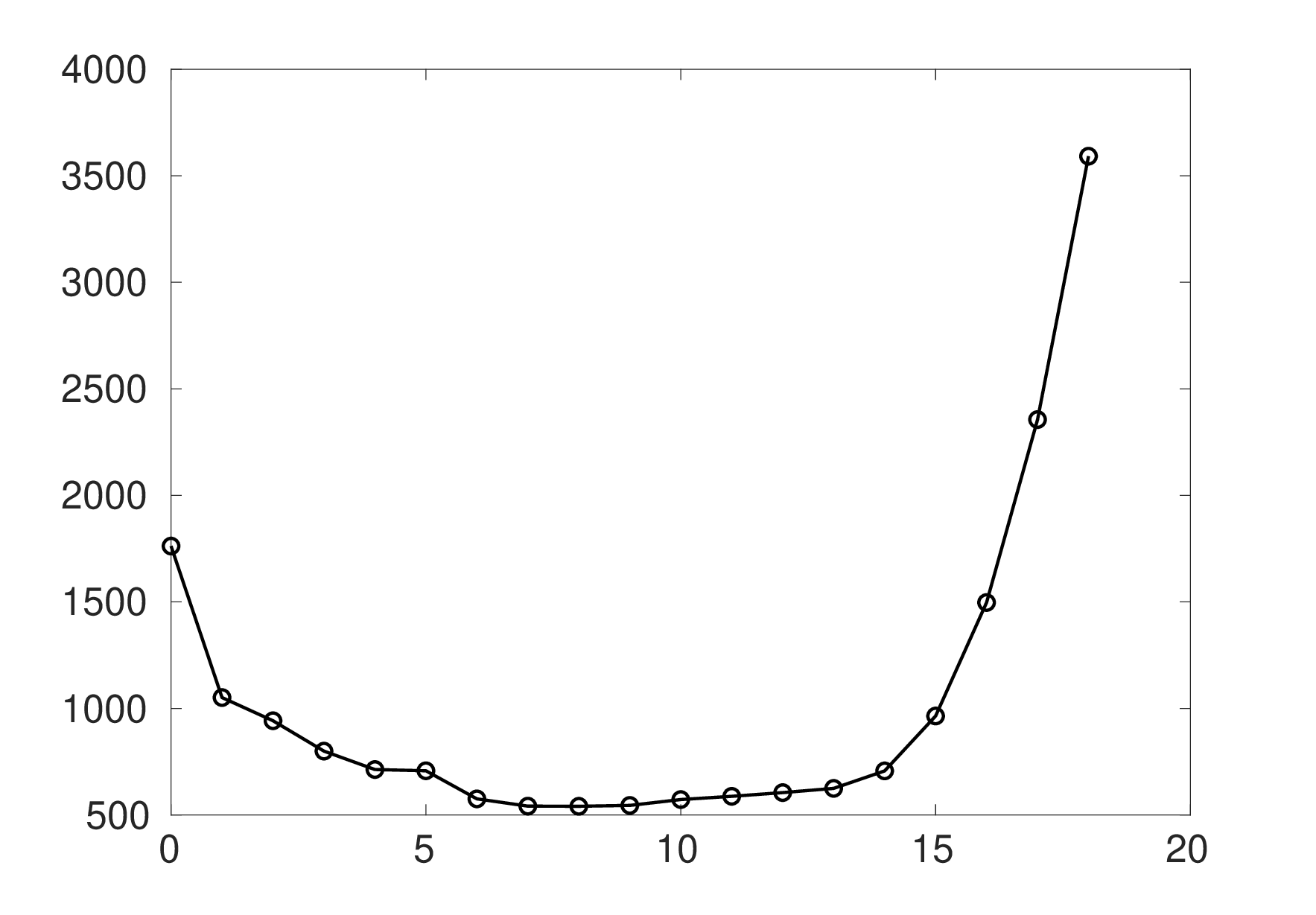}
}
\caption{Condition numbers of the sample matrix (\ref{eq:sampleMatrix})
  for least square stencils based on poised lattices. 
  Each subplot is generated by adding redundant points, one at a time, 
  into a poised lattice
  in Figure \ref{fig:TLG_2D} or Figure \ref{fig:TLG_3D}
  and computing the resulting condition number.
  The abscissa is the number of redundant points
  and the ordinate is the condition number of the sample matrix.
  Redundant points with smaller Manhattan distance to the starting
  point are added before those with larger Manhattan distances.
 }
\label{fig:TLG_conditioning}
\end{figure}

\begin{table}
  \caption{CPU time of TLG discretization of $\nabla \cdot (\mathbf{u} \mathbf{u})$
    on an Intel Xeon CPU E5-2698 v3 at 2.30GHz.
}
\label{tab:timeConsumption}
\centering
\renewcommand{\arraystretch}{1.28}
\begin{tabular}{c|c|c|c|c|c|c}
  \hline
  & & & number of & \multicolumn{2}{c}{CPU time in seconds} &
  \\ \cline{5-7}
  $\Dim$ & $n$ & $h$ & TLG calls & test sets of type I & test sets of type
                                                       II & ratio \\
  \hline
$2$ & 4 & $\frac{1}{64}$ & 192 & 5.54e$-$3 & 6.49e$-$4 & 8.54e+0 \\
 \hline
$2$ & 6 & $\frac{1}{64}$ & 288 & 6.83e+0 & 1.66e$-$2 & 4.11e+2 \\
\hline
$2$ & 8 & $\frac{1}{64}$ & 384 & 2.99e+4 & 1.61e+1 & 1.86e+3 \\
\hline
$2$ & 8 & $\frac{1}{128}$ & 768 & 4.62e+4 & 3.20e+1 & 1.44e+3 \\
\hline
$3$ & 4 & $\frac{1}{32}$ & 1696 & 2.03e+0 & 1.33e$-$2 & 1.53e+2 \\
 \hline
$3$ & 6 & $\frac{1}{32}$ & 2549 & 4.19e+4 & 1.81e+1 & 2.31e+3 \\
\hline
%$3$ & 8 & $1/32$ &  &  &  & \\
%\hline
\end{tabular}
\end{table}%

\subsection{The TLG discretization of continuous operators}
%  by fitting complete polynomials}
\label{sec:testDiscretization}

We start with a reiteration of the TLG discretization,
 i.e., step (c) of PLG-FD in Definition \ref{def:PLG-FD}. 
For each irregular FD node $\mathbf{x}_{\mathbf{j}}$,
 we set $\mathbf{q}=\mathbf{x}_{\mathbf{j}}$,
 determine $K$ from $\mathbf{q}$ and $n$
 using the (CFS) strategy in Section \ref{sec:how-choose-feasible}, 
 invoke the TLG algorithm to solve the TLG problem, 
 \revise{adding 0--6 extra nodes into
 the generated poised lattice
 ${\mathcal T}_{\mathbf{j}}=\{\mathbf{x}_{\mathbf{i}}\}$, 
 solve (\ref{eq:fittingLinearSystem}) 
 to express $\mathbf{c}_\mathbf{i}$
 as a linear combination of the symbols
 in $\mathbf{U}_{\mathbf{S}(\mathbf{i})}$,
 substitute $\mathbf{c}_\mathbf{i}$ into
 (\ref{eq:fittingPoly})
 to obtain a multivariate polynomial $p(\mathbf{x})$
 whose coefficients are \emph{symbols} 
 that represent $u(\mathbf{x}_{\mathbf{i}})$'s,
 i.e. values of the unknown function $u$ at FD nodes.}
Then we take derivatives of $p(\mathbf{x})$
 as dictated by ${\mathcal L}u$ %and boundary conditions 
 to obtain a discrete form
 $Lu(\{\mathbf{x}_{\mathbf{i}}\})\approx
 {\mathcal L}u(\mathbf{x}_{\mathbf{j}})$.
% Note that $L$ is another polynomial of the
% $u(\mathbf{x}_{\mathbf{i}})$'s. 

For a spatial operator ${\mathcal L}$ on a scalar function $u$, 
%close to an irregular boundary,  
 the \emph{truncation error for the TLG discretization of} ${\mathcal L}u$
 at an irregular FD node $\mathbf{x}_{\mathbf{j}}$ is
 \begin{equation}
   \label{eq:truncationErrors}
   E_T({\mathcal L}u, \mathbf{x}_{\mathbf{j}}) :=
   \left|Lu(\{\mathbf{x}_{\mathbf{i}}\}) -
     {\mathcal L}u(\mathbf{x}_{\mathbf{j}})\right|,
\end{equation}
where $\{\mathbf{x}_{\mathbf{i}}\}$ is the poised lattice
of $\mathbf{x}_{\mathbf{j}}$ generated by the TLG algorithm
and $L$ is the TLG discretization of $\mathcal{L}$
%derivative of the polynomial $p$ in terms of $u(\mathbf{x}_{\mathbf{i}})$'s
as described in the previous paragraph.

In this test, the irregular boundaries in two and three dimensions
 are respectively the ellipse and the ellipsoid
 in (\ref{eq:ellipse}) and (\ref{eq:ellipsoid}).
The scalar function $u$ is set to be each
  component of a velocity field $\mathbf{u}$,
which, in two and three dimensions, is respectively given by
 \begin{align}
& \mathbf{u}(x, y) = \left(
 \begin{array}{c}
\sin^2(\pi x) \sin(2 \pi y) \\  
-\sin(2 \pi x) \sin^2(\pi y)
 \end{array}
 \right), \\
& \mathbf{u}(x, y, z) = \frac{1}{2} \left(
 \begin{array}{c}
\sin^2(\pi x) \sin(2 \pi y) \sin(2 \pi z) \\  
\sin(2 \pi x) \sin^2(\pi y) \sin(2 \pi z) \\
-2 \sin(2 \pi x) \sin(2 \pi y) \sin^2(\pi z)
 \end{array}
 \right).
 \end{align}

\begin{table}
  \caption{Truncation errors for the TLG discretization of $\nabla \cdot (\mathbf{u} \mathbf{u})$;
    norms are taken over all components of $\mathbf{u}$.
    The two-dimensional domain is the unit square with an ellipse removed,
    prescribed by (\ref{eq:ellipse}) with $(x_0,y_0)=(\frac{1}{2}, \frac{1}{2})$ and
    $(a,b)=(\frac{1}{4}, \frac{1}{8})$.
    The three-dimensional domain is the unit cube with an ellipsoid removed,
    prescribed by (\ref{eq:ellipsoid}) with
    $(x_0,y_0,z_0) = (\frac{1}{2}, \frac{1}{2}, \frac{1}{2})$ and
    $(a,b,c) = (\frac{1}{4}, \frac{1}{8}, \frac{1}{4})$.
  }
  \label{tab:cvt}
  \centering
\begin{tabular}{cccccccc}
\hline
\multicolumn{8}{c}{$\Dim=2$, $n=4$} \\
\hline       
          &  $h=\frac{1}{32}$ &  rate &  $h=\frac{1}{64}$ &  rate & $h=\frac{1}{128}$ &  rate &  $h=\frac{1}{256}$  \\
\hline         
$L^\infty$ &   4.40e-03 &       3.51 &   3.87e-04 &       3.69 &   3.01e-05 &       3.84 &   2.10e-06 \\
     $L^1$ &   5.41e-04 &       4.08 &   3.20e-05 &       4.00 &   1.99e-06 &       4.01 &   1.24e-07 \\
     $L^2$ &   5.78e-04 &       4.18 &   3.19e-05 &       4.00 &   1.99e-06 &       4.05 &   1.21e-07 \\
\hline     
\multicolumn{8}{c}{$\Dim=2$, $n=6$} \\
\hline
         &  $h=\frac{1}{32}$ &  rate &   $h=\frac{1}{64}$ &  rate &  $h=\frac{1}{128}$ &   rate &  $h=\frac{1}{256}$  \\
\hline         
$L^\infty$ &   2.99e-04 &       6.81 &   2.66e-06 &       4.15 &   1.49e-07 &       5.30 &   3.80e-09 \\
     $L^1$ &   2.08e-05 &       6.22 &   2.80e-07 &       6.02 &   4.33e-09 &       6.02 &   6.67e-11 \\
     $L^2$ &   2.78e-05 &       6.59 &   2.87e-07 &       5.88 &   4.89e-09 &       6.05 &   7.38e-11 \\
\hline     
% \multicolumn{8}{c}{Eighth-order TLG discretization ($n=8$)} \\
% \hline
%          $h$ &       1/32 &       rate &       1/64 &       rate &      1/128 &       rate & --\\
% \hline         
% $L^\infty$ &   5.79e-05 &       7.84 &   2.53e-07 &      -0.58 &   3.77e-07 & -- & -- \\
%      $L^1$ &   1.16e-06 &       8.23 &   3.86e-09 &       2.05 &   9.31e-10 & -- & -- \\
%      $L^2$ &   3.76e-06 &       8.22 &   1.27e-08 &       0.16 &   1.13e-08 & -- & -- \\
% \hline     
% \multicolumn{8}{c}{Eighth-order discretization} \\
% \hline
%          $h$ &       1/32 &       rate &       1/64 &       rate &      1/128 &       rate &      1/256 \\
% \hline
% $L^\infty$ &   1.82e-06 &       7.68 &   8.91e-09 &      -0.33 &   1.12e-08 &       0.49 &   7.94e-09 \\
%      $L^1$ &   6.19e-07 &       8.06 &   2.32e-09 &       7.26 &   1.52e-11 &       1.47 &   5.50e-12 \\
%      $L^2$ &   5.83e-07 &       8.02 &   2.24e-09 &       3.78 &   1.63e-10 &       0.25 &   1.37e-10 \\
% \hline     
\multicolumn{8}{c}{$\Dim=3$, $n=4$} \\ 
\hline       
         &  $h=\frac{1}{32}$ &  rate &  $h=\frac{1}{64}$&  rate & $h=\frac{1}{128}$ & & \\
\hline         
$L^\infty$ &   6.74e-03 &       3.04 &   8.21e-04 &       4.50 &   3.63e-05 & & \\ 
     $L^1$ &   2.88e-04 &       4.01 &   1.79e-05 &       4.01 &   1.11e-06 & & \\ 
     $L^2$ &   2.85e-04 &       4.09 &   1.67e-05 &       4.07 &   9.94e-07 & & \\ 
\hline     
\multicolumn{8}{c}{$\Dim=3$, $n=6$} \\ 
\hline
         &  $h=\frac{1}{32}$ &  rate &  $h=\frac{1}{64}$&  rate & $h=\frac{1}{128}$ & & \\
\hline         
$L^\infty$ &   4.50e-04 &       4.78 &   1.64e-05 &       6.38 &   1.96e-07 & & \\ 
     $L^1$ &   9.72e-06 &       6.01 &   1.50e-07 &       6.03 &   2.30e-09 & & \\ 
     $L^2$ &   1.29e-05 &       6.11 &   1.87e-07 &       6.30 &   2.38e-09 & & \\ 
\hline
\end{tabular}
\end{table}%

% \begin{table}
% \caption{Truncation errors for the TLG discretization of $\nabla \cdot (\mathbf{u} \mathbf{u})$ in 3D.
%  Norms are taken over all components of $\mathbf{u}$.
% The domain is the unit cube with an ellipsoid removed,
%  which is described by (\ref{eq:ellipsoid}) with
%  $(x_0,y_0,z_0) = (\frac{1}{2}, \frac{1}{2}, \frac{1}{2})$ and
%  $(a,b,c) = (\frac{1}{4}, \frac{1}{8}, \frac{1}{4})$.
% }
% \label{tab:cvt3d}
% \centering
% \begin{tabular}{cccccc}
% \hline
% \multicolumn{6}{c}{TLG discretization with $n=4$} \\
% \hline       
%          $h$ &       1/32 &       rate &       1/64 &       rate &      1/128  \\
% \hline         
% $L^\infty$ &   6.74e-03 &       3.04 &   8.21e-04 &       4.50 &   3.63e-05 \\
%      $L^1$ &   2.88e-04 &       4.01 &   1.79e-05 &       4.01 &   1.11e-06 \\
%      $L^2$ &   2.85e-04 &       4.09 &   1.67e-05 &       4.07 &   9.94e-07 \\
% \hline     
% \multicolumn{6}{c}{TLG discretization with $n=6$} \\
% \hline
%          $h$ &       1/32 &       rate &       1/64 &       rate &      1/128 \\
% \hline         
% $L^\infty$ &   4.50e-04 &       4.78 &   1.64e-05 &       6.38 &   1.96e-07 \\
%      $L^1$ &   9.72e-06 &       6.01 &   1.50e-07 &       6.03 &   2.30e-09 \\
%      $L^2$ &   1.29e-05 &       6.11 &   1.87e-07 &       6.30 &   2.38e-09 \\
% \hline
% %\multicolumn{6}{c}{Eighth-order discretization} \\
% %\hline
% \end{tabular}
% \end{table}%

In Sections \ref{sec:test-sets-first} and \ref{sec:test-sets-second},
 two types of test sets are given for use with the TLG algorithm.
In this test,
 we first compare CPU time of these two different test sets
 and list the results in Table \ref{tab:timeConsumption}.
Test sets of type II are clearly much more efficient
 than those of type I.
Furthermore,
 the ratio of improvement increases as $n$ grows, 
 but it appears to be insensitive
 to the grid size, c.f. 
\revise{
 the third and fourth rows in Table \ref{tab:timeConsumption}.
}
These observations confirm our conjecture that the backtracking efficiency
 can be greatly enhanced by enforcing in the spanning trees
 the algebraic structure of orbits of equivalence classes
 of the principal lattice.
 
When each component of $\mathbf{u}$ is approximated
 by a complete multivariate polynomial of degree no more than $n$,
 the leading error term of $\mathbf{u}\mathbf{u}$
 is $O(h^{n+1})$,
 and hence the truncation error of 
 the spatial operator ${\mathcal L}\mathbf{u}= \nabla \cdot (\mathbf{u} \mathbf{u})$
 should be $O(h^{n})$.
In Table \ref{tab:cvt},
truncation errors of TLG discretization for ${\mathcal L}\mathbf{u}$
%at all irregular FD nodes 
 are listed with corresponding convergence rates; 
 in both two and three dimensions,
 fourth-order and sixth-order convergence rates 
 are clearly observed for TLG discretization
 with $n=4$ and $n=6$, respectively.
This verifies the correctness and effectiveness of the TLG algorithm.
% In the case of $n=8$ and $h=\frac{1}{128}$, 
%  the TLG discretization has a large error
%  in the max-norm,
%  which seems to be caused by the large condition number.
% The investigation and solution of this accuracy deterioration 
%  will be the subject of a future research.

\subsection{An elliptic equation with a cross-derivative term}
\label{sec:ellipticEqWithCrossDerivative}

%In a simply-connected domain $\Omega$, 
In this test, our PLG-FD method is employed to numerically solve the elliptic equation
\begin{equation}
  \label{eq:secondOrderMixedDer}
  \left\{
    \begin{aligned}
      a \frac{\partial^2 u}{\partial x^2}
      + b \frac{\partial^2 u}{\partial x \partial y} + c \frac{\partial^2 u}{\partial y^2} &= f \quad \text{in}\ \Omega, \\
      u &= g \quad \text{on}\ \partial \Omega, 
    \end{aligned}
  \right.
\end{equation}
 where $\Omega$ is a simply-connected domain and 
 $u : \Omega \rightarrow \mathbb{R}$ is the unknown function.
When $b$ is not zero, the cross-derivative term
 could cause difficulty for FD methods based on one-dimensional FD formulas.
In this test, the righthand side $f$ and the boundary condition $g$ are 
 both derived from the exact solution
\begin{equation}
  u(x, y) = \sin(2 \pi x) \cos(2 \pi y),
\end{equation}
which is also used for calculating truncation and solution errors.

% \begin{figure}
% \centering
% \includegraphics[width=.8\linewidth]{}
% \caption{Height plot of the solution to Problem 1, $h = 1/64$. }
% \end{figure}

\begin{figure}
\centering
\subfigure[Solution on the unit square.]{
\includegraphics[width=.37\linewidth]{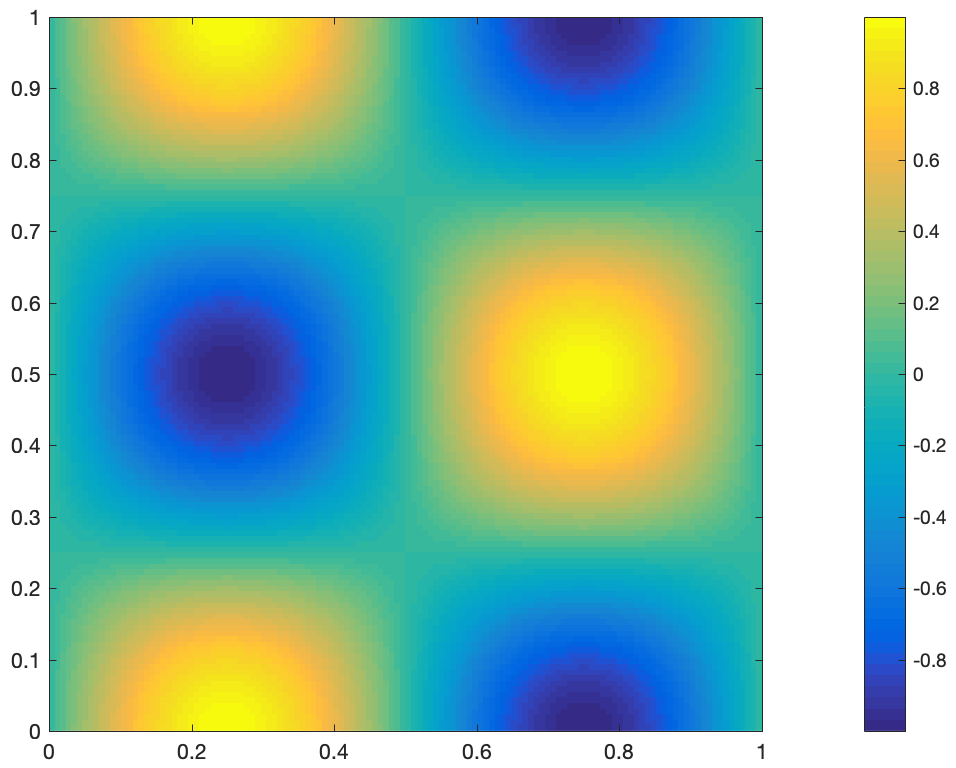}
}
\hfill
\subfigure[Solution on the unit square rotated by $\pi/6$. ]{
\includegraphics[width=.58\linewidth]{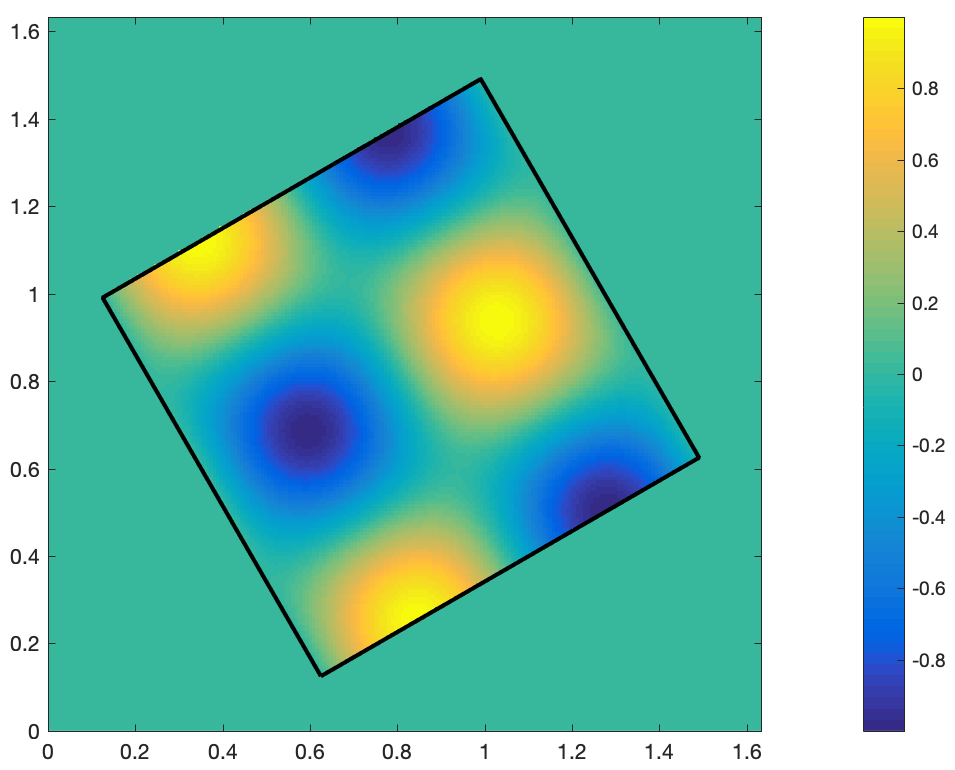}
}
\caption{Solutions of equation (\ref{eq:secondOrderMixedDer})
  by the PLG-FD method on different domains
  with the same grid size $h = \frac{1}{128}$.
  In subplot (a), the domain is the unit square
  where neither TLG nor TLG discretization is invoked;
   all discretizations are done
   via traditional one-dimensional FD formulas.
  In subplot (b), the domain is the rotated unit square
   where the TLG discretization is used for irregular FD nodes.
  See Table \ref{tab:mixedDerError} for an error comparison.
 }
\label{fig:mixedDerSolComparison}
\end{figure}

\begin{table}
  \caption{Truncation errors and solution errors of
    the PLG-FD method with $n=4$ in solving
    the elliptic equation (\ref{eq:secondOrderMixedDer}).}
    % The domain is the unit square rotated by $\frac{\pi}{6}$. 
    % The equivalent system is defined on the unit square
    % and with coefficients $(a, b, c) = (1.0, 0.0, 2.0)$. }
\label{tab:mixedDerError}
\centering
\renewcommand{\arraystretch}{1.2}
\begin{tabular}{cccccccc}
  \hline
  \multicolumn{8}{c}{Truncation errors on the rotated square with 
  $(a, b, c) = (\frac{5}{4}, -\frac{\sqrt{3}}{2}, \frac{7}{4})$ 
  } \\
\hline
      &  $h=\frac{1}{64}$  &  rate &  $h=\frac{1}{128}$ &  rate & $h=\frac{1}{256}$ &  rate &  $h=\frac{1}{512}$ \\
\hline       
$L^\infty$ &   1.44e-01 &       2.58 &   2.42e-02 &       2.97 &   3.10e-03 &       3.36 &   3.01e-04 \\
     $L^1$ &   6.84e-04 &       4.02 &   4.22e-05 &       4.04 &   2.56e-06 &       4.02 &   1.57e-07 \\
     $L^2$ &   3.29e-03 &       3.60 &   2.71e-04 &       3.82 &   1.92e-05 &       3.85 &   1.33e-06 \\
\hline
\multicolumn{8}{c}{Solution errors on the rotated square with 
  $(a, b, c) = (\frac{5}{4}, -\frac{\sqrt{3}}{2}, \frac{7}{4})$} \\
\hline
     &  $h=\frac{1}{64}$  &  rate  &  $h=\frac{1}{128}$ &  rate &  $h=\frac{1}{256}$  &  rate  &  $h=\frac{1}{512}$  \\
\hline       
$L^\infty$ &   2.02e-05 &       6.08 &   2.99e-07 &       4.22 &   1.61e-08 &       4.01 &   1.00e-09 \\
     $L^1$ &   1.28e-06 &       4.11 &   7.42e-08 &       4.02 &   4.58e-09 &       4.00 &   2.87e-10 \\
     $L^2$ &   1.75e-06 &       4.09 &   1.03e-07 &       4.01 &   6.38e-09 &       4.00 &   3.99e-10 \\
\hline       
  \multicolumn{8}{c}{Solution errors on the unit square $[0,1]^2$
  with $(a, b, c) = (1, 0, 2)$
  } \\
\hline
     &  $h=\frac{1}{64}$  &  rate  &  $h=\frac{1}{128}$ &  rate &  $h=\frac{1}{256}$  &  rate  &  $h=\frac{1}{512}$  \\
\hline           
  $L^\infty$ &   1.24e-06 &       3.99 &   7.79e-08 &       4.00 &   4.88e-09 &       3.98 &   3.09e-10 \\
       $L^1$ &   3.30e-07 &       3.98 &   2.09e-08 &       3.99 &   1.31e-09 &       4.00 &   8.20e-11 \\
       $L^2$ &   4.81e-07 &       3.99 &   3.03e-08 &       3.99 &   1.90e-09 &       3.97 &   1.21e-10 \\
\hline       
\end{tabular}
\end{table}%

We design two test cases to examine how TLG discretization affects the
accuracy of the numerical solutions.
First, 
 we solve (\ref{eq:secondOrderMixedDer}) with $(a, b, c) = (1, 0, 2)$
 on the unit square $[0,1]^2$ and
 plot the solution in Figure \ref{fig:mixedDerSolComparison} (a). 
The regular geometry and the absence of the cross-derivative term
 admit a discretization via one-dimensional FD formulas at all FD nodes,
 and hence TLG is not invoked and no TLG discretization is necessary.
Second, we rotate the unit square by $\frac{\pi}{6}$
 and embed it in a larger square domain,
 on which the TLG discretization is applied to irregular FD nodes
 for equation (\ref{eq:secondOrderMixedDer}) with
 $(a, b, c) = (\frac{5}{4}, -\frac{\sqrt{3}}{2}, \frac{7}{4})$
 and the resulting linear system is solved by a geometric multigrid method;
 the solution is plotted in Figure \ref{fig:mixedDerSolComparison} (b). 
 % elliptic problem (\ref{eq:secondOrderMixedDer}) 
Since the exact solution is the same in both cases,
 the difference of the two computed solutions
 quantifies the influence of irregular geometry and
 cross-derivative terms.

As shown in Figure \ref{fig:mixedDerSolComparison},
 the two solutions are qualitatively the same.
In Table \ref{tab:mixedDerError},
we list truncation and solution errors
of our fourth-order PLG-FD method in solving (\ref{eq:secondOrderMixedDer})
on these two domains.
For truncation errors, 
 the convergence rates are above 3, 4, and 3.5
 in the max-norm, the 1-norm, and the 2-norm, respectively. 
This implies that the elliptic operator is approximated
 to third-order accurate on the irregular FD nodes of codimension one
 and to fourth-order accurate on the regular FD nodes.
In contrast,
 convergence rates of solution errors are very close to 4 in all norms, 
 due to the elliptic nature of the PDE. 
The difference of solution errors on the two domains
 seems to suggest that TLG discretization, 
 when dealing with complex geometry and cross-derivative terms,
 roughly triples the solution errors of traditional FD formulas.

\subsection{Poisson's equation with mixed Dirichlet and Neumann
  conditions}
\label{sec:PoissonEq}

In this subsection we demonstrate that 
 our PLG-FD method can be much more accurate than
 a second-order embedded boundary (EB) method \cite{johansen98:_cartes_grid_embed_bound_method}.

Consider a test \cite[Problem 3]{johansen98:_cartes_grid_embed_bound_method} of Poisson's equation
\begin{equation}
  \label{eq:Poisson}
  \frac{\partial^2 u}{\partial x^2} + \frac{\partial^2 u}{\partial y^2} = f \quad \text{in } \Omega, \\
\end{equation}
where $\Omega = \Omega_1 \cap \Omega_2$,
 $\Omega_1$ is the unit square centered at the origin and
\begin{equation*}
\Omega_2 = \left\{(r,\theta) : r \ge 0.25 + 0.05 \cos{6 \theta}
\right\}; 
\end{equation*}
 a Dirichlet boundary condition is imposed on $\partial \Omega_1$
 while a Neumann condition on $\partial \Omega_2$.
Both $f$ and the boundary conditions are derived 
 from the exact solution
 \begin{equation}
   \label{eq:PoissonSolution}
   u(r, \theta) = r^4 \cos{3 \theta},
 \end{equation}
 where $(r,\theta)$ are polar coordinates satisfying
 $(x,y)=(r\cos\theta, r\sin\theta)$.

Although no cross-derivative term is present in the equation,
 TLG discretization is still necessary
 in order to fulfill the Neumann boundary condition near $\Omega_2$; 
 see step (c) of Definition \ref{def:PLG-FD}.

As shown in Figure \ref{fig:PoissonErrorPlot},
 solution errors of our PLG-FD method
 for $h=\frac{1}{160}$
 are dominated by those near the curvilinear boundary.
This is not unexpected because, 
 while traditional FD discretizations on symmetric stencils
 have fortuitous cancellations of leading error terms,
 the TLG discretization near an irregular boundary does not.
Hence 
 truncation errors of the TLG discretization near the boundary
 are typically much higher than traditional FD discretization
 on regular FD nodes.
Despite different types of discretization across the FD nodes,
 the solution errors appear smooth on the entire domain $\Omega$.
 
To compare the accuracy of our fourth-order method to
 the second-order EB method by Johansen and Colella
 \cite{johansen98:_cartes_grid_embed_bound_method}, 
 we show truncation errors and solution errors of these two methods
 in Table \ref{tab:PoissonErrors}.
The convergence rates of solution errors of our method
 are very close to four, meeting our expectations.
Furthermore,
 our method is much more accurate than the EB method.
In particular, the max-norm of solution errors of our method
 on the grid of $h=\frac{1}{80}$ 
 is smaller by a factor of twenty
 than that of the EB method on the finest grid of $h=\frac{1}{320}$.
Also,  for $h=\frac{1}{320}$, 
 the error norm of our method
 is much less than that of the EB method
 by a factor of 5350.

\begin{figure}
\centering
\includegraphics[width=.6\linewidth]{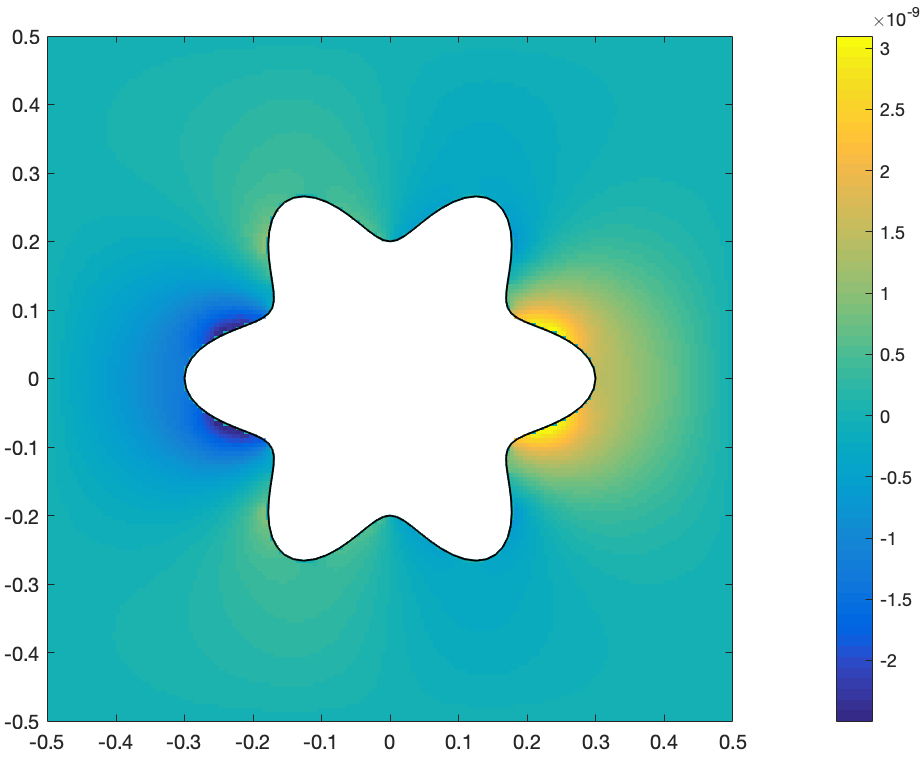}
\caption{Solution errors of the fourth-order PLG-FD method
  in solving (\ref{eq:Poisson}) with $h = \frac{1}{160}$.
  A Dirichlet condition and a Neumann condition are enforced on the square
  and the curvilinear boundary, respectively.
}
\label{fig:PoissonErrorPlot}
\end{figure}

\begin{table}
  \caption{Errors of the fourth-order PLG-FD method
    and a second-order EB method \cite{johansen98:_cartes_grid_embed_bound_method}
    in solving Poisson's equation (\ref{eq:Poisson})
    with mixed Dirichlet and Neumann conditions. }
\label{tab:PoissonErrors}
\centering
\begin{tabular}{cccccccc}
\hline
\multicolumn{8}{c}{Truncation errors of the EB method by Johansen and
  Colella \cite{johansen98:_cartes_grid_embed_bound_method}} \\
\hline
      &  $h=\frac{1}{40}$ &  rate &  $h=\frac{1}{80}$ &   rate &  $h=\frac{1}{160}$ &   rate &      $h=\frac{1}{320}$ \\
\hline
$L^\infty$ &   1.66e-03 &       2.0 &   4.15e-04 &       2.0 &   1.04e-04 &       2.0 &   2.59e-05 \\
\hline
\multicolumn{8}{c}{Truncation errors of our fourth-order PLG-FD method} \\
\hline
      &  $h=\frac{1}{40}$ &  rate &  $h=\frac{1}{80}$ &   rate &  $h=\frac{1}{160}$ &   rate &      $h=\frac{1}{320}$ \\
\hline       
$L^\infty$ &   1.64e-03 &       4.57 &   6.91e-05 &       2.03 &   1.69e-05 &       2.82 &   2.39e-06 \\
     $L^1$ &   1.34e-05 &       4.27 &   6.94e-07 &       3.65 &   5.53e-08 &       4.07 &   3.30e-09 \\
     $L^2$ &   8.98e-05 &       4.51 &   3.95e-06 &       2.98 &   5.01e-07 &       3.56 &   4.24e-08 \\
\hline       
\hline       
\multicolumn{8}{c}{Solution errors of the EB method by Johansen and
  Colella \cite{johansen98:_cartes_grid_embed_bound_method}} \\
\hline
      &  $h=\frac{1}{40}$ &  rate &  $h=\frac{1}{80}$ &   rate &  $h=\frac{1}{160}$ &   rate &      $h=\frac{1}{320}$ \\
\hline 
$L^\infty$ &   4.78e-05 &       1.85 &   1.33e-05 &       1.98 &   3.37e-06 &       1.95 &   8.72e-07 \\
\hline
\multicolumn{8}{c}{Solution errors of our fourth-order PLG-FD method} \\
\hline
      &  $h=\frac{1}{40}$ &  rate &  $h=\frac{1}{80}$ &   rate &  $h=\frac{1}{160}$ &   rate &      $h=\frac{1}{320}$ \\
\hline       
$L^\infty$ &   4.37e-06 &       6.62 &   4.43e-08 &       4.00 &   2.76e-09 &       4.08 &   1.63e-10 \\
     $L^1$ &   4.16e-07 &       7.46 &   2.36e-09 &       2.94 &   3.07e-10 &       4.22 &   1.65e-11 \\
     $L^2$ &   7.59e-07 &       7.35 &   4.64e-09 &       3.26 &   4.84e-10 &       4.18 &   2.66e-11 \\
\hline       
\end{tabular}
\end{table}%

In Table \ref{tab:PoissonTime}, 
 we report CPU time of the TLG discretization
 and that of all steps of our fourth-order PLG-FD method
 in solving (\ref{eq:Poisson}). 
%[Zhixuan: adding a table here to report the results.]
It is found that CPU time spent on TLG discretization
 is no more than 10\% of that on the entire solution process; 
 furthermore, this percentage decreases as the grid size is reduced. 
Invoked only at a set of codimension one,
 TLG discretization has a complexity of $O(\frac{1}{h^{\Dim-1}})$. 
In contrast, 
 the complexity of multigrid solvers 
 is $O(\frac{1}{h^{\Dim}})$. 
Hence 
 the cost of TLG discretization is asymptotically negligible
 as far as CPU time of the entire solution process is concerned.
 
However, the discussion in the previous paragraph does not diminish the importance
 of test sets of type II: % in Section \ref{sec:test-sets-second}: 
 combined with Table \ref{tab:timeConsumption}, Table
 \ref{tab:PoissonTime} also suggests that,
 on a fixed grid,
 CPU time of the entire PLG-FD method
 could have been dominated by that of TLG discretization
 if test sets of type I had been used.

\begin{table}
  \caption{Timing results in solving Poisson's equation
    (\ref{eq:Poisson})
    with the fourth-order PLG-FD method
    on an Intel Xeon E5-2698 v3 at 2.30GHz.
    The middle two columns are CPU time in seconds.
  }
\label{tab:PoissonTime}
\centering
\renewcommand{\arraystretch}{1.2}
\begin{tabular}{c|c|c|c}
\hline
 & TLG discretization & the entire solver & ratio \\
\hline
$h=\frac{1}{80}$  & 0.007 & 0.079 & 8.9\%\\
$h=\frac{1}{160}$ & 0.013 & 0.277 & 4.7\%\\
$h=\frac{1}{320}$ & 0.032 & 1.129 & 2.8\%\\
\hline
\end{tabular}
\end{table}%

% \begin{table}
%   \caption{CPU time in seconds in solving Poisson's equation (\ref{eq:Poisson})
%  on an Intel Xeon E5-2698 v3 at 2.30GHz. }
% \label{tab:PoissonTime}
% \centering
% \begin{tabular}{ccccc}
% \hline
% $h$ & TLG discretization & Multigrid initialization & Multigrid solution & Total \\
% \hline
% $1/80$   & 0.007 (8.9\%) & 0.005 & 0.067 & 0.079 \\
% $1/160$ & 0.013 (4.7\%)   & 0.040 & 0.224 & 0.277 \\
% $1/320$ & 0.032 (2.8\%)   & 0.358 & 0.739 & 1.129\\
% \hline
% \end{tabular}
% \end{table}%

\subsection{Poisson's equation in three dimensions}
\label{sec:PoissonEq3D}

Consider a test of Poisson's equation
\begin{equation}
    \frac{\partial^2 u}{\partial x^2} + \frac{\partial^2 u}{\partial y^2}
    + \frac{\partial^2 u}{\partial z^2} = f \quad \text{in } \Omega', \\
    \label{eq:PoissonEq3D}
\end{equation}
where $\Omega' = \Omega_3 \cap \Omega_4$,
$\Omega_3 = [0,1]^3$ is the unit box and
\begin{equation}
    \Omega_4 = \left\{ (x,y,z) \in \mathbb{R}^3 :
    \left( \frac{x-0.5}{0.25} \right)^2
    + \left( \frac{y-0.5}{0.125} \right)^2
    + \left( \frac{z-0.5}{0.25} \right)^2 \ge 1
    \right\}.
\end{equation}
Dirichlet conditions are imposed on both $\partial \Omega_3$ and $\partial \Omega_4$.
The source term $f$ and the boundary conditions are derived from the exact solution
\begin{equation}
    u(x, y, z) = \sin(2 \pi x) \cos(2 \pi y) \sin(2 \pi z).
\end{equation}

% A variant of the three-dimensional TLG algorithm is used
% to discretize the Laplacian operator.
\revise{
In step (c) of PLG-FD in Definition \ref{def:PLG-FD},
 % after generating 
 % the poised lattice $\mathbf{S}(\mathbf{i})$
 % for an irregular FD node $\mathbf{q}$, 
 we update the poised lattice $\mathbf{S}(\mathbf{i})$
 with $\mathbf{S}(\mathbf{i})\cup\{\bmj \in \mathbb{Z}^D: 
 \Vert \bmj - \mathbf{q} \Vert_1 \le 2\}$.
Consequently, 
 for most irregular FD nodes
 the linear system \eqref{eq:fittingLinearSystem} is overdetermined
 and must be solved in the least square sense. 
It is found that this choice of $\mathbf{S}(\mathbf{i})$
 leads to substantially smaller solution errors; 
% and 
 see Table \ref{tab:PoissonErrorsIn3D}.
 Fourth-order convergence rates are also observed. 
}

\begin{table}
    \caption{Errors of the fourth-order PLG-FD method
      in solving a three-dimensional Poisson's equation (\ref{eq:PoissonEq3D})
      with Dirichlet conditions. }
    \label{tab:PoissonErrorsIn3D}
    \centering
    \begin{tabular}{cccccccc}
      \hline
      \multicolumn{8}{c}{Truncation errors} \\
      \hline
      &  $h=\frac{1}{32}$ &   rate &  $h=\frac{1}{64}$ &   rate &  $h=\frac{1}{128}$ &       rate &  $h=\frac{1}{256}$  \\
      \hline
      $L^\infty$ &   2.40e-01 &       2.50 &   4.24e-02 &       2.68 &   6.63e-03 &       3.01 &   8.23e-04 \\
      $L^1$ &   1.93e-03 &       3.85 &   1.34e-04 &       4.06 &   8.04e-06 &       4.01 &   4.98e-07 \\
      $L^2$ &   1.07e-02 &       3.18 &   1.18e-03 &       3.54 &   1.01e-04 &       3.48 &   9.06e-06 \\
      \hline
      \multicolumn{8}{c}{Solution errors} \\
      \hline
      &  $h=\frac{1}{32}$ &   rate  &  $h=\frac{1}{64}$ &  rate &  $h=\frac{1}{128}$ &   & \\
      \hline
      $L^\infty$ &   9.01e-05 &       4.09 &   5.30e-06 &       4.62 &   2.15e-07 &      &     \\
      $L^1$ &   3.80e-06 &       4.40 &   1.80e-07 &       4.25 &   9.42e-09 &     &      \\
      $L^2$ &   7.57e-06 &       4.54 &   3.26e-07 &       4.47 &   1.47e-08 &     &      \\
      \hline
    \end{tabular}
  \end{table}

\revise{
\subsection{The heat equation with a Robin boundary condition}
\label{sec:heatEq}

In this subsection, 
 we use PLG-FD to solve the heat equation
\begin{equation}
  \label{eq:heatEquation}
%  \left\{
    \begin{array}{rll}
      \frac{\partial \varphi}{\partial t}
      &= \nu\Delta \varphi + f \quad &\text{in}\ \Omega,
      \\
      \nu\frac{\partial\varphi}{\partial \mathbf{n}}
      + a\varphi&= g\quad &\text{on}\ \partial \Omega,
    \end{array}
%  \right.
\end{equation}
where $\Omega$ is a simply-connected domain,
$\varphi$ the unknown function,
$f, g, a$ given functions,
$\nu$ the diffusion coefficient,
and $\mathbf{n}$ the unit outward normal of $\partial\Omega$.

\begin{figure}
\centering
\subfigure[The initial condition at $t_0=0.0$.]{
\includegraphics[width=.45\linewidth]{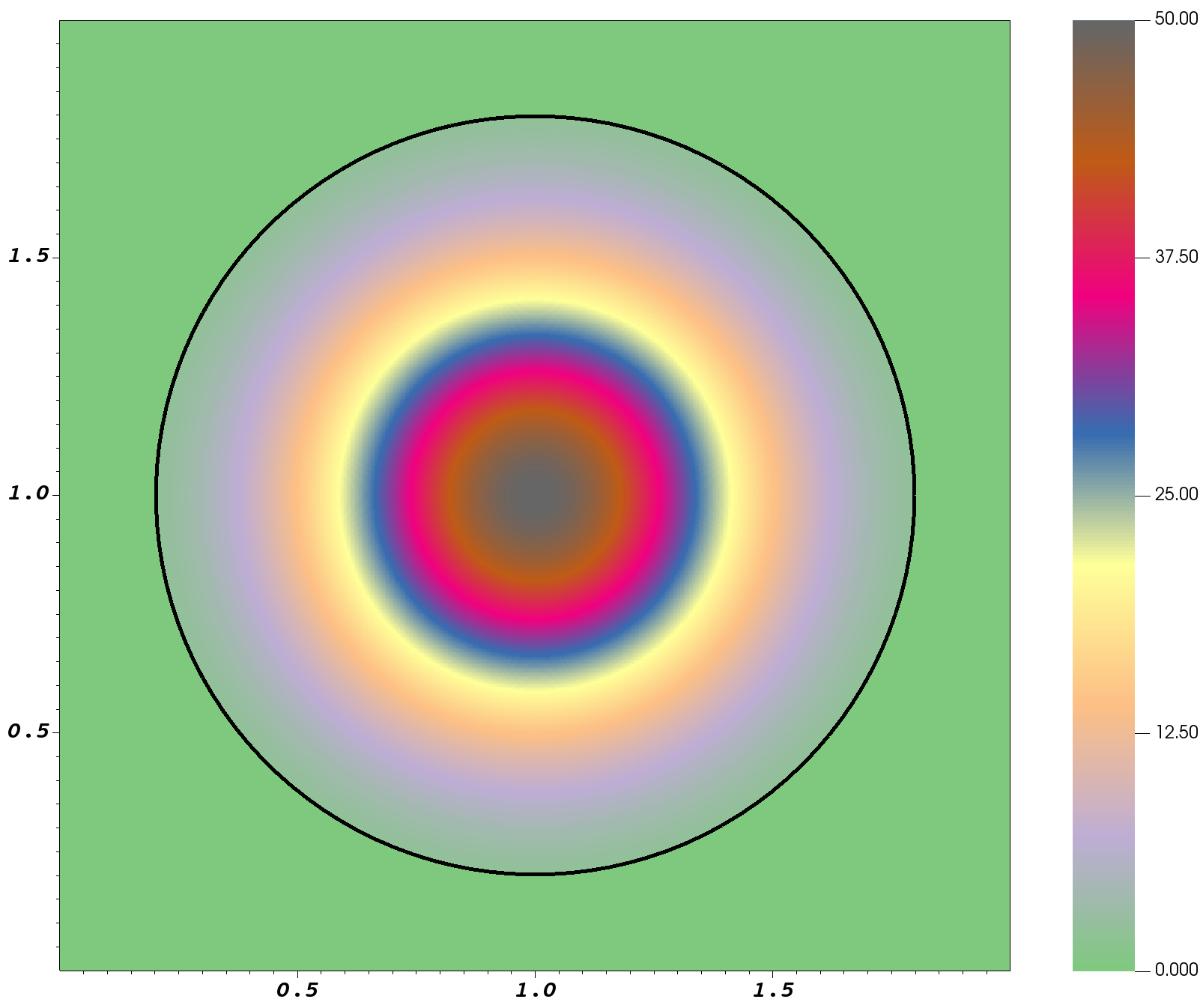}
}
\hfill
\subfigure[The final solution at $T=1.0$.]{
\includegraphics[width=.45\linewidth]{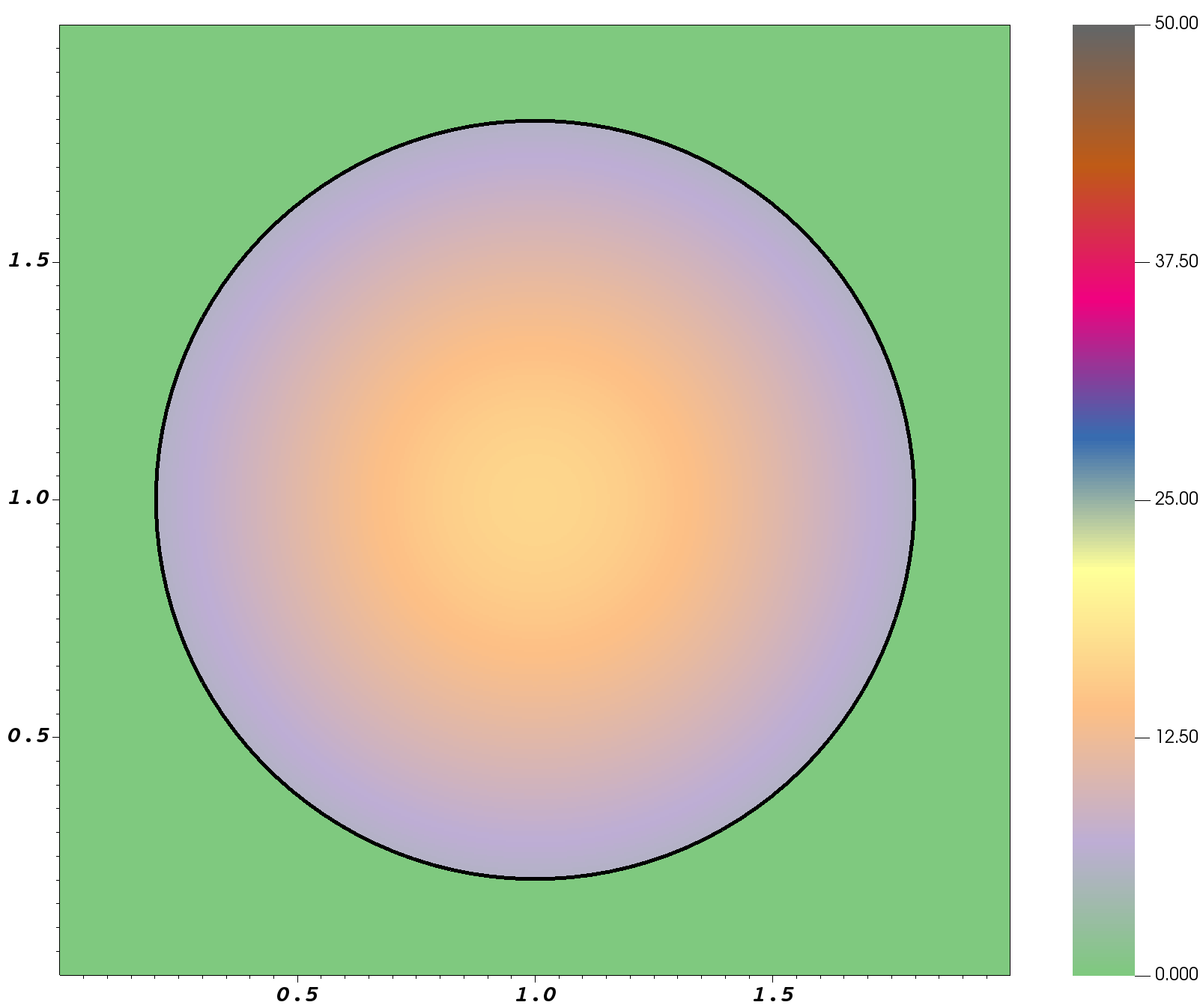}
}
\caption{Results of PLG-FD in solving the heat equation (\ref{eq:heatEquation}) with $h = \frac{1}{256}$ and $\Delta t = 2h$.
 }
\label{fig:heatEq}
\end{figure}

\begin{table}
  \caption{Errors and convergence rates
    of the fourth-order PLG-FD method
    coupled with an ESDIRK method,
    in solving the heat equation (\ref{eq:heatEquation})
    with a Robin boundary condition. }
    \label{tab:HeatEqWithRobinCondition}
    \centering
    \begin{tabular}{cccccccc}
        \hline
        &  $h=\frac{1}{32}$  &   rate &  $h=\frac{1}{64}$ &    rate &  $h=\frac{1}{128}$ &       rate &  $h=\frac{1}{256}$ \\
        \hline
        $L^\infty$ &   8.69e-05 &       4.66 &   3.43e-06 &       3.90 &   2.30e-07 &       4.12 &   1.33e-08 \\
             $L^1$ &   5.70e-05 &       4.88 &   1.95e-06 &       3.78 &   1.41e-07 &       4.17 &   7.85e-09 \\
             $L^2$ &   4.27e-05 &       4.86 &   1.48e-06 &       3.81 &   1.06e-07 &       4.16 &   5.90e-09 \\
        \hline
    \end{tabular}
\end{table}

The diffusion of a point source
 in a disk of radius $R$ is described by 
 \begin{equation}
   \label{eq:diffusionOnDisk}
   \begin{array}{l}
   \varphi(x,y,t) = \frac{10}{4\nu(t+\frac{1}{2})}
   \exp\left(-\frac{(x-x_0)^2+(y-y_0)^2}{4\nu(t+\frac{1}{2})}\right),
   \end{array}
 \end{equation}
where $(x_0, y_0)$ is the center of the disk.
We set $a = 1$, $R=0.8$, $(x_0,y_0) = (1,1)$,
 $\nu=0.1$,
 and determine the form of $f$ and $g$
 by plugging (\ref{eq:diffusionOnDisk}) into (\ref{eq:heatEquation}). 

We couple the fourth-order
explicit singly diagonal implicit Runge-Kutta (ESDIRK) scheme
\cite[p. 176]{kennedy03:_additive_runge_kutta},
 and the fourth-order PLG-FD method in Definition \ref{def:PLG-FD}
 to obtain a simple method
 as described by (PFT-1,2) in Section \ref{sec:PLG-FD}. 
Figure~\ref{fig:heatEq} shows the initial and final numerical solutions 
with the grid size $h=\frac{1}{256}$
and the time step size $\Delta t = 2h$.
Errors and convergence rates at the final time $T=1.0$
are presented in Table~\ref{tab:HeatEqWithRobinCondition}, 
where all convergence rates
in the $L^1$, $L^2$, and $L^{\infty}$ norms
are close to 4. 
}

\revise{
\subsection{The advection-diffusion equation with
 homogenuous Neumann conditions}
\label{sec:adv-diffu-eq}

In this subsection,
we test PLG-FD by solving the advection-diffusion equation
\begin{equation}
  \label{eq:adv-diff-eq}
  \begin{array}{rll}
    \frac{\partial\varphi}{\partial t}
    + \mathbf{u}\cdot\nabla\varphi
    &= \nu\Delta\varphi \quad &\text{in } \Omega;
    \\
    \frac{\partial \varphi}{\partial \mathbf{n}} &= 0
    \quad &\text{on } \partial\Omega,
  \end{array}
\end{equation}
where $\varphi$ is the unknown function,
$\mathbf{u}(x,y,t)$ the specified velocity field,
and $\nu$ the diffusion coefficient.

As shown in Figure \ref{fig:annulus-subfig1},
 the domain $\Omega$ is an annulus
 with its center at $(1.0, 1.0)$
 and with the radii of the two concentric circles
 as $R_1 = 0.25$ and $R_2 = 0.8$. 
The diffusion coefficient is $\nu = 0.01$.
The velocity $\mathbf{u}$ is a solid body rotation
 given by the stream function
\begin{equation}
  \label{eq:exactSolutionAnnulus}
    \psi(x,y) = 
    \begin{cases}
    0 & \text{if } r\in[0,R_1)\cup(R_2,\infty); \\
    0.2\pi(R_2^2-r^2)    & \text{if }r\in[R_1, R_2], \\
    \end{cases}
\end{equation}
where $r = \sqrt{(x-1)^2 + (y-1)^2}$
is the distance between $(x, y)$ and the center of the annulus.
The initial condition is 
%a function of $\theta$ in the form of
\begin{equation}
  \label{eq:initialConditionAnnulus}
  \begin{array}{l}
  \varphi(\theta) = \frac{1}{2}
  \text{erf}\left(\frac{\pi-6\theta}{6\sqrt{4\nu}}\right)
  + \frac{1}{2}
  \text{erf}\left(\frac{\pi+6\theta}{6\sqrt{4\nu}}\right),
  \end{array}
\end{equation}
where $\theta(x, y)$ is the angle
between the vector $(x-1, y-1)$ and the $x$-axis.

\begin{figure}
  \centering
  \subfigure[$T=0.0$.]{
    \includegraphics[width=0.45\textwidth]{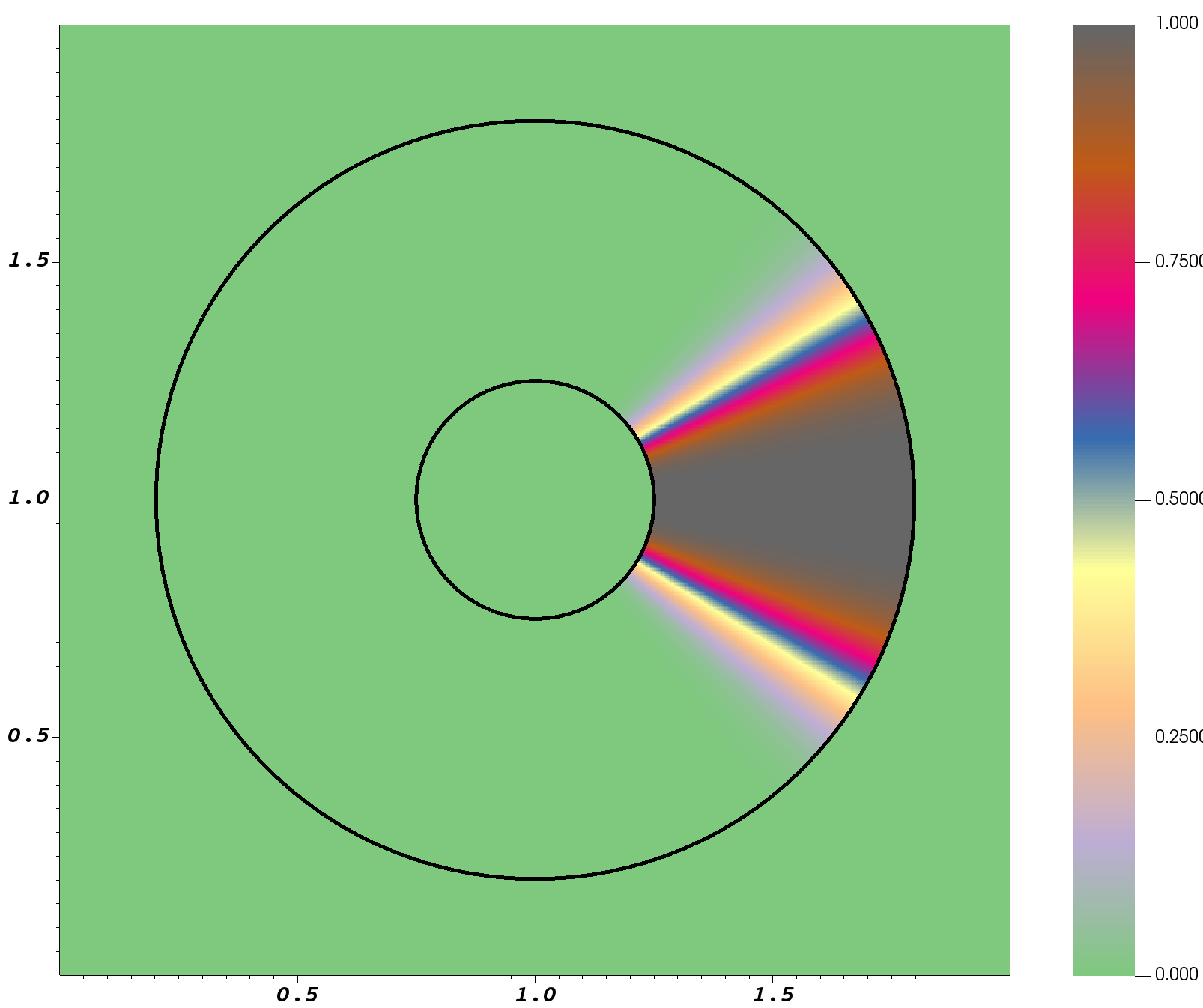}
    \label{fig:annulus-subfig1}
  }
  \hfill
  \subfigure[$T=1.0$.]{
    \includegraphics[width=0.45\textwidth]{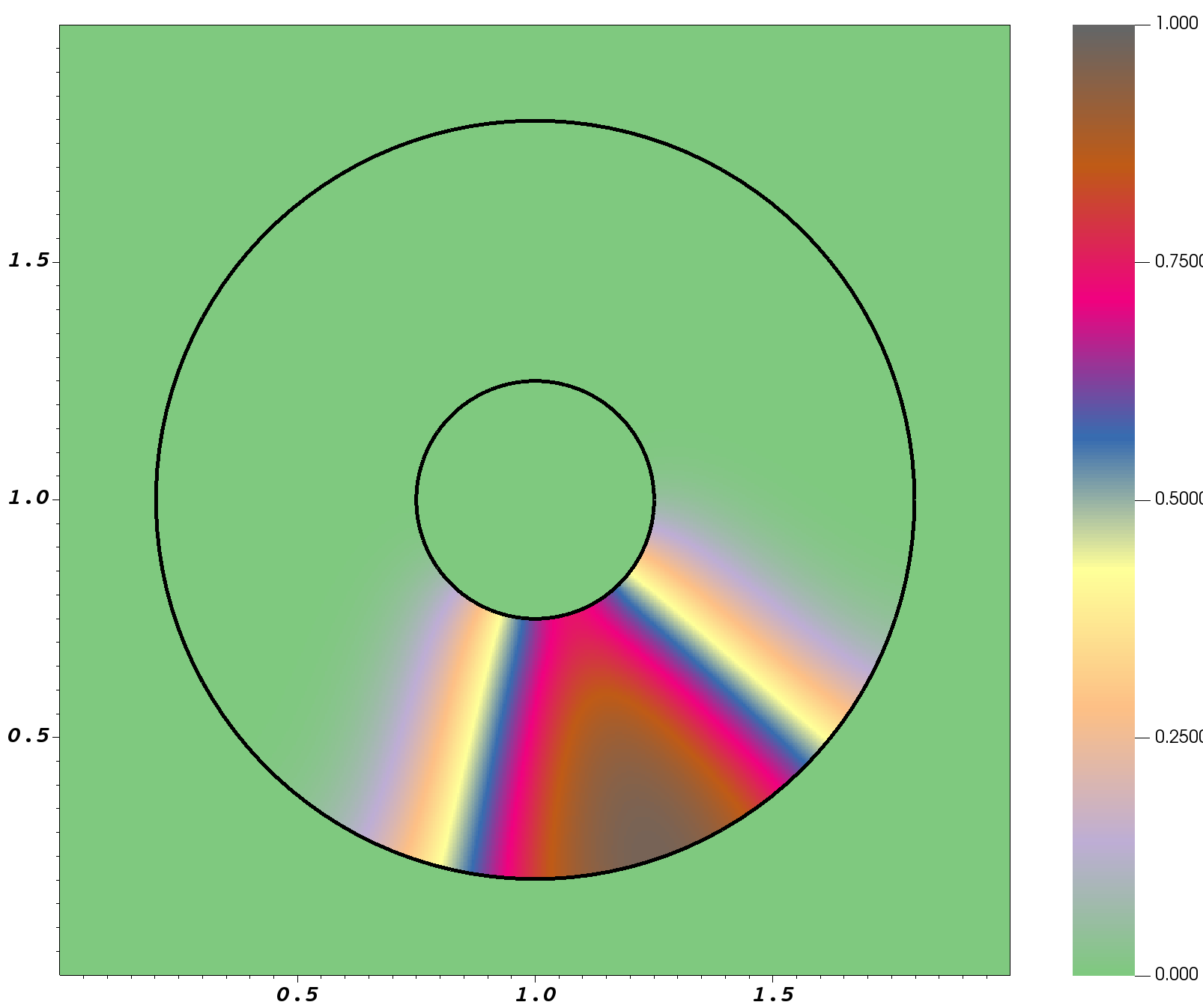}
    \label{fig:annulus-subfig2}
  }
  \hfill
    \subfigure[$T=2.0$.]{
    \includegraphics[width=0.45\textwidth]{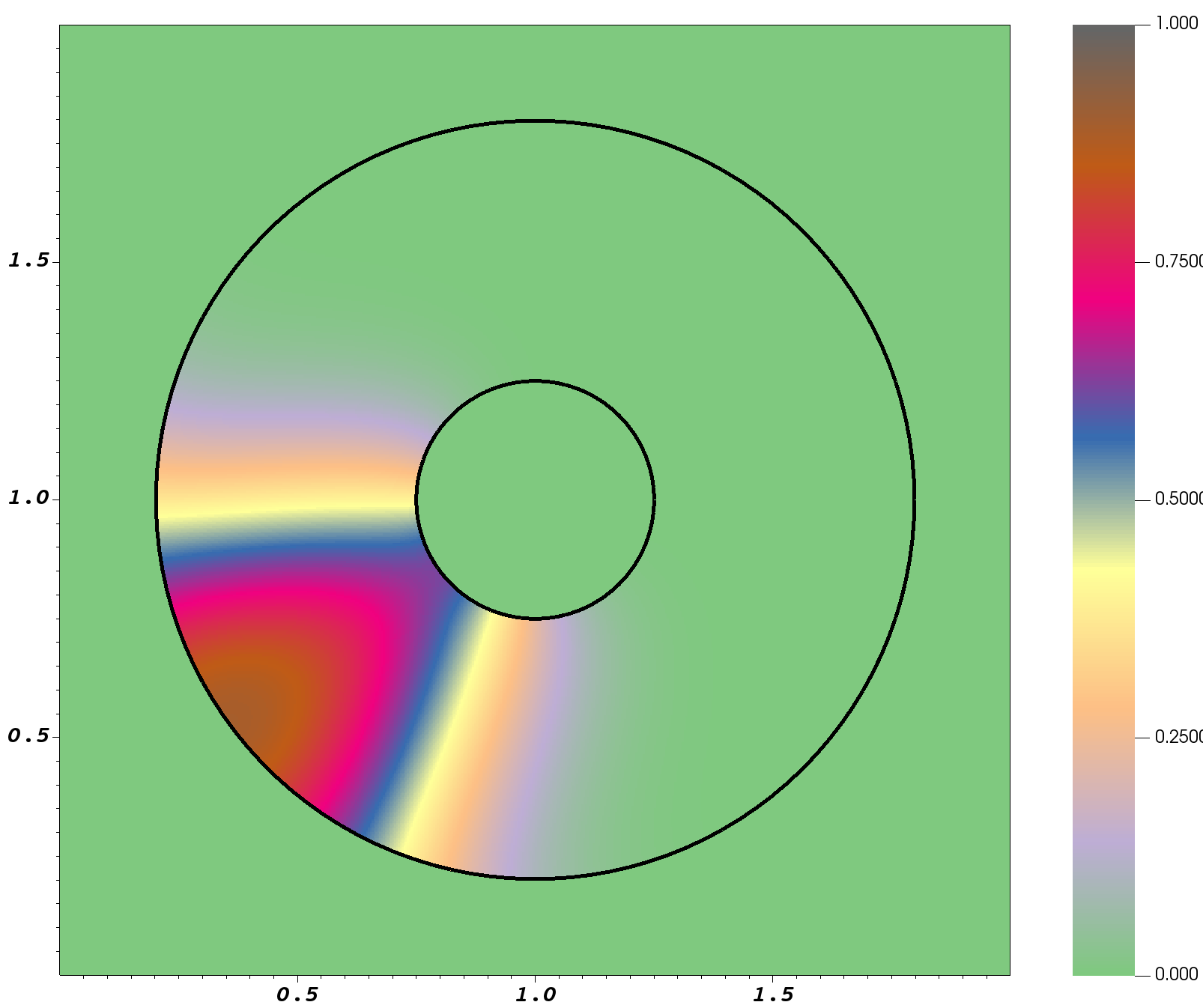}
    \label{fig:annulus-subfig3}
  }
  \hfill
    \subfigure[$T=3.0$.]{
    \includegraphics[width=0.45\textwidth]{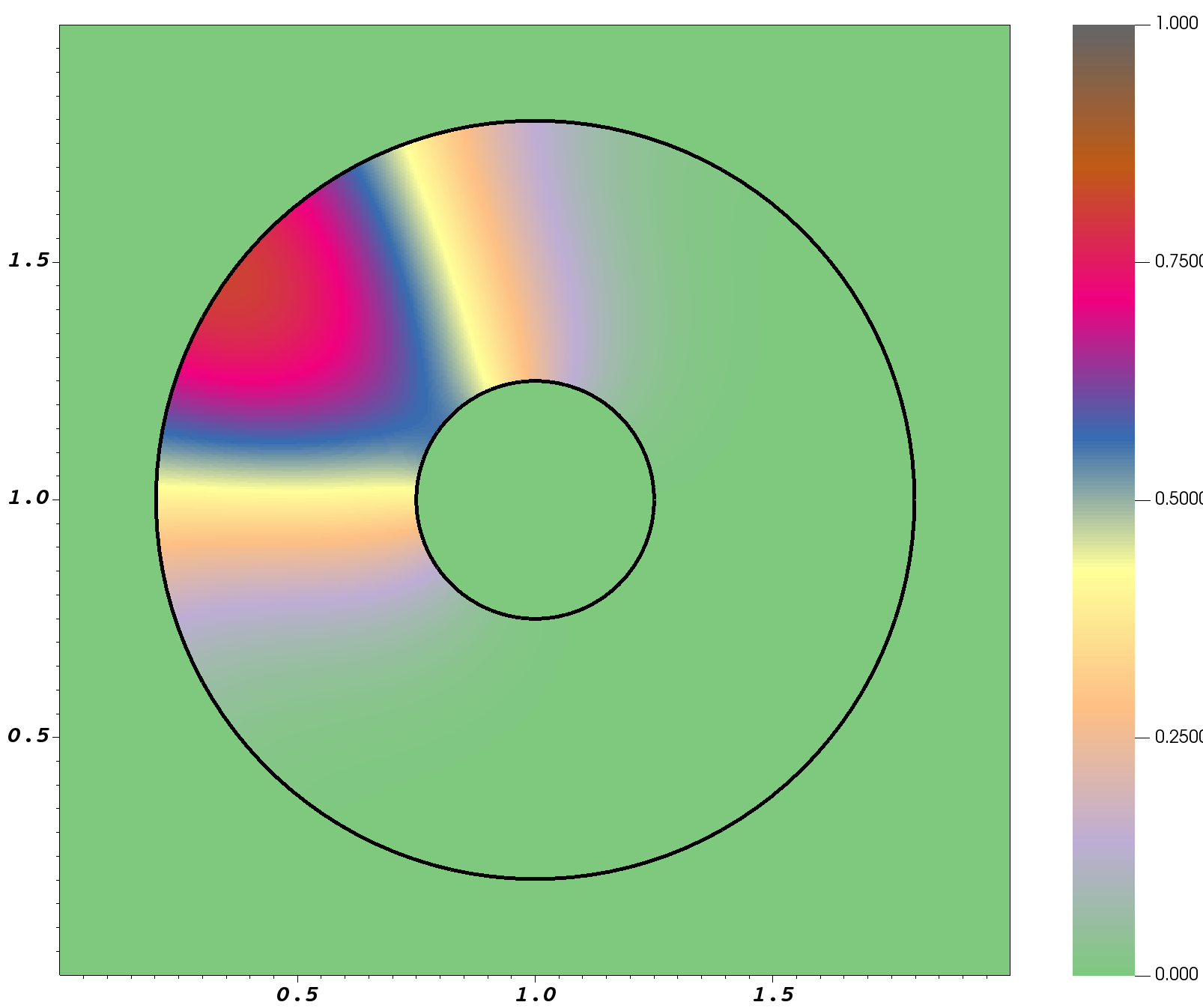}
    \label{fig:annulus-subfig4}
  }
  \hfill
  \subfigure[$T=4.0$.]{
    \includegraphics[width=0.45\textwidth]{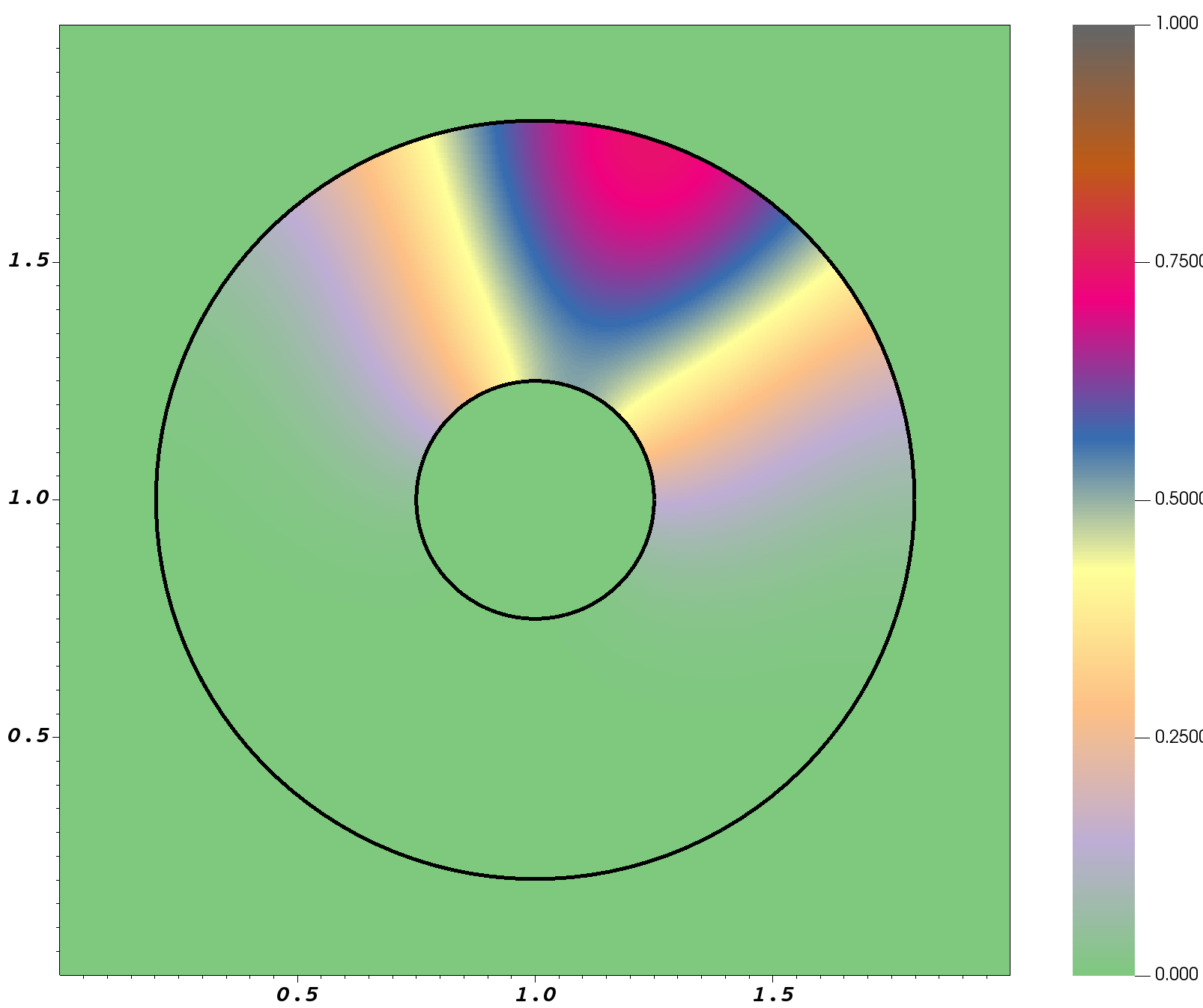}
    \label{fig:annulus-subfig5}
  }
  \hfill
    \subfigure[$T=5.0$.]{
    \includegraphics[width=0.45\textwidth]{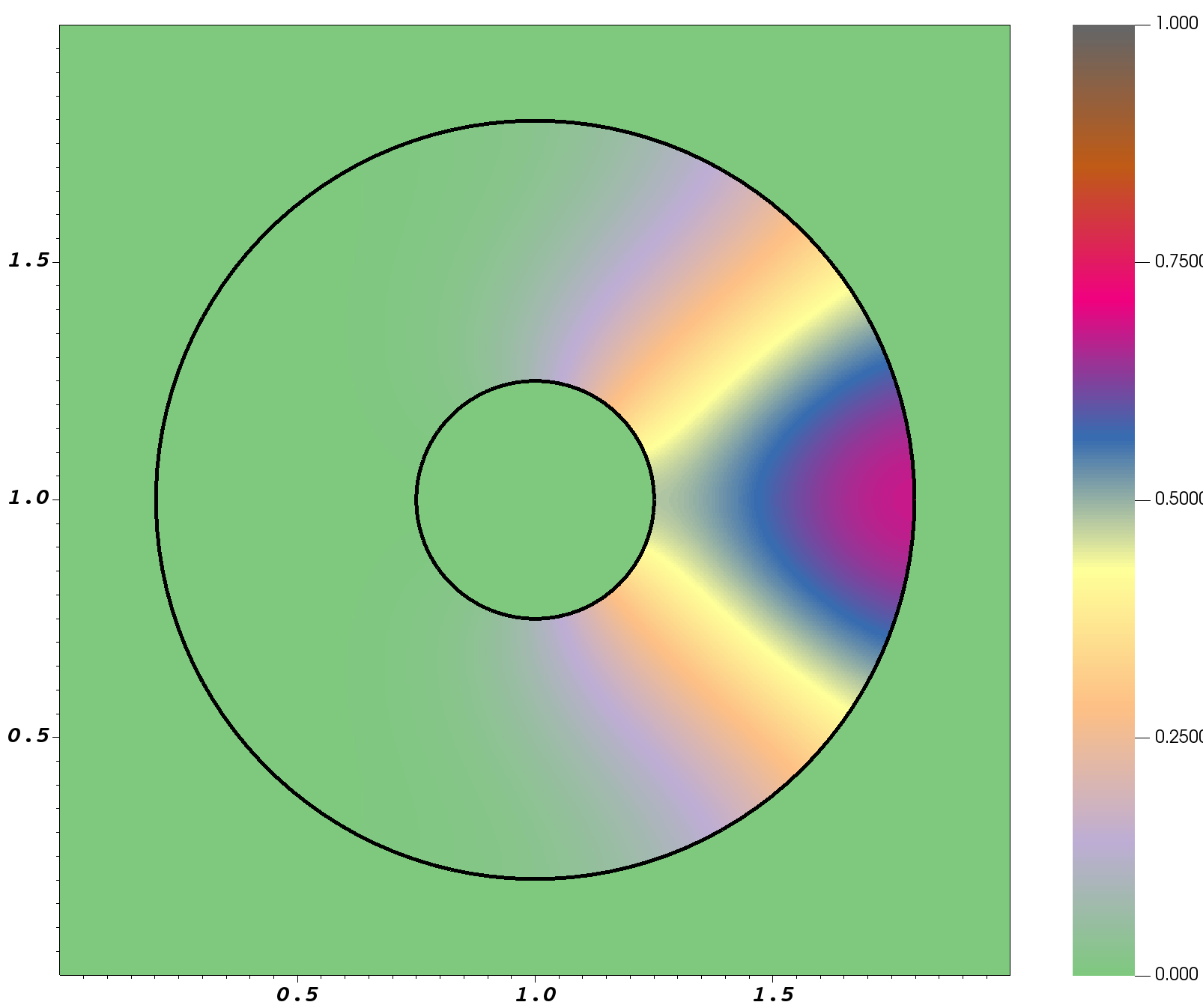}
    \label{fig:annulus-subfig6}
  }
  \caption{Results of the fourth-order PLG-FD
    in solving the advection-diffusion equation
    (\ref{eq:adv-diff-eq}) on an annulus
    with $h = \frac{1}{256}$ and $\Delta t = h$.}
  \label{fig:annulus-result}
\end{figure}

We couple the fourth-order PLG-FD
 to the fourth-order ERK-ESDIRK time integrator 
 by Kennedy and Carpenter \cite[p. 176]{kennedy03:_additive_runge_kutta}, 
 which treats the advection term explicitly
 and the diffusion term implicitly.
As mentioned in (PFT-1,2) in Section \ref{sec:PLG-FD}, 
 we solve a sequence of elliptic equations within each time step
 to advance the solution. 
Figure \ref{fig:annulus-result} presents
 results of our method
 at various time instants over one full rotation.
Since no exact solution is available,
 we use standard Richardson extrapolation
 to calculate the convergence rates at time $T=1.0$,
 which, as shown in Table~\ref{tab:adv-diff-eq},
 are close to four.

\begin{table}
  \caption{Errors and convergence rates of the fourth-order PLG-FD 
    in solving the advection-diffusion equation (\ref{eq:adv-diff-eq})
    with homogeneous Neumann boundary conditions at $T=1.0$. }
  \label{tab:adv-diff-eq}
  \centering
  \begin{tabular}{cccccc}
    \hline
    & $h=\frac{1}{64}-\frac{1}{128}$ & rate
    & $h=\frac{1}{128}-\frac{1}{256}$ & rate
    & $h=\frac{1}{256}-\frac{1}{512}$
    \\
    \hline
    $L^\infty$ & 1.94e-04 & 3.84 & 1.36e-05 & 4.14 & 7.69e-07 \\
    $L^1$ & 4.34e-06 & 3.94 & 2.84e-07 & 4.09 & 1.65e-08 \\
    $L^2$ & 9.56e-06 & 3.93 & 6.30e-07 & 4.13 & 3.57e-08 \\
    \hline
  \end{tabular}
\end{table}

As shown in Figure \ref{fig:select_box}
 and discussed in Section \ref{sec:PLG-FD},
 the feasible set of PLG-FD
 can be set by different strategies
 according to the physics of different operators to be discretized; 
 this provides certain degree of flexiblity for the user
 to cater for the problem at hand. 
In this test, the Laplacian operator is discretized
 by centering the starting point within the feasible set
 and the advection operator by a lop-sided box
 according to the direction of the velocity
 at the starting point; 
 see Figure \ref{fig:plg-example} for the generated poised lattices 
 at an irregular FD node close to the inner circular boundary.
}

\begin{figure}
  \centering
  \subfigure[Discretizing the Laplacian $\Delta \varphi$.]{
    \includegraphics[width=0.45\textwidth]{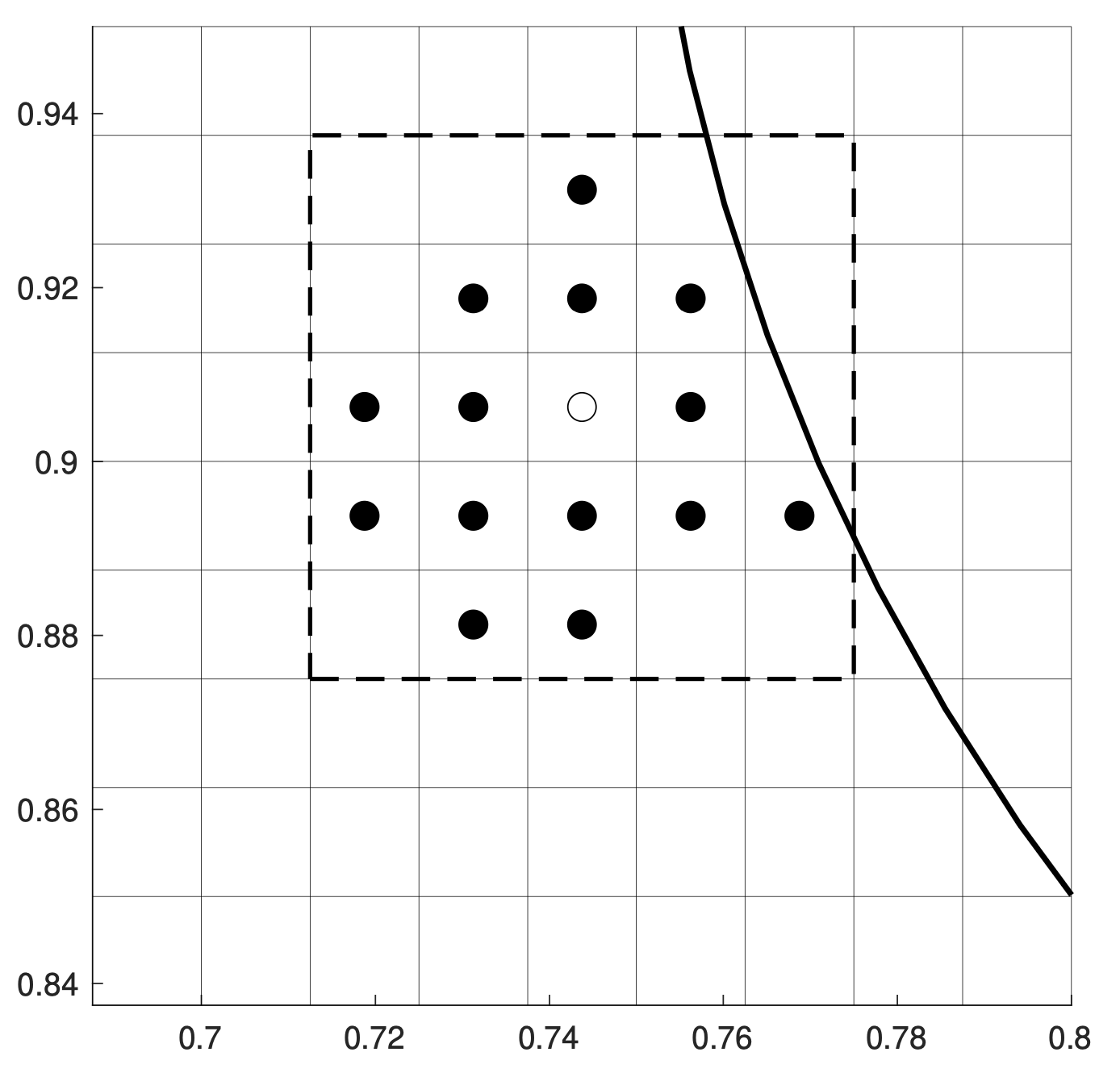}
    \label{fig:plgExample-subfig1}
  }
  \hfill
  \subfigure[Discretizing the advection $\mathbf{u} \cdot \nabla \varphi$.]{
    \includegraphics[width=0.45\textwidth]{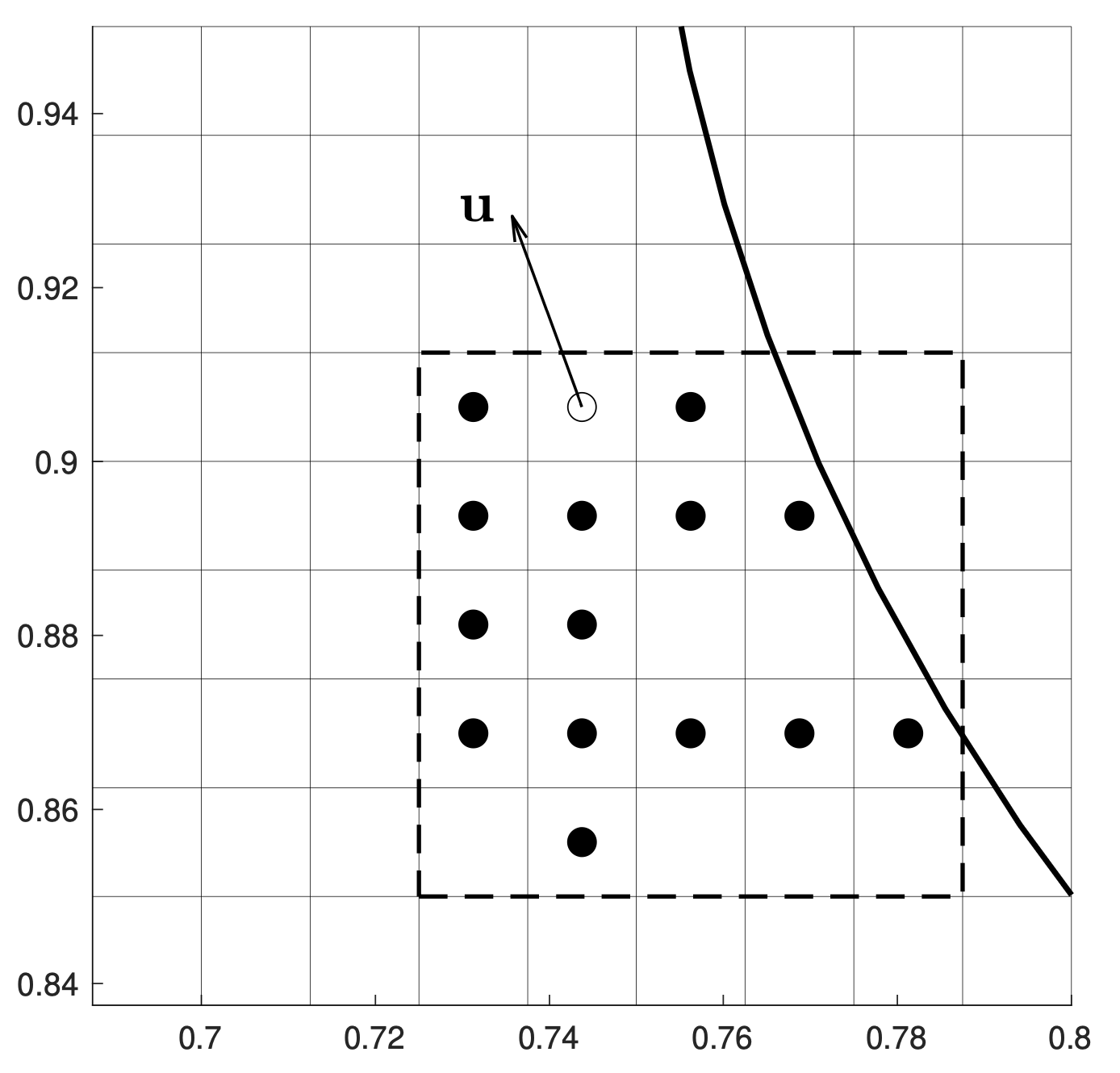}
    \label{fig:plgExample-subfig2}
  }
  \caption{Examples of discretizing different operators
    with different strategies in Section
    \ref{sec:how-choose-feasible};
    see also Figure \ref{fig:select_box}. 
    In both cases,
    the starting point $\mathbf{q}$ is represented by a hollow dot
    near the boundary of the inner circle,
    and the poised lattice for $\mathbf{q}$
    consists of $\mathbf{q}$ and all solid dots.}
  \label{fig:plg-example}
\end{figure}

\section{Conclusion}
\label{sec:conclusion}

For a positive integer $n$ and a given set $K$
 of isolated points in $\mathbb{Z}^{\Dim}$,
 we propose a generic TLG algorithm
 that outputs a subset ${\mathcal T}\subset K$ 
 such that multivariate polynomial interpolation on ${\mathcal T}$
 is unisolvent in $\Pi^{\Dim}_n$.
% This generated lattice $$ is poised
%  for multivariate polynomial interpolation on $\mathbb{Z}^{\Dim}$.
% with complete  of degree no more than $n$.

Based on the TLG discretization, %of spatial derivatives
% near irregular boundaries, 
 we further develop a fourth-order PLG-FD method
 for numerically solving PDEs
 on irregular domains with Cartesian grids.
Our new method has been shown to be
simple, efficient, and more accurate
than a second-order EB method.
 
Several research prospects follow.
We are currently working on augmenting the PLG-FD method
 to higher convergence rates
\revise{and to time-dependent problems such as 
 the Navier-Stokes equations on irregular domains.}
We also plan to utilize the TLG discretization
 in our finite volume methods \cite{zhang16:_GePUP}
% to develop a fourth-order numerical method
 for simulating incompressible fluids
 with moving boundaries.

%%% Local Variables:
%%% mode: latex
%%% TeX-master: "../PLG"
%%% End:

% LocalWords:  unisolvent

%\begin{acknowledgements}
 \vspace{0.5cm}
 \noindent \textbf{Acknowledgements. }
  This work was supported by the grant 12272346
   from the National Natural Science Foundation of China.
  \revise{We are grateful to two anonymous referees
    for their insightful comments and suggestions.
  We also thank Junxiang Pan and Lei Pang
  for their comments that lead to an improvement
  of the exposition of the manuscript.
}
%\end{acknowledgements}

\section*{Declarations}

\subsection*{Data Availability}

The datasets generated during and/or analysed during the current study
are available from the corresponding author on reasonable request.

\subsection*{Conﬂict of interest}

The authors declare that they have no conﬂict of interest.

\bibliography{bib/PLG.bib}
\bibliographystyle{abbrv}

\end{document}